\newtheorem{theorem}{Theorem}
\newtheorem{lemma}[theorem]{Lemma}
\newtheorem{claim}[theorem]{Claim}
\newtheorem{proposition}[theorem]{Proposition}
\newtheorem{corollary}[theorem]{Corollary}
\newtheorem{observation}[theorem]{Observation}
\newtheorem{setting}[theorem]{Setting}
\newtheorem{definition}[theorem]{Definition}
\theoremstyle{remark}
\newtheorem{remark}[theorem]{Remark}
\newcommand{\oldqed}{}
\newcommand{\qedClaim}{\hfill\scalebox{.6}{$\Box$}}
\newenvironment{claimproof}[1][Proof]{
  \renewcommand{\oldqed}{\qedsymbol}
  \renewcommand{\qedsymbol}{\qedClaim}
  \begin{proof}[#1]
}{
  \end{proof}
  \renewcommand{\qedsymbol}{\oldqed}
}
\setlist{itemsep=2pt,parsep=1pt,topsep=3pt,partopsep=0pt}  
\def\itm#1{\rm ({#1})} 
\def\itmit#1{\itm{\it #1\,}} 
\def\rom{\itmit{\roman{*}}} 
\def\abc{\itmit{\alph{*}}}
\def\itmarab#1{\mbox{\itm{{\it #1\,}\arabic{*}\hspace{.05em}}}}
\def\itmarabp#1{\mbox{\itm{{\it #1\,}\arabic{*}'\hspace{.05em}}}}
\newcommand{\RandomEmbedding}{\emph{RandomEmbedding}}
\newcommand{\PackingProcess}{\emph{PackingProcess}}
\newcommand{\MatchLeaves}{\emph{MatchLeaves}}
\newcommand{\PerfectPacking}{\emph{PerfectPacking}}
\newcommand{\OrientationSwitch}{\emph{OrientationSwitch}}
\newcommand{\AlgMap}{\hookrightarrow}
\newcommand{\By}[2]{\overset{\mbox{\tiny{#1}}}{#2}}
\newcommand{\ByRef}[2]{   \By{\eqref{#1}}{#2} }
\newcommand{\eqByRef}[1]{ \ByRef{#1}{=} }
\newcommand{\leByRef}[1]{ \ByRef{#1}{\le} }
\DeclareMathOperator{\im}{im}
\DeclareMathOperator{\dom}{dom}
\newcommand{\eps}{\varepsilon}
\renewcommand{\rho}{\varrho}
\renewcommand{\subset}{\subseteq}
\newcommand{\cE}{\mathcal{E}}
\newcommand{\cG}{\mathcal{G}}
\newcommand{\cH}{\mathcal{H}}
\newcommand{\tmu}{\tilde{\mu}}
\newcommand{\trho}{\tilde{\rho}}
\newcommand{\tnu}{\tilde{\nu}}
\newcommand{\teta}{\tilde{\eta}}
\newcommand{\tsigma}{\tilde{\sigma}}
\newcommand{\oH}{\vec{H}}
\newcommand{\hist}{\mathscr{H}}
\newcommand{\histens}{\mathscr{L}}
\newcommand{\Exp}{\mathbb{E}}
\newcommand{\Prob}{\mathbb{P}}
\newcommand{\ONE}{\mathbbm{1}}
\DeclareMathOperator{\Var}{Var}
\newcommand{\DietEvent}{\mathsf{DietE}}
\newcommand{\CoverEvent}{\mathsf{CoverE}}
\newcommand{\CodietEvent}{\mathsf{CoDietE}}
\newcommand{\CapEvent}{\mathsf{CapE}}
\newcommand{\LNBH}{N^{-}}
\newcommand{\CANDSET}{C}
\newcommand{\LEFTDEG}{\deg^{-}}
\title{Perfectly packing graphs with bounded degeneracy and many leaves}
\author[P. Allen]{Peter Allen}
\address{(PA) London School of Economics, Department of Mathematics, Houghton
  Street, London WC2A 2AE, UK}
\thanks{PA was partially supported by the EPSRC, grant number EP/P032125/1.}
\email{p.d.allen@lse.ac.uk}
\author[J. B\"ottcher]{Julia B\"ottcher}
\address{(JB) London School of Economics, Department of Mathematics, Houghton Street, London WC2A 2AE, UK}
\email{j.boettcher@lse.ac.uk}
\thanks{JB was partially supported by the EPSRC, grant number EP/R00532X/1.}
\thanks{PA and JB were partially supported by a STICERD grant.}
\author[D. Clemens]{Dennis Clemens}
\address{(DC) Technische Universität Hamburg, Institut f\"ur Mathematik, Am Schwarzenberg-Campus 3, 21073 Hamburg, Germany }
\email{dennis.clemens@tuhh.de}
\author[A. Taraz]{Anusch Taraz}
\address{(AT) Technische Universität Hamburg, Institut f\"ur Mathematik, Am Schwarzenberg-Campus 3, 21073 Hamburg, Germany }
\email{taraz@tuhh.de}
\date{\today}
\begin{document}

\begin{abstract}
 We prove that one can perfectly pack degenerate graphs into complete or dense $n$-vertex quasirandom graphs, provided that all the degenerate graphs have maximum degree $o\big(\tfrac{n}{\log n}\big)$, and in addition $\Omega(n)$ of them have at most $\big(1-\Omega(1)\big)n$ vertices and $\Omega(n)$ leaves. This proves Ringel's conjecture and the Gy\'arf\'as Tree Packing Conjecture for all but an exponentially small fraction of trees (or sequences of trees, respectively).
\end{abstract}

\maketitle

\section{Introduction}

Let $\cG=\{G_1,G_2,\dots,G_s\}$ be a collection of graphs, and $H$ be a graph.
We say that \emph{$\cG$ packs into $H$} if we can find pairwise edge-disjoint
copies in $H$ of the graphs $G_1,\dots,G_s$. If in addition we have
$\sum_{i\in[s]}e(G_i)=e(H)$, we call the packing \emph{perfect}: in this case,
each edge of $H$ is used in a copy of exactly one $G_i$.


The study of perfect packings in graphs has a long history, beginning with
Pl\"ucker~\cite{Pluecker}, who in 1835 showed that for certain values of $n$
there is a perfect packing of copies of $K_3$ into $K_n$. Steiner~\cite{Steiner}
in 1853 asked, more generally, when one can perfectly pack the $n$-vertex
$k$-uniform complete hypergraph with cliques on $r$ vertices. He phrased the
question as a problem in set theory, and gave some obvious divisibility-based
necessary conditions on $n$; today such perfect packings are called
\emph{combinatorial designs}. In 1846 Kirkman~\cite{Kirkman} asked for a
strengthening of Pl\"ucker's ideas: when can one have a perfect packing of
spanning $K_3$-factors (that is, $\tfrac{n}{3}$ vertex disjoint copies of $K_3$)
into $K_n$? Again, he showed that for specific values of $n$ such a thing is
possible. Generalising this in the direction of Steiner one obtains the concept
of a \emph{resolvable design}; again, it is easy to find divisibility-based
necessary conditions on $n$.

Despite their simple statement, these problems turned out to be difficult.
Kirkman gave explicit constructions showing that one can perfectly pack $K_n$
with copies of $K_3$ if and only if $n$ is congruent to $1$ or $3$ modulo $6$.
But it took more than a century until, in 1975 Wilson~\cite{Wilson75} proved the
(much harder) statement that the necessary divisibility conditions are also
sufficient for cliques of any fixed size in large enough ($2$-uniform) graphs.
Ray-Chaudhuri and Wilson~\cite{RCW} in 1971 solved Kirkman's problem. There was
then another pause, till 2014---up to which time, despite significant work, not
a single example of a non-trivial hypergraph perfect packing for uniformity at
least $6$ was discovered---when Keevash~\cite{Kee1}, in a major breakthrough,
proved that the necessary divisibility conditions are also sufficient for any
fixed clique size and hypergraph uniformity, provided $n$ is large enough. The
problem was re-solved, by a rather different method, by Glock, K\"uhn, Lo and
Osthus~\cite{GKLO:Designs,GKLO:Fdesigns}, who also solved the problem of perfect
packings with general fixed hypergraphs in place of cliques. In~\cite{Kee2},
Keevash made the beautiful observation that a resolvable design is equivalent to
a perfect packing in an auxiliary well-structured hypergraph, and established
the existence of such a packing. Hence, he proved that resolvable designs
exist whenever the obvious necessary conditions are satisfied and $n$ is large
enough.

When one moves away from packings with fixed-size objects (or statements which
can be reduced to such packings), the first positive result is due to Walecki in
the 1800s (see~\cite{Lucas}), who proved that $K_n$ can be perfectly packed with
Hamilton cycles whenever $n$ is odd. In the 1960s and 70s, interest in this area
was renewed, in particular due to conjectures of Ringel~\cite{Ringel} and
Gy\'arf\'as~\cite{GyaLeh} on packings of trees. These conjectures state,
respectively, that $K_{2n-1}$ can be packed with $2n-1$ copies of any given
$n$-vertex tree, and that if $T_2,\dots,T_n$ is any sequence of trees such that
$v(T_i)=i$, then $\{T_2,\dots,T_n\}$ packs into $K_n$. In both cases, the
packing is necessarily perfect, which makes these conjectures difficult. It is
not too hard to prove either conjecture for stars or paths, and a considerable
amount of effort was put into solving special cases of both cases (for the
former, see the survey of Gallian~\cite{Gallian}). However until rather
recently, there were no proofs of either conjecture for any reasonably large
family of trees. Then Joos, Kim, K\"uhn and Osthus~\cite{JKKO} proved (among
other things) that both conjectures hold when the trees have constant maximum
degree $\Delta$ and $n$ is large enough. The proof of this result is very hard,
using a variety of powerful techniques from modern extremal graph theory.

Broadly, the recent solutions to perfect packing conjectures (both, in the case
of combinatorial designs and in the case of tree packing) depend on two
advances: \emph{randomised packing methods}, and the \emph{absorbing method}.
The idea is that, rather than
deterministically specifying how to pack, one gives a randomised packing
algorithm and argues that it is likely to succeed. Here `succeed' means packing
almost all (not all) of the graphs, and there will be some edges remaining. This
leftover is dealt with by the absorption method: one should begin by cleverly
choosing an `absorbing packing' of the first few graphs which has the property
that whatever the remaining edges from the randomised algorithm turn out to be,
one can modify the absorbing packing in order to incorporate the leftover to a
perfect packing. In the work of Keevash, roughly this template is followed
(though there are some mild conditions on the leftover), and an intricate algebraic
structure is used to obtain the absorbing packing. In the work of Joos et al.,
the \emph{iterative absorption} method (originating in~\cite{KnoxKO}) is used:
here one packs in a way that uses all the edges adjacent to most vertices and
almost no edges among the remaining few vertices, and then iterates this
process, until all the difficulty has been pushed into a set of vertices so tiny
that a relatively simple absorber suffices.

The idea of randomised packing dates back to R\"odl's celebrated \emph{nibble
  method}~\cite{RodlNibble} in which he solved the Erd\H{o}s-Hanani problem, of
showing that if $n$ is large enough then one can pack most of the edges of the
complete $k$-uniform $n$-vertex hypergraph with cliques of size~$r$, solving
Steiner's problem approximately. Note that for this problem there is no
divisibility restriction on $n$. The nibble method was brought to tree packing
by B\"ottcher, Hladk\'y, Piguet and Taraz~\cite{BHPT}, who showed that one can
pack most of the edges of $K_n$ with bounded degree trees, provided the trees
are not too close to spanning. This was the trigger for a sequence of
generalisations: Messuti, R\"odl and Schacht~\cite{MRS} showed that one can
replace trees with graphs from any non-trivial minor-closed family; Ferber, Lee
and Mousset~\cite{FLM} showed that one can additionally allow spanning graphs;
Kim, K\"uhn, Osthus and Tyomkyn~\cite{KKOT} discarded the structural assumption
entirely, packing most of the edges of $K_n$ with arbitrary bounded degree
graphs. All these results work in more generality than just for packings in~$K_n$.
In particular, we should note that the results of~\cite{KKOT} work in the
Szemer\'edi regularity setting, which was necessary for the proof strategy
of~\cite{JKKO}.

All the results mentioned so far deal with bounded degree graphs; the first
result to handle growing degrees is due to Ferber and Samotij~\cite{FerSam}, who
showed that one can pack most of the edges of $K_n$ with trees of maximum degree
$O\big(\tfrac{n}{\log n}\big)$ (for almost-spanning trees) or
$O\big(\tfrac{n}{\log n}\big)^{1/6}$ (for spanning trees). In~\cite{DegPack} it was proved that one can pack most of the edges of $K_n$
with arbitrary $D$-degenerate graphs with maximum degree $O\big(\tfrac{n}{\log
  n}\big)$. Finally, recently Montgomery, Pokrovskiy and Sudakov~\cite{MPS} were able to deal with trees of unbounded degree, at least in the setting of Ringel's conjecture: they proved an approximate version of Ringel's conjecture, proving that $K_{2n-1}$ can be packed with $2n-1$ copies of any tree $T$ with $n-o(n)$ vertices.

One might be tempted to think that, while a randomised
strategy is very good for packing most of the edges, one cannot hope for a
perfect packing: After all, at some point one has to pack the
last few graphs, or at least somehow use the last few edges; at this point the
packing is very constrained and any mistake will cause the packing to fail (and
there cannot be many choices left, so that one cannot hope for strong
concentration bounds), but a randomised algorithm will probably make a mistake
(at least, unless it does a good deal of `looking ahead' which will be hard to
analyse). In this paper, however, we show that this thinking is flawed and
a rather natural, simple randomised algorithm can succeed
in giving a perfect packing. Using this algorithm, we prove the following.

\subsection{Main result}

A graph~$G$ is \emph{$D$-degenerate} if every subgraph of~$G$ has a vertex of
degree at most~$D$.

\begin{definition}[$(\mu,n)$-sequence]
  We say that a sequence $(G_i)_{i\in[m]}$ of graphs is a
  \emph{$D$-degenerate $(\mu,n)$-graph sequence with maximum degree~$\Delta$} if 
  \begin{enumerate}[label=\itmarab{G}]
  \item $G_i$ is $D$-degenerate and $\Delta(G_i)\le\Delta$ for each $i\in[m]$,
  \item $v(G_i)=n$   for each $1\le i\le m-\lfloor\mu n\rfloor$, and
  \item $v(G_i)=n-\lfloor \mu n\rfloor$ and $G_i$ has at least $\lfloor\mu n\rfloor$
    leaves for each~$i$ with $m-\lfloor\mu n\rfloor<i\le m$.
  \end{enumerate}
  We also call the~$G_i$ with $m-\lfloor\mu n\rfloor<i\le m$ the
  \emph{special} graphs of the sequence.
\end{definition}

An $n$-vertex graph $H$ is \emph{$(\alpha,k)$-quasirandom} with density $p$ if
$e(H)=p\binom{n}{2}$ and for every $\ell\in[k]$ and every set
$\{v_1,\dots,v_\ell\}$ of vertices of $H$ we have
\[\big|N_H(v_1,\dots,v_\ell)\big|=(1\pm\alpha)p^\ell n\,.\]

Our main result states that a 
$D$-degenerate $(\mu,n)$-sequence $(G_i)_{i\in[t]}$ of guest graphs with
maximum degree of order at most $\frac{n}{\log n}$ can be perfectly
packed into a sufficiently quasirandom graph $\widehat{H}$.

\begin{theorem}[main result]\label{thm:main}
  For every $D$ and $\mu,\hat p_0>0$ there are~$n_0$ and $\xi,c>0$ such that for
  every $\hat{p}\ge \hat p_0$, every $n\ge n_0$, and every $m$, the following holds
  for every $n$-vertex graph $\widehat{H}$ which is $(\xi,4D+7)$-quasirandom
  with density $\hat{p}$. Every $D$-degenerate $(\mu,n)$-graph sequence
  $(G_i)_{i\in[m]}$ with maximum degree $\frac{cn}{\log n}$ such that
  $\sum_{i\in m}e(G_i)\le e(\widehat{H})$ packs into~$\widehat{H}$.
\end{theorem}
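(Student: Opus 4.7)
The plan is to combine a randomised iterative packing with an absorption step that exploits the $\lfloor\mu n\rfloor$ leaves present in each special graph. For each special graph $G_i$ (with $m-\lfloor\mu n\rfloor < i \le m$) we fix a set $L_i$ of $\lfloor\mu n\rfloor$ leaves and let $G_i^{\circ}:=G_i-L_i$, so $v(G_i^{\circ}) \le n-2\lfloor\mu n\rfloor$. The high-level strategy is: first pack the non-special graphs together with the trunks $G_i^{\circ}$ using a randomised embedding (\PackingProcess) that consumes most of $\widehat H$, and then extend each $G_i^{\circ}$ back to $G_i$ by routing the leaves in $L_i$ to vertices adjacent (in the residual host) to the already-fixed images of their parents, so that the leftover of the first phase is exactly absorbed.

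For the first phase we process $G_1,\ldots,G_{m-\lfloor\mu n\rfloor},G_{m-\lfloor\mu n\rfloor+1}^{\circ},\ldots,G_m^{\circ}$ one by one, embedding each via \RandomEmbedding: fix a degeneracy order on its vertices and embed them in this order, at each step choosing the image of the current vertex uniformly from the common neighbourhood in the residual host of the already-embedded neighbours (of which there are at most $D$ by degeneracy), while avoiding the current graph's previously placed vertices. Since $\widehat H$ is $(\xi,4D+7)$-quasirandom, every common neighbourhood consulted initially has approximately its expected size; a concentration argument (Azuma/McDiarmid for the random choices and a Freedman-type bound for the codegree-error growth) combined with the maximum-degree bound $\Delta \le cn/\log n$ should show that the residual host remains quasirandom with appropriately reduced density throughout, so that the embedding can continue until all trunks are placed. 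The $4D+7$ uniformity of the quasirandomness is exactly what is required to track the codegree computations that arise when we verify pseudorandomness of the residual host under repeated random modifications.

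In the second phase a residual graph $H^*$ remains with exactly $\sum_{i>m-\lfloor\mu n\rfloor}|L_i|$ edges. For each special $G_i$ in turn, \MatchLeaves~reduces the embedding of $L_i$ to a bipartite matching problem: each leaf $\ell\in L_i$ must be matched to an unused image adjacent to the fixed image of its parent through an edge of $H^*$. Provided the residual host is still quasirandom in a suitable degree-preserving sense just before $G_i$ is processed, Hall's condition is satisfied and the required matching exists. The main obstacle lies in the coupling between the two phases: not only must the residual at the end of Phase~I have the correct total number of edges, but those edges must be distributed so that \emph{every} \MatchLeaves~call in Phase~II succeeds simultaneously. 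To handle this we plan to arrange Phase~I so the parents of absorbing leaves are embedded in a pseudorandomly spread-out fashion, and to introduce an \OrientationSwitch~subroutine that can swap two absorbing leaves between different special graphs in order to repair local degree imbalances detected just before a matching is attempted. Proving the requisite concentration estimates throughout Phase~I under the weak max-degree hypothesis $cn/\log n$, and showing that these mechanisms really do leave Phase~II tractable, is the core technical task.
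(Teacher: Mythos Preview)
Your high-level outline has the right two-phase structure (randomised almost-packing, then absorb via leaves), but Phase~II as you describe it will not close to a perfect packing, and your description of \OrientationSwitch{} does not match what is actually needed.

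The essential problem is that you process the leaf-absorption \emph{graph-by-graph}: for each special $G_i$ in turn you want to find a matching of $L_i$ into the residual host. But this cannot work for the last few special graphs. After you have matched the leaves of $G_{m-1}$, the residual $H^*$ has exactly $|L_m|$ edges, and these edges must form precisely a union of stars centred at the images of the parents of $L_m$, with the correct star sizes, and with all leaves outside $\im\phi'_m$. Nothing in your procedure enforces this rigid structure; quasirandomness of the residual before each step says nothing about the residual after all but one matching have been extracted. ``Repairing local degree imbalances'' by swapping leaves between graphs is far too weak: the constraint at the end is global and exact, not a local imbalance.

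What the paper does instead is to process Phase~II \emph{vertex-by-vertex in the host graph}. The key device is to first orient the leftover graph $H$ so that $\deg^+_{\vec H}(r)$ equals the total number of dangling leaves at $r$ (summed over all special graphs). This is what \OrientationSwitch{} actually does: it takes a uniformly random orientation and corrects it by reversing a few directed paths of length~$2$ so that all out-degrees become exactly right. Once such an orientation exists, every edge of $H$ is pre-assigned to the vertex from which it points out, and the leaf-embedding at each vertex $r$ becomes a perfect matching problem in a balanced bipartite graph $F_r$ between the leaves dangling at $r$ and $N^+_{\vec H}(r)$. These matchings are chosen uniformly at random (one $r$ at a time), and the analysis shows that this randomness keeps all later $F_k$ sufficiently regular for a matching to exist. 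Because the orientation fixed the edge-usage in advance, the process automatically uses every edge of $H$ exactly once; there is no ``last graph'' to worry about. Your proposal is missing both this orientation step and the vertex-by-vertex viewpoint, and without them the absorption cannot be made perfect.
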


It is easy to see (and proved for completeness in Proposition~\ref{prop:tree})
that if
$T$ is a uniform random labelled $n$-vertex tree, then for each $c>0$, with probability
$1-e^{-O(n)}$ the tree $T$ will have at least $n/100$ leaves and maximum degree
at most $\tfrac{cn}{\log n}$. In particular, we have the following corollary to
Theorem~\ref{thm:main}, proving almost all cases of Ringel's conjecture and the
Gy\'arf\'as Tree Packing Conjecture.

\begin{corollary}\label{cor:unif}
  Let $T$ be a uniform random $n$-vertex tree. With probability $1-e^{-O(n)}$,
  there is a packing of $2n-1$ copies of $T$ into $K_{2n-1}$.
 
  Let $T_2,\dots,T_n$ be chosen independently and uniformly at random such that
  $T_i$ is an $i$-vertex tree for each $2\le i\le n$. With probability at least 
  $1-e^{-O(n)}$, there is a packing of $\{T_2,\dots,T_n\}$ into $K_n$.\hfill\qed
\end{corollary}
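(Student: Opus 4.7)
The plan is to derive both parts of the corollary as direct applications of Theorem~\ref{thm:main} to the complete host $\widehat{H} = K_N$, with $N = 2n-1$ for Ringel and $N = n$ for Gy\'arf\'as. Since $K_N$ is vacuously $(\xi, k)$-quasirandom with density $\hat p = 1$ for every $\xi > 0$ and every $k$, the hypothesis on the host is automatic, and only the sequence side requires attention. The role of Proposition~\ref{prop:tree} is to supply, with probability $1-e^{-\Omega(n)}$, a maximum degree at most $cn/\log n$ and at least $n/100$ leaves in each tree to which it is applied; trees are $1$-degenerate, so $D=1$ throughout.

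For Ringel, I would fix a small constant $\mu > 0$ (say $\mu = 1/300$) so that $\lfloor \mu N\rfloor \le n/100$, and build a sequence of length $m = 2n-1$ whose every term is a copy of $T$. The first $m - \lfloor \mu N\rfloor$ terms are padded with $N - n$ isolated vertices to become $N$-vertex graphs, and the last $\lfloor \mu N\rfloor$ terms are padded with $(N-\lfloor \mu N\rfloor) - n$ isolated vertices to become $(N-\lfloor \mu N\rfloor)$-vertex graphs. Isolated vertices affect neither degeneracy, nor maximum degree, nor edge count, nor leaf count, so conditions (G1)--(G3) follow immediately from Proposition~\ref{prop:tree} (using $\lfloor \mu N\rfloor \le n/100$ for the leaf bound on the special terms). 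The sequence uses exactly $(2n-1)(n-1) = e(K_{2n-1})$ edges, and Theorem~\ref{thm:main} then produces a (perforce perfect) packing of the $2n-1$ copies of $T$ into $K_{2n-1}$.

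For Gy\'arf\'as the complication is that the guests $T_2,\ldots,T_n$ have distinct sizes while a $(\mu,n)$-sequence admits only two. I would again fix a small $\mu$ and designate $\lfloor \mu n\rfloor$ special slots, to be filled by trees with at most $n-\lfloor \mu n\rfloor$ vertices and at least $\lfloor \mu n\rfloor$ leaves. Applying Proposition~\ref{prop:tree} to each $T_i$ with $i$ of order $n$—say $n/2 \le i \le n-\lfloor \mu n\rfloor$—shows that such an $i$ is suitable with probability $1-e^{-\Omega(i)}= 1-e^{-\Omega(n)}$, and there are $\Theta(n)$ such indices so that a union bound over just them still yields probability $1-e^{-\Omega(n)}$. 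The remaining $T_i$ fill non-special slots, padded with $n-v(T_i)$ isolated vertices to reach $n$ vertices. Small trees satisfy the degree bound trivially, since $\Delta(T_i)\le i-1 \le cn/\log n$ once $n$ is large, while larger ones satisfy it by Proposition~\ref{prop:tree}. Edges sum to $\binom{n}{2}$ and Theorem~\ref{thm:main} applies.

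The main obstacle I expect is the probability bookkeeping in the Gy\'arf\'as case: per-index failure probabilities of the form $e^{-\Omega(i)}$ do not sum to $e^{-\Omega(n)}$ over all $i\in[n]$, so one must isolate the $i=\Omega(n)$ regime, where each bound is strong enough, and handle smaller $i$ deterministically by arranging that small trees are never asked to provide leaves and meet the degree bound for free. Once this split is organised and the special/non-special slot assignment is fixed accordingly, everything feeds cleanly into Theorem~\ref{thm:main}.
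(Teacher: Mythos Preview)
Your approach matches the paper's and the Ringel half goes through as written. For Gy\'arf\'as, the leaf argument is correct, but the degree argument has a gap in the intermediate range of $i$. Your dichotomy is: for $i-1\le cn/\log n$ the bound $\Delta(T_i)\le i-1$ is free, and for larger $i$ invoke Proposition~\ref{prop:tree}\ref{prop:tree:ii}. But that proposition applied as a black box to the $i$-vertex tree $T_i$ only yields $\Prob[\Delta(T_i)>ci/\log i]\le e^{-ci/2}$; for $i$ just above $cn/\log n$ this is merely $e^{-\Theta(n/\log n)}$, and summing over such $i$ gives total failure probability $e^{-\Omega(n/\log n)}$, not the claimed $e^{-\Omega(n)}$. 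The alternative reading---splitting at $i$ linear in $n$ and declaring the degree bound ``for free'' below that---does not help either: for, say, $cn/\log n<i<n/100$ one has only $\Delta(T_i)\le i-1<n/100$, which exceeds $cn/\log n$ for large $n$.

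The fix is to re-run the Pr\"ufer code computation from the proof of Proposition~\ref{prop:tree}\ref{prop:tree:ii} with the threshold $d=cn/\log n$ held fixed (the same for every $i$, rather than $d=ci/\log i$). For any $i\le n$ with $i>d+1$ one gets
\[\Prob\big[\Delta(T_i)>d\big]\le i\binom{i-2}{d}i^{-d}\le i\big(e/d\big)^d\le ne^{-cn/2}\]
for large $n$, uniformly in $i$; union bounding over all $i$ then gives $n^2e^{-cn/2}=e^{-\Omega(n)}$. With this single adjustment your padding construction feeds into Theorem~\ref{thm:main} exactly as you describe.
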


We should briefly compare these results to the earlier result of Joos, Kim,
K\"uhn and Osthus~\cite{JKKO}. On the one hand, we cannot handle trees with few
leaves, so our result does not contain theirs. Furthermore, as far as
Corollary~\ref{cor:unif} goes, packing bounded degree trees `almost' covers a
typical uniform random tree, whose maximum degree is likely to be
$\Theta\big(\tfrac{\log n}{\log\log n}\big)$, and most likely the approach
of~\cite{JKKO} could be pushed to allow for a few vertices of logarithmic
degree: this would prove something like Corollary~\ref{cor:unif}, though the failure probability would be polynomial rather than exponential in $n$ (the probability of having a vertex of at least logarithmic degree is only polynomially small). On the other hand, the method of~\cite{JKKO} heavily
relies on the structure of trees, and in particular that one can embed them
effectively in a Szemer\'edi partition; handling general degenerate graphs with
high maximum degree would be rather challenging with their approach.

Finally, we discuss which conditions in Theorem~\ref{thm:main} are needed.
It is easy to see that a typical graph~$H$ on $n$ vertices with density $\frac12$ will be
quasirandom. However such a graph will typically not
contain any $\tfrac1{10}\log n$-set $S$ of vertices such that each other vertex
has a neighbour in~$S$.
In particular, if $G$ is an $n$-vertex graph which is the vertex
disjoint union of $\tfrac1{10}\log n$ stars, each with the same number of leaves
(up to an error $1$), then $G$ has maximum degree less than $\tfrac{20n}{\log
  n}$ but does not embed into~$H$. Thus the maximum degree
bound in our theorem is optimal up to a constant factor.

One can allow $D$ to grow with $n$. Examination of our proof shows it can grow
roughly as $\log\log\log n$, but this is presumably not optimal. On the other
hand, $D$ cannot be as big as $10\log n$, since a typical random graph is
unlikely to contain any given graph with $9n\log n$ edges.

We cannot allow all graphs to be spanning in Theorem~\ref{thm:main}, as an
example in~\cite[Section~9.1]{BHPT} shows. However we expect one can do better than needing
linearly many graphs to be linearly far from spanning.

We do not believe that it is necessary to have many graphs with many leaves. We
should note that one cannot simply omit this condition, because for example no
collection of cycles can perfectly pack $K_{2n}$, due to a parity obstruction:
cycles use an even number of edges at each vertex, but $K_{2n}$ has odd degree
vertices. However for the case $D=1$ (i.e.\ forests) we believe one can omit the
condition entirely (as the leaves should allow for parity correction). Work on
this problem is work in progress with Hladk\'y and Piguet.

\subsection{Proof outline}

This paper builds on~\cite{DegPack}, so we begin by outlining the randomised algorithm
\PackingProcess{} described there (and repeated here later). In
\PackingProcess{}, we embed graphs one-by-one into $\widehat{H}$. To embed a
given graph, we take the vertices in the degeneracy order, and one by one embed
them: at each step, we choose from the set of all vertices to embed to which do
not immediately break our packing (either by re-using a vertex already used in
the current embedding or by re-using an edge already used for a previous graph)
uniformly at random. We do this until almost the entire graph is embedded; then
we choose arbitrarily a way of completing the embedding to a spanning
embedding. (This is a slight simplification, but the simplification does
  not affect the point.) Note that here we certainly do not `look ahead' in any
way at what we will embed in the future, and the algorithm is essentially purely
random. 

To prove our main theorem, we will begin by removing some
of the leaves from each special graph in a given $(\mu,n)$-graph sequence. We
will then use \PackingProcess{} to pack all the non-special graphs
and all the special graphs minus the removed leaves into
$\widehat{H}$. It
remains to embed these removed leaves into the graph $H$ consisting of the
unused edges of $\widehat{H}$. We say a removed leaf is \emph{dangling} at a
vertex $v\in V(H)$ if its parent is embedded to $v$. We will show that at each
vertex of $H$, it is likely that there are about twice as many edges as dangling
leaves. In order to decide where to embed the dangling leaves, we first orient
the edges of $H$ randomly, then `correct' this orientation (by reversing a few
carefully chosen directed paths of length $2$) such that the out-degree of each
vertex is equal to the number of dangling leaves. This is the only step in our algorithm
where we
`look ahead' and prepare for the future.

We then complete the packing by going through $H$ vertex-by-vertex, and for each
vertex choosing a uniform random assignment of the dangling leaves to the
out-neighbours which preserves having a packing. We should note that this last
step has some similarity to the approach of~\cite{JKKO}, where the
authors also complete their perfect packing by assigning dangling leaves to
out-neighbours, but in a small set of vertices. However in their setting, they
only need to assign one dangling leaf per tree, and no other vertices of that
tree are embedded to the small vertex set. As they already did all the hard work
to reach this point, it is not hard for them to make such an assignment. In our
setting, we need to embed linearly many dangling leaves per tree, which dangle
on many different vertices, and the previously embedded images of these trees
can cover most of the vertices to which we want to embed dangling leaves. It is
already non-trivial that we can even assign the dangling leaves at the first
vertex of $H$, and this assignment affects what we can do at later vertices.

In order to understand how it can be that this random process succeeds in
obtaining a perfect packing, one should note that when we embed the dangling
leaves at the first vertex of $H$, we have no choice over the set of edges we
use (these are fixed as the out-neighbours) but the set of assignments, from
which we choose uniformly, is rather large. This property is preserved right
through to the last vertex of $H$---even in the last step, we have not one but
many possible assignments to choose from, so that even in the last steps we have
quite a lot of randomness.

\subsection{Organisation}

In Section~\ref{sec:prelim} we fix notation and collect some concentration
inequalities and facts about degenerate graphs. In Section~\ref{sec:alg} we
state our main technical theorem, show that it implies Theorem~\ref{thm:main},
and formalise our random packing process. In Section~\ref{sec:const} we fix the
constants we will use throughout our proofs. In Section~\ref{sec:lemmas} we
provide our main lemmas which analyse what happens in the different phases of
our packing process: the almost perfect packing lemma, the orientation
lemma, and the matching lemma. In Section~\ref{sec:maintech} we show that these
lemmas imply our main technical theorem.  Section~\ref{sec:orient} proves the
orientation lemma, Section~\ref{sec:match} the matching lemma, and
Section~\ref{sec:appl} the almost perfect packing lemma. The latter takes up the
(technical) bulk of the paper. Concluding remarks are given in Section~\ref{sec:concl}.

\section{Preliminaries}
\label{sec:prelim}

\subsection{Notation}
\label{subsec:not}

For a graph~$G$ we write $V(G)$ for the vertices of~$G$, and~$E(G)$ for its
edges, $v(G)$ for the number of vertices in~$G$, and $e(G)$ for the number
of edges.
For disjoint vertex sets $X,Y\subset V(G)$ we write $G[X]$ for the subgraph
of~$G$ induced by~$X$, and $G[X,Y]$ for the bipartite subgraph of~$G$ on
vertex set $X\cup Y$ and with all edges of~$G$ with one end in~$X$ and the
other in~$Y$. For a set $S$ of vertices of $G$, we write $N_G(S)$ for the \emph{common neighbourhood} $\{u\in V(G):su\in E(G)\text{ for each }s\in S\}$. We write $\deg_G(S):=\big|N_G(S)\big|$ for the \emph{common degree} of $S$ in $G$. When $S=\{v_1,\dots,v_\ell\}$ we will omit the set braces and simply write $N_G(v_1,\dots,v_\ell)$ and $\deg_G(v_1,\dots,v_\ell)$. We will not use joint neighbourhoods of sets of vertices in this paper.

Given a graph $G$ and a set of vertices $X$, if $X\subset V(G)$ we write $G-X$ for the graph obtained by removing the vertices $X$ from $V(G)$, i.e.\ $G\big[V(G)\setminus X\big]$. If $X$ is disjoint from $V(G)$, we write $G+X$ for the graph obtained by adding $X$ as a set of isolated vertices, i.e.\ the graph on vertex set $V(G)\cup X$ whose edge set is $E(G)$. Given graphs $G_1$ and $G_2$ with $V(G_2)\subset V(G_1)$, we write $G_1-G_2$ for the graph obtained by removing the edges of $G_2$ from $G_1$, i.e.\ the graph on vertex set $V(G_1)$ whose edge set is $E(G_1)\setminus E(G_2)$.

Given an ordering $V(G)=\{v_1,\dots,v_n\}$ of the vertices of a graph $G$, we write $\LNBH_G(v_i)$ for the \emph{left-neighbourhood of $v_i$}, i.e.\ the set
\[\LNBH_G(v_i):=N_G(v_i)\cap \{x_k:~ k\in [i-1]\}\,.\]
We write $\LEFTDEG_G(v_i):=\big|\LNBH_G(v_i)\big|$ for the \emph{left-degree of
  $v_i$}, and say the order is a \emph{$D$-degenerate order} if for each
$i\in[n]$ we have $\LEFTDEG_G(v_i)\le D$.

An \emph{orientation} of a graph $H=(V,E)$ is an oriented graph on~$V$
which contains, for each undirected edge $uv\in
E$, exactly one directed edge, either $\vec{uv}$ or $\vec{vu}$.
The \emph{outdegree} $\deg^+_{\vec{H}}(v)$ of a vertex~$v$ in an oriented
graph~$\vec{H}$ is the number of vertices~$u$ in~$\vec{H}$ such that $\vec{vu}$
is an edge of~$\vec{H}$; the set of these vertices~$u$ is the
\emph{outneighbourhood} $N^+_{\vec{H}}(v)$ of~$v$.

\smallskip

Let
  $\Omega$ be a finite probability space. A \emph{filtration} $\mathcal{F}_0$,
  $\mathcal{F}_1$,\dots, $\mathcal{F}_n$ is a sequence of partitions of~$\Omega$
  such that $\mathcal{F}_i$ refines $\mathcal{F}_{i-1}$ for all $i\in[n]$. In
  our application, the partition $\mathcal{F}_i$ is given by all possible
  histories of the run of one of our algorithms up to time~$i$. (For more
  explanation see~\cite{DegPack}.) We say that a function $f:\Omega\rightarrow
  \mathbb{R}$ is \emph{$\mathcal{F}_i$-measurable} if $f$ is constant on each
  part of $\mathcal{F}_i$. Further, for any random variable
  $Y\colon\Omega\rightarrow\mathbb{R}$ the \emph{conditional expectation}
  $\Exp(Y|\mathcal{F}_i)\colon\Omega\rightarrow\mathbb{R}$ and the
  \emph{conditional variance} $\Var(Y|\mathcal{F}_i)\colon\Omega\rightarrow\mathbb{R}$
  of $Y$ with respect to $\mathcal{F}_i$ are defined
  by
  \begin{equation*}
    \begin{split}
      \Exp(Y|\mathcal{F})(x)&=\Exp(Y|X),\\
      \Var(Y|\mathcal{F})(x)&=\Var(Y|X),
    \end{split}
    \qquad
    \text{where $X\in\mathcal{F}$ is such that $x\in X$\,.}
  \end{equation*}

Suppose that we have an algorithm which proceeds in $m$ rounds using a new
source of randomness $\Omega_i$ in each round $i$. Then the probability space
underlying the run of the algorithm is $\prod_{i=1}^{m}\Omega_i$. By
\emph{history up to time $t$} we mean a set of the form
$\{\omega_1\}\times\cdots\times\{\omega_t\}\times
\Omega_{t+1}\times\cdots\Omega_m$, where $\omega_i\in\Omega_i$. We shall use the
symbol $\hist_{t}$ to denote any particular history of such a form. By a
\emph{history ensemble up to time $t$} we mean any union of histories up to time
$t$; we shall use the symbol $\histens$ to denote any one such. Observe that
there are natural filtrations associated to such a probability space: given
times $t_1<t_2<\dots$ we let $\mathcal{F}_{t_i}$ denote the partition of
$\Omega$ into the histories up to time $t_i$.

\subsection{Probabilistic tools}

  \begin{theorem}[Chernoff bounds,~{\cite[Theorem 2.10]{JLR}}]
   \label{thm:chernoff}
  Suppose
  $X$ is a random variable which is the sum of a collection of independent Bernoulli random variables.
  Then we have for $\delta\in(0,3/2)$
 \[\Prob\big[X>(1+\delta)\Exp  X\big]<e^{-\delta^2\Exp X/3}\quad\text{and}\quad
 \Prob\big[X<(1-\delta)\Exp  X\big]<e^{-\delta^2\Exp X/3}
 \,.\]
  \end{theorem}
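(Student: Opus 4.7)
The plan is to follow the classical Bernstein/Chernoff moment generating function argument; the statement as written is a black-box citation to Janson--\L uczak--Ruci\'nski, but the standard proof is short and I sketch it here.

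First I would write $X=\sum_{i=1}^N X_i$ with $X_i\sim\mathrm{Bernoulli}(p_i)$ independent, set $\mu:=\Exp X=\sum_i p_i$, and reduce both tail bounds to estimates of the moment generating function $M(t):=\Exp[e^{tX}]$. By independence, $M(t)=\prod_i \Exp[e^{tX_i}]=\prod_i\bigl(1+p_i(e^t-1)\bigr)\le \prod_i e^{p_i(e^t-1)}=e^{\mu(e^t-1)}$, using $1+x\le e^x$. Then Markov applied to $e^{tX}$ (for $t>0$) gives for any $a>\mu$
\[
\Prob[X>a]\le e^{-ta}M(t)\le \exp\bigl(\mu(e^t-1)-ta\bigr).
\]
For the lower tail one proceeds symmetrically with $t<0$ applied to $e^{tX}$, obtaining $\Prob[X<a]\le \exp\bigl(\mu(e^t-1)-ta\bigr)$ for $a<\mu$.

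Next I would optimise in $t$. Setting $a=(1+\delta)\mu$ and choosing $t=\ln(1+\delta)>0$ in the upper tail yields the sharp bound
\[
\Prob\bigl[X>(1+\delta)\mu\bigr]\le \exp\bigl(-\mu\,\varphi(\delta)\bigr),\qquad \varphi(\delta):=(1+\delta)\ln(1+\delta)-\delta,
\]
and similarly, with $a=(1-\delta)\mu$ and $t=\ln(1-\delta)<0$,
\[
\Prob\bigl[X<(1-\delta)\mu\bigr]\le \exp\bigl(-\mu\,\psi(\delta)\bigr),\qquad \psi(\delta):=(1-\delta)\ln(1-\delta)+\delta.
\]

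Finally I would reduce to the advertised form by the elementary calculus inequality $\varphi(\delta)\ge \delta^2/3$ for $\delta\in(0,3/2)$ and $\psi(\delta)\ge \delta^2/2\ge \delta^2/3$ for $\delta\in(0,1)$ (which also covers the stated range since the lower tail probability is zero once $\delta\ge 1$). Both inequalities follow by Taylor-expanding and checking the sign of the remainder; this is the only slightly fiddly step and is really the main (very minor) obstacle, since the tight expressions $\varphi,\psi$ are not what one wants to cite in applications. No randomness or graph-theoretic input is needed beyond the independence hypothesis, so once these two scalar inequalities are in hand the proof is complete.
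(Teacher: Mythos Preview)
Your proof is correct and is the standard moment-generating-function argument; the paper itself does not prove this statement but simply cites it from~\cite{JLR}, so there is nothing to compare against. Your handling of the scalar inequalities $\varphi(\delta)\ge\delta^2/3$ on $(0,3/2)$ and $\psi(\delta)\ge\delta^2/2$ on $(0,1)$, together with the observation that the lower tail is vacuous for $\delta\ge 1$, is exactly what is needed.
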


  We use the following consequence of Freedman's inequality~\cite{Freedman},
  derived in~\cite{DegPack}, for analysing our random embedding algorithms. 

  \begin{lemma}[Freedman's inequality on a good event]
    \label{lem:freedman}
    Let $\Omega$ be a finite probability space, and $(\mathcal{F}_0,
    \mathcal{F}_{1},\dots,\mathcal{F}_{n})$ be a filtration. Suppose that we
    have $R>0$, and for each $1\le i\le n$ we have an $\mathcal{F}_i$-measurable
    non-negative random variable $Y_{i}$,
    nonnegative real numbers~$\tmu$,~$\tnu$ and
    $\tsigma$, and an event $\cE$. Suppose that almost surely, either $\cE$ does
    not occur or we have
    $\sum_{i=1}^{n}\Exp\big[Y_{i}\big|\mathcal{F}_{i-1}\big]=\tmu\pm\tnu$, and
    $\sum_{i=1}^n\Var\big[Y_i\big|\mathcal{F}_{i-1}\big]\le\tsigma^2$, and $0\le
    Y_i\le R$ for each $1\le i\le n$. Then for each $\trho>0$ we have
    \[
      \Prob\left[\mathcal{\cE}\text{ and }\sum_{i=1}^{n}Y_{i}\neq\tmu\pm(\tnu+\trho)\right]\le2\exp\Big(-\frac{\trho^{2}}{2\tsigma^2+2R\trho}\Big)\,.
    \]
    Furthermore, if we assume only that either $\cE$ does not occur or we have
    $\sum_{i=1}^{n}\Exp\big[Y_{i}\big|\mathcal{F}_{i-1}\big]\le\tmu+\tnu$, and
    $\sum_{i=1}^n\Var\big[Y_i\big|\mathcal{F}_{i-1}\big]\le\tsigma^2$, and $0\le
    Y_i\le R$ for each $1\le i\le n$, then for each $\trho>0$ we have
    \[
      \Prob\left[\mathcal{\cE}\text{ and }\sum_{i=1}^{n}Y_{i}>\tmu+\tnu+\trho\right]\le\exp\Big(-\frac{\trho^{2}}{2\tsigma^2+2R\trho}\Big)\,.
    \]
\end{lemma}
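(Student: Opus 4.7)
The plan is to reduce to the classical (unconditional) Freedman inequality by constructing a modified martingale difference sequence $(\tilde Z_i)$ that agrees with the centred process $Y_i-\Exp[Y_i|\mathcal{F}_{i-1}]$ on the good event~$\cE$ but satisfies the Freedman hypotheses deterministically. Set $Z_i:=Y_i-\Exp[Y_i|\mathcal{F}_{i-1}]$ and $S_n:=\sum_{i=1}^n Z_i$, so that
\[\textstyle\sum_{i=1}^n Y_i \;=\; A_n + S_n,\qquad A_n\;:=\;\sum_{i=1}^n\Exp[Y_i|\mathcal{F}_{i-1}].\]
The decomposition is useful because $A_n$ is controlled by the hypothesis on~$\cE$, so any Freedman-type tail bound for $S_n$ will lift to one for $\sum_i Y_i$ on~$\cE$.

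For the tail bound on~$S_n$, introduce the cumulative variance $B_i:=\sum_{j\le i}\Var[Y_j|\mathcal{F}_{j-1}]$, which is $\mathcal{F}_{i-1}$-measurable since each summand is. Define a predictable stopping time~$\tau$ that triggers at the first step where an $\mathcal{F}_{i-1}$-observable certificate for the failure of either the variance bound $B_i\le\tsigma^2$ or the pointwise bound $Y_{i-1}\le R$ is observed, with $\tau:=n+1$ otherwise. The event $\{i\le\tau\}$ is then $\mathcal{F}_{i-1}$-measurable, and $\tilde Z_i:=Z_i\ONE_{i\le\tau}$ is a martingale difference sequence for which $\sum_i\Var[\tilde Z_i|\mathcal{F}_{i-1}]\le\tsigma^2$ and (after a small one-step shift in the stopping criterion) $|\tilde Z_i|\le R$ hold deterministically. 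Applying classical Freedman (two-sided in the first part, one-sided in the second) to $(\tilde Z_i)$ then yields
\[\Prob\Big[\big|\textstyle\sum_{i=1}^n\tilde Z_i\big|>\trho\Big]\;\le\;2\exp\!\Big(-\frac{\trho^2}{2\tsigma^2+2R\trho}\Big)\]
and its one-sided analogue.

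On~$\cE$ the hypotheses force $B_n\le\tsigma^2$ and $Y_i\le R$ for every~$i$, so $\tau=n+1$ and no truncation takes place; thus $\sum_i\tilde Z_i=S_n$ on~$\cE$. Combined with $A_n=\tmu\pm\tnu$ on~$\cE$, this gives $\sum_i Y_i=\tmu\pm(\tnu+S_n)$ on~$\cE$, so $\cE\cap\{\sum_i Y_i\neq\tmu\pm(\tnu+\trho)\}\subseteq\{|\sum_i\tilde Z_i|>\trho\}$ and the first conclusion follows on taking probabilities. The one-sided version is obtained identically, using $A_n\le\tmu+\tnu$ on~$\cE$ together with the one-sided Freedman bound.

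The main technical point is arranging that the truncation~$\tau$ is simultaneously (i)~predictable, so that the stopped sequence is a genuine martingale difference sequence, and (ii)~strong enough to force the deterministic pointwise bound $|\tilde Z_i|\le R$; this is the standard truncation idiom behind Freedman-on-a-good-event statements and is carried out, for instance, in~\cite{DegPack}.
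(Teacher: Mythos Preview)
The paper does not actually prove this lemma: it is stated as a consequence of Freedman's inequality \emph{derived in~\cite{DegPack}}, and no proof is given here. So there is no in-paper argument to compare against; your sketch is being measured against the standard derivation, which is indeed the truncation argument you outline.

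Your approach is the right one and matches the usual proof. A couple of remarks on the point you flag as ``the main technical point''. The cleanest way to arrange the bounded-increment condition is not to shift the stopping criterion by one step (which, as you implicitly notice, leaves $Y_\tau$ uncontrolled) but rather to truncate the variables themselves: set $\hat Y_i:=\min(Y_i,R)$ and then stop predictably on the variance condition alone. One then checks that $\Var[\hat Y_i\mid\mathcal{F}_{i-1}]\le\Var[Y_i\mid\mathcal{F}_{i-1}]$ (this holds because $\min(\cdot,R)$ and $(\cdot-R)_+$ are both nondecreasing, hence nonnegatively correlated, and $Y_i$ is their sum), so the stopped, truncated process has deterministically bounded increments and cumulative conditional variance, and classical Freedman applies. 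On $\cE$ one has $\hat Y_i=Y_i$ for all $i$ and no stopping occurs, so the conclusion transfers exactly as you describe. This avoids the slight awkwardness of your ``one-step shift'', which as written does not quite guarantee $|\tilde Z_i|\le R$ at the step where the violation first occurs.
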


A special case is the following corollary.

\begin{corollary}\label{cor:freedm}
  Let $\Omega$ be a finite probability space, and $(\mathcal{F}_0,
  \mathcal{F}_{1},\dots,\mathcal{F}_{n})$ be a filtration. Suppose that we have
  $R>0$, and for each $1\le i\le n$ we have an $\mathcal{F}_i$-measurable
  non-negative random variable $Y_{i}$, nonnegative real numbers $\tmu,\tnu$ and
  an event $\cE$.
  \begin{enumerate}[label=\abc]
  \item\label{cor:freedm:uppertail} Suppose that either $\cE$ does not occur or
    we have $\sum_{i=1}^{n}\Exp\big[Y_{i}\big|\mathcal{F}_{i-1}\big]\le\tmu$,
    and $0\le Y_i\le R$ for each $1\le i\le n$. Then
    \begin{equation*}
      \Prob\left[\mathcal{\cE}\text{ and
        }\sum_{i=1}^{n}Y_{i} >2\tmu\right] \le\exp\Big(-\frac{\tmu}{4R}\Big)\,.
    \end{equation*}
  \item\label{cor:freedm:tails} Suppose that either $\cE$ does not occur or we
    have $\sum_{i=1}^{n}\Exp\big[Y_{i}\big|\mathcal{F}_{i-1}\big]=\tmu\pm\tnu$,
    and $0\le Y_i\le R$ for each $1\le i\le n$. Then for each $\trho>0$ we have
    \begin{equation*}
      \Prob\left[\mathcal{\cE}\text{ and
        }\sum_{i=1}^{n}Y_{i} \neq\tmu\pm(\tnu+\trho)\right] \le2\exp\Big(-\frac{\trho^2}{2R(\tmu+\tnu+\trho)}\Big)\,.
    \end{equation*}
    In particular, if $\tnu=\trho=\tmu\teta>0$ and $\teta\le\frac12$, then
    \begin{equation*}
      \Prob\left[\mathcal{\cE}\text{ and
        }\sum_{i=1}^{n}Y_{i} \neq\tmu(1\pm 2\teta)\right] \le2\exp\Big(-\frac{\tmu\teta^2}{4R}\Big)\,.
    \end{equation*}
  \end{enumerate}
\end{corollary}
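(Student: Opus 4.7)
The plan is to deduce both parts from Lemma~\ref{lem:freedman} by exploiting the standard bound $\Var[Y_i\mid\mathcal{F}_{i-1}]\le \Exp[Y_i^2\mid\mathcal{F}_{i-1}]\le R\cdot\Exp[Y_i\mid\mathcal{F}_{i-1}]$, which holds because $0\le Y_i\le R$. Summing over $i$, on the good event $\cE$ we obtain $\sum_{i=1}^n\Var[Y_i\mid\mathcal{F}_{i-1}]\le R\cdot(\tmu+\tnu)$ in the setting of~\ref{cor:freedm:tails} (and $\le R\tmu$ in the setting of~\ref{cor:freedm:uppertail}). So the hypotheses of Lemma~\ref{lem:freedman} are satisfied with $\tsigma^2=R(\tmu+\tnu)$ (respectively $R\tmu$), and it only remains to substitute suitable values of $\trho$.

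For part~\ref{cor:freedm:uppertail}, I apply the one-sided (``furthermore'') half of Lemma~\ref{lem:freedman} with the given $\tmu$, with $\tnu=0$, and with $\tsigma^2=R\tmu$. Choosing $\trho=\tmu$ in that statement yields
\[
  \Prob\Big[\cE\text{ and }\sum_{i=1}^n Y_i>2\tmu\Big]\le \exp\Big(-\frac{\tmu^2}{2R\tmu+2R\tmu}\Big)=\exp\Big(-\frac{\tmu}{4R}\Big),
\]
which is exactly the claimed bound.

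For part~\ref{cor:freedm:tails}, I apply the two-sided half of Lemma~\ref{lem:freedman} with $\tsigma^2=R(\tmu+\tnu)$. For any $\trho>0$ this gives
\[
  \Prob\Big[\cE\text{ and }\sum_{i=1}^n Y_i\ne\tmu\pm(\tnu+\trho)\Big]\le 2\exp\Big(-\frac{\trho^2}{2R(\tmu+\tnu)+2R\trho}\Big),
\]
which rearranges to the advertised form. For the special case $\tnu=\trho=\tmu\teta$ with $\teta\le\tfrac12$, substituting produces the exponent $-\tmu\teta^2/\bigl(2R(1+2\teta)\bigr)$, and the assumption $\teta\le\tfrac12$ gives $1+2\teta\le 2$, so the exponent is at most $-\tmu\teta^2/(4R)$, as required.

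There is essentially no obstacle here: the whole content of the corollary is the variance-from-mean reduction $\Var\le R\cdot\Exp$ for bounded non-negative variables, plus a cosmetic choice of parameters in Lemma~\ref{lem:freedman}. The only mildly delicate point is being careful that in part~\ref{cor:freedm:tails} one uses the upper bound $\tmu+\tnu$ (not $\tmu$) on the conditional expectations when bounding $\tsigma^2$, which is what produces the $\tmu+\tnu+\trho$ in the denominator of the final exponent.
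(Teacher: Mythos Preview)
Your proof is correct and follows exactly the same approach as the paper: both reduce to Lemma~\ref{lem:freedman} via the variance bound $\Var[Y_i\mid\mathcal{F}_{i-1}]\le R\cdot\Exp[Y_i\mid\mathcal{F}_{i-1}]$, yielding $\tsigma^2=R(\tmu+\tnu)$, and then set $\tnu=0$, $\trho=\tmu$ for part~\ref{cor:freedm:uppertail}. Your treatment of the special case in~\ref{cor:freedm:tails} is also the same computation the paper implicitly relies on.
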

\begin{proof}
Both parts follow from Lemma~\ref{lem:freedman} with
$\tsigma^2=R(\tmu+\tnu)$; for
the first part we also set $\tnu=0$ and
$\trho=\tmu$. Observe that
\[\Var\big[Y_i\big|\mathcal{F}_{i-1}\big]\le\Exp\big[Y_i^2\big|\mathcal{F}_{i-1}\big]\le R\cdot \Exp\big[Y_i\big|\mathcal{F}_{i-1}\big]\,,\]
so that
\[\sum_{i=1}^n\Var\big[Y_i\big|\mathcal{F}_{i-1}\big]\le R\sum_{i=1}^n\Exp\big[Y_i\big|\mathcal{F}_{i-1}\big]\le R(\tmu+\tnu)\]
when $\cE$ holds, justifying the choice of $\tsigma^2$.
\end{proof}

We conclude this subsection by proving maximum degree and leaf statistics for random labelled trees.

\begin{proposition}\label{prop:tree} Let $T_n$ be a tree chosen uniformly at random from the set of $n$-vertex labelled trees. Then
\begin{enumerate}[label=(\roman*)]
 \item\label{prop:tree:i} With probability at most $\exp\big(-\tfrac{n}{500}\big)$ the number of leaves in $T_n$ is less than $\tfrac{n}{100}$.
 \item\label{prop:tree:ii} Given $c>0$, if $n$ is sufficiently large then with probability at most $e^{-cn/2}$ there is a vertex in $T_n$ with degree greater than $\tfrac{cn}{\log n}$.
\end{enumerate}
\end{proposition}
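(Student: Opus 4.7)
\medskip

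\noindent\textbf{Proof plan.}
The natural approach is to pass to Pr\"ufer sequences. Recall that $n$-vertex labelled trees are in bijection with sequences $(a_1,\dots,a_{n-2})\in[n]^{n-2}$, and under this bijection the degree of vertex~$v$ in the tree equals one plus the number of times $v$ appears in the sequence. So if $T_n$ is uniform, we may take $a_1,\dots,a_{n-2}$ to be i.i.d.\ uniform on~$[n]$, and both statistics (number of leaves and maximum degree) become functions of these independent coordinates.

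For part~\ref{prop:tree:ii}, the approach is the most direct: for each fixed $v\in[n]$, the number of times $v$ appears in the Pr\"ufer sequence is $\mathrm{Bin}(n-2,1/n)$, so
\[
\Prob\big[\deg_{T_n}(v)>\tfrac{cn}{\log n}\big]
\;\le\;\binom{n-2}{k}\Big(\frac1n\Big)^k
\;\le\;\frac{1}{k!}
\qquad\text{where }k=\lceil\tfrac{cn}{\log n}-1\rceil.
\]
Using $k!\ge(k/e)^k$ and taking logarithms shows $1/k!\le\exp\big(-(1-o(1))cn\big)$, and a union bound over the $n$ choices of $v$ absorbs the factor~$n$ to give the bound $e^{-cn/2}$ for all sufficiently large~$n$.

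For part~\ref{prop:tree:i}, let $L$ be the number of leaves of~$T_n$. A vertex is a leaf iff it does not appear in the Pr\"ufer sequence, so
\[
\Exp L \;=\; n\Big(1-\tfrac1n\Big)^{n-2} \;\ge\; \tfrac{n}{e}\big(1-o(1)\big)
\;\ge\; \tfrac{n}{3}
\]
for~$n$ large. Since $L$ is a function of the $n-2$ independent uniform coordinates and changing a single coordinate changes $L$ by at most~$2$ (one candidate leaf may appear, another may disappear), the standard bounded-differences inequality (Azuma--Hoeffding applied to the Doob martingale) gives
\[
\Prob\big[L<\Exp L - t\big]\;\le\;\exp\Big(-\tfrac{t^2}{2\cdot 4(n-2)}\Big).
\]
Choosing $t=\Exp L-\tfrac{n}{100}\ge\tfrac{n}{4}$ yields $\Prob[L<\tfrac{n}{100}]\le\exp(-n/500)$, as required. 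For the small values of $n$ not covered by the large-$n$ estimate of $\Exp L$, one checks the bound directly (or notes that the stated inequality is trivial when $\exp(-n/500)\ge1$, which we may arrange by enlarging the constant in the exponent if necessary).

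The only mildly delicate point is the constant in part~\ref{prop:tree:i}: one must ensure that the Azuma deviation $t$ is a definite constant fraction of $n$, which forces an explicit (easy) lower bound on $(1-1/n)^{n-2}$ that is safely above $1/100$; once this is in place, everything else is a routine calculation with the Pr\"ufer bijection.
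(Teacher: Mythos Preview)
Your argument is correct. For part~\ref{prop:tree:ii} you do essentially what the paper does: translate via Pr\"ufer sequences, bound the probability that a fixed label appears $k\approx cn/\log n$ times by $\binom{n-2}{k}n^{-k}$, and union-bound over labels. Your route through $1/k!$ and Stirling is a minor stylistic variant of the paper's direct estimate $\binom{n-2}{k}n^{-k}\le(\log n/c)^{k}$.

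For part~\ref{prop:tree:i} the approaches genuinely differ. The paper does \emph{not} use a concentration inequality for the leaf count~$L$ directly; instead it argues in two phases: either the first $n/2$ terms of the code already use fewer than $n/4$ labels (forcing many leaves), or those $\ge n/4$ labels are likely to be repeated often in the remaining $n/2-2$ terms, and a straight Chernoff bound on these independent Bernoulli repeats gives the exponential tail. This lets the paper stay entirely within Theorem~\ref{thm:chernoff}. Your approach---observe that $L$ is a $2$-Lipschitz function of the $n-2$ independent coordinates and apply the bounded-differences (McDiarmid/Azuma) inequality---is more direct and conceptually cleaner, at the cost of invoking a tool the paper does not itself state.

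One small cleanup: your hedge about ``small values of $n$'' is unnecessary and the parenthetical is off (the bound $\exp(-n/500)\ge 1$ never holds). In fact $(1-1/n)^{n-2}\ge(1-1/n)^{n-1}\ge e^{-1}$ for every $n\ge2$, so $\Exp L\ge n/e>n/3$ holds uniformly, $t\ge n/e-n/100>n/4$ is always available, and your Azuma bound gives $\exp(-n/128)\le\exp(-n/500)$ for all $n\ge2$ without any case distinction.
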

\begin{proof}
 To prove both probability bounds, we use the well-known Pr\"ufer code bijection between labelled $n$-vertex trees and sequences of $n-2$ vertex labels.
 
 For~\ref{prop:tree:i}, we note that a vertex is a leaf if and only if its label does not appear in the corresponding Pr\"ufer code, and hence the Pr\"ufer code of a tree with less than $\tfrac{n}{100}$ leaves has at least $\tfrac{49n}{50}$ distinct labels. Consider generating the first $\tfrac{n}{2}$ terms of a Pr\"ufer code. If there are less than $\tfrac{n}{4}$ distinct labels, then the full code has less than $\tfrac{3n}{4}$ distinct labels and hence corresponds to a tree with at least $\tfrac{n}{100}$ leaves. Otherwise, there are at least $\tfrac{n}{4}$ distinct labels among the first $\tfrac{n}{2}$ terms. We now count the number of times that these labels are used in the subsequent $\tfrac{n}{2}-2$ terms. Each term is chosen uniformly at random from the set of all $n$ vertex labels, hence has probability at least $\tfrac14$ of repeating a label used in the first $\tfrac{n}{2}$ terms. Thus the expected number of repeated labels is at least $\tfrac14(\tfrac{n}{2}-2)\ge\tfrac{n}{16}$. By the Chernoff bound, Theorem~\ref{thm:chernoff}, with $\delta=\tfrac12$ the probability that less than $\tfrac{n}{32}$ repeated labels occur is at most $\exp\big(-\tfrac{n}{500}\big)$.
 
 For~\ref{prop:tree:ii}, we note that a vertex has degree equal to one plus the number of its appearances in the Pr\"ufer code. Thus a vertex has degree exceeding $\tfrac{cn}{\log n}$ only if its label appears $\tfrac{cn}{\log n}$ times in the Pr\"ufer code. For a given vertex label and choice of $\tfrac{cn}{\log n}$ terms of the Pr\"ufer code, the probability that each of the chosen terms is equal to the given label is $n^{-\tfrac{cn}{\log n}}=e^{-cn}$. Taking the union bound over the choices of vertex label and terms of the code, the probability that some vertex label appears at least $\tfrac{cn}{\log n}$ times is at most
 \[n\cdot\binom{n-2}{\tfrac{cn}{\log n}}\cdot e^{-cn}\le n\cdot\big(\tfrac{n\log n}{cn}\big)^{\tfrac{cn}{\log n}}e^{-cn}=\exp\big(\log n+\tfrac{cn\log(c^{-1}\log n)}{\log n}-cn\big)\le e^{-cn/2}\,,\]
 where the final inequality is valid for all sufficiently large $n$.
\end{proof}

We should point out that much more precise statistics are known; we give these rough and simple bounds for completeness.

\subsection{Degenerate graphs}

It is easy to show that degenerate graphs contain large independent sets all of whose vertices have the same degree.

\begin{lemma}[Lemma~8 of~\cite{DegPack}]\label{lem:degindept}
  Let $G$ be a $D$-degenerate $n$-vertex graph. Then there exists an integer
  $0\le d\le 2D$ and a set $I\subseteq V(G)$ with $|I|\ge (2D+1)^{-3}n$ which is
  independent, and all of whose vertices have the same degree $d$ in $G$.
\end{lemma}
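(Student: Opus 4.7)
\textbf{Proof plan for Lemma~\ref{lem:degindept}.}
My plan is to combine a standard degeneracy-implies-low-average-degree estimate, a Markov-style count to locate many low-degree vertices, a pigeonhole to make them share one common degree, and finally a greedy independent set argument on the induced subgraph.

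First, I would use the fact that a $D$-degenerate graph satisfies $e(G)\le Dn$ (just peel off vertices of degree at most $D$ in the degeneracy order). Hence $\sum_{v\in V(G)}\deg_G(v)=2e(G)\le 2Dn$. A Markov-type count then shows that the number of vertices of degree at least $2D+1$ is at most $\tfrac{2Dn}{2D+1}$, so at least
\[
n-\tfrac{2Dn}{2D+1}=\tfrac{n}{2D+1}
\]
vertices have degree in $\{0,1,\dots,2D\}$. Pigeonholing over these $2D+1$ possible degree values yields an integer $0\le d\le 2D$ and a set $S\subseteq V(G)$ with $|S|\ge\tfrac{n}{(2D+1)^2}$, every vertex of which has degree exactly $d$ in $G$.

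Now every vertex of the induced subgraph $G[S]$ has degree at most $d\le 2D$ in $G[S]$, so $G[S]$ has maximum degree at most $d$. A trivial greedy algorithm (repeatedly take a vertex into the independent set and delete its at most $d$ neighbours) produces an independent set $I\subseteq S$ with
\[
|I|\ge\tfrac{|S|}{d+1}\ge\tfrac{|S|}{2D+1}\ge\tfrac{n}{(2D+1)^3}\,.
\]
Since $I\subseteq S$, all vertices of $I$ have the same degree $d$ in $G$, which is the required set.

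There is no real obstacle here: the argument is purely elementary counting and greedy selection, and each of the three losses (Markov, pigeonhole, greedy) contributes exactly one factor of $2D+1$, matching the $(2D+1)^{-3}$ in the statement. The only point of care is to start from the correct edge bound $e(G)\le Dn$ rather than the weaker $e(G)\le Dn-\binom{D+1}{2}$, but this does not affect the final constants.
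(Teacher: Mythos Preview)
Your proof is correct and is exactly the standard argument one would expect: the three factors of $2D+1$ arise cleanly from the Markov bound, the pigeonhole, and the greedy independent set. Note that the present paper does not actually prove this lemma but merely quotes it from~\cite{DegPack}, so there is no in-paper proof to compare against; your argument is the natural one and almost certainly coincides with the original.
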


In~\cite{DegPack} this was used to show that one can modify a degeneracy order
slightly to move such an independent set to the end of the order while not
increasing the degeneracy by much. We repeat the straightforward argument here
for completeness.

\begin{lemma}
  \label{lem:reorder}
  Let $G$ be a $D$-degenerate $n$-vertex graph. Then there
  exists an integer
  $0\le d\le 2D$
  and a $2D$-degenerate order of~$V(G)$ such that the last $\lceil(2D+1)^{-3}n\rceil$
  vertices in this order form an independent set and all have degree~$d$.
\end{lemma}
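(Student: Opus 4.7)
The plan is to combine Lemma~\ref{lem:degindept} with the observation that appending an independent set to a degeneracy order only inflates left-degrees for the appended vertices, whose left-degree in the new order equals their full $G$-degree.

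First I would apply Lemma~\ref{lem:degindept} to $G$ to obtain an integer $0\le d\le 2D$ and an independent set $I\subset V(G)$ with $|I|\ge (2D+1)^{-3}n$ such that every vertex of $I$ has degree exactly $d$ in $G$. Then I would consider the induced subgraph $G-I$; since $G$ is $D$-degenerate and this property is closed under taking subgraphs, $G-I$ admits a $D$-degenerate order $u_1,u_2,\dots,u_{n-|I|}$. I then define the target order of $V(G)$ by concatenating this with an arbitrary ordering of $I$, i.e.\ $u_1,\dots,u_{n-|I|}$ followed by the vertices of~$I$ in some order.

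Next I would verify that this concatenated order is $2D$-degenerate. For each $u_i$ with $i\le n-|I|$, the left-neighbourhood in the new order is identical to its left-neighbourhood in the chosen $D$-degenerate order of $G-I$ (since all vertices of~$I$ appear strictly later), so its left-degree is at most $D\le 2D$. For each vertex $v\in I$, every earlier vertex lies either in $V(G)\setminus I$ or in $I$, but $I$ is independent, so the left-neighbours of $v$ are precisely $N_G(v)\cap\bigl(V(G)\setminus I\bigr)=N_G(v)$, which has size exactly $d\le 2D$. Hence the order is $2D$-degenerate.

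Finally, since $|I|\ge (2D+1)^{-3}n\ge \lceil(2D+1)^{-3}n\rceil$ (if one takes $\lceil\cdot\rceil$ as usual when $(2D+1)^{-3}n$ is not an integer, one just needs $|I|\ge$ this value, which holds), the last $\lceil(2D+1)^{-3}n\rceil$ vertices of the order form a subset of $I$, so they are independent and every one has degree $d$ in $G$, as required. There is essentially no obstacle: the only point that needs a moment of care is the left-degree bound for vertices of $I$, and this is immediate from independence.
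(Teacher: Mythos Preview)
Your proof is correct and follows essentially the same approach as the paper: apply Lemma~\ref{lem:degindept} to extract the independent set $I$ of constant-degree vertices, and place $I$ at the end of a degeneracy order. The only cosmetic difference is that the paper starts from a $D$-degenerate order of all of $G$ and moves $I$ to the end (which can only decrease left-degrees of non-$I$ vertices), whereas you take a $D$-degenerate order of $G-I$ directly and append $I$; both yield the same $2D$ bound for the same reason.
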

\begin{proof}
  By Lemma~\ref{lem:degindept} there is an independent set~$I$ in~$G$ of $\lceil
  (2D+1)^{-3}n\rceil$ vertices, each of which has degree~$d$. Now pick a
  $D$-degenerate order of~$G$ and then modify this order by moving all vertices
  of~$I$ to the end (in an arbitrary order). Since all vertices in~$I$ have
  degree $d\le 2D$ the resulting order is $2D$-degenerate.
\end{proof}
  
Further, we shall use the following auxiliary lemma, which given an arbitrary family of
graphs we want to pack produces a family with at most $\frac32 n$ members and
the same bound on maximum degree and degeneracy by combining graphs with many
isolated vertices in the family. Obtaining such a family with at most $\frac32
n$ members needs some argument; while obtaining a family with at most $2n$
members instead is straightforward. In~\cite{DegPack} we only used the latter, and
the reason why we use the smaller family here is that it allows us to
stay consistent with the constants used in~\cite{DegPack}. More precisely, the
constant $\alpha_{2n}$ that will be defined in~\eqref{eq:defconsts} is not small
enough for our analysis here, while $\alpha_{7n/4}$ is small enough.

\begin{lemma}[compression lemma]
  \label{lem:compress}
  Let $(G_i)_{i\in[m]}$ be a family of $D$-degenerate graphs
  with maximum degree at most~$\Delta$, with $\sum_{i=1}^{m}
  e(G_i)\le\binom{n}{2}$ and $v(G_i)\le n$ for all $i\in[m]$. Then there is a
  family of graphs $(\check G_i)_{i\in[\check m]}$ with $\check m\le\frac32 n$
  such that $\sum_{i=1}^{\check m} e(\check G_i)\le\binom{n}{2}$, such that for
  each $i\in[\check m]$ we have $v(\check G_i)\le n$, $\Delta(\check
  G_i)\le\max\{2,\Delta\}$, and~$\check G_i$ is $\max\{2,D\}$-degenerate, and
  such that $(\check G_i)$ is a packing of $(G_i)$.
\end{lemma}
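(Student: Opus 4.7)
The plan is to treat this as a constrained bin-packing problem. For each $i\in[m]$ let $a_i$ denote the number of non-isolated vertices of $G_i$; then
\[
\sum_{i\in[m]} a_i\le 2\sum_{i\in[m]} e(G_i)\le n(n-1),
\]
since each non-isolated vertex contributes at least one to $\sum_v\deg_{G_i}(v)=2e(G_i)$. Isolated vertices can always be added back freely to pad any $\check G_j$ up to $n$ vertices, so the real task is to partition $[m]$ into groups $P_1,\dots,P_{\check m}$ so that for each $j$ the graphs $\{G_i : i\in P_j\}$ can be embedded edge-disjointly into a common graph $\check G_j$ on at most $n$ vertices with maximum degree at most $\max\{2,\Delta\}$ and degeneracy at most $\max\{2,D\}$.

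First I would consider the obvious strategy: take $\check G_j$ to be the vertex-disjoint union of its group (padded with isolated vertices). This requires $\sum_{i\in P_j}a_i\le n$, an ordinary bin-packing problem with items of size $\le n$ and total size $\le n(n-1)$. Using, say, first-fit-decreasing one readily gets $\check m\le 2n$, which is the weaker bound that the authors say is straightforward but is not enough for our purposes.

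To reach $\check m\le\tfrac32 n$ I would exploit the slack in the $\max\{2,\cdot\}$ in the conclusion: when two graphs are combined into a common $\check G_j$, one may overlap a \emph{leaf} of one with a leaf (or isolated vertex) of the other, because the merged vertex then has degree at most $2$, which is compatible with both $\max\{2,\Delta\}$ and with $\max\{2,D\}$-degeneracy. Therefore only the vertices of $G_i$ of degree at least $2$ need to be placed on pairwise disjoint positions inside $\check G_j$. Writing $b_i$ for the number of such vertices of $G_i$ and $\ell_i$ for its leaf count, the inequality $2e(G_i)\ge\ell_i+2b_i$ gives $b_i\le e(G_i)$, hence $\sum_i b_i\le\binom{n}{2}$, and in particular any $G_i$ with $b_i>n/2$ has more than $n/2$ edges, so there are fewer than $n-1$ such graphs.

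The main obstacle is to combine the two layers consistently: one has to simultaneously bin-pack the ``cores'' $b_i$ into groups with $\sum_{i\in P_j}b_i\le n$ (which by the edge budget and the fact that items of size $>n/2$ are rare can be arranged in at most $\tfrac32n$ bins by a careful first-fit-decreasing argument), and then also place the overlapping leaves of the grouped $G_i$ within the residual degree budget $\max\{2,\Delta\}$ at each vertex of $\check G_j$, while maintaining $\max\{2,D\}$-degeneracy of the resulting graph. Controlling this second step uses that an overlapped leaf-pair contributes degree exactly $2$ and that the total number of leaves across a group is controlled again by $\sum e(G_i)\le\binom{n}{2}$; checking the bookkeeping for the few bins that must accommodate large cores together with small or matching-like graphs is where the real work lies.
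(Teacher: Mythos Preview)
Your key observation is exactly the right one: the $\max\{2,\cdot\}$ in the conclusion is there precisely so that a leaf of one $G_i$ may share a vertex with a leaf (or an isolated vertex) of another. This is also what the paper exploits. But your execution via a two-layer bin-packing is both incomplete and, as stated, not sound. The grouping criterion $\sum_{i\in P_j}b_i\le n$ on cores alone does \emph{not} guarantee that the group can be realised inside a single $\check G_j$: take three perfect matchings on $n$ vertices. Then each $b_i=0$, so your criterion puts all three in one bin, yet any common host on $n$ vertices would have every vertex of degree $3>\max\{2,1\}$. So the step you defer as ``where the real work lies'' is not bookkeeping at all; it must feed back into how the groups $P_j$ are formed, and you have not said how.

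The paper avoids this entirely by never trying to pack more than two graphs at once, and by choosing a very concrete combinability threshold. Call $G$ \emph{sparse} if it has at most $\tfrac{2}{3}n$ non-isolated vertices and at most $\tfrac{1}{3}n$ vertices of degree $\ge 2$. Whenever two sparse graphs remain, combine them: partition each into parts $A,B,C$ of size about $n/3$ with $A$ isolated and $B$ of degree $\le 1$; map $C'$ onto $A$ and $A'$ onto $C$, and on the middle third pack the two matchings $G[B]$, $G'[B']$. This produces a graph with $\Delta\le\max\{2,\Delta\}$ and degeneracy $\le\max\{2,D\}$. Iterate until at most one sparse graph is left. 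Now every other surviving graph has either more than $\tfrac{2}{3}n$ non-isolated vertices or more than $\tfrac{1}{3}n$ vertices of degree $\ge 2$; in either case it has at least $\tfrac{n}{3}$ edges. Since the total edge budget is $\binom{n}{2}$, there are at most $1+\binom{n}{2}/(n/3)\le\tfrac{3}{2}n$ graphs. No bin-packing analysis is needed, and the ``second layer'' problem never arises because the explicit $A,B,C$ construction handles cores and leaves in one stroke.
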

\begin{proof}
  Given the family $(G_i)$, repeatedly perform the following operation,
  packing two members of the family into one graph.
  If the current family contains two graphs $G$, $G'$ which have at most
  $\frac23 n$ vertices of degree at least~$1$ and at most $\frac13 n$
  vertices of degree at least~$2$ then pack~$G$ and~$G'$ into a graph~$G''$
  as follows and then remove $G$, $G'$ from the family and add~$G''$ instead.

  To define an embedding $\phi$ of~$G'$ into $\bar G$, let $A,B,C\subset
  V(G)$ be a partition of $V(G)$ into sets of size either $\lfloor\tfrac{n}{3}\rfloor$ or $\lceil\tfrac{n}{3}\rceil$, such that $|A|=|C|$, and such that 
  $\deg_{G}(x)=0$ for all $x\in A$ and $\deg_{G}(x)\le 1$ for all $x\in B$,
  which is possible by our assumptions on~$G$.
  Analogously, let $A',B',C'\subset
  V(G')$ be  a partition of $V(G')$ into sets of size either $\lfloor\tfrac{n}{3}\rfloor$ or $\lceil\tfrac{n}{3}\rceil$, such that $|A'|=|C'|$ and such that
  $\deg_{G'}(x)=0$ for all $x\in A'$ and $\deg_{G'}(x)\le 1$ for all $x\in B'$.
  We construct $\phi$ by first finding
  a packing of $G[B]$ and $G'[B']$ (which is easy since these two graphs
  are matchings and $|B|\ge3$) and then extending this by arbitrarily mapping~$A'$ to~$C$
  and~$C'$ to~$A$. Note that by construction $|A|=|A'|=|C|=|C'|$. Clearly, this indeed gives a packing of~$G$ and~$G'$
  since vertices in~$A$ have degree~$0$ in~$G$ and vertices in~$A'$ have
  degree~$0$ in~$G'$. Moreover, $\Delta(G'')\le\max\{2,\Delta\}$, and~$G''$ is
  $\max\{2,D\}$-degenerate by construction.

  We stop combining graphs in this way when at most one graph with at most 
  $\frac23 n$ vertices of degree at least~$1$ and at most $\frac13 n$
  vertices of degree at least~$2$ remains; we call the resulting family $(\check
  G_i)_{i\in[\check m]}$.
  In this family, all graphs~$\check G_i$ but possibly one graph satisfy at least
  one of the following conditions:
  \begin{itemize}
    \item $\check G_i$ has more than $\frac23 n$ vertices of degree at least~$1$,
    \item $\check G_i$ has more than $\frac13 n$ vertices of degree at least~$2$.
  \end{itemize}
  In either case $e(\check G_i)\ge \frac13 n$, and therefore we conclude from
  $\sum_{i=1}^{\check m} e(\check G_i)=\sum_{i=1}^{m} e(G_i)\le\binom{n}{2}$ that
  \[\check m\le 1+\frac{\binom{n}{2}}{\frac13 n}\le\frac32 n\,.\]
\end{proof}

\section{Main technical theorem and the packing algorithm}\label{sec:alg}

In this section we detail our packing algorithm, introduce the definitions
necessary for this algorithm, and outline the proof of why this algorithm succeeds.
We deduce Theorem~\ref{thm:main} from the following technical result.

\begin{theorem}\label{thm:maintech}
  For every $D$ and $\mu,\hat p_0>0$ with $\mu\le\frac14$ there are~$n_0$ and $\xi,c>0$ such that for
  every $\hat{p}\ge \hat p_0$ and every $n\ge n_0$ the following holds. Suppose that~$\widehat H$ is a
  $(\xi,2D+3)$-quasirandom graph with $n$ vertices and density $\hat{p}$.
  Suppose that $s^{*}\le \frac74 n$ and that the graph sequence $(G_{s})_{s\in[s^*]}$
  is a $D$-degenerate $(\mu,n)$-graph sequence with maximum degree
  $\Delta\le\tfrac{cn}{\log n}$, such that for each $s\in[s^*]$ there is a
  $D$-degenerate order of $G_s$ such that
  the last $\lceil(D+1)^{-3}n\rceil$ vertices
  form an independent set in $G_{s}$, and all have the same
  degree $d_s$ in $G_s$. Suppose further that
  $\sum_{s\in[s^*]}e(G_s)=e(\widehat{H})$. Then $\left(G_{s}\right)_{s\in
    [s^{*}]}$ packs into~$\widehat H$.
\end{theorem}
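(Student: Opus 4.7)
The plan is to execute the three-phase randomised strategy sketched in the introduction, deducing Theorem~\ref{thm:maintech} by combining the three main lemmas promised for Section~\ref{sec:lemmas}: the almost perfect packing lemma, the orientation lemma, and the matching lemma.

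First, for each of the $\lfloor\mu n\rfloor$ special graphs $G_s$ I would select a set $L_s$ of its leaves to defer, which is possible since $G_s$ has at least $\lfloor\mu n\rfloor$ leaves. Write $\tilde G_s := G_s - L_s$; the sizes $|L_s|$ are chosen so that the number $\sum_s |L_s|$ of edges associated to deferred leaves equals the number of edges of $\widehat H$ that will remain unused after the next phase, and more importantly so that at every vertex $v$ the expected number of leaves whose parent is embedded to $v$ is about half of the expected leftover degree at $v$. I would then apply the almost perfect packing lemma to \PackingProcess{} run on the modified sequence (the non-special $G_s$ together with the reduced $\tilde G_s$). This gives, with high probability, a partial packing $\phi$ of these graphs into $\widehat H$ whose leftover graph $H$ (the edges of $\widehat H$ not used by $\phi$) is quasirandom-like per vertex, and for which $\deg_H(v) \approx 2 d(v)$ at every~$v$, where $d(v)$ denotes the number of deferred leaves dangling at $v$. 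The hypothesis that each $G_s$ has a $D$-degenerate order ending in a $d_s$-regular independent set is used exactly here, to keep the step-by-step analysis of \PackingProcess{} tractable and to preserve sufficient pseudorandomness of $H$.

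Next, I would apply the orientation lemma: starting from a uniformly random orientation of $H$, in which each out-degree is concentrated near $\deg_H(v)/2$, the small remaining discrepancies are corrected by reversing short directed paths (of length two), producing an orientation $\vec H$ in which $\deg^+_{\vec H}(v) = d(v)$ at every vertex. Finally I would apply the matching lemma to $\vec H$: processing the vertices of $H$ in some order, at each vertex $v$ one samples a uniformly random bijection from the deferred leaves dangling at $v$ to $N^+_{\vec H}(v)$, subject to the validity constraint that no leaf of $G_s$ is sent to a vertex already lying in the image $\phi(G_s)$. The lemma guarantees that, with positive probability, a valid bijection exists at every vertex; gluing these bijections to $\phi$ yields the desired perfect packing of $(G_s)_{s\in[s^*]}$ into $\widehat H$.

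The principal obstacle is the matching lemma. In this final step there is no slack whatsoever over which edges are used, as they are already forced by $\vec H$; the only freedom left is in the bijections themselves, and early choices heavily constrain later ones, so one cannot hope for fresh randomness near the end. The key will be to maintain inductively that the number of valid bijections available at each vertex remains exponentially large throughout the process, so that enough entropy survives for the martingale concentration from Lemma~\ref{lem:freedman} to control the evolving dangling-leaf and neighbourhood statistics all the way to the last vertex. Phases~1 and~2 are engineered precisely to create the initial margin that makes this induction feasible.
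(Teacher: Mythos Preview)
Your proposal is correct and follows essentially the same three-phase route as the paper: apply the almost perfect packing lemma (Lemma~\ref{lem:appl}) to the subgraph sequence with $\lfloor\nu n\rfloor$ leaves removed from each special graph, then the orientation lemma (Lemma~\ref{lem:orient}), then iterate the matching lemma (Lemma~\ref{lem:match}) through \MatchLeaves{}.

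One framing difference worth noting: in your final paragraph you phrase the key invariant as ``the number of valid bijections remains exponentially large'', but the paper's actual mechanism is more concrete. The matching lemma is stated for a bipartite graph satisfying a degree--codegree condition (equivalent, via Lemma~\ref{lem:DLR}, to super-regularity), and its conclusion is that a uniformly random perfect matching uses any fixed edge with probability at most $\tfrac{2}{\mu m}$. This quantitative edge-use bound, not an entropy count, is what gets fed into Corollary~\ref{cor:freedm} to show that the degree condition~\ref{lem:match:3} is maintained for every later $F_r^{(r-1)}$. The induction is therefore on the degree--codegree properties of the leaf matching graphs, with the concentration running over the sequence of random matchings $\sigma_1,\dots,\sigma_{r-1}$ using~\ref{appl:4}, \ref{appl:degN} and~\ref{appl:sumweight} from Lemma~\ref{lem:appl} to bound the expected damage per round.
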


Before sketching the proof of Theorem~\ref{thm:maintech}, we show that it
implies Theorem~\ref{thm:main}.
 
\begin{proof}[Proof of Theorem~\ref{thm:main}]
  Given~$D$, $\mu$, $\hat p_0$, let~$n'_0$, $\xi$, $c$ be as given by
  Theorem~\ref{thm:maintech} for input $D'=2\max\{2,D\}$,
  $\mu'=\min\{\mu,\frac14\}$, and $\hat p_0$. Choose $n_0=\max\{n_0',10c^{-2}\}$.
  Next,
  let~$\hat p$ and~$n$ as well as the graphs $\widehat{H}$ and $(G_i)_{i\in[m]}$ be
  given.

  Now we first add new graphs~$G_i$ with $i>m$ consisting of single edges to our
  graph sequence until $\sum e(G_i)=e(\widehat{H})$. Assume that the resulting
  sequence has $m'$ graphs and reorder the sequence so that the $\lfloor \mu n
  \rfloor$ special graphs come last.
  In a second step we apply the compression lemma, Lemma~\ref{lem:compress}, to
  the non-special graphs $(G_i)_{i\in[m'-\lfloor {\mu' n} \rfloor]}$ to obtain a
  family $(\check G_i)_{i\in[\check m]}$ with $\check m\le\frac32$ that is a
  packing of $(G_i)_{i\in[m']}$. In a third
  step, we add
  the remaining special graphs to this compressed family, that is, for $1\le i\le\lfloor
  {\mu' n} \rfloor$ we let $\check G_{\check m+i}=G_{m'-\lfloor {\mu'
      n}\rfloor+i}$. We obtain a family $(\check G_s)_{s\in[s^*]}$ of
  $\max\{2,D\}$-degenerate graphs with maximum degree at most $cn/\log n$, where
  $s^*\le\frac32 n+\lfloor
  {\mu' n} \rfloor\le\frac74 n$. In a fourth step, we apply
  Lemma~\ref{lem:reorder} to obtain a $D'$-degenerate order of each $\check G_s$
  such that the last $\lceil (D+1)^{-3}n \rceil \geq \lceil (D'+1)^{-3}n \rceil$ vertices
  form an independent set in $\check G_{s}$, and all have the same
  degree $d_s$ in $\check G_s$. Hence the family $(\check G_s)_{s\in[s^*]}$ satisfies
  all conditions required by Theorem~\ref{thm:maintech} with the above chosen
  constants. Since $\widehat{H}$ is $(\xi,4D+7)$-quasirandom, it is also $(\xi,2D'+3)$-quasirandom as required for Theorem~\ref{thm:maintech}. Applying this theorem, we obtain a perfect packing of  $(\check G)_{s\in[s^*]}$ into
  $\widehat H$, which gives a packing of $(G_i)_{i\in[m]}$ since $(\check G)_{s\in[s^*]}$
  is a packing of $(G_i)_{i\in[m]}$ (plus possibly some additional edges).
\end{proof}

We now sketch the proof of Theorem~\ref{thm:maintech}. We start by creating an almost perfect packing, which omits linearly many leaves
in the linearly many special graphs~$G_s$, by packing only the following
subgraph sequence omitting $\ell=\lfloor\nu n\rfloor$ leaves.

\begin{definition}[corresponding subgraph sequence]
  For a $D$-degenerate $(\mu,n)$-graph sequence $(G_s)_{s\in[s^*]}$ with
  maximum degree $\Delta$, we say that $(G'_s)_{s\in[s^*]}$ is a
  \emph{corresponding subgraph sequence omitting $\ell$ leaves} if
  \begin{enumerate}[label=\itmarabp{G}]
    \item\label{def:subgraph:1} for each $s\le s^*-\lfloor\mu n\rfloor$ we have $G'_s=G_s$, and
    \item\label{def:subgraph:2} for each $s>s^*-\lfloor\mu n\rfloor$ we have $G'_s=G_s-V_s+I_s$
      for an independent set $V_s$ of leaves in~$G_s$ with $|V_s|=\ell$, and a set
      $I_s$ of new and independent vertices with $|I_s|=\ell$.
  \end{enumerate}
\end{definition}

We remark that the addition of the independent set~$I_s$
in~\ref{def:subgraph:2} is purely for technical reasons: it guarantees that
the special~$G'_s$ have $n-\lfloor\mu n\rfloor$ vertices, which makes the
statement of some of our later lemmas easier (in particular Lemma~\ref{lem:appl}). The restriction that the set $V_s$ is independent simply says that if there is a component of $G_s$ which contains exactly one edge, both endpoints are leaves but only one may be in $V_s$.

The subgraph sequence $(G'_s)$ is packed with the help of the following
\PackingProcess, which uses an algorithm \RandomEmbedding{} that we shall
describe thereafter. This \PackingProcess{} was introduced and analysed
in~\cite{DegPack}, and it requires $n$-vertex graphs with a degeneracy ordering
whose last vertices form an independent set as input. To that end, for each
$s\in[s^*]$ with $s>s^*-\lfloor \mu n \rfloor$, we do the following. We let $I'_s$ be a set of $n-v(G'_s)$ new
isolated vertices and we obtain $G''_s$ by adding $I'_s$ to $G'_s$. Each
non-special graph $G'_s$, with $s\le s^*-\lfloor\mu n\rfloor$, already has~$n$
vertices and so we simply set $G''_s=G'_s$. For the special graphs $G'_s$, with
$s>s^*-\lfloor\mu n\rfloor$, we fix a $D$-degenerate order of $G''_s$ such that
the $\lfloor\mu n\rfloor>\delta n$ isolated vertices in $I'_s$ come last. We
then relabel vertices, so that again $V(G''_s)=[n]$ and the fixed $D$-degenerate
order is the natural order on $[n]$.

\begin{algorithm}[ht]
    \caption{\PackingProcess{}}\label{alg:pack}
    \SetKwInOut{Input}{Input}
    \SetKwInOut{Output}{Output}
    \Input{graphs $G''_1,\dots,G''_{s^*}$, with $G''_s$ on vertex set $[n]$
      such that  the last $\delta n$ vertices of~$G''_s$ form an independent set; a graph $\widehat H$ on $n$ vertices}
    \Output{a packing $(\phi^*_s)_{s\in[s^*]}$ of $(G''_s)_{s\in[s^*]}$ into
      $\widehat H$ and a left-over graph $H$}
    choose $H^*_0$ by picking edges of $\widehat H$ independently with probability $\gamma\binom{n}{2}/e(\widehat{H})$ \;
    let $H_0=\widehat H-H^*_0$ \;
    \For{$s=1$ \KwTo $s^*$}{
      run \RandomEmbedding($G''_s$,$H_{s-1}$) to get
      an embedding $\phi''_s$ of~$G''_s[{\scriptstyle [n-\delta n]}]$ into~$H_{s-1}$\;   
      let $H_{s}$ be the graph obtained from $H_{s-1}$ by removing the
      edges of $\phi''_{s}\big(G''_s[{\scriptstyle [n-\delta n]}]\big)$\;
      choose an arbitrary extension $\phi^*_s$ of $\phi''_s$ embedding all of $G''_s$ and embedding the edges of $G''_s-G''_s[{\scriptstyle [n-\delta n]}]$ into $H^*_{s-1}$ \;
      let $H^*_s$ be the graph obtained from $H^*_{s-1}$ by removing the edges of $\phi^*_s\big(G''_s-G''_s[{\scriptstyle [n-\delta n]}]\big)$ \;
    }    
    \Return $(\phi^*_s)_{s\in[s^*]}$ and $H=H_{s^*}+H^*_{s^*}$
\end{algorithm}

\medskip

For describing \RandomEmbedding{} we need the following definitions. We shall use the symbol $\AlgMap$ to denote embeddings produced by \RandomEmbedding{}. We write $G\AlgMap H$ to indicate that the graph $G$ is to be embedded into $H$. Also, if $t\in V(G)$, $v\in V(H)$ and $A\subset V(H)$ then $t\AlgMap v$ means that $t$ is embedded on $v$, and $t\AlgMap A$ means that $t$ is embedded on a vertex of $A$.

\begin{definition}[partial embedding, candidate set]
  Let~$G$ be a graph with vertex set $[v(G)]$, and~$H$ be a graph with
  $v(H)\ge v(G)$.
  Further, assume $\psi_{j}\colon[j]\rightarrow V(H)$ is a \emph{partial embedding}
  of $G$ into~$H$ for $j\in[v(G)]$, that is, $\psi_j$ is a graph embedding
  of $G\big[[j]\big]$ into~$H$. Finally, let $t\in[v(G)]$ be such that $\LNBH_G(t)\subset[j]$.
  Then the \emph{candidate set of $t$} (with respect to~$\psi_j$) is the
  common neighbourhood in~$H$ of the already embedded neighbours of~$t$,
  that is,
  \[\CANDSET_{G\AlgMap H}^{j}(t)=N_{H}\Big(\psi_{j}\big(\LNBH_{G}(t)\big)\Big)\,.\]
\end{definition}

\RandomEmbedding{} (see Algorithm~\ref{alg:embed}) randomly embeds most of a guest graph~$G$ into a host
graph~$H$. The algorithm is simple: we
iteratively embed the first $(1-\delta)n$ vertices of~$G$ randomly to one
of the vertices of their candidate set which was not used for embedding
another vertex already.

  \begin{algorithm}[ht]
    \caption{\RandomEmbedding{}}\label{alg:embed}
    \SetKwInOut{Input}{Input}
    \SetKwInOut{Output}{Output}
    \Input{graphs~$G$ and~$H$, with $V(G)=[v(G)]$ and $v(H)=n$}
    \Output{an embedding $\psi_{t^*}$ of~$G\big[[n-\delta n]\big]$ into~$H$}
    $\psi_0:=\emptyset$\;
    $t^*:=(1-\delta)n$\;
    \For{$t=1$ \KwTo $t^*$}{
      \lIf{$\CANDSET_{G\AlgMap H}^{t-1}(t)\setminus\im(\psi_{t-1})=\emptyset$}{
        halt with failure}
      choose $v\in\CANDSET_{G\AlgMap
        H}^{t-1}(t)\setminus\im(\psi_{t-1})$ uniformly at random\;
      $\psi_{t}:=\psi_{t-1}\cup\{t\AlgMap v\}$\;
    }
    \Return $\psi_{t^*}$
  \end{algorithm}

If successful, \PackingProcess{} returns the packing
$(\phi^*_s)_{s\in[s^*]}$ of $(G''_s)_{s\in[s^*]}$ and a leftover graph
$H$. For each $s\in[s^*]$, we obtain an embedding $\phi'_s$ of $G'_s$ into
$\widehat{H}$ from the embedding $\phi^*_s$ of $G''_s$ into $\widehat{H}$
by ignoring the vertices of $G''_s$ which are not in $G'_s$. Recall that
all these vertices are isolated vertices. It follows that the
$(\phi'_s)_{s\in[s^*]}$ give a packing of $(G'_s)_{s\in[s^*]}$ into
$\widehat{H}$ which leaves unused exactly the edges of $H$. 
In~\cite{DegPack} it was shown that \PackingProcess{} is indeed a.a.s.\
successful.
We shall use the techniques developed there
to show in Lemma~\ref{lem:appl} that moreover \PackingProcess{} 
returns a packing of $(G'_s)_{s\in[s^*]}$ and a leftover graph with suitable properties for the
following steps.
  
It remains to pack all the leaves
we omitted from $(G_s)_{s\in[s^*]}$. For this we shall proceed vertex by vertex of the remaining
host graph~$H$, and when
considering $r\in V(H)$ we shall randomly embed all leaves \emph{dangling at $r$}, that is, the leaves of all guest
graphs such that the neighbour of the leaf is already embedded to~$r$.
For describing this process in more detail, we will need the following
definitions.
  
  \begin{definition}[weights]
    Let $(G_s)_{s\in[s^*]}$ be a $(\mu,n)$-graph sequence, and $(G'_s)_{s\in[s^*]}$ be a
    corresponding subgraph sequence, $H$ be an $n$-vertex graph, and
    $\phi'_s\colon V(G'_s)\to V(H)$ be an injection for each $s\in[s^*]$. For
    $s^*-\lfloor\mu n\rfloor<s\le s^*$ we define for each $x\in V(G_s)$ the weight 
    \[ w_s(x)=\big|\{y\in N_{G_s}(x)\colon y \text{ is a leaf of~$G_s$ in~$G_s-G'_s$}\}\big|\,,\]
    and for each $v\in V(H)$ the weight
    \[ w_s(v)=w_s\big({\phi'}_s^{-1}(v)\big)\,.\]
    Further, for each $v\in V(H)$ we define
    \[ w(v)=\sum_{s^*-\lfloor\mu n\rfloor<s\le s^*}w_s(v)\,.\]
  \end{definition}
  Note that since each set $V_s$ of omitted leaves is an independent set in $G_s$, the weight of an omitted leaf is $0$. Thus the entire weight of $G_s$ (which is $\ell$, the number of omitted leaves) is on the vertices in $G'_s$ embedded by $\phi'_s$.
  We next choose an orientation $\oH$ of $H$ such that $N^+_{\oH}(r)=w(r)$
  for each $r\in V(H)$. We shall show in Lemma~\ref{lem:orient} that we can
  choose an orientation with this property which is moreover random-like
  (in the sense that it suitably inherits the properties guaranteed by Lemma~\ref{lem:appl}).
  The idea now is to embed the remaining leaves dangling at~$r$ by  using only edges directed away from~$r$.
  We define the following auxiliary graphs, which encode the ways in which
  we can embed the dangling leaves.
  
  \begin{definition}[leaf matching graphs]\label{def:leafmatch}
   Given $r\in V(\oH)$, we define the \emph{leaves at $r$} to be the set
   \[L_r:=\Big\{x\colon \exists s\text{ such that } x\in V(G_s)\setminus
     V(G'_s) \text{ and } x{\phi'}^{-1}_s(r)\in E(G_s)\Big\}\]
   Let the \emph{leaf matching graph} $F_r$ be the bipartite graph with parts $L_r$ and $N^+_{\oH}(r)$, and edges $xu$ with $x\in L_r$ and $u\in N^+_{\oH}(r)$
   whenever $u\not\in\im\phi'_s$ for the~$s$ such that $x\in V(G'_s)$.
  \end{definition}

  Observe that a perfect matching in $F_r$ defines an assignment of the
  leaves at (all preimages of) $r$ to $N^+_{\oH}(r)$ which extends the packing of $(G'_s)_{s\in[s^*]}$. We will see that each $F_r$ is a graph whose parts have size roughly $\tfrac12pn$ and whose density is roughly $\mu$. If we simply chose a perfect matching in each $F_r$ to embed all the leaves $\bigcup_rL_r$, then we would almost have a perfect packing---each edge of $K_n$ would be used exactly once---but it could be the case that multiple leaves of some $G_s$ (not in the same $L_r$) are embedded to a single $u\in V(H)$. To avoid this, we find perfect matchings in each $F_r$ one at a time and update the leaf matching graphs by removing edges which are no longer useable. In order that not too many edges are removed from any one vertex in any $F_r$, we choose perfect matchings uniformly at random. Making this precise, assume $V(\oH)=\{1,\dots,n\}$, and set $F_r^{(0)}:=F_r$ for each $r\in V(\oH)$. We use the following algorithm.
  \begin{algorithm}[ht]
    \caption{\MatchLeaves{}}\label{alg:MatchLeaves}
    \SetKwInOut{Input}{Input}
    \SetKwInOut{Output}{Output}
    \Input{a
      $(\mu,n)$-graph sequence $(G_s)_{s\in[s^*]}$, a corresponding subgraph
    sequence $(G'_s)_{s\in[s^*]}$ omitting $\lfloor \nu n \rfloor$ leaves, and
    associated leaf matching graphs $F_1^{(0)},\dots,F_n^{(0)}$}
    \Output{matchings $(\sigma_r)_{r\in[n]}$ of the omitted leaves to feasible
      image vertices as given by the leaf matching graphs}
    \For{$r=1$ \KwTo $n$}{
     let $\sigma_r$ be a uniform random perfect matching in $F_{r}^{(r-1)}$ \;
     \For{$k=r+1$ \KwTo $n$}{
     let $B_k:=\big\{xu\in E(F_{k}^{(r-1)})\colon
      \exists s\text{ such that } x\in V(G'_s) \text{ and } \sigma_r^{-1}(u)\in V(G_s)\big\}$\;
      let $F_{k}^{(r)}:=F_{k}^{(r-1)}-B_k$ \;
     }
    }
   \Return $(\sigma_r)_{r\in[n]}$ \;
  \end{algorithm}
  
  We shall show that, throughout, the graphs $F_{k}^{(r)}$ satisfy a certain
  degree-codegree condition. We shall show in
  Lemma~\ref{lem:match} that under this degree-codegree condition we can
  find a perfect matching in $F_r^{(r-1)}$. Further, the same lemma asserts that a perfect matching $\sigma_r$ chosen
  uniformly at random in $F_r^{(r-1)}$ uses edges almost uniformly, which is
  important for maintaining the degree-codegree condition.

  We will then, for each $s\in[s^*]$ and each $x\in V(G_s)$ set
  \begin{equation}\label{eq:phi}
    \phi_s(x)=\begin{cases} \phi'_s(x) & \text{ if }x\in\dom(\phi'_s)\\
      \sigma_{r}(x) & \text{ if }x\in L_{r}\,.
   \end{cases}
 \end{equation}
 This is a perfect packing of $(G_s)_{s\in[s^*]}$ into $\widehat{H}$ since
 $(\phi'_s)_{s\in[s^*]}$ is a packing of the subgraph sequence
 $(G'_s)_{s\in[s^*]}$ into $\widehat{H}$ and we chose matchings in the leaf
 matching graphs $F_r^{(r-1)}$ to embed the remaining leaves and updated
 the subsequent leaf matching graphs accordingly.

Summing up, our packing algorithm proceeds as described in Algorithm~\ref{alg:perfect}.

  \begin{algorithm}[ht]
   \caption{\PerfectPacking{}}\label{alg:perfect}
    \SetKwInOut{Input}{Input}
    \SetKwInOut{Output}{Output}
    \SetKw{KwForEach}{for each}
    \Input{graphs $G_1,\dots,G_{s^*}$ that form a $(\mu,n)$-graph sequence
      such that  the last $(D+1)^{-3}n$ vertices of~$G_s$ form an independent set; a graph $\widehat H$ on $n$ vertices}
    \Output{A packing $(\phi_s)_{s\in[s^*]}$ of $(G_s)_{s\in[s^*]}$
      into~$\widehat H$}
    let $(G'_s)_{s\in[s^*]}$ be a subgraph sequence corresponding to
    $(G_s)_{s\in[s^*]}$ omitting $\lfloor \nu n \rfloor$ leaves\;
    \For{$s=s^*-\lfloor \mu n \rfloor+1$ \KwTo $s^*$}{
      let~$I'_s$ be a set of $n-v(G'_s)$ (new) isolated vertices\;
      $G''_s:=G'_s+I'_s$, where we place~$I'_s$ at the end of the degeneracy
      order\;
    }
    \lForEach{$s\in[s^*]$}{ assume that $V(G''_s)=[n]$, with the natural degeneracy
    order}
    run \PackingProcess{} to obtain embeddings $(\phi^*_s)_{s\in[s^*]}$ of
    $(G_s'')_{s\in[s^*]}$ into $\widehat H$ with leftover~$H$\;
    obtain embeddings $(\phi'_s)_{s\in[s^*]}$ of $(G_s')_{s\in[s^*]}$ into
    $\widehat H$ from $(\phi^*_s)_{s\in[s^*]}$ by ignoring the $I'_s$\;
    construct a random-like orientation $\oH$ of~$H$ with
    $N^+_{\oH}(r)=w(r)$ for all $r\in V(\oH)$\;
    \lForEach{$r\in V(\oH)$}{
      let $F_r^{(0)}$ be the leaf matching graph $F_r$}
    run \MatchLeaves{} to obtain embeddings $(\sigma_r)_{r\in[n]}$ of the
    leaves at~$r$\;
    \lFor{$s=1$ \KwTo $s^*$ and \KwForEach $x\in V(G_s)$}{set $\phi_s(x)$
      as in~\eqref{eq:phi}}
    \Return $(\phi_s)_{s\in[s^*]}$\;
  \end{algorithm}

\subsection{Graphs and maps used in the algorithm}

As described above, a number of different (auxiliary) graphs and maps are used in
our packing procedure. For the convenience of the reader we collect these in the
following table.

\begin{description}[leftmargin=1cm]
\item[$G_s$] are the given $n$-vertex guest graphs, forming a $D$-degenerate
  $(\mu,n)$-graph sequence, whose last $\lfloor(D+1)^{-3}n\rfloor$ vertices form
  an independent set.
\item[$G'_s$] is in the subgraph sequence corresponding to $(G_s)_{s\in[s^*]}$
  omitting $\lfloor \nu n \rfloor$ leaves; the special $G'_s$ have $n-\lfloor
  \mu n \rfloor$ vertices, the others~$n$.
\item[$G''_s$] is obtained from $G'_s$ by adding isolated vertices to the end of
  the degeneracy order until we have~$n$ vertices.
\item[$\widehat H$] is the given $n$-vertex $(\xi,2D+3)$-quasirandom host graph.
\item[$H_{s-1}$] is the part of $\widehat H$ used by \RandomEmbedding{} to embed $G''_s\big[[n-\delta
  n]\big]$.
\item[$H_{s-1}^*$] is the part of $\widehat H$ used in \PackingProcess{} to
  complete the embedding of $G''_s$.
\item[$H$] is the leftover host graph after running \PackingProcess{}.
\item[$\oH$] is a random-like orientation of~$H$ with as many outgoing edges for
  each vertex~$r$ as there are leaves dangling at~$r$.
\item[$F_k^{(r)}$] is what remains of the leaf matching graph~$F_k$ after round~$r$ of \MatchLeaves{}. 
\item[$\phi''_s$] embeds $G''_s\big[[n-\delta
  n]\big]$ into $H_{s-1}$ and is constructed by \RandomEmbedding{}.
  
\item[$\phi_s^*$] is an extension of $\phi''_s$, embedding $G''_s$ into
  $H_{s-1}\cup H^*_{s-1}$ constructed in \PackingProcess{}.
\item[$\phi'_s$] is an embedding of $G'_s$ into $\widehat H$ obtained from
  $\phi_s^*$ by ignoring the added isolated vertices.
\item[$\phi_s$] is an embedding of $G_s$ into $\widehat H$ obtained from
  $\phi'_s$ and the $\sigma_r$ in \PerfectPacking{}.
\item[$\psi_t$] is the partial embedding obtained in round~$t$ of \RandomEmbedding{}.
\item[$\sigma_r$] is the matching in the leaf matching graph $F_r^{(r-1)}$ found
  by \MatchLeaves{}.
\end{description}
  
\section{Constants}
\label{sec:const}

In this section we set values for the various constants we need throughout our proofs (including
those used in the algorithms above), which are the following.

\begin{description}
\item[$\alpha_x$] is the error in the quasirandomness of~$H_x$.
\item[$\alpha$] is a quasirandomness error used in auxiliary lemmas; we
  always assume $\alpha_0\le\alpha\le\alpha_{2n}$.
\item[$\beta_t$] is the error in the diet-condition (see
  Definition~\ref{def:dietcover}) for round~$t$ of \RandomEmbedding{}.
\item[$c$] is the constant in the maximum degree bound of the~$G_s$.
\item[$C$] appears in the error term for the probability of embedding a fixed
  vertex of~$G''_s$ on a fixed vertex of~$H_{s-1}$.
\item[$C'$] appears in the error term for the fraction of vertices of certain
  sets that get covered by embedding one graph~$G''_s$
\item[$D$] is the degeneracy bound of the guest graphs~$G_s$.
\item[$n_0$] is the lower bound on the number~$n$ of vertices.
\item[$\hat{p}$] is the density of the host graph $\widehat{H}$.  
\item[$\hat{p}_0$] is the lower bound on~$\hat{p}$.
\item[$p$] is the density of the leftover host graph~$H$ after running 
  \PackingProcess{} to embed the subgraph sequence.
\item[$p_s$] is the density of the graph~$H_s$.
\item[$\delta$] is the proportion of vertices in $G''_s$ formed by the
  independent set at the end of the degeneracy order as required by \PackingProcess{}.
\item[$\eps$] gives the length~$\eps n$ of intervals in $V(G_s)$ used in
  the cover condition (see Definition~\ref{def:dietcover}); it also appears in
  the error term of the cover condition.
\item[$\eta$] is the error in the quasirandomness of $H_0^*$.
\item[$\gamma$] is the proportion of host graph edges used by
  \PackingProcess{} to complete almost spanning embeddings to spanning embeddings.
\item[$\gamma'$] determines the error bound in our analysis here of \PackingProcess{}.
\item[$\mu$] specifies the fraction of special $G_s$, how far they are from spanning and how many leaves they have.
\item[$\nu$] specifies the fraction of leaves omitted in the subgraph sequence.
\item[$\xi$] is the error in the quasirandomness of $\widehat{H}$.
\end{description}

The constants $D$, $\hat p$ and $\mu$ are provided as input to our main
technical theorem; the other constants are chosen to satisfy
\begin{equation*}
  0\ll\frac{1}{n_0},c\ll\eps\le\xi\ll\alpha_0\le\alpha_{2n}\le\frac{1}{C'}\ll\frac{1}{C}\ll\delta\ll\eta
  \ll\gamma\ll \gamma'\ll \nu\ll \mu,\hat p_0,\frac{1}{D} \,.
\end{equation*}
Here $a\ll b$ means that we choose~$a$ sufficiently small in terms of~$b$, so
that our calculations work. In other words, there is a monotone increasing function
$f\colon\mathbb{R}_{>0}\to\mathbb{R}_{>0}$ with $f(b)\le b$ such that we choose $a=f(b)$.

For the constants $\nu$, $\gamma'$, $\gamma$, and $n_0$ we do not provide
explicit dependencies (mainly because Lemma~\ref{lem:KKOT}, which we take from
elsewhere, does not provide explicit dependencies), but merely state that we can choose them suitably with
the above relations.

The various host graph densities satisfy the following relations. We have $\hat
p\ge\hat p_0$.
Given~$n\ge n_0$, the density~$p$ is determined by 
\begin{equation}\label{eq:p}
  p=\lfloor\mu n\rfloor \lfloor\nu n\rfloor\binom{n}{2}^{-1}\,.
\end{equation}
Moreover, in our later proofs we will assume that
$e(H_0^*)=(1\pm \frac{1}{10})\gamma \binom{n}{2}$,
which can be seen to hold with probability 
larger than $1-e^{-n}$ by an application of Theorem~\ref{thm:chernoff}.
Then, since the density of~$\widehat H$ is~$\hat p$, the density of $H_0$ is
$p_0\ge\hat p-1.1\gamma$, and therefore, because \PackingProcess{} embeds
$\sum_{s\in[s^*]}e(G_s)-\lfloor \mu n \rfloor\lfloor \nu n \rfloor\le\hat p\binom{n}{2}-\lfloor \mu n \rfloor\lfloor \nu n \rfloor$
edges we have
\begin{equation}\label{eq:ps}
  p_s\ge\hat p-1.1\gamma-\frac{\hat p \binom{n}{2}-\lfloor \mu n \rfloor\lfloor  \nu n \rfloor}{\binom{n}{2}}
  \ge \nu\mu-1.1\gamma\ge\gamma\ \qquad\text{for all $s\in[s^*]$}\,.
\end{equation}

The remaining constants are only used in the proof of the most technically involved of our main lemmas,
Lemma~\ref{lem:appl}.
These are defined precisely in the same way as
in~\cite{DegPack} apart from~$C'$, which is added here.\footnote{The density of the
  host graph $\widehat H$ is denoted by~$p$ in~\cite{DegPack}; we
  denote it by~$\hat p$ here.}
This is important, because much of our proof of Lemma~\ref{lem:appl} builds on
tools developed in~\cite{DegPack}, and the relation of the constants involved is
somewhat more intricate.

\begin{setting}\label{set:const}
  Let $D,n\in\mathbb N$ and $\hat p,\gamma>0$ be given. We define
  \begin{equation}\label{eq:defconsts}\begin{split}
      \eta&=\frac{\gamma^D}{200D}\,,\quad
      \delta=\frac{\gamma^{10D}\eta}{10^{6}D^{4}}\,,\quad
      C=40D\exp\big(1000D\delta^{-2}\gamma^{-2D-10}\big)\,,\quad
      C'=10^4C\delta^{-1}\,, \\
      \alpha_x&=\frac{\delta}{10^8CD}\exp\Big(\frac{10^8CD^3\delta^{-1}(x-2n)}{n}\Big) \qquad\text{for each $x\in\mathbb{R}$},\\
      \eps&= \alpha_0\delta^2\gamma^{10D}/1000CD\,,\quad c= D^{-4}\eps^{4}/100\,\quad\text{and}\quad \xi=\alpha_0/100\,.
  \end{split}\end{equation}
  Moreover, given $\alpha>0$ we use the following constants
  $\beta_{t}(\alpha)$, which are
  chosen such that $\beta_{0}(\alpha)=\alpha$ and such that
  $\beta_{v(G)}(\alpha)/\beta_{0}(\alpha)$ is bounded by a constant which does not depend on
  $\alpha$ (though it does depend on $D$, $\gamma$ and $\delta$). We define
  \begin{equation}\label{eq:defnbeta}
    \beta_{t}(\alpha) =2\alpha\exp\left(\tfrac{1000D\delta^{-2}\gamma^{-2D-10}t}{n}\right)\;.
  \end{equation}
\end{setting}

\begin{remark}
  When using the constants $\alpha_x$, $\beta_t$, we will mainly take~$x$ and~$t$ integer in
  the range $[0,2n]$, but it is convenient to allow them to be any real
  number.

  Note that we call~$\alpha_x$ and~$\beta_t$ `constant' even
  though $n$ appears in their definition. 
  It is easy to check though that
  $\alpha_x$ is strictly increasing in $x$ and $\beta_t$ is strictly
  increasing in~$t$ and
  that neither $\alpha_0,\beta_0$ nor $\alpha_{2n},\beta_{2n}$ depends on
  $n$. Further, for each $t\ge 0$, we have
  \begin{equation}\label{eq:betabound}\begin{split}
      &\tfrac{1}{n}\int_{i=0}^t 1000D\delta^{-2}\gamma^{-2D-10}\beta_i \,\mathrm{d}i\\
      \le &2\alpha\int_{i=-\infty}^t
      \frac{1000D\delta^{-2}\gamma^{-2D-10}}{n}\exp\left(\tfrac{1000D\delta^{-2}\gamma^{-2D-10}i}{n}\right)
      \,\mathrm{d}i
      =\beta_t\,.
    \end{split}\end{equation}
\end{remark}

\section{Main lemmas}
\label{sec:lemmas}

In this section we collect the main lemmas we need for the proof of our main technical
theorem.
Our first lemma states that the randomised algorithm
\PackingProcess{} generates an almost perfect packing of the
corresponding subgraph sequence of our guest graphs such that this packing
and the leftover~$H$ of the host graph satisfy certain properties. We prove this
lemma in Section~\ref{sec:appl}. The fact that \PackingProcess{} produces a
packing of this type such that the leftover~$H$ is quasirandom is the main
result of~\cite{DegPack}.
Here, we need to establish additional properties for completing this to a
perfect packing.

\begin{lemma}[almost perfect packing lemma]\label{lem:appl}
  Assume $0\ll c\ll\xi\ll\delta\ll\gamma\ll\gamma'\ll\nu\ll\mu,\hat p,\frac1D$.
  Let $\widehat{H}$ be a $(\xi,2D+3)$-quasirandom graph with $n$ vertices and density $\hat{p}$.
  Let $s^*\le\frac74n$, let $(G_s)_{s\in[s^*]}$ be a $D$-degenerate $(\mu,n)$-graph sequence with
  maximum degree $\frac{cn}{\log n}$ and $\sum_{s\in[s^*]}e(G_s)=e(\widehat{H})$, and let $(G'_s)_{s\in[s^*]}$ be a
  corresponding subgraph sequence omitting $\lfloor \nu n \rfloor$ leaves.
  Then \PackingProcess\ (applied with constants~$\gamma$ and~$\delta$ to the graphs $(G''_s)_{s\in[s^*]}$
  obtained in \PerfectPacking{} from $(G'_s)_{s\in[s^*]}$ by adding isolated
  vertices)
  a.a.s.\ provides a packing $(\phi'_s)_{s\in[s^*]}$ of
  $(G'_s)_{s\in[s^*]}$ into~$\widehat{H}$ with leftover~$H$ such that for
  $p=\lfloor\mu n\rfloor \lfloor\nu n\rfloor\binom{n}{2}^{-1}$ we have
  \begin{enumerate}[label=\itmarab{P}]
  \item\label{appl:quasi} $H$ is $({\gamma'}^3,2D+3)$-quasirandom and has density~$p$,
  \end{enumerate}
  for all $v\in V(H)$ and $s^*-\lfloor\mu n\rfloor<s,s'\le s^*$ we have
  \begin{enumerate}[label=\itmarab{P},start=2]
    \item\label{appl:weight} $w(v)=(1\pm{\gamma'}^3)\frac{pn}{2}$,
    \item\label{appl:3} $\big|N_H(v)\setminus \im\phi'_s\big|=(1\pm{\gamma'}^3)\mu pn$,
    \item\label{appl:4} $\big|N_H(v)\setminus
      (\im\phi'_s\cup\im\phi'_{s'})\big|=(1\pm{\gamma'}^3)\mu^2pn$ if $s\neq s'$,
  \end{enumerate}
  for all $u,v\in V(H)$ with $u\neq v$ we have,
  \begin{enumerate}[label=\itmarab{P},start=5] 
    \item\label{appl:degN} $\sum_s w_s(v)\ONE_{u\not\in\im\phi'_s}=(1\pm{\gamma'}^3)\mu\frac{pn}{2}$,
  \end{enumerate}
  and for all $u\in V(H)$ and $s^*-\lfloor\mu n\rfloor<s\le s^*$ we have
  \begin{enumerate}[label=\itmarab{P},start=6]
    \item\label{appl:sumweight} If $u\not\in\im\phi'_s$ then $\sum_{v\colon vu\in E(H)} w_s(v)<\frac{10p^2 n}{\mu}$.
  \end{enumerate}
\end{lemma}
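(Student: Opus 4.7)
The plan is to build directly on the infrastructure of~\cite{DegPack}, which analyses \PackingProcess{} and establishes (P1) essentially as its main output. The crucial quantitative takeaways from that work---which I would carefully import and re-use---are (i) that \RandomEmbedding{} a.a.s.\ succeeds in every round, and (ii) tight estimates on the conditional distribution of the embedding in each round: for each~$s$, each $x \in V(G''_s)$, and each $v \in V(H_{s-1})$, the probability that $x$ is embedded to~$v$ is close to $1/n$ up to the error terms tracked by the $\beta_t$'s in Setting~\ref{set:const}. From these it follows that for distinct rounds $s \neq s'$ the events $\{v \in \im \phi'_s\}$ and $\{v \in \im \phi'_{s'}\}$ are approximately independent, since they depend on disjoint portions of the algorithm's randomness. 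Property~(P1) is then the main theorem of~\cite{DegPack} applied to the sequence $(G''_s)$.

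For (P3)--(P5) the strategy is uniform: write each quantity as a sum of bounded random variables, compute the expectation from the embedding-probability estimates, and apply Corollary~\ref{cor:freedm} on the good event that those estimates hold throughout the run. For (P3), $|N_H(v)\setminus \im \phi'_s|=\sum_{w\in N_H(v)}\ONE[w\not\in\im\phi'_s]$; each indicator has expectation $\mu \pm o(1)$ and $|N_H(v)|\approx pn$ by (P1), giving expected value $\mu p n$. For (P4) the analogous computation with two (approximately independent) indicators yields $\mu^2 pn$. For (P5), the product $w_s(v)\ONE[u\not\in\im\phi'_s]$ factorises approximately because the weight depends on the embedding choice at~$v$ whereas the indicator depends on choices at~$u$, and these are nearly independent within a single round; per special~$s$ the expectation is approximately $\mu\cdot\ell/n=\mu\nu$, summing to $\mu^2\nu n\approx \mu\cdot pn/2$ over the $\lfloor\mu n\rfloor$ special rounds.

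Property~(P2) follows from the same framework, noting $\sum_{v}w_s(v)=\ell$ exactly, so $\Exp[w(v)]\approx \mu n\cdot\ell/n\approx pn/2$, with concentration from Freedman. For~(P6) I would use the upper-tail version (Corollary~\ref{cor:freedm}\ref{cor:freedm:uppertail}): the expected value of $\sum_{v:\,vu\in E(H)} w_s(v)$ is at most roughly $\deg_H(u)\cdot(\ell/n)\approx p\nu n\approx 2\mu\nu^2 n$, which leaves a factor-$20$ gap to the stated bound $10p^2n/\mu\approx 40\mu\nu^2 n$, ample room for the concentration error. Taking a union bound over the $O(n^2)$ relevant tuples $(v,u,s,s')$ is then routine.

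The main obstacle is that individual weights $w_s(v)$ can be as large as the maximum degree $\Delta=cn/\log n$, which is much larger than their typical value of order $\nu$. This range rules out naive Chernoff bounds and forces us to rely on the Freedman-type inequality of Corollary~\ref{cor:freedm}, exploiting the identity $\Var[Y\mid \mathcal{F}]\le R\cdot\Exp[Y\mid\mathcal{F}]$ for $Y\in[0,R]$. Over $\mu n$ special rounds the conditional variance sum is of order $\Delta\cdot pn$, so Freedman gives failure probability $\exp\big(-\Omega(\gamma'^{6}pn/\Delta)\big)=\exp\big(-\Omega(\gamma'^{6}\log n/c)\big)$, which is super-polynomial in~$n$ because $c\ll\gamma'$ in Setting~\ref{set:const}, easily defeating the union bound. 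The technical bulk, therefore, consists in verifying that the embedding-probability estimates of~\cite{DegPack} can be invoked cleanly in this weighted setting and that the error terms $\beta_t$ and $\alpha_x$ propagate through the chain of applications of Freedman without inflation---this is why the constants $\alpha_x$ and $\beta_t$ were set up with the exponential dependence on~$t$ in Setting~\ref{set:const}.
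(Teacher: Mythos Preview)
Your architecture---import (P1) from~\cite{DegPack}, then handle (P2)--(P6) by expressing each quantity as a sum, estimating conditional expectations via the single-vertex embedding probabilities, and applying Corollary~\ref{cor:freedm}---is the paper's approach, and it works essentially as you describe for (P1), (P2) and (P5). Two points, however, are real gaps rather than details to be filled in.

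For (P3) and (P4), the decomposition $|N_H(v)\setminus\im\phi'_s|=\sum_{w\in N_H(v)}\ONE[w\not\in\im\phi'_s]$ is not a martingale sum: the index set $N_H(v)$ depends on the \emph{entire} run of \PackingProcess{} (it is the leftover neighbourhood after all $s^*$ rounds), so it is not measurable at the time the indicator is decided. The paper instead tracks $N_{H_i}(v)\setminus\im\phi'_s$ as $i$ runs from $s$ to $s^*$: one lemma shows that embedding $G''_s$ leaves a $(1\pm o(1))\mu$ fraction of the fixed set $N_{H_{s-1}}(v)$ uncovered, and a second lemma shows that subsequent rounds thin this set by the expected factor $p_i/p_s$. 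For (P4) one interleaves two applications of each.

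The serious gap is (P6). The quantity is for a \emph{fixed} $s$, and the natural martingale decomposition is over later rounds $i>s$, with increments $Y_i=\sum_{v\in N_{H_{i-1}}(u)\setminus N_{H_i}(u)}w_s(v)$, the $w_s$-weight of edges at $u$ consumed when embedding $G''_i$. Crucially, $Y_i$ is \emph{not} bounded by $\Delta$: a single round can use up to $\Delta$ edges at $u$, each to a vertex of $w_s$-weight up to $\Delta$, and the weight in $N_{H_s}(u)$ may be concentrated on very few vertices, so one $Y_i$ can swallow a constant fraction of the entire sum. Your ``main obstacle'' paragraph with $R=\Delta$ applies to (P2) and (P5) but not here; with the true range, Freedman gives only $\exp(-O(1))$. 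The paper resolves this with a capping argument: set $Z_i=\max\{Y_i-K'\Delta,0\}$ for a large constant $K'$, apply Freedman to the capped $Y_i-Z_i$, and separately show $\sum_i Z_i$ is negligible by proving that ``going near the cap'' in any round is exponentially unlikely in $K'$ and then bounding the post-cap weight across all rounds. This step is the technical heart of the proof and is missing from your plan.
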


Our second lemma states that there is an orientation of~$H$ suitable for
completing the perfect packing by embedding the leaves with the help of the
algorithm \MatchLeaves.
A \emph{random orientation} of a graph $H=(V,E)$ is an orientation of~$H$ in which the
orientation of each edge $\{ u,v\}\in
E$ is chosen independently and uniformly at random. We prove this lemma in Section~\ref{sec:orient}.

\begin{lemma}[orientation lemma]\label{lem:orient}
  Let~$H$ be a $({\gamma'}^3,2)$-quasirandom graph of density~$p$ with vertex weights $w\colon
  V(H)\to \mathbb{N}_0$ such that $w(v)=(1\pm{\gamma'}^3)\frac{pn}{2}$ for all
  $v\in V(H)$ and such that $\sum_{v\in V} w(v)=e(H)$. If $\vec{H}_0$ is a random orientation of~$H$, then a.a.s.\
    there is an orientation~$\vec{H}$ of~$H$ such that for all $v\in V(H)$
  \begin{enumerate}[label=\itmarab{O}]
    \item\label{orient:deg} $\deg^+_{\vec{H}}(v)=w(v)$, and
    \item\label{orient:diff} $\big|\{uv\in E(H)\colon uv \text{ is oriented differently in }
      \vec{H} \text{ and } \vec{H}_0\}\big|\le{\gamma'}^2 n$.
  \end{enumerate}
\end{lemma}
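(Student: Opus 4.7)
The plan is to start with the random orientation $\vec{H}_0$ and correct it into $\vec H$ by reversing a small family of directed 2-paths. The key observation is that reversing a 2-path $w\to v'\to u$ (to give $w\leftarrow v'\leftarrow u$) decreases $\deg^+(w)$ by one, increases $\deg^+(u)$ by one, leaves $\deg^+(v')$ unchanged, and flips exactly two edges of $H$. Thus a suitably chosen family of edge-disjoint 2-paths transfers units of out-degree between vertices with no unintended side effects, and the number of flipped edges is exactly twice the number of 2-paths used.

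First I would apply Theorem~\ref{thm:chernoff} and a union bound over $V(H)$: since $\deg^+_{\vec{H}_0}(v)$ is a sum of $\deg_H(v)=(1\pm{\gamma'}^3)pn$ independent Bernoulli$(1/2)$ variables, a.a.s.\ $|\deg^+_{\vec H_0}(v)-\tfrac12\deg_H(v)|\le\sqrt{pn\log n}$ for every $v$. Combined with the hypothesis $w(v)=(1\pm{\gamma'}^3)\tfrac{pn}{2}$, the excess $e(v):=\deg^+_{\vec H_0}(v)-w(v)$ then satisfies $|e(v)|=O({\gamma'}^3 pn)$ a.a.s., while $\sum_v e(v)=e(H)-\sum_v w(v)=0$ by hypothesis, so total surplus equals total deficit. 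Let $A=\{v:e(v)>0\}$ and $B=\{v:e(v)<0\}$; the task reduces to finding an edge-disjoint family of directed 2-paths in $\vec H_0$ containing $e(w)$ paths starting at each $w\in A$ and $|e(u)|$ paths ending at each $u\in B$.

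I would encode this as an integer max-flow problem: source $S$ with capacity-$e(w)$ arcs into $w\in A$, sink $T$ with capacity-$|e(u)|$ arcs out of $u\in B$, and internal capacity-one arcs following $\vec H_0$ (with intermediate vertices split in the standard way so that the two halves of each 2-path are coupled). For every pair $w\in A$, $u\in B$, the number of intermediate vertices $v'$ with $\vec{wv'},\vec{v'u}\in\vec H_0$ is a.a.s.\ $(\tfrac14\pm o(1))p^2 n$: the set $N_H(w)\cap N_H(u)$ has size $(1\pm{\gamma'}^3)p^2 n$ by $({\gamma'}^3,2)$-quasirandomness of $H$, and Chernoff applied to the independent random orientations of the relevant incident edges concentrates the proportion falling into $N^+_{\vec H_0}(w)\cap N^-_{\vec H_0}(u)$ around $1/4$. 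Since $|e(v)|\ll p^2 n$, the number of candidate 2-paths between any surplus-deficit pair dwarfs the required per-pair flow.

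The hardest step will be verifying the min-cut condition uniformly, to guarantee that the required integer flow exists. Using the density-of-2-paths estimate above, any cut that strangles the flow must hit most intermediate vertices between sizeable portions of $A$ and $B$; a pigeonhole argument combined with $({\gamma'}^3,2)$-quasirandomness then shows the cut capacity exceeds the total demand $\sum_{w\in A}e(w)$ by a wide margin, so no cut is tight. Once the integer flow is secured, reversing the corresponding family of edge-disjoint 2-paths yields the orientation $\vec H$ with $\deg^+_{\vec H}(v)=w(v)$ for all $v$, establishing~\ref{orient:deg}; the bound~\ref{orient:diff} then follows because the total number of reversed edges equals $2\sum_{w\in A}e(w)$, which is controlled by the concentration of $|e(v)|$ established in the second paragraph.
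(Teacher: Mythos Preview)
Your overall strategy---reversing directed 2-paths to shift units of out-degree---is exactly the mechanism the paper uses, and your Chernoff estimates for $\deg^+_{\vec H_0}(v)$ and for the number of $w\to v'\to u$ paths are the right starting point. However, there is a real gap in your treatment of~\ref{orient:diff}. That condition is a \emph{per-vertex} bound: for every $v\in V(H)$, the number of edges \emph{incident to $v$} that are flipped must be at most ${\gamma'}^2 n$. Your final paragraph only controls the \emph{total} number of reversed edges, $2\sum_{w\in A}e(w)=O({\gamma'}^3 p n^2)$, which says nothing about how many flips concentrate at a single vertex. A max-flow solution with only capacity-one edge constraints can route enormously many 2-paths through one popular middle vertex, flipping far more than ${\gamma'}^2 n$ edges there. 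To salvage the flow approach you would have to additionally cap the throughput of each intermediate vertex at something like $O({\gamma'}^3 n)$ via vertex-splitting; but then the min-cut verification---which you already flag as the hardest step and leave essentially unsketched---becomes genuinely more involved, since cuts may now mix source/sink arcs, edge arcs, and vertex-capacity arcs.

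The paper avoids all of this by using a greedy random procedure rather than a global flow. It repeatedly picks an excess vertex $x$, a deficit vertex $y$, and a \emph{uniformly random} $m\in N^+_{\vec H_i}(x)\cap N^-_{\vec H_i}(y)$ in the current orientation, then reverses the 2-path $x\to m\to y$. The random choice of $m$ spreads the load automatically: a short Chernoff-type argument shows a.a.s.\ no vertex is chosen as middle vertex more than $O({\gamma'}^3 n)$ times, and since each vertex is an endpoint of at most $|e(v)|=O({\gamma'}^3 pn)$ switchings, the per-vertex flip count stays below ${\gamma'}^2 n$. This yields~\ref{orient:deg} and~\ref{orient:diff} simultaneously, with no min-cut analysis needed.
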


Our last lemma states that if in a graph~$F$
satisfying a certain degree-codegree condition, we remove a few edges and
then choose a perfect matching uniformly at random, then each edge is
roughly equally likely to appear in the matching. In the proof of our main
theorem, we shall show that the leaf matching graphs $F_v$ satisfy these
conditions, and hence \MatchLeaves{} can find a perfect matching in~$F_v$,
using edges almost uniformly.

\begin{lemma}[matching lemma]
\label{lem:match}
  Assume $0\ll\frac{1}{m}\ll p\ll \mu\ll 1$. Let $F=F[U,W]$ be a bipartite
  graph with $|U|=|W|=(1\pm p) m$ such that
  \begin{enumerate}[label=\itmarab{M}]
    \item\label{lem:match:1} $\deg_{F}(x)=(1\pm p)\mu m$ for all $x\in U\cup W$, and
    \item\label{lem:match:2} $\deg_{F}(u,u')=(1\pm p)\mu^2 m$ for all but at most
      $\frac{m^2}{\log m}$ pairs $\{u,u'\}\in\binom{U}{2}$,
  \end{enumerate}
  and let $F'=F'[U,W]$ be a spanning subgraph of $F[U,W]$ such that
  \begin{enumerate}[label=\itmarab{M},start=3]
    \item\label{lem:match:3} $\deg_{F}(x)-\deg_{F'}(x)<\frac{100pm}{\mu^2}$ for all $x\in U\cup W$.
  \end{enumerate}
  Then $F'$ has a perfect matching and for a perfect matching $\sigma$ chosen uniformly at random
  among all perfect matchings in $F'$ and for all $uw\in E(F')$ we have
  \[\Prob[\sigma(u)=w]\le\frac{2}{\mu m}\,.\]
\end{lemma}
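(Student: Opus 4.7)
I would treat the two conclusions of the lemma separately: existence of a perfect matching in $F'$, and the edge-marginal bound for the uniform random perfect matching $\sigma$.

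For existence, I verify Hall's condition $|N_{F'}(S)|\ge|S|$ for every $S\subseteq U$. From (M1) and (M3), the minimum degree in $F'$ is at least $(1-p)\mu m-100pm/\mu^2\ge\mu m/2$, so Hall holds trivially for $|S|\le\mu m/2$; a symmetric argument (using that a vertex in $W\setminus N_{F'}(S)$ has all its $\ge\mu m/2$ neighbours in $U\setminus S$) handles $|S|\ge|U|-\mu m/2$. For medium-sized $S$, the codegree condition (M2) transferred to $F'$ via (M3) implies pseudo-randomness of $F'$, so an expander-mixing-lemma-type edge-counting argument yields $|N_{F'}(S)|\ge|S|$.

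For the edge-marginal bound, I express
\begin{equation*}
  \Prob[\sigma(u)=w]=\frac{\mathrm{pm}(F'-\{u,w\})}{\mathrm{pm}(F')}
\end{equation*}
and estimate both permanents. Sinkhorn-scaling the bipartite adjacency matrix $A$ of $F'$ produces a doubly stochastic matrix $B=D_1AD_2$; by near-regularity, the scaling vectors are essentially uniform with $D_1\approx I$ and $D_2\approx(1/(\mu m))I$, so the Falikman--Egorychev (van der Waerden) theorem gives $\mathrm{pm}(F')\gtrsim\sqrt{2\pi n'}(\mu m/e)^{n'}$. Br\'egman's theorem applied to $F'-\{u,w\}$ gives $\mathrm{pm}(F'-\{u,w\})\lesssim(\mu m/e)^{n'-1}$ up to lower-order factors. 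Taking the ratio yields a bound of order $e/(\mu m)$.

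The main obstacle is sharpening the constant $e\approx 2.72$ down to $2$ and absorbing the Stirling-type polynomial correction of order $(2\pi\mu m)^{1/(2\mu)}/\sqrt{n'}$. I expect this to require using (M2) more forcefully: passing to a near-$\mu m$-regular pseudo-random spanning subgraph $F^*\subseteq F'$ (guaranteed by K\"onig-type regularization combined with (M2)) allows replacing Falikman--Egorychev with Schrijver's tight lower bound $\mathrm{pm}(F^*)\ge((k-1)^{k-1}/k^{k-2})^{n'}$, which for pseudo-random $F^*$ matches Br\'egman's upper bound on $F'-\{u,w\}$ to within a subexponential factor and eliminates the polynomial correction. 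The remaining gap between $e$ and $2$ is then absorbed by the slack $p\ll\mu$ and by choosing $F^*$ with regular degree close to $\mu m$; alternatively, a switching argument exploiting (M2) to compare $\mathrm{pm}(F'-\{u',w\})$ across $u'\in N_{F'}(w)$ can be used in place of Schrijver, showing directly that these permanents are within a constant factor of each other and yielding $\mathrm{pm}(F'-\{u,w\})\le2\mathrm{pm}(F')/(\mu m)$ via the identity $\mathrm{pm}(F')=\sum_{u'\in N_{F'}(w)}\mathrm{pm}(F'-\{u',w\})$.
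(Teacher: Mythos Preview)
Your approach diverges substantially from the paper's, which is a two-line black-box argument: verify that the hypotheses (M1)--(M3) imply the Duke--Lefmann--R\"odl degree--codegree condition, hence $F'$ is $(\eps',\mu)$-super-regular for a suitable $\eps'$; then quote the Joos lemma (Lemma~\ref{lem:KKOT}, from~\cite{KKOT}) which says that a uniform random perfect matching in a super-regular pair hits each edge with probability $(1\pm o(1))/(d m')$. Both external lemmas are cited, not reproved.

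Your permanent-ratio route has a genuine gap. The polynomial correction $(2\pi\mu m)^{1/(2\mu)}$ you identify comes from the \emph{upper} bound (Br\'egman), since $(k!)^{1/k}=(k/e)\cdot(2\pi k)^{1/(2k)}(1+o(1))$ and this is raised to the power $n'\approx m$ with $k\approx\mu m$. Schrijver's inequality is, like van~der~Waerden, a \emph{lower} bound on the permanent, so substituting it for Falikman--Egorychev cannot cancel a factor that originates in the upper bound; in fact Schrijver's bound $\big((k-1)^{k-1}/k^{k-2}\big)^{n'}\sim(k/e)^{n'}$ is slightly weaker than van~der~Waerden's $\sqrt{2\pi n'}\,(k/e)^{n'}$ here, so the ratio gets worse, not better. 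Since $\mu\ll1$, the surviving factor $(2\pi\mu m)^{1/(2\mu)}/\sqrt{n'}$ is a genuine positive power of $m$, which destroys any $O(1)/(\mu m)$ conclusion. No amount of slack $p\ll\mu$ absorbs a factor that grows with $m$. Br\'egman is essentially tight for disjoint unions of $K_{k,k}$, so improving the upper bound requires using the pseudorandomness of $F'$---which is precisely what the elementary permanent inequalities do not do.

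Your switching alternative is the correct idea, and indeed the identity $\mathrm{pm}(F')=\sum_{u'\in N_{F'}(w)}\mathrm{pm}(F'-\{u',w\})$ reduces the lemma to showing that these summands are within a constant factor of each other. But carrying this out is exactly the content of the Joos lemma that the paper cites: one needs to build an efficient switching (or an entropy argument) that genuinely exploits regularity of $F'$, and this is a nontrivial standalone result, not a one-line observation. Your sketch does not supply it. If you want a self-contained proof, you should prove Lemma~\ref{lem:KKOT} directly rather than route through permanent inequalities.
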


This lemma is a straightforward consequence of a lemma (Lemma~\ref{lem:KKOT}) on random matchings
in super-regular pairs by Felix Joos (see~\cite{KKOT}) and the degree-codegree
characterisation of super-regular pairs (Lemma~\ref{lem:DLR}) provided by 
Duke, Lefmann, and R\"odl in~\cite{DukLefRod}. For
completeness, we provide the deduction in Section~\ref{sec:match}.

\section{Proof of the main technical theorem}
\label{sec:maintech}

To prove Theorem~\ref{thm:maintech} we shall run the algorithm
\PerfectPacking{} (Algorithm~\ref{alg:perfect}), which uses \PackingProcess{} to pack the $G'_s$. 
The resulting graph $H$ of unused edges
is likely to satisfy the conclusions of the
almost perfect packing lemma (Lemma~\ref{lem:appl}).
\PerfectPacking{} then chooses a random orientation $\oH_0$ of $H$ and modifies this orientation slightly to obtain $\oH$,
which satisfies the conclusions of the orientation lemma (Lemma~\ref{lem:orient}) and also
oriented versions of properties~\ref{appl:3} and~\ref{appl:4} of Lemma~\ref{lem:appl}. Finally, \PerfectPacking{} runs
\MatchLeaves{} to complete the packing. To show that \MatchLeaves{} succeeds,
we will verify that with high probability for each $r$ the graphs $F_{r}$ and
$F_{r}^{(r-1)}$ satisfy the conditions of the matching lemma (Lemma~\ref{lem:match}). For this we use
Corollary~\ref{cor:freedm} and the union bound.

\begin{proof}[Proof of Theorem~\ref{thm:maintech}]
 We use constants with relations as given in Section~\ref{sec:const}, that
 is
 \[0\ll c\ll\xi\ll\delta\ll\gamma\ll\gamma'\ll\nu\ll\mu,\hat p_0,\frac1D\,,\]
 and $\hat p\ge\hat p_0$.
 Suppose that~$\widehat H$ is an $(\xi,2D+3)$-quasirandom graph with $n$
 vertices and density~$\hat{p}$. Suppose that $s^{*}\le \frac{7}{4}n$ and that the
 graph sequence $(G_{s})_{s\in[s^*]}$ is a $D$-degenerate $(\mu,n)$-graph
 sequence, with maximum degree $\Delta\le\tfrac{cn}{\log n}$, such that the
 last $\lceil(D+1)^{-3}n\rceil$ vertices in the degeneracy order form an independent
 set in $G_{s}$, and all have the same degree $d_s$ in $G_s$. Suppose
 further that $\sum_{s\in[s^*]}e(G_s)=e(\widehat{H})$. We use \PerfectPacking{}
 (Algorithm~\ref{alg:perfect}) for packing $(G_{s})_{s\in[s^*]}$
 into~$\widehat H$ and argue in the following that it succeeds a.a.s.
 
 As $\lfloor\nu n\rfloor<\mu n$, \PerfectPacking{} can choose a corresponding subgraph
 sequence $(G'_s)_{s\in[s^*]}$ omitting $\lfloor\nu n\rfloor$ leaves. Next it
 creates for each $s\in [s^*]$ a graph $G''_s$. For the non-special graphs
 ($s\le s^*-\lfloor\mu n\rfloor$) it sets $G''_s:=G'_s$. For the special graphs
 ($s>s^*-\lfloor\mu n\rfloor$) it obtain~$G''_s$
 by adding the set $I'_s$ of
 $n-v(G'_s)$ isolated vertices, which we place at the end of the
 $D$-degeneracy order. Note that for each~$G''_s$ the last $\delta n$ vertices of $G''_s$ in the degeneracy order
 are an independent set all of whose vertices have degree $d_s$. Indeed,
 if $s\le s^*-\lfloor\mu n\rfloor$ then this holds by
 assumption on $G_s$ and because $\delta<(D+1)^{-3}$, and if $s>s^*-\lfloor\mu
 n\rfloor$ then this holds because $n-v(G'_s)=n-\lfloor \mu n \rfloor$ and
 $\delta<\mu$ (and in this case $d_s=0$).

 \PerfectPacking{} next runs \PackingProcess{} with input $(G''_s)_{s\in[s^*]}$ and $\widehat{H}$.
 By the almost perfect packing lemma (Lemma~\ref{lem:appl}), \PackingProcess{} a.a.s.\ returns a packing
 $(\phi^*_s)_{s\in[s^*]}$ of $(G''_s)_{s\in[s^*]}$ into $\widehat{H}$, and a
 graph $H$ consisting of all the edges not used in the packing, which satisfies
 the conclusions \ref{appl:quasi}--\ref{appl:sumweight} of Lemma~\ref{lem:appl}.
 As described in \PerfectPacking{}, we let for each $s\in[s^*]$ the map
 $\phi'_s$ be the embedding of $G'_s$ into $\widehat{H}$ induced by $\phi^*_s$.
 By construction of the $(G''_s)_{s\in[s^*]}$, the $(\phi'_s)_{s\in[s^*]}$ form
 a packing of the $(G'_s)_{s\in[s^*]}$ into $\widehat{H}$, with $H$ being the
 graph formed by the unused edges. The total number of unused edges is by
 construction $\lfloor\mu n\rfloor\lfloor\nu n\rfloor=p\binom{n}{2}$, so $H$ has
 density $p$.
 
 \PerfectPacking{} next chooses a random-like orientation of~$H$. More precisely,
 we want to use an orientation $\oH$ of $H$ such that
 $w(v)=\deg^+_{\oH}(v)$ for each $v\in V(H)$, which in addition inherits
 oriented versions of~\ref{appl:3} and~\ref{appl:4}. The next claim states that
 such an orientation exists.

 \begin{claim} For all sufficiently large $n$ there exists an orientation $\oH$
   of $H$ such that $w(v)=\deg^+_{\oH}(v)$ for each $v\in V(H)$, and in addition
   for each $s^*-\lfloor\mu n\rfloor<s,s'\le s^*$ we have
  \begin{enumerate}[label=\itmarab{P'},start=3]
   \item\label{appl:3p} $\big|N^+_{\oH}(v)\setminus \im\phi'_s\big|=(1\pm\gamma')\tfrac{\mu pn}{2}$, and
   \item\label{appl:4p} $\big|N^+_{\oH}(v)\setminus
      (\im\phi'_s\cup\im\phi'_{s'})\big|=(1\pm\gamma')\tfrac{\mu^2pn}{2}$ if $s\neq s'$.
  \end{enumerate}
 \end{claim}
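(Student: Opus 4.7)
The plan is to apply the orientation lemma (Lemma~\ref{lem:orient}) to $H$ equipped with the weight function $w$, starting from a uniformly random orientation $\oH_0$, and then argue that the small modification of $\oH_0$ into $\oH$ guaranteed by that lemma is not enough to disturb the outneighbourhood statistics that $\oH_0$ already enjoys by concentration.

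First, I would verify the hypotheses of Lemma~\ref{lem:orient}. Properties~\ref{appl:quasi} and~\ref{appl:weight} of Lemma~\ref{lem:appl} directly give that $H$ is $({\gamma'}^3,2)$-quasirandom of density $p$ and that $w(v)=(1\pm{\gamma'}^3)\tfrac{pn}{2}$ for every $v\in V(H)$. The identity $\sum_{v\in V(H)} w(v) = e(H)$ needs only a brief count: each omitted leaf of a special $G_s$ contributes exactly $1$ to the weight of the embedded image of its unique neighbour, so $\sum_{v\in V(H)} w(v)$ equals the total number of omitted leaves, which is $\lfloor\mu n\rfloor\lfloor\nu n\rfloor = p\binom{n}{2} = e(H)$.

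Next I would analyse the random orientation $\oH_0$ directly. Fix $v\in V(H)$ and a special index~$s$; each edge of $H$ at $v$ is oriented away from $v$ independently with probability $\tfrac12$, so $|N^+_{\oH_0}(v)\setminus\im\phi'_s|$ is a sum of independent Bernoullis whose mean is $\tfrac12|N_H(v)\setminus\im\phi'_s|=(1\pm{\gamma'}^3)\tfrac{\mu pn}{2}$ by~\ref{appl:3}. Since this mean is linear in $n$, Chernoff (Theorem~\ref{thm:chernoff}) applied with deviation parameter $\tfrac{\gamma'}{4}$ gives failure probability $e^{-\Omega(n)}$. An identical argument using~\ref{appl:4} controls $|N^+_{\oH_0}(v)\setminus(\im\phi'_s\cup\im\phi'_{s'})|$. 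A union bound over the $O(n^3)$ relevant triples $(v,s,s')$ then shows that a.a.s.\ $\oH_0$ already satisfies the analogues of~\ref{appl:3p} and~\ref{appl:4p} with relative error at most $\tfrac{\gamma'}{2}$.

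Finally I would invoke Lemma~\ref{lem:orient} to obtain, a.a.s.\ over $\oH_0$, an orientation $\oH$ that satisfies~\ref{orient:deg} (which is exactly $\deg^+_{\oH}(v)=w(v)$) and~\ref{orient:diff} (at most ${\gamma'}^2 n$ edges re-oriented in total). In particular, at any single vertex $v$ at most ${\gamma'}^2 n$ incident edges are flipped, so $|N^+_{\oH}(v)\setminus S|$ and $|N^+_{\oH_0}(v)\setminus S|$ differ by at most ${\gamma'}^2 n$ for every $S\subset V(H)$. The only point to check is that this error is absorbed by the slack left from the previous paragraph: since $p\ge 2\mu\nu-o(1)$ and the constant hierarchy from Section~\ref{sec:const} gives $\gamma'\ll\nu\ll\mu$, we have ${\gamma'}^2 n\le\tfrac{\gamma'}{8}\cdot\tfrac{\mu^2 pn}{2}$, which is more than enough to upgrade the $\tfrac{\gamma'}{2}$-bound from $\oH_0$ into the required $\gamma'$-bound for $\oH$; hence both~\ref{appl:3p} and~\ref{appl:4p} follow. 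The (minor) obstacle is purely bookkeeping, namely keeping track of this interplay between the three error terms ${\gamma'}^3$, ${\gamma'}^2$ and $\gamma'$ in the constant hierarchy.
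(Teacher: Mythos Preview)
Your proposal is correct and follows essentially the same route as the paper: verify the hypotheses of Lemma~\ref{lem:orient}, use Chernoff with~\ref{appl:3} and~\ref{appl:4} plus a union bound to control $N^+_{\oH_0}(v)\setminus\im\phi'_s$ and $N^+_{\oH_0}(v)\setminus(\im\phi'_s\cup\im\phi'_{s'})$, and then absorb the ${\gamma'}^2 n$ flips from~\ref{orient:diff} into the error term. One small slip: \ref{orient:diff} is stated per vertex, not as a total count of re-oriented edges, but since you immediately use the per-vertex bound your argument is unaffected; you also explicitly check $\sum_v w(v)=e(H)$, which the paper leaves implicit.
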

 \begin{claimproof}
  By~\ref{appl:quasi}, in particular $H$ is $({\gamma'}^3,2)$-quasirandom and of
  density $p$, and by~\ref{appl:weight} we have
  $w(v)=(1\pm{\gamma'}^3)\tfrac{pn}{2}$ for all $v\in V(H)$. This verifies that
  $H$ satisfies the conditions of the orientation lemma (Lemma~\ref{lem:orient}).
  
  Let $\oH_0$ be a random orientation of $H$. Given $v\in V(H)$ and $s^*-\lfloor\mu n\rfloor<s\le s^*$, by~\ref{appl:3} and Theorem~\ref{thm:chernoff}, with probability at least $1-\exp\big(-\tfrac{{\gamma'}^6\mu pn}{12}\big)$ we have
  \[\big|N^+_{\oH_0}(v)\setminus \im\phi'_s\big|=(1\pm3{\gamma'}^3)\tfrac{\mu pn}{2}\,.\]
  Similarly, given $v\in V(H)$ and $s^*-\lfloor\mu n\rfloor<s<s'\le s^*$, by~\ref{appl:4} and Theorem~\ref{thm:chernoff}, with probability at least $1-\exp\big(-\tfrac{{\gamma'}^6\mu^2 pn}{12}\big)$ we have
  \[\big|N^+_{\oH_0}(v)\setminus (\im\phi'_s\cup\im\phi'_{s'})\big|=(1\pm3{\gamma'}^3)\tfrac{\mu^2pn}{2}\,.\]
  Taking the union bound, and by Lemma~\ref{lem:orient}, with probability at least $1-2n^3\exp\big(-\tfrac{{\gamma'}^6\mu^2 pn}{12}\big)-o(1)$ each of the above good events holds for each $v\in V(H)$ and each $s^*-\lfloor\mu n\rfloor<s,s'\le s^*$, and in addition there is an orientation $\oH$ of $H$ satisfying conclusions~\ref{orient:deg} and~\ref{orient:diff} of Lemma~\ref{lem:orient}.
  
  For sufficiently large $n$ we have $1-2n^3\exp\big(-\tfrac{{\gamma'}^6\mu^2 pn}{12}\big)-o(1)>0$, so we fix $\oH_0$ and $\oH$ satisfying all these properties. By~\ref{orient:deg} the orientation $\oH$ satisfies $\deg_{\oH}^+(v)=w(v)$ for each $v\in V(H)$, as desired. Given $v\in V(H)$ and $s^*-\lfloor\mu n\rfloor<s\le s^*$, by~\ref{orient:diff} we have
  \[\big|N^+_{\oH}(v)\setminus \im\phi'_s\big|=\big|N^+_{\oH_0}(v)\setminus \im\phi'_s\big|\pm{\gamma'}^2 n=(1\pm3{\gamma'}^3)\tfrac{\mu pn}{2}\pm{\gamma'}^2 n=(1\pm\gamma')\tfrac{\mu pn}{2}\,,\]
  where the final inequality is by choice of $\gamma'$. This verifies~\ref{appl:3p}. Similarly, given $v\in V(H)$ and $s^*-\lfloor\mu n\rfloor<s<s'\le s^*$, we have
  \begin{equation*}\begin{split}
      \big|N^+_{\oH}(v)\setminus
      (\im\phi'_s\cup\im\phi'_{s'})\big| &=\big|N^+_{\oH_0}(v)\setminus
      (\im\phi'_s\cup\im\phi'_{s'})\big|\pm{\gamma'}^2
      n=(1\pm3{\gamma'}^3)\tfrac{\mu^2pn}{2}\pm{\gamma'}^2
      n \\
      & =(1\pm\gamma')\tfrac{\mu^2pn}{2}\,,
  \end{split}\end{equation*}
  giving~\ref{appl:4p}.
 \end{claimproof}

 This orientation is now used to embed the remaining dangling leaves. \PerfectPacking{} runs
 \MatchLeaves{} (Algorithm~\ref{alg:MatchLeaves}) for this purpose. Recall that,
 for a vertex $v\in V(\oH)$, the leaf matching graph $F_v$ (see
 Definition~\ref{def:leafmatch}) is a bipartite graph with parts consisting of
 the leaves $L_v$ which we need to embed at $v$ (which will be in many different
 $G_s$) and the out-neighbours $N_{\oH}^+(v)$ of $v$ in $\oH$ to which we will
 embed these leaves, with an edge from a leaf in some $G_s$ to an out-neighbour
 $u$ of $v$ if $u\not\in\im\phi'_s$. Recall that for convenience we assume
 $V(\oH)=[n]$. \MatchLeaves{} starts with $F_v^{(0)}:=F_v$ for each $v\in[n]$,
 and then for each $r\in[n]$ in succession takes a random perfect matching
 $\sigma_r$ in $F_r^{(r-1)}$ and for each $k>r$ removes some edges from
 $F_k^{(r-1)}$ to form $F_k^{(r)}$. As explained in Section~\ref{sec:alg}, it is
 enough to show that with positive probability \MatchLeaves{}
 does not halt with failure. To analyse the running of
 \MatchLeaves{}, we aim to show that for each $r$ the graphs
 $F_r^{(0)}$ and $F_r^{(r-1)}$ satisfy the conditions of Lemma~\ref{lem:match}
 with $m:=\tfrac{pn}{2}$, with $F=F_r^{(0)}$ and $F'=F_r^{(r-1)}$, and with $U=L_r$ and $W=N^+_{\oH}(r)$. We shall then
 use Lemma~\ref{lem:match} to conclude that the
 matching $\sigma_r$ we choose in $F_r^{(r-1)}$ does not use any given edge with
 exceptionally high probability, which in turn will allow us to show that
 \MatchLeaves{} is successful.
 
 \underline{Property~\ref{lem:match:1}:} Given $x\in V(F_r^{(0)})$, we separate
 two cases. If $x\in L_r$ is in the graph $G_s$, then by~\ref{appl:3p} we have
 $\deg_{F_r^{(0)}}(x)=\big|N_{\oH}^+(r)\setminus\im\phi'_s\big|=(1\pm\gamma')\tfrac{\mu
   p n}{2}$. If $x\in N^+_{\oH}(r)$, then by~\ref{appl:degN} we have
 $\deg_{F_r^{(0)}}(x)=\sum_s
 w_s(r)\ONE_{x\not\in\im\phi'_s}=(1\pm{\gamma'}^3)\tfrac{\mu p n}{2}$. In either
 case, since $p>\gamma'$ this verifies~\ref{lem:match:1} for $F=F_r^{(0)}$,
 $F'=F_r^{(r-1)}$ and every $r\in [n]$.
 
 \underline{Property~\ref{lem:match:2}:} Given $u,u'\in L_r$, if $u\in V(G_s)$
 and $u'\in V(G_{s'})$, where $s\neq s'$, then by~\ref{appl:4p} we have
 $\deg_{F_r^{(0)}}(u,u')=\big|N_{\oH}^+(r)\setminus(\im\phi'_s\cup\im\phi'_{s'})\big|=(1\pm\gamma')\tfrac{\mu^2
   p n}{2}$. Again since $\gamma'<p$ this is as required by~\ref{lem:match:2},
 and we only need to show that the number of $u,u'\in L_r$ which are both in
 $G_s$ for some $s\in[s^*]$ is at most $\tfrac{p^2n^2}{4\log(pn/2)}$. But any
 given $G_s$ has at most $w_s(r)\le\Delta=\tfrac{cn}{\log n}$ vertices in $L_r$,
 so that for a given $u$ there are at most $\tfrac{cn}{\log n}$ choices of $u'$
 with $u,u'\in V(G_s)$ for some $s\in[s^*]$. Since $|L_r|\le n$ we conclude that
 there are at most $\tfrac{cn^2}{\log n}<\tfrac{p^2n^2}{4\log(pn/2)}$ pairs
 $u,u'\in L_r$ such that $u,u'\in V(G_s)$ for some $s\in[s^*]$. This completes
 the verification of~\ref{lem:match:2} for $F=F_r^{(0)}$, $F'=F_r^{(r-1)}$ and
 every $r\in [n]$.
 
\underline{Property~\ref{lem:match:3}:} 
This property does not hold deterministically, but we shall show that it
holds for all~$r$ with high probability. For this purpose we define the
following events.
For each $r\in[n]$ let $\mathcal{E}_r$ be the event that for each $y\in V(F_r^{(0)})$ we have
\begin{equation}\label{eq:defE}
  \deg_{F_r^{(0)}}(y)-\deg_{F_r^{(r-1)}}(y)\le
  50 p^2n\mu^{-2}\,,
\end{equation}
 that is, $\mathcal{E}_r$ is the event that~\ref{lem:match:3} holds for
 $F=F_r^{(0)}$ and $F'=F_r^{(r-1)}$. We shall prove the following claim below,
 but first show how it implies the theorem.

 \begin{claim}\label{cl:probE}
   With probability at least $1-n^{-1}$ for every $r\in [n]$ the event
   $\mathcal{E}_r$ holds.
 \end{claim}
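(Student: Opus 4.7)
The plan is to fix $r\in[n]$ and, on the inductively maintained good event $\mathcal{E}_{\le r-1}:=\mathcal{E}_1\cap\dots\cap\mathcal{E}_{r-1}$, bound the probability that $\mathcal{E}_r$ fails by a Freedman-type argument. Since $F_{r'}^{(r'-1)}$ is $\mathcal{F}_{r'-1}$-measurable, on $\mathcal{E}_{\le r-1}$ every graph $F_{r'}^{(r'-1)}$ with $r'<r$ satisfies~\ref{lem:match:3}; conditions~\ref{lem:match:1} and~\ref{lem:match:2} for $F_{r'}^{(0)}$ hold deterministically from the same verification already carried out above for $F_r^{(0)}$ (via~\ref{appl:3p}, \ref{appl:degN} and~\ref{appl:4p}). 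Hence Lemma~\ref{lem:match} applies throughout rounds $r'<r$ with $m=pn/2$, yielding the key estimate $\Prob[\sigma_{r'}(x)=u\mid\mathcal{F}_{r'-1}]\le 4/(\mu pn)$ for every $xu\in E(F_{r'}^{(r'-1)})$. For $y\in V(F_r^{(0)})$ let $X_{r'}^{(y)}$ be the number of edges incident to $y$ that \MatchLeaves{} removes from $F_r$ in round $r'$, so that $\deg_{F_r^{(0)}}(y)-\deg_{F_r^{(r-1)}}(y)=\sum_{r'<r}X_{r'}^{(y)}$.

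In Case~1, $y\in L_r$ is a leaf of some $G_s$, and an edge $yu\in E(F_r^{(0)})$ is removed in round $r'$ precisely when $\sigma_{r'}^{-1}(u)\in L_{r'}\cap V(G_s)$. As $\sigma_{r'}$ is a matching, $X_{r'}^{(y)}\le|L_{r'}\cap V(G_s)|\le\Delta\le cn/\log n$, and combining the matching-lemma bound with the quasirandomness estimate $|N^+_{\oH}(r)\cap N^+_{\oH}(r')|\le|N_H(r)\cap N_H(r')|\le 2p^2n$ from~\ref{appl:quasi} gives $\Exp[X_{r'}^{(y)}\mid\mathcal{F}_{r'-1}]\le 8p|L_{r'}\cap V(G_s)|/\mu$. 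Since each of the $\lfloor\nu n\rfloor$ omitted leaves of $G_s$ dangles at exactly one vertex of $\oH$, $\sum_{r'}|L_{r'}\cap V(G_s)|\le\nu n$, whence $\sum_{r'<r}\Exp[X_{r'}^{(y)}\mid\mathcal{F}_{r'-1}]\le 16\nu^2n\le 25p^2n/\mu^2$. In Case~2, $y\in N^+_{\oH}(r)$, and an edge $xy\in E(F_r^{(0)})$ with $x\in L_r\cap V(G_s)$ (so $y\notin\im\phi'_s$) is removed in round $r'$ iff $y\in N^+_{\oH}(r')$ and $\sigma_{r'}^{-1}(y)\in L_{r'}\cap V(G_s)$. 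Thus $X_{r'}^{(y)}\le\max_s|L_r\cap V(G_s)|\le\Delta$, and the matching-lemma bound gives $\Exp[X_{r'}^{(y)}\mid\mathcal{F}_{r'-1}]\le(4/(\mu pn))\sum_{s:\,y\notin\im\phi'_s}w_s(r)w_s(r')$; summing over $r'<r$ with $y\in N^+_{\oH}(r')$ and applying~\ref{appl:sumweight} to bound $\sum_{r':\,r'y\in E(H)}w_s(r')<10p^2n/\mu$, together with~\ref{appl:weight} to bound $\sum_s w_s(r)\le(1+{\gamma'}^3)pn/2$, yields $\sum_{r'<r}\Exp[X_{r'}^{(y)}\mid\mathcal{F}_{r'-1}]\le 25p^2n/\mu^2$.

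In both cases Corollary~\ref{cor:freedm}\ref{cor:freedm:uppertail} applied on $\mathcal{E}_{\le r-1}$ with $\tmu=25p^2n/\mu^2$ and $R=cn/\log n$ gives
\[
  \Prob\Bigl[\mathcal{E}_{\le r-1}\text{ and }\sum_{r'<r}X_{r'}^{(y)}>50p^2n/\mu^2\Bigr]\le\exp\Bigl(-\tfrac{25p^2\log n}{4c\mu^2}\Bigr)\le n^{-25\nu^2/(4c)}\le n^{-5}
\]
by the choice $c\ll\nu$. Observing that if any $\mathcal{E}_r$ fails then the first such $r$ satisfies $\mathcal{E}_{\le r-1}\cap\neg\mathcal{E}_r$, a union bound over this $r\in[n]$ and over the at most $2n$ vertices $y\in V(F_r^{(0)})$ gives total failure probability at most $2n^2\cdot n^{-5}\le n^{-1}$, as required. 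The main obstacle is the double sum in Case~2: controlling $\sum_{r'<r}w_s(r')$ over in-neighbours of $y$ needs the leftover-weight bound~\ref{appl:sumweight}, and the whole argument must be carried out on the inductively maintained good event so that Lemma~\ref{lem:match} can be invoked in every previous round.
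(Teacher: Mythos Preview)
Your proposal is correct and follows essentially the same approach as the paper: you split on whether $y\in L_r$ or $y\in N^+_{\oH}(r)$, bound the per-round increments $X_{r'}^{(y)}$ by $\Delta$, estimate their conditional expectations on the inductively maintained good event using Lemma~\ref{lem:match} together with~\ref{appl:quasi}, \ref{appl:sumweight} and~\ref{appl:weight}, and finish with Corollary~\ref{cor:freedm}\ref{cor:freedm:uppertail} and a union bound. The paper's argument is the same in structure; the only cosmetic differences are that in Case~1 the paper sums $w_s(i)|N_H(r,i)|$ directly rather than passing through $|L_{r'}\cap V(G_s)|$ and $16\nu^2 n$, and the paper tracks a slightly different final exponent ($\tfrac12 n^{-3}$ per pair $(r,y)$ versus your $n^{-5}$).
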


 If $\mathcal{E}_r$ holds
 then all conditions of Lemma~\ref{lem:match} are satisfied 
 for $F=F_r^{(0)}$, $F'=F_r^{(r-1)}$. In this case, we can apply the lemma, and
 obtain a perfect matching $\sigma_r$ in $F_r^{(r-1)}$ with the following property.
 Let $\hist_{r-1}$ consist of the collection of matchings
 $\sigma_1,\dots,\sigma_{r-1}$ obtained in earlier rounds.

 \begin{claim}\label{cl:nicematch}
  For $r\in[n]$, for $u\in N^+_{\oH}(r)$ and for $s^*-\lfloor\mu n\rfloor<s\le s^*$ the following holds.
  Either $\cE_r$ does not occur or a random perfect matching~$\sigma_r$ in
  $F_r^{(r-1)}$ satisfies
  \begin{align*}
   \Prob\Big[\sigma_r^{-1}(u)\in V(G_s)|\hist_{r-1}\Big]\le\tfrac{4w_s(r)}{\mu pn}\,.
  \end{align*}
 \end{claim}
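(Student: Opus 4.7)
The plan is to apply the matching lemma (Lemma~\ref{lem:match}) to the bipartite graphs $F=F_r^{(0)}$ and $F'=F_r^{(r-1)}$ with $m=pn/2$, working conditionally on a fixed history $\hist_{r-1}$. Once $\hist_{r-1}$ is fixed, the graph $F_r^{(r-1)}$ is determined, and by construction of \MatchLeaves{} the matching $\sigma_r$ is then a uniform random perfect matching in $F_r^{(r-1)}$.

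First I will verify the hypotheses of Lemma~\ref{lem:match}. By~\ref{appl:weight} and~\ref{orient:deg}, the two parts $L_r$ and $N^+_{\oH}(r)$ each have size $w(r)=(1\pm{\gamma'}^3)pn/2=(1\pm p)m$, as required. Properties~\ref{lem:match:1} and~\ref{lem:match:2} of $F_r^{(0)}$ have already been established above (using~\ref{appl:3p}, \ref{appl:degN} and~\ref{appl:4p}). Property~\ref{lem:match:3} for $F_r^{(r-1)}$ is exactly the content of the event $\cE_r$, with a stronger constant than demanded. Hence, under the assumption that $\cE_r$ holds, all conditions of the matching lemma are satisfied by $F,F'$.

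Consequently, Lemma~\ref{lem:match} yields
\[
\Prob\bigl[\sigma_r(x)=u\,\big|\,\hist_{r-1}\bigr]\le\frac{2}{\mu m}=\frac{4}{\mu pn}
\]
for every edge $xu\in E(F_r^{(r-1)})$, and the probability is of course zero for non-edges. To conclude, I will sum this per-edge bound over $x\in L_r\cap V(G_s)$. From the definitions of $L_r$ and $w_s$, the set $L_r\cap V(G_s)$ consists precisely of the removed leaves of $G_s$ whose $G_s$-neighbour is embedded to $r$ under $\phi'_s$, and so has cardinality $w_s(r)$ (and is empty, trivialising the claim, if $r\notin\im\phi'_s$). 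Summing then gives
\[
\Prob\bigl[\sigma_r^{-1}(u)\in V(G_s)\,\big|\,\hist_{r-1}\bigr]\le w_s(r)\cdot\frac{4}{\mu pn}=\frac{4w_s(r)}{\mu pn},
\]
as required. There is no serious obstacle here: the argument is essentially a bookkeeping step once Lemma~\ref{lem:match} is applicable, and the main care is simply to check that on the event $\cE_r$ the hypotheses of the matching lemma genuinely hold for the pair $(F_r^{(0)},F_r^{(r-1)})$.
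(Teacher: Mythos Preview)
Your proof is correct and follows essentially the same approach as the paper: apply Lemma~\ref{lem:match} to $F=F_r^{(0)}$ and $F'=F_r^{(r-1)}$ with $m=pn/2$ (the hypotheses~\ref{lem:match:1} and~\ref{lem:match:2} having been verified earlier, and~\ref{lem:match:3} being the content of $\cE_r$), obtain the per-edge bound $4/(\mu pn)$, and take a union bound over the $w_s(r)$ leaves of $G_s$ in $L_r$. One tiny remark: the constant in $\cE_r$ matches that of~\ref{lem:match:3} exactly (with $\le$ versus $<$) rather than being strictly stronger, but this is immaterial.
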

 \begin{claimproof}
   If $\cE_r$ occurs, then all properties \ref{lem:match:1}--\ref{lem:match:3} from Lemma~\ref{lem:match} are
   satisfied with $F=F_r^{(0)}$ and $F'=F_r^{(r-1)}$, 
   and thus, a random matching~$\sigma_r$ in~$F'$ satisfies
   for any given edge $xu\in E\big(F_r^{(r-1)}\big)$ 
   \[
     \Prob\big[xu\in\sigma_r\big|\hist_{r-1}\big]\le\tfrac{4}{\mu p n}.
   \] 
   Taking the union bound over the $w_s(r)$ choices of $x\in L_r$ which are in $G_s$, the claim follows.
 \end{claimproof}

 Hence, assuming Claim~\ref{cl:probE}, we get that
 Algorithm~\ref{alg:MatchLeaves} does not halt with failure in any round with probability at
 least $1-n^{-1}$ and provides matchings
 $\sigma_1,\dots,\sigma_r$. \PerfectPacking{} uses these matchings to define
 for each $s\in[s^*]$ the map $\phi_s:V(G_s)\to
 V(\widehat{H})$ by setting
 \[\phi_s(x)=\begin{cases} \phi'_s(x) & \text{ if }x\in V(G_s)\cap\dom(\phi_s)\\
                             \sigma_{r}(x) & \text{ if }x\in L_{r}\,.
                           \end{cases}
 \]
 Recall that for each $s$, the map $\phi'_s$ is an embedding
 of $G'_s$ into $\widehat{H}$. All the edges of $G_s$ which are not in $G'_s$
 have one end in the removed leaves $V_s$ and the other end in $V(G'_s)$.
 Consider those leaves of $G_s$ which are adjacent to $x\in V(G'_s)$. By
 definition, these are in $L_{\phi_s(x)}$ and by construction of
 $F_{\phi_s(x)}^{(\phi_s(x)-1)}$, they are embedded to distinct vertices of
 $\widehat{H}$ which are adjacent in $H$ to $\phi_s(x)$ and which are neither in
 $\im\phi_s$, nor are of the form $\sigma_i(y)$ for some $i<\phi_s(x)$ and $y\in
 V_s$. It follows that $\phi_s$ is indeed an embedding of $G_s$ into $\widehat{H}$ for
 each $s\in[s^*]$. 
   
 We now check that these embeddings together form a packing. The maps
 $(\phi'_s)_{s\in[s^*]}$ pack the graphs $(G'_s)_{s\in[s^*]}$ into
 $\widehat{H}$, leaving exactly the edges of $H$ unused. By construction $\oH$
 is an orientation of $H$, so for $\vec{vu}\in E(\oH)$, the edge $uv\in E(H)$ is
 used in the embedding of $G_s$, where $\sigma_v^{-1}(u)\in V(G_s)$. It follows
 that each edge of $\widehat{H}$ is used in the maps $(\phi_s)_{s\in[s^*]}$ at
 least once, and since $\sum_{s\in[s^*]}e(G_s)=e(\widehat{H})$ each edge must be
 used exactly once. This justifies that the maps $(\phi_s)_{s\in[s^*]}$
 perfectly pack the graphs $(G_s)_{s\in[s^*]}$ into $\widehat{H}$, as desired.
 
 \medskip

 So it remains to verify Claim~\ref{cl:probE}. We shall first argue that the claimed
 probability bound follows from a probability bound, given in~\eqref{eq:goody},
 which is of the right form to use Corrollary~\ref{cor:freedm}.
 Indeed,
 let $\mathcal{A}_r$ be the event that
 $\mathcal{E}_i$ holds for each $1\le i<r$ but $\mathcal{E}_r$ does not hold.
 Observe that if for each $r$ the event $\mathcal{A}_r$ does not hold, then
 $\mathcal{E}_r$ holds for each $r\in[n]$. In particular, by the union bound
 over $r\in[n]$ it
 suffices to show that for each fixed $r\in[n]$ we have $\Prob[\mathcal{A}_r]\le
 n^{-2}$. Further, by another union bound over the at most
 $v(F_r^{(0)})=2w(r)\le 2n$ different $y\in V(F_r^{(0)})$ and since $\mathcal{A}_r\subset \bigcap_{1\le i\le
   r-1}\mathcal{E}_i$ it is enough to show that for a fixed $y\in V(F_r^{(0)})$
 \begin{equation}\label{eq:goody}
  \Prob\Big[\bigcap_{1\le i\le r-1}\mathcal{E}_i\quad\text{and}\quad\deg_{F_r^{(0)}}(y)-\deg_{F_r^{(r-1)}}(y)>50 p^2n\mu^{-2}\Big]\le\tfrac12n^{-3}\,,
 \end{equation}
 where we used the definition of~$\mathcal{E}_i$ (see~\eqref{eq:defE}). The
 remainder of this proof is devoted to establishing this bound. We will use
 Corollary~\ref{cor:freedm} for this purpose, with the good event $\bigcap_{1\le
   i\le r-1}\mathcal{E}_i$. To that end, define for each $1\le i\le r-1$ the
 random variable
 \[Y_i:=\deg_{F_r^{(i-1)}}(y)-\deg_{F_r^{(i)}}(y)\]
 and observe that
 \[
 \deg_{F_r^{(0)}}(y)-\deg_{F_r^{(r-1)}}(y)
 = \sum_{i=1}^{r-1} Y_i\,. 
 \]
 
To apply Corollary~\ref{cor:freedm} we need to find the range of each $Y_i$ and the expectation of each $Y_i$, conditioned on the history $\hist_{i-1}$ which consists of the collection of matchings $\sigma_1,\dots,\sigma_{i-1}$. This is encapsulated in Claim~\ref{cl:niceY}. 
 
  \begin{claim}\label{cl:niceY} For each $1\le i\le r-1$, we have $0\le Y_i\le\Delta$. Furthermore, either some $\mathcal{E}_i$ with $1\le i\le r-1$ does not occur, or we have $\sum_{i=1}^{r-1}\Exp[Y_i|\hist_{i-1}]\le 25p^2n\mu^{-2}$.
 \end{claim}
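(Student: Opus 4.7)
The plan is to establish $0\le Y_i\le\Delta$ directly from the definitions, and then bound the sum of conditional expectations by invoking Claim~\ref{cl:nicematch} under the assumption that all of $\mathcal{E}_1,\dots,\mathcal{E}_{r-1}$ hold (otherwise there is nothing to prove). For the range, I case-split on whether $y$ lies in $L_r$ or in $N^+_{\oH}(r)$: the edges at $y$ removed in round $i$ correspond to pairs $(x,u)\in\sigma_i$ with $u\in N^+_{\oH}(i)$ together with a $G_s$-compatibility condition, so in the first case $Y_i\le|L_i\cap V(G_s)|=w_s(i)\le\Delta$, and in the second case the unique $s^*$ with $\sigma_i^{-1}(u)\in V(G_{s^*})$ determines the edges removed and $Y_i\le|L_r\cap V(G_{s^*})|=w_{s^*}(r)\le\Delta$.

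For Case~1 of the expectation bound (where $y\in L_r\cap V(G_s)$), I would observe that conditional on $\hist_{i-1}$ the set $N_{F_r^{(i-1)}}(y)$ is determined, and Claim~\ref{cl:nicematch} applied at round~$i$ bounds $\Prob[\sigma_i^{-1}(u)\in V(G_s)\mid\hist_{i-1}]\le 4 w_s(i)/(\mu p n)$ for each $u\in N_{F_r^{(i-1)}}(y)\cap N^+_{\oH}(i)$. The $({\gamma'}^3,2)$-quasirandomness of $H$ from~\ref{appl:quasi} then gives $|N^+_{\oH}(r)\cap N^+_{\oH}(i)|\le |N_H(r)\cap N_H(i)|\le 2p^2n$ (valid since $i\ne r$), so $\Exp[Y_i\mid\hist_{i-1}]\le 8p\,w_s(i)/\mu$. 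Summing over $i$ and using $\sum_i w_s(i)\le\lfloor\nu n\rfloor$ (the total number of omitted leaves of $G_s$) together with $p\ge\mu\nu$ (immediate from~\eqref{eq:p} for sufficiently large $n$) yields $\sum_i\Exp[Y_i\mid\hist_{i-1}]\le 8 p\nu n/\mu\le 25p^2n/\mu^2$.

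For Case~2 (where $y=u\in N^+_{\oH}(r)$), I partition the edges $xu\in E(F_r^{(i-1)})$ by the unique $s$ with $x\in V(G_s)$: only indices $s$ with $u\notin\im\phi'_s$ contribute (otherwise there are no such edges by the definition of $F_r$), each contributing at most $|L_r\cap V(G_s)|=w_s(r)$ edges, all of which are removed simultaneously precisely when $\sigma_i^{-1}(u)\in V(G_s)$. Applying Claim~\ref{cl:nicematch} and swapping the sums over $i$ and $s$, together with $N^-_{\oH}(u)\subseteq N_H(u)$, reduces the total to
\[
  \sum_{i=1}^{r-1}\Exp[Y_i\mid\hist_{i-1}]\;\le\;\frac{4}{\mu p n}\sum_{s\,:\,u\notin\im\phi'_s} w_s(r)\sum_{v\in N_H(u)}w_s(v).
\]
Property~\ref{appl:sumweight} bounds the inner sum by $10p^2n/\mu$ for each relevant~$s$, and property~\ref{appl:degN} bounds $\sum_{s:u\notin\im\phi'_s} w_s(r)$ by $(1+o(1))\mu p n/2$; multiplying gives at most $20p^2n/\mu\le 25p^2n/\mu^2$ since $\mu\le 1$. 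The main subtlety throughout is keeping the conditioning clean---once $\hist_{i-1}$ is fixed, $F_r^{(i-1)}$ is fully determined, so Claim~\ref{cl:nicematch} can be applied vertex-by-vertex over a deterministic index set---after which the remaining work is just to assemble the appropriate structural properties from Lemma~\ref{lem:appl} (codegree quasirandomness for Case~1, the weight-sum bounds \ref{appl:sumweight} and \ref{appl:degN} for Case~2) to close each case.
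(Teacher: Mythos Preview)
Your proof is correct and follows essentially the same route as the paper: the same case split on whether $y\in L_r$ or $y\in N^+_{\oH}(r)$, the same use of Claim~\ref{cl:nicematch} to bound individual probabilities, and the same structural inputs from Lemma~\ref{lem:appl}. The only noteworthy difference is in Case~2, where you invoke~\ref{appl:degN} to bound $\sum_{s:u\notin\im\phi'_s}w_s(r)$ by roughly $\tfrac{\mu pn}{2}$, whereas the paper uses~\ref{appl:weight} to bound $\sum_s w_s(r)=w(r)$ by roughly $\tfrac{pn}{2}$; your estimate is a factor $\mu$ sharper, though you then discard that factor via $\mu\le 1$. Your explicit restriction to $s$ with $u\notin\im\phi'_s$ also makes the application of~\ref{appl:sumweight} cleaner than in the paper's presentation.
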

 \begin{claimproof}
  We first show $0\le Y_i\le\Delta$. There are two cases to consider. First, if $y\in L_r$, then $y$ is in $G_s$ for some $s\in[s^*]$. An edge $yu$ of $F_r^{(i-1)}$ is removed to form $F_r^{(i)}$ only if $u$ is assigned a leaf of $G_s$ in $\sigma_i$. Since there are at most $w_s(i)\le\Delta$ such leaves, we have $Y_i\le\Delta$ in this case. Second, if $y\in N^+_{\oH}(r)$, and $y$ is assigned a leaf of $G_s$ in $\sigma_i$, then we remove all edges of $F_r^{(i-1)}$ from $y$ to leaves of $G_s$ to form $F_r^{(i)}$.  Since $\sigma_i$ is a matching, this happens for at most one $s\in[s^*]$. There are at most $w_s(r)\le\Delta$ such leaves of $G_s$, so also in this case we have $Y_i\le\Delta$.
  
  We now bound above the sum of conditional expectations. Again, there are two cases to consider. First, if $y\in L_r$, then let $s$ be such that $y\in V(G_s)$. Suppose that $\hist_{i-1}$ is a history up to and including $\sigma_{i-1}$ such that $\cE_i$ holds. By linearity of expectation, and because $\sigma_i^{-1}(u)\in V(G_s)$ means that some leaf of $G_s$ is matched to $u$ in $\sigma_i$, we have
  \begin{equation*}\begin{split}
   \Exp\big[Y_i\big|\hist_{i-1}\big]&=\sum_{\substack{u\in N_{F_r^{(i-1)}}(y)\\ \vec{iu}\in E(\oH)}}\Prob\big[\sigma_i^{-1}(u)\in V(G_s)\big|\hist_{i-1}\big]\\
   &\le\sum_{\substack{u\in N_{F_r^{(i-1)}}(y)\\ \vec{iu}\in E(\oH)}}w_s(i)\tfrac{4}{\mu p n}\le\sum_{u\in N_H(r,i)}w_s(i)\tfrac{4}{\mu p n}\,,
   \end{split}\end{equation*}
  where the first inequality is by Claim~\ref{cl:nicematch} and the second holds since $\vec{iu}\in E(\oH)$ implies $iu\in E(H)$ and since $u\in N_{F_r^{(i-1)}}(y)$ implies $ru\in E(H)$. Summing over $i$, either some $\cE_i$ with $i\in[r-1]$ does not hold, or we have
  \[\sum_{i=1}^{r-1}\Exp\big[Y_i\big|\hist_{i-1}\big]\le \sum_{i=1}^{r-1}\sum_{u\in N_H(r,i)}w_s(i)\tfrac{4}{\mu p n}\le\sum_{i=1}^{n}\big|N_H(r,i)\big|\cdot w_s(i)\tfrac{4}{\mu p n}\le 2p^2n\cdot\tfrac{4}{\mu p n}\sum_{i=1}^nw_s(i)\,,\]
  where the final inequality is by~\ref{appl:quasi}. Recall that we defined $p=\lfloor\mu n\rfloor\lfloor\nu n\rfloor\binom{n}{2}^{-1}$, so in particular $\nu n\le \tfrac{pn}{\mu}$. Since $\sum_{i=1}^nw_s(i)=\lfloor\nu n\rfloor\le\tfrac{pn}{\mu}$ counts the number of leaves removed from $G_s$ to form $G'_s$, we obtain that either some $\cE_i$ with $i\in[r-1]$ does not hold, or
  \[\sum_{i=1}^{r-1}\Exp\big[Y_i\big|\hist_{i-1}\big]\le\tfrac{8p^2n}{\mu^2}\,,\]
  as desired.
  
  Finally, we consider the case $y\in N^+_{\oH}(r)$. If a leaf of $G_s$ is assigned to $y$ by $\sigma_i$, it follows that $y$ is adjacent to $w_s(r)$ leaves of $G_s$ in $F_r^{(i-1)}$ and the edges to these leaves are exactly the edges at $y$ removed from $F_r^{(i-1)}$ to form $F_r^{(i)}$. Suppose that $\hist_{i-1}$ is a history up to and including $\sigma_{i-1}$ such that $\cE_i$ holds. Since a leaf of $G_s$ can only be assigned to $y$ by $\sigma_i$ if $\vec{iy}\in E(\oH)$, and by linearity of expectation, we have
  \begin{equation*}\begin{split}
   \Exp\big[Y_i\big|\hist_{i-1}\big]&=\sum_{s^*-\lfloor\mu n\rfloor<s\le s^*}\ONE_{y\in N^+_{\oH}(i)}\Prob\big[\sigma_i^{-1}(y)\in V(G_s)\big|\hist_{i-1}\big]\cdot w_s(r)\\
   &\le \sum_{s^*-\lfloor\mu n\rfloor<s\le s^*}\ONE_{y\in N^+_{\oH}(i)}w_s(r)w_s(i)\tfrac{4}{\mu p n}\,,
  \end{split}\end{equation*}
  where the second line follows by Claim~\ref{cl:nicematch}. Summing over $i$, either some $\cE_i$ with $i\in[r-1]$ does not hold, or we have
  \begin{equation*}\begin{split}
   \sum_{i=1}^{r-1} \Exp\big[Y_i\big|\hist_{i-1}\big]&\le \sum_{i=1}^{r-1}\sum_{s^*-\lfloor\mu n\rfloor<s\le s^*}\ONE_{y\in N^+_{\oH}(i)}w_s(r)w_s(i)\tfrac{4}{\mu p n}\\
   &\le\sum_{s^*-\lfloor\mu n\rfloor<s\le s^*} \sum_{i=1}^{n}\ONE_{y\in N^+_{\oH}(i)}w_s(r)w_s(i)\tfrac{4}{\mu p n}\\
   &=\sum_{s^*-\lfloor\mu n\rfloor<s\le s^*} \sum_{v:\vec{vy}\in E(\oH)}^{n}w_s(r)w_s(v)\tfrac{4}{\mu p n}\\
   &\le \sum_{s^*-\lfloor\mu n\rfloor<s\le s^*} \tfrac{4w_s(r)}{\mu p n}\sum_{v:vy\in E(H)}w_s(v)\\
   &\le \sum_{s^*-\lfloor\mu n\rfloor<s\le s^*}\tfrac{4w_s(r)}{\mu p n}\cdot \tfrac{10p^2n}{\mu}=\tfrac{40p}{\mu^2}\sum_{s^*-\lfloor\mu n\rfloor<s\le s^*}w_s(r)\,,
  \end{split}\end{equation*}
  where the last inequality is by~\ref{appl:sumweight}. By definition of $w(r)$, by~\ref{appl:weight} and by choice of $\gamma'$ we have $\sum_{s^*-\lfloor\mu n\rfloor<s\le s^*}w_s(r)=w(r)\le \frac{5}{8}pn$, so we conclude that either some $\cE_i$ with $i\in[r-1]$ does not hold, or we have
  \[\sum_{i=1}^{r-1} \Exp\big[Y_i\big|\hist_{i-1}\big]\le \tfrac{40p}{\mu^2}\cdot \frac{5}{8}pn=\tfrac{25p^2n}{\mu^2}\,,\]
  as desired.
\end{claimproof}

  Using Claim~\ref{cl:niceY}, We are now in a position to apply 
 Corollary~\ref{cor:freedm}, with $R=\Delta=\tfrac{cn}{\log n}$, with
 $\tmu=25p^2n\mu^{-2}$, and with the event
 $\cE=\bigcap_{i=1}^{r-1}\mathcal{E}_i$, which gives
 \[\Prob\Big[\bigcap_{1\le i\le r-1}\mathcal{E}_i\quad\text{and}\quad\sum_{i=1}^{r-1}Y_i>50p^2n\mu^{-2}\Big]\le\exp\big(-\tfrac{\tmu}{4R}\big)=\exp(-6.25c^{-1}p^2\mu^{-2}\log n)<\tfrac12n^{-3}\,,\]
 where the final inequality is by choice of $c$. This
 establishes~\eqref{eq:goody}.
\end{proof}

\section{Proof of the orientation lemma}
\label{sec:orient}

In this section we prove Lemma~\ref{lem:orient}.

\begin{proof}[Proof of Lemma~\ref{lem:orient}]
By the given quasirandomness of $H$ we know that
$\deg_{H}(v)=(1\pm \gamma'^3)pn$ and
$|N_{H}(v)\cap N_{H}(w)|=(1\pm \gamma'^3)p^2n$ 
for every $v\neq w\in V(H)$.
Applying a standard Chernoff argument, i.e. using Theorem~\ref{thm:chernoff}, we
obtain that a.a.s.\ 
for every $v\neq w\in V(H)$ we have
\[
\deg_{\vec{H}_0}^+(v)=(1\pm 2\gamma'^3)\frac{pn}{2}
\text{ and }
|N_{\vec{H}_0}^+(v)\cap N_{\vec{H}_0}^-(w)|=(1\pm 2\gamma'^3)\frac{p^2n}{4}\, .
\] 
From now on fix an arbitrary orientation $\vec{H}_0$
satisfying these two properties.
Starting with $\vec{H}_0$ we aim to switch the orientations
of some edges until we find an oriented graph $\vec{H}$
as desired. In order to do so, we will successively
switch the orientations of pairs of edges, 
thus producing a sequence
of oriented graphs $(\vec{H}_i)_{0\leq i\leq t}$ 
that eventually end up with $\vec{H}_t=\vec{H}$.
For any such oriented graph $\vec{H}_i$
and every vertex $v\in V(H)$ we
define the potential $\phi_i(v):=\deg_{\vec{H}_i}^+(v)-w(v)$ and
\[
\phi(\vec{H}_i) := \sum_{v\in V(H)} |\phi_i(v)|~ .
\]
Initially we have $|\phi_0(v)|\leq 3\gamma'^3 pn$ for every $v\in V(H)$. 

The algorithm \OrientationSwitch{} describes how orientations are switched. In
every iteration of this algorithm, the central idea is to change the potential
of two vertices $x,y\in V(H)$ with $\phi_i(x)>0$ and $\phi_i(y)<0$ in the
following way: We choose a vertex $m\in N_{\vec{H}_i}^+(x)\cap
N_{\vec{H}_i}^-(y)$ uniformly at random. We then \emph{switch} (the orientation
of) the directed edge $xm$, that is we replace $xm$ with $mx$, and we also
switch the edge $my$. Switching these two edges creates a new orientation
$\vec{H}_{i+1}$ of $H$. The vertex $m$ will be called the \emph{middle vertex}
of the switching, while $x$ and $y$ are called the \emph{end vertices}. In case
that
\[\big|\{uv\in E(H)\colon uv \text{ is oriented differently in } \vec{H}_i
  \text{ and } \vec{H}_0\}\big|\]
gets too large in some round $i$ and for some
vertex $v$, we let the algorithm halt with failure. However, we will see in the
following that this happens with probability tending to 0.

 \begin{algorithm}[ht]
   \caption{\OrientationSwitch{}}\label{alg:OrientationSwicth}
   let $t:=\phi(\vec{H}_0)/2$\;
    \For{$i=0$ \KwTo $t-1$}{
     \lIf{$\exists~v$ with 
     $\big|\{uv\in E(H)\colon uv \text{ is oriented differently in }
     \vec{H}_i \text{ and } \vec{H}_0\}\big|>100{\gamma'}^3 n$ \\\quad\mbox{}}{
        halt with failure}
      choose vertices $x,y\in V(H)$ with $\phi_i(x)>0$
      and $\phi_i(y)<0$\;
      choose a vertex $m\in N_{\vec{H}_i}^+(x)\cap N_{\vec{H}_i}^-(y)$
      uniformly at random\;
      create the new oriented graph $\vec{H}_{i+1}$
      by switching the orientations of $xm$ and $my$\;
    }
   \Return $H_t$ \;
  \end{algorithm}
 
We start with some easy observations.
 
\begin{observation}\label{obs:switch1}
As long as the algorithm does not halt with failure
we have 
\[
\phi(\vec{H}_{i+1})=\phi(\vec{H}_i)-2\, .
\]
\end{observation}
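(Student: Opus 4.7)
The plan is a direct bookkeeping argument. First I would observe that switching the orientations of $\vec{xm}$ and $\vec{my}$ changes $\vec{H}_i$ only at these two edges, so only the out-degrees of the vertices $x$, $y$, and the middle vertex $m$ can possibly change. Tracking each: reversing $\vec{xm}$ to $\vec{mx}$ decreases $\deg^+(x)$ by one and increases $\deg^+(m)$ by one; reversing $\vec{my}$ to $\vec{ym}$ decreases $\deg^+(m)$ by one and increases $\deg^+(y)$ by one. The two changes at $m$ cancel. Hence $\phi_{i+1}(v) = \phi_i(v)$ for every $v \notin \{x,y\}$, while $\phi_{i+1}(x) = \phi_i(x) - 1$ and $\phi_{i+1}(y) = \phi_i(y) + 1$. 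Note that $m \neq x,y$ because $H$ is simple, and $x \neq y$ because $\phi_i(x)$ and $\phi_i(y)$ have opposite signs, so the accounting is unambiguous.

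Second, I would use the sign information coming from the choice of the end vertices to evaluate the new absolute values. Since $\phi_i$ is integer-valued and the algorithm selected $x,y$ with $\phi_i(x) > 0$ and $\phi_i(y) < 0$, we have $\phi_i(x) \geq 1$ and $\phi_i(y) \leq -1$. Therefore
\[
|\phi_{i+1}(x)| = \phi_i(x) - 1 = |\phi_i(x)| - 1 \quad \text{and} \quad |\phi_{i+1}(y)| = -\phi_i(y) - 1 = |\phi_i(y)| - 1.
\]
Summing over all $v \in V(H)$ then yields $\phi(\vec{H}_{i+1}) = \phi(\vec{H}_i) - 2$, as claimed.

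I do not expect any real obstacle here: the statement is an elementary one-step accounting identity, and the only content is the cancellation at the middle vertex~$m$ together with the integrality of $\phi_i$. The proof is essentially two short paragraphs; nothing in the assumption that the algorithm has not halted with failure is needed for this particular observation, so I would not invoke it.
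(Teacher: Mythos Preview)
Your proposal is correct and follows essentially the same approach as the paper: track the out-degree changes from the two edge reversals, note the cancellation at the middle vertex~$m$, and use the sign conditions on $\phi_i(x)$ and $\phi_i(y)$ to conclude that each of $|\phi_i(x)|$ and $|\phi_i(y)|$ drops by exactly one. The paper's proof is a single sentence stating that $\phi_{i+1}(x)=\phi_i(x)-1$, $\phi_{i+1}(y)=\phi_i(y)+1$, and all other potentials are unchanged; your version spells out the cancellation at~$m$ and the integrality step explicitly, but there is no difference in substance.
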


\begin{observation}\label{obs:switch2}
For every vertex $v\in V(H)$ with $\phi_0(v)>0$ 
(or $\phi_0(v)<0$) 
it holds that $\phi_{i-1}(v)\geq \phi_i(v)\geq 0$ (or $\phi_{i-1}(v)\leq \phi_i(v)\leq 0$) for all $i\in [t]$.
\end{observation}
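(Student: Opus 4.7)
The plan is a short induction on $i$, with the work all being in the bookkeeping of how a single switch affects the potential at each vertex.

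First, I would compute the effect of one round of \OrientationSwitch{} on the individual potentials. When the algorithm switches $xm$ to $mx$ and $my$ to $ym$, the outdegree of $x$ drops by one, the outdegree of $y$ increases by one, and the outdegree of $m$ is unchanged (it loses one out-edge to $y$ but gains one to $x$). All other vertices are untouched. Consequently
\[
 \phi_i(x)=\phi_{i-1}(x)-1,\qquad \phi_i(y)=\phi_{i-1}(y)+1,\qquad \phi_i(u)=\phi_{i-1}(u)\ \text{for all other }u,
\]
including $u=m$. Recording this is essentially the whole content of the observation.

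Now I would induct on $i$. The case $i=1$ asks for $\phi_0(v)\ge\phi_1(v)\ge 0$ when $\phi_0(v)>0$; this is immediate from the display above together with the algorithm's selection rule, which requires the chosen endpoint $x$ to satisfy $\phi_0(x)>0$ and the chosen endpoint $y$ to satisfy $\phi_0(y)<0$. For the inductive step, suppose $\phi_0(v)>0$ and $\phi_{i-1}(v)\ge 0$. Since the algorithm only picks $y$ with $\phi_{i-1}(y)<0$, the vertex $v$ is never chosen as $y$, so its potential does not increase in round $i$; thus $\phi_i(v)\le\phi_{i-1}(v)$. If $v\ne x$ then $\phi_i(v)=\phi_{i-1}(v)\ge 0$; if $v=x$ then the selection rule forces $\phi_{i-1}(v)\ge 1$, and so $\phi_i(v)=\phi_{i-1}(v)-1\ge 0$. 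Either way $\phi_{i-1}(v)\ge\phi_i(v)\ge 0$, as required. The case $\phi_0(v)<0$ is symmetric, using that $v$ can never play the role of $x$.

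I do not anticipate any real obstacle here: the argument is a direct consequence of the definition of the switching operation and the selection rule, and the only conceptually important point—implicit in the design of the algorithm—is that the middle vertex $m$ has its outdegree (hence its potential) preserved by the switch, so that only the two endpoints $x$ and $y$, whose potentials have prescribed signs, can move.
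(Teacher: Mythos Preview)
Your argument is correct and takes essentially the same approach as the paper: the paper simply notes that the switch gives $\phi_{i+1}(x)=\phi_i(x)-1$, $\phi_{i+1}(y)=\phi_i(y)+1$, and leaves all other potentials (including that of $m$) unchanged, from which the observation follows. Your write-up is a more explicit version of the same reasoning, with the induction and the selection rule spelled out.
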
 
 
Indeed, both observations
hold since the switching of the orientations of $xm$
and $my$ ensures that $\phi_{i+1}(x)=\phi_i(x)-1$
and $\phi_{i+1}(y)=\phi_i(y)+1$, while
the potentials of all the other vertices do not change.

\begin{claim}
A.a.s.\ throughout the algorithm every vertex $v\in V(H)$ is chosen at most $40\gamma'^3 n$ times as the middle vertex of a switching.
\end{claim}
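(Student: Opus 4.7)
The plan is to fix an arbitrary vertex $v\in V(H)$ and bound the number $Z_v$ of times that $v$ is chosen as a middle vertex via the upper-tail Freedman inequality on a good event (Corollary~\ref{cor:freedm}\ref{cor:freedm:uppertail}), then take a union bound over $v\in V(H)$. The key quantity is the conditional probability that $v$ is the middle vertex at step $i$, which equals $1/|N^+_{\vec{H}_i}(x)\cap N^-_{\vec{H}_i}(y)|$ when $v$ lies in this common neighbourhood, and $0$ otherwise.

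First I would bound the total number of switches $t$: using the assumed $\deg^+_{\vec{H}_0}(u)=(1\pm 2{\gamma'}^3)pn/2$ together with $w(u)=(1\pm{\gamma'}^3)pn/2$, one obtains $|\phi_0(u)|\le \tfrac32{\gamma'}^3 pn$, hence $t = \phi(\vec{H}_0)/2 \le \tfrac34{\gamma'}^3 pn^2$. Second, I would establish a uniform lower bound on the candidate sets throughout the algorithm. Let $\cE$ denote the event that the algorithm has not yet halted; on $\cE$, at most $100{\gamma'}^3 n$ edges at any vertex have their orientation flipped relative to $\vec{H}_0$. Since each such flip changes any common neighbourhood $N^+_{\vec{H}_i}(x)\cap N^-_{\vec{H}_i}(y)$ by at most one, and initially this set has size $(1\pm 2{\gamma'}^3)p^2n/4$, we deduce that its size remains at least $p^2 n/5$ throughout, using ${\gamma'}^3\ll p^2$.

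Letting $X_i$ be the indicator that $v$ is the middle vertex at step $i$ (and $X_i=0$ if the algorithm has halted by then), it follows that on $\cE$ we have $\Exp[X_i\mid\mathcal{F}_{i-1}]\le 5/(p^2n)$, so $\sum_{i=1}^{t} \Exp[X_i\mid \mathcal{F}_{i-1}]\le \tfrac{15{\gamma'}^3 n}{4p}$. Applying Corollary~\ref{cor:freedm}\ref{cor:freedm:uppertail} with $R = 1$ and $\tmu = \tfrac{15{\gamma'}^3 n}{4p}$ yields
\[
  \Prob\bigl[\cE \text{ and } Z_v > 40{\gamma'}^3 n\bigr] \le \exp\bigl(-\Omega({\gamma'}^3 n)\bigr),
\]
the constant $40$ fitting comfortably since $p = \Theta(\mu\nu)$ is a positive constant of the hierarchy and $2\tmu = O({\gamma'}^3 n)$. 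A union bound over $v\in V(H)$ then gives the claim a.a.s.

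The main subtlety is the apparent circularity between the conclusion of the claim and the halting condition in \OrientationSwitch{}: the bound $100{\gamma'}^3 n$ used in the halting rule is exactly what is needed to force the candidate-set lower bound that drives the expectation estimate, while the claim itself---combined with the deterministic bound on how often $v$ is chosen as an end vertex coming from Observation~\ref{obs:switch2}---is what will eventually rule out the halting. The good-event form of Corollary~\ref{cor:freedm}\ref{cor:freedm:uppertail} resolves this cleanly: we only verify the conditional expectation bound on $\cE$, and the corollary then bounds the joint probability that $\cE$ holds yet the middle-vertex count exceeds the target.
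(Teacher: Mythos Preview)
Your overall plan is the same as the paper's---fix $v$, bound the conditional probability that $v$ is the chosen middle vertex at step $i$, sum, and apply a Chernoff/Freedman-type inequality on the good event that the algorithm has not halted---but there is a genuine quantitative gap in your expectation bound that prevents you from reaching the constant $40$.

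You bound $\sum_{i=1}^{t}\Exp[X_i\mid\mathcal{F}_{i-1}]$ by $t\cdot 5/(p^2n)\le \tfrac{15{\gamma'}^3 n}{4p}$, using only that $t\le\tfrac34{\gamma'}^3 pn^2$ and that the candidate set has size at least $p^2n/5$. But then $2\tmu=\tfrac{15{\gamma'}^3 n}{2p}$, and since $p=\lfloor\mu n\rfloor\lfloor\nu n\rfloor\binom{n}{2}^{-1}\approx 2\mu\nu$ with $\nu\ll\mu\le\tfrac14$, we certainly have $p<\tfrac{3}{16}$, so $2\tmu>40{\gamma'}^3 n$. Corollary~\ref{cor:freedm}\ref{cor:freedm:uppertail} only tells you that $Z_v$ is unlikely to exceed $2\tmu$, which here is a number much larger than $40{\gamma'}^3 n$. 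Your assertion that ``$2\tmu=O({\gamma'}^3 n)$'' hides a constant $15/(2p)$ that is large, not at most $40$. Since the constant $40$ feeds directly into the halting threshold $100{\gamma'}^3 n$ (via $3{\gamma'}^3 pn+2\cdot 40{\gamma'}^3 n<100{\gamma'}^3 n$), you cannot simply absorb the loss.

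The fix, and what the paper does, is to observe that $v$ is a \emph{candidate} middle vertex at step $i$ only when $v\in N_H(x)$ for the end vertex $x$ chosen at that step. Since $v$ has at most $(1+{\gamma'}^3)pn<2pn$ neighbours, and each neighbour $x$ with $\phi_0(x)>0$ serves as an end vertex in at most $|\phi_0(x)|\le 3{\gamma'}^3 pn$ rounds (by Observation~\ref{obs:switch2}), the number of rounds in which $v$ is even eligible is at most $2pn\cdot 3{\gamma'}^3 pn=6{\gamma'}^3 p^2n^2$. Summing the bound $5/(p^2n)$ only over these rounds gives $\sum_i\Exp[X_i\mid\mathcal{F}_{i-1}]\le 30{\gamma'}^3 n$, and now Freedman comfortably yields $Z_v\le 40{\gamma'}^3 n$ with the required probability. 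In short: you need to restrict the sum to rounds where $v$ is actually a neighbour of the chosen end vertex; this recovers the missing factor of $p$.
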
 

\begin{proof}
Every vertex $v\in V(H)$ can become a middle vertex only
if $v\in N_{\vec{H}_i}^+(x)\cap N_{\vec{H}_i}^-(y)$
for some $0\leq i\leq t-1$ and $x,y\in V(H)$
with $\phi_i(x)>0$ and $\phi_i(y)<0$. Now, $v$
has at most $(1+\gamma'^3)pn<2pn$ neighbours $x\in V(H)$ and every such vertex with positive potential 
participates in a switching as an end vertex 
in at most $|\phi_0(x)|\leq 3\gamma'^3 pn$
rounds. Thus, there are at most $6\gamma'^3 p^2n^2$
rounds which may consider $v$ as a suitable middle vertex.
In each such round, the middle vertex is chosen
uniformly at random from a set $N_{\vec{H}_i}^+(x)\cap N_{\vec{H}_i}^-(y)$.
As long as the algorithm does not halt with failure
we have 
\[
|N_{\vec{H}_i}^+(v)\cap N_{\vec{H}_i}^-(w)| = 
|N_{\vec{H}_0}^+(v)\cap N_{\vec{H}_0}^-(w)| \pm 2\cdot 100\gamma'^3 n
=(1\pm \gamma'^2)\frac{p^2n}{4}~ .
\]
Thus, when $v$ is suitable for being a middle vertex, the probability that $v$ is chosen is bounded from above by
$\frac{5}{p^2n}$. Now, applying a Chernoff-type argument
the claim follows.
\end{proof}
 
With the above statements in hand, we can show that 
a.a.s.\ \OrientationSwitch{} does not halt with failure 
and that the resulting oriented graph
$\vec{H}=\vec{H}_t$ satisfies the properties~\ref{orient:deg} and~\ref{orient:diff}. 
Indeed, let $v\in V(H)$ be any vertex.
In some round, we change the orientation of exactly one edge incident
with $v$ 
if and only if $v$ is an end vertex of the switching in this round.
As such a switching decreases $|\phi_i(v)|$ by 1
and since $|\phi_i(v)|$ never increases
according to Observation~\ref{obs:switch2}, this happens
at most $|\phi_0(v)|\leq 3\gamma' pn$ times.
Moreover, we change the orientation of exactly two 
edges incident with $v$ if and only if $v$ is a middle vertex
of a switching. By the above claim a.a.s.\ this happens
at most $40\gamma'^3 n$ times. 
Thus, as long as the algorithm runs, we a.a.s.\ switch the orientations of at most
\[
3\gamma' pn + 2\cdot 40\gamma'^3 n < 100\gamma'^3 n < \gamma'^2 n 
\]
edges incident with $v$.
It follows that the algorithm runs without failures, and also that
property~\ref{orient:diff} holds. 
By Observation~\ref{obs:switch1} and since $t=\phi(\vec{H}_0)/2$ we obtain that
$\phi(\vec{H}_t)=0$, meaning that~\ref{orient:deg} holds for $\vec{H}=\vec{H}_t$.
\end{proof}

\section{Proof of the matching lemma}
\label{sec:match}

In this section we provide the proof of Lemma~\ref{lem:match}. The proof of
this lemma is the only place in this paper where we use the concept of a
regular pair.

\begin{definition}[density, $(\eps,d)$-regular, $(\eps,d)$-super-regular]
  Let~$G$ be a graph and $U,W\subset V(G)$ be disjoint vertex sets. The
  \emph{density} of the bipartite graph $G[U,W]$ is 
  \[d_G(U,W)=\frac{e(G[U,W])}{|U||W|}\,.\]
  We say that $G[U,W]$ is \emph{$(\eps,d)$-regular} if for all $U'\subset U$ and
  $W'\subset W$ with $|U'|\ge\eps|U|$ and $|W'|\ge\eps|W|$ we have
  \[d_G(U',W')=d\pm\eps\,.\]
  The graph $G[U,W]$ is \emph{$(\eps,d)$-super-regular} if it is
  $(\eps,d)$-regular and for all $u\in U$ and for all $w\in W$ we have
  \[\deg_{G[U,W]}(u)=(d\pm\eps)|W|, \qquad\text{and}\qquad
    \deg_{G[U,W]}(w)=(d\pm\eps)|U|\,.\]
\end{definition}

It is well-known that regular pairs are forced by a degree-codegree condition;
we use the following formulation due to
Duke, Lefmann, and R\"odl in~\cite{DukLefRod}.

\begin{lemma}[degree-codegree condition~\cite{DukLefRod}]
  \label{lem:DLR}
  Assume $0<\eps<2^{-200}$ and let $G[U,W]$ be a bipartite graph with
  parts~$U$ and~$W$ of size $|U|=|W|=n$ and density $d=d_{G[U,W]}(U,W)$.
  If
  \begin{enumerate}[label=\rom]
    \item $\deg_{G[U,W]}(u)>(d-\eps)|W|$ for all $u\in U$, and
    \item $\deg_{G[U,W]}(u,u')<(d+\eps)^2|W|$ for all but at most
      $2\eps|U|^2$ pairs $\{u,u'\}\in\binom{U}{2}$,
  \end{enumerate}
  then $G[U,W]$ is $(\eps^{\frac16},d)$-regular.
  \qed
\end{lemma}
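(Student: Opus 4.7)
The plan is to derive $(\eps^{1/6},d)$-regularity via a two-step Cauchy--Schwarz argument that uses the codegree hypothesis (ii) to control the variance of the degree into a fixed subset $U'$, then uses Cauchy--Schwarz again to transfer that variance bound to the density between $U'$ and an arbitrary large $W'$. Fix $U'\subseteq U$ with $|U'|=\alpha n\ge \eps^{1/6}n$ and $W'\subseteq W$ with $|W'|=\beta n\ge \eps^{1/6}n$; write $e(U',W)$, $\mu:=e(U',W)/n$, and $d':=d_G(U',W')$. The goal is $|d'-d|\le\eps^{1/6}$.

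First, I would double-count paths of length two with both endpoints in $U'$. On the one hand, this quantity equals $\sum_{w\in W}\deg_G(w,U')^2$. On the other hand, it equals $\sum_{u,u'\in U'}\deg_G(u,u')$. Using hypothesis (ii), the off-diagonal contribution to the latter is bounded by $|U'|^2(d+\eps)^2 n$ from the at most $\binom{|U'|}{2}$ good pairs plus $O(\eps n^3)$ from the $\le 2\eps|U|^2$ bad pairs (each contributing at most $|W|=n$); the diagonal contributes at most $|U'|\cdot n$. Hence
\[
\sum_{w\in W}\deg_G(w,U')^2 \;\le\; |U'|^2 (d+\eps)^2 n + O(\eps n^3).
\]
On the other hand, hypothesis (i) yields $\mu\ge (d-\eps)|U'|$ and therefore
\[
\frac{(\sum_{w\in W}\deg_G(w,U'))^2}{n} \;=\; n\mu^2 \;\ge\; (d-\eps)^2|U'|^2 n.
\]
Subtracting these (the standard identity $\sum_w (\deg_G(w,U')-\mu)^2=\sum_w \deg_G(w,U')^2 - n\mu^2$) gives the key variance bound $\sum_{w\in W}(\deg_G(w,U')-\mu)^2 \le 4d\eps|U'|^2 n + O(\eps n^3) = O(\eps n^3)$. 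This is the heart of the argument.

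Second, I would apply Cauchy--Schwarz again to control the deviation of $e(U',W')$ from $\mu|W'|$:
\[
\bigl|e(U',W') - \mu|W'|\bigr|^2 \;=\; \Bigl|\sum_{w\in W'}(\deg_G(w,U')-\mu)\Bigr|^2 \;\le\; |W'|\sum_{w\in W}(\deg_G(w,U')-\mu)^2 \;\le\; O(\eps\beta n^4).
\]
Dividing by $|U'||W'| = \alpha\beta n^2$ and using $\mu/|U'|=d\pm O(\eps^{1/3})$ (which itself follows from $\mu \ge (d-\eps)|U'|$ and from the upper bound $n\mu^2\le \sum_w \deg_G(w,U')^2 \le |U'|^2(d+\eps)^2 n + O(\eps n^3)$ combined with $\alpha\ge\eps^{1/6}$), I would obtain
\[
|d' - d| \;\le\; O(\eps^{1/3}) + O\!\left(\frac{\sqrt{\eps/\beta}}{\alpha}\right) \;\le\; O(\eps^{1/3}) + O(\eps^{1/4}),
\]
using $\alpha,\beta\ge\eps^{1/6}$ (so $\sqrt{\eps/\beta}/\alpha\le \eps^{5/12-1/6}=\eps^{1/4}$). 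For $\eps<2^{-200}$ this comfortably absorbs the constants and gives $|d'-d|\le\eps^{1/6}$, proving $(\eps^{1/6},d)$-regularity.

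The main obstacle is the first step: deriving a variance bound on $\deg_G(w,U')$ from codegree information about pairs in $U'$ alone (the lemma has no direct hypothesis on $W$-side codegrees or on upper bounds of individual degrees in $U$). The trick that makes it work is the defect form of Cauchy--Schwarz, combining the codegree \emph{upper bound} on $\sum_w\deg_G(w,U')^2$ with the minimum-degree \emph{lower bound} on $\sum_w\deg_G(w,U')=e(U',W)$ from (i); the gap $(d+\eps)^2-(d-\eps)^2 = 4d\eps$ is what limits the variance. Once this variance bound is in hand, the second Cauchy--Schwarz step is routine, and the bookkeeping of exponents is straightforward given the generous condition $\eps<2^{-200}$.
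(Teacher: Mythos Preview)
The paper does not prove this lemma at all: note the \qed\ immediately after the statement, indicating it is quoted from Duke--Lefmann--R\"odl~\cite{DukLefRod} and taken as a black box. So there is no ``paper's own proof'' to compare against.

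That said, your argument is essentially the standard second-moment/defect-Cauchy--Schwarz proof that one finds in the original reference and in expositions of it, and the outline is sound. Two small points worth tightening if you want a self-contained write-up: (a) the diagonal contribution $\sum_{u\in U'}\deg(u)\le |U'|\,n$ is only $O(\eps n^3)$ when $n$ is at least roughly $\eps^{-1}$, so you implicitly need $n$ not too small (harmless for the paper's application, but worth stating); (b) your upper bound $\mu/|U'|\le d+O(\eps^{1/3})$ requires a brief case split on whether $d\ge\eps^{1/3}$, since for very small $d$ one cannot divide by $d+\eps$ in the square-root estimate --- you handle both cases correctly in spirit but should make it explicit. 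With those caveats, the exponents line up as you say and the slack from $\eps<2^{-200}$ absorbs all constants.
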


If we choose a perfect matching uniformly at random in a super-regular pair
then each edge is roughly equally likely to appear in the matching, as was shown
by Joos (see~\cite{KKOT}).

\begin{lemma}[perfect matchings in super-regular pairs
  {\cite[Theorem~4.3]{KKOT}}]
  \label{lem:KKOT}
  Assume $0\ll\frac{1}{m'}\ll\eps'\ll d\ll 1$. Let $G[U,W]$ be an
  $(\eps',d)$-super-regular graph with $|U|=|W|=m'$.
  Then $G[U,W]$ contains a perfect matching.
  Moreover, for a perfect matching $\sigma$ chosen uniformly at random
  among all perfect matchings in $G[U,W]$ and for all $uw\in E(G)$ we have
  \begin{flalign*}
   &&\Prob[\sigma(u)=w]=(1\pm(\eps')^{\frac1{20}})\frac{1}{dm'}~ .&&\qed
  \end{flalign*}
\end{lemma}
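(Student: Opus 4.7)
My plan is to prove the two assertions of Lemma~\ref{lem:KKOT} separately: first existence of a perfect matching, then the probability bound.

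For existence I would verify Hall's condition using $(\eps',d)$-super-regularity. For $S \subseteq U$ with $|S| < \eps' m'$, the minimum-degree part of super-regularity gives $|N_G(S)| \ge (d-\eps')m' > |S|$ since $d \gg \eps'$. For $\eps' m' \le |S| \le (1-\eps')m'$, if $|W \setminus N_G(S)| \ge \eps' m'$ then $(\eps',d)$-regularity would force a positive density between $S$ and $W \setminus N_G(S)$, which is impossible since no edges exist between them; hence $|N_G(S)| \ge (1-\eps')m' \ge |S|$. For $|S| > (1-\eps')m'$, any $w \in W \setminus N_G(S)$ would have all its neighbours in $U \setminus S$, a set of size less than $\eps' m' \ll (d-\eps')m'$, contradicting the minimum-degree bound; so $N_G(S) = W$. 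Hall's theorem then yields a perfect matching.

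For the probability bound, fix $uw \in E(G)$ and write
\[
  \Prob[\sigma(u) = w] = \frac{|\mathcal{M}_{uw}|}{|\mathcal{M}|},
\]
where $\mathcal{M}$ is the set of all perfect matchings of $G[U,W]$ and $\mathcal{M}_{uw} \subseteq \mathcal{M}$ those containing the edge $uw$. Since $|\mathcal{M}_{uw}|$ equals the number of perfect matchings of $G[U\setminus\{u\}, W\setminus\{w\}]$, and the latter is still $(O(\eps'),d)$-super-regular, the first step is a general count of perfect matchings in an $(\eps,d)$-super-regular bipartite graph on $m+m$ vertices. Using Bregman's inequality for an upper bound on the corresponding permanent, and the Egorychev--Falikman theorem (van der Waerden's conjecture) for a matching lower bound after normalising degrees to an approximately doubly stochastic matrix, combined with Stirling's formula, one obtains that this count equals $(1 \pm f(\eps,d))^{m} d^{m} m!$ for some function $f(\eps,d)$ tending to $0$ with $\eps$. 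Taking the ratio of the two counts yields $|\mathcal{M}_{uw}|/|\mathcal{M}|$ close to $1/(dm')$.

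The main obstacle is sharpening the resulting error, which is a factor $(1 \pm f(\eps',d))^{m'}$ exponential in $m'$, to the polynomial factor $1\pm(\eps')^{1/20}$ required by the statement. To overcome this I would use a symmetrisation argument: since $\sum_{w' \in N_G(u)} |\mathcal{M}_{uw'}| = |\mathcal{M}|$ and $|N_G(u)| = (1 \pm \eps') dm'$, it suffices to show $|\mathcal{M}_{uw_1}| = (1 \pm (\eps')^{1/20})|\mathcal{M}_{uw_2}|$ for all $w_1, w_2 \in N_G(u)$. For this I would set up a switching argument: given $\sigma \in \mathcal{M}_{uw_1}$, set $u' := \sigma^{-1}(w_2)$, and if $u' w_1 \in E(G)$, swap the edges $uw_1, u'w_2$ of $\sigma$ for $uw_2, u'w_1$ to obtain $\sigma' \in \mathcal{M}_{uw_2}$. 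Careful two-sided counting of valid switchings, using the co-degree estimate $|N_G(w_1) \cap N_G(w_2)| \approx d^2 m'$ guaranteed for most pairs by regularity, would yield the required tight ratio. The most delicate point is controlling, for a uniform random $\sigma \in \mathcal{M}_{uw_1}$, the distribution of $u' = \sigma^{-1}(w_2)$; this marginal is itself an instance of the very probability we aim to estimate, so the argument would proceed either by induction on $m'$ or by bootstrapping the crude exponential-error permanent bound to first control these marginals and then run the switching to obtain the sharp estimate.
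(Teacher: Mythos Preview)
The paper does not prove this lemma: it is quoted verbatim from~\cite[Theorem~4.3]{KKOT} (attributed there to Joos) and marked with a \qed\ immediately after the statement. So there is no ``paper's own proof'' to compare against; you are attempting something the authors deliberately imported as a black box.

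Your Hall-condition argument for existence is correct and standard. For the probability estimate, you correctly identify the crux: the permanent bounds (Bregman and van der Waerden/Egorychev--Falikman) only give the count of perfect matchings up to a multiplicative factor of the form $(1\pm f(\eps',d))^{m'}$, which is useless for a ratio that must be accurate to $1\pm(\eps')^{1/20}$. Your symmetrisation idea---reduce to showing $|\mathcal{M}_{uw_1}|\approx|\mathcal{M}_{uw_2}|$ via a switching---is natural, but the induction you sketch does not close. The switching identity gives $|\mathcal{M}_{uw_1}|/|\mathcal{M}_{uw_2}|=p_2/p_1$, and to estimate $p_1,p_2$ you invoke the inductive hypothesis on $G[U\setminus\{u\},W\setminus\{w_i\}]$. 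Even granting that this smaller graph is still $(\eps'',d)$-super-regular with $\eps''=\eps'+O(1/m')$, the inductive error $(\eps'')^{1/20}$ enters \emph{twice} (once in $p_1$, once in $p_2$), so the ratio carries error $\approx 2(\eps')^{1/20}$, and after averaging over $w'\in N_G(u)$ you conclude $|\mathcal{M}_{uw}|/|\mathcal{M}|=(1\pm c(\eps')^{1/20})/(dm')$ with $c>1$. That is strictly weaker than the inductive hypothesis, so the induction does not go through; iterating would blow the constant up exponentially in $m'$, which is exactly the problem you were trying to avoid. Your fallback suggestion of ``bootstrapping the crude exponential-error permanent bound'' is in the right spirit---one does need a two-stage argument, first obtaining some control on the marginals and then tightening---but as written it is only a gesture, not a proof. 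The actual argument in~\cite{KKOT} is more delicate and does not proceed by this direct switching-plus-induction route.
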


The proof of the matching lemma simply combines these two lemmas.

\begin{proof}[Proof of Lemma~\ref{lem:match}]
  By~\ref{lem:match:1} and~\ref{lem:match:3}, for all $x\in U\cup W$ we
  have
  \begin{equation}
    \label{eq:match:deg}
    \deg_{F'}(x)=\big(\mu\pm \tfrac{200p}{\mu^2}\big)m\,
  \end{equation}
  By~\ref{lem:match:2} and~\ref{lem:match:3}, for all but at most
  $\frac{m^2}{\log m}$ pairs $\{u,u'\}\in\binom{U}{2}$ we have
  \begin{equation}
    \label{eq:match:codeg}
    \deg_{F'}(u,u')=\big(\mu^2\pm \tfrac{300p}{\mu^2}\big)m\,.
  \end{equation}
  We want to apply Lemma~\ref{lem:DLR} with $d=\mu$ and $\eps=400p/\mu^3$ to
  conclude that $F'[U,W]$ is super-regular, and now check the
  conditions of this lemma.
  By~\eqref{eq:match:deg}, for $u\in U$ we have
  \begin{equation*}
      \deg_{F'}(u)=\big(d\pm \tfrac{200p}{\mu^2}\big)m 
      = \big(d\pm \tfrac{200p}{\mu^2}\big)\frac{|W|}{1\pm p}
      = \big(d\pm \tfrac{400 p}{\mu^2}\big)|W|
      >(d-\eps)|W|\,, 
  \end{equation*}
  and similarly for $w\in W$ we have $\deg_{F'}(w)=\big(d\pm \tfrac{400p}{\mu^2}\big)|U|>(d-\eps)|U|$.
  By~\eqref{eq:match:codeg}, for all but at most $\frac{m^2}{\log m}$ pairs
  $\{u,u'\}\in\binom{U}{2}$ we have
  \begin{equation*}\begin{split}
      \deg_{F'}(u,u')&\le\big(d^2+\tfrac{300p}{\mu^2}\big)m\le\big(d^2+\tfrac{300p}{\mu^2}\big)\frac{|W|}{{1-p}}
      \le\big(d^2+\tfrac{400p}{\mu^2}\big)|W|<(d^2+2\eps d+\eps^2)|W| \\
      &=(d+\eps)^2|W|\,,
  \end{split}\end{equation*}
  where the last inequality uses $d=\mu$ and $\eps=400p/\mu^3$.
  We conclude that, if 
  $\frac{m^2}{\log m}\leq 2\eps |U|^2$ which holds for  
  $\log m>1/\eps$, then $F'$ is
  $\big((400\frac{p}{\mu^3})^{\frac16}, \mu\big)$-regular by
  Lemma~\ref{lem:DLR}.
  Since $\deg_{F'}(x)=\big(d\pm \tfrac{400p}{\mu^2}\big)|U|$ for all $x\in U\cup W$, it follows
  that~$F'$ is  $\big((400\frac{p}{\mu^3})^{\frac16},\mu\big)$-super-regular.
  
  Hence we can apply Lemma~\ref{lem:KKOT} to~$F'$ with 
  \[m'=|U|=(1\pm p)m\,, \quad
    \eps'=\Big(400\frac{p}{\mu^3}\Big)^{\frac16}\,, \quad \text{and} \quad
    d=\mu\,,
  \]
  and conclude that~$F'$ has a perfect matching and that for a perfect
  matching~$\sigma$ chosen uniformly at random among all perfect matchings
  of~$F'$ and for all $uw\in E(F')$ we have
  \[\Prob[\sigma(u)=w]=\Big(1\pm\Big(400\frac{p}{\mu^3}\Big)^{\frac1{120}}\Big)\frac{1}{\mu(1\pm
      p)m} \le \frac{2}{\mu m}\,,\]
  where the inequality holds if~$p$ is small and $\big(400\frac{p}{\mu^3}\big)^{\frac1{120}}\le\frac{1}{100}$.
\end{proof}

\section{Proof of the almost perfect packing lemma}
\label{sec:appl}

In this section we prove Lemma~\ref{lem:appl}. This is the technical part
of this paper, which requires some stamina. 

We start this section by explaining the setup which we use throughout. Then, in
Section~\ref{subsec:inv} we define some auxiliary properties that our random
packing process preserves.
In Section~\ref{subsec:RandomEmbedding} we analyse the behaviour of the algorithm
\RandomEmbedding{},
and in Section~\ref{subsec:PackingProcess} the behaviour of \PackingProcess{}.
In Section~\ref{subsec:appl_proof}, finally, we use the obtained results to show
Lemma~\ref{lem:appl}. 

In the results in this section we shall use the following setup.

\begin{setting}\label{set:graphs}
  We use the constants defined in Setting~\ref{set:const}.

  Let $(G''_s)_{s\in[s^*]}$ (for some $s^*\le \frac{7}{4}n$) be graphs on $[n]$, such
  that for each $s$ and $x\in V(G''_s)$ we have $\LEFTDEG_{G''_s}(x)\le D$, such
  that $\Delta(G''_s)\le cn/\log n$, and such that the final $\delta n$
  vertices of $G''_s$ all have degree $d_s$ and form an independent set.

  Let $\widehat H$ be a $(\xi,2D+3)$-quasirandom graph with $n$ vertices and density $\hat{p}$.
  Recall that \PackingProcess{} chooses $H_0^*$ as a subgraph of~$\widehat H$ by picking edges
  of~$\widehat H$ independently with probability~$\gamma\binom{n}{2}/e(\widehat
  H)$. We will assume that $e(H_0^*)\leq 1.1\gamma \binom{n}{2}$.
\end{setting}

We note at this point that we assume $e(H_0^*)\leq 1.1\gamma \binom{n}{2}$ in
order to make use of (\ref{eq:ps}). This inequality holds with probability at
least $1-e^{-n}$ and hence this assumption does not affect the proof of
Lemma~\ref{lem:appl}, since we will see that, if this inequality holds, each of
the properties \ref{appl:quasi} -- \ref{appl:sumweight} occurs with probability
at least $1-n^{-4}$.

\subsection{Coquasirandomness, diet, codiet, and cover}
\label{subsec:inv}

The following properties coined in~\cite{DegPack} are preserved throughout the run of our
random packing process. Firstly, for our analysis of \PackingProcess{}, we need
the concept of coquasirandomness. This controls the intersections of vertex
neighbourhoods in two edge-disjoint graphs on the same vertex set.

\begin{definition}[coquasirandom]
    For $\alpha>0$ and $L\in\mathbb N$, we say that a pair of graphs $(F, F^*)$, both on the same vertex set~$V$ of order~$n$
    and with densities $p$ and $p^*$, respectively, is \emph{$(\alpha,L)$-coquasirandom} if for
    every set $S\subset V$ of at most $L$ vertices and every subset $R\subseteq S$ we have
    \[|N_{F}(R)\cap N_{F^*}(S\setminus R)|
    =(1\pm\alpha)p^{|R|}(p^*)^{|S\setminus R|}n\,.\]
\end{definition}

For the analysis of one run of \RandomEmbedding{} we further need the following concepts.

\begin{definition}[diet condition, codiet condition, cover condition]
  \label{def:dietcover} 
  Let~$H$ be a graph with~$n$ vertices and $p\binom{n}{2}$ edges, and let
  $X\subseteq V(H)$ be any vertex set. We say that the pair $(H,X)$
  satisfies the \emph{$(\beta,L)$-diet condition} if for every set
  $S\subset V(H)$ of at most~$L$ vertices we have 
  \[|N_{H}(S)\setminus
  X|=(1\pm\beta)p^{|S|}(n-|X|)\,.\]
  Given further $H^*$ on the same vertex set as $H$, which has no edges in common with $H$ and which has $p^*\binom{n}{2}$ edges, we say that the triple $(H,H^*,X)$ satisfies the \emph{$(\beta,L)$-codiet condition} if for every set $S\subset V(H)$ of at most $L$ vertices, and for every $R\subset S$, we have
  \[\big|\big(N_{H}(R)\cap N_{H^*}(S\setminus R)\big)\setminus
  X\big|=(1\pm\beta)p^{|R|}(p^*)^{|S\setminus R|}(n-|X|)\,.\]
  Further, let~$G$ be a graph with vertex set~$[n]$.
  Given $\eps>0$,  $i\in [n-\epsilon n]$, and $d\in\mathbb N$, we define
  \[X_{i,d}:=\{x\in V(G)\colon i\le x<i+\varepsilon n,|\LNBH_G(x)|=d\}\,.\] 
  We say that a partial embedding $\psi$ of $G$ into $H$, which embeds
  $\LNBH_G(x)$ for each $i\le x<i+\varepsilon n$, satisfies the
  \emph{$(\eps,\beta,i)$-cover condition} if for each $v\in V(H)$, and for each $d\in\mathbb N$,
  we have
  \[
  \big|\big\{ x\in X_{i,d}:v\in N_{H}\big(\psi(\LNBH_G(x))\big)\big\}\big|=(1\pm\beta)p^{d}|X_{i,d}|\pm\varepsilon^{2}n\,.
  \]
\end{definition}

Following~\cite{DegPack}, we use Definition~\ref{def:dietcover} to define key events $\DietEvent(\cdot;\cdot)$, $\CoverEvent(\cdot;\cdot)$, $\CodietEvent(\cdot)$ on the probability space $\Omega^{G\AlgMap H}$ underlying the run of \RandomEmbedding\ which attempts to embed $G$ into $H$. (For a formal definition of this probability space, see~\cite[Section~4.1]{DegPack}.)

 Suppose that $D$, $\delta$ and $\eps$ are as in Setting~\ref{set:const}.
 Suppose that $\lambda>0$. Suppose that we have graphs $G$ and $H$ as in
 Algorithm~\ref{alg:embed}. Suppose that we run \RandomEmbedding{} to partially
 embed $G$ into $H$. Let $(\psi_{i})_{i\in[t_*]}$ be the partial embeddings of
 $G\big[[i]\big]$ into $H$, where $t_*=n-\delta n$ if \RandomEmbedding{} succeeded, and otherwise
 $t_*+1$ is the step in which \RandomEmbedding{} halted with failure.
\begin{itemize}[leftmargin=*]
	\item For each $t\in[n-\delta n]$, let $\DietEvent(\lambda;t)\subset\Omega^{G\AlgMap H}$ correspond to executions of \RandomEmbedding{} for which $t_*\ge t$ and the pair $(H,\im\psi_{t})$
	satisfies the $(\lambda,2D+3)$-diet condition.
	\item For each $t\in[n-\delta n]$, let $\CoverEvent(\lambda;t)\subset\Omega^{G\AlgMap H}$ correspond to executions of \RandomEmbedding{} for which $t_*\ge t+\eps n$ and the embedding $\psi_{t^*}$ of~$G$ into~$H$
	satisfies the $(\varepsilon,\lambda,t)$-cover condition.
	\item Suppose further that we have a graph $H^*_0$ with $V(H)=V(H^*_0)$. For each $t\in[n-\delta n]$, let $\CodietEvent(t)\subset\Omega^{G\AlgMap H}$ correspond to executions of \RandomEmbedding{} for which $t_*\ge t$ and the triple $(H,H^*_0,\im\psi_t)$ satisfies the $(2\eta,2D+3)$-codiet condition.
\end{itemize}

\subsection{Properties of \RandomEmbedding{}}
\label{subsec:RandomEmbedding}

In this section we collect properties that are preserved during a run of
\RandomEmbedding{}. The constants we use are as in
Setting~\ref{set:const}. However, since we are only concerned with a
single run of \RandomEmbedding{} here, we only consider a single guest
graph~$G$, and a single host graph~$H$ with the following properties.

\begin{setting}\label{set:RandEmb}
  Let~$G$ be a graph on vertex set $[n]$ such that $\LEFTDEG_G(x)\le D$ for each
  $x\in V(G)$ and $\Delta(G)\le cn/\log n$.
  Let~$H$ be an $(\alpha,2D+3)$-quasirandom graph with $n$ vertices and $p\binom{n}{2}$ edges, with $p\ge\gamma$, and suppose that $H^*_0$ is a graph on $V(H)$ such that $(H,H^*_0)$ is $(\eta,2D+3)$-coquasirandom.
\end{setting}

The following lemma comes from~\cite[Lemma~24]{DegPack} and the deduction of~\cite[Lemma~18]{DegPack} which comes immediately after.
Specifically,~\ref{22:a} is the deduction of \cite[Lemma~18]{DegPack} and~\ref{22:d} is explicitly in
\cite[Lemma~24]{DegPack}, while~\ref{22:b} and~\ref{22:c} differ only from
the statements of~\cite[Lemma~24]{DegPack} in that the error bound we
give here is in terms of $\beta_t$ whereas in~\cite[Lemma~24]{DegPack} a
(larger) error bound $C\alpha$ is given. In the proof
of~\cite[Lemma~24]{DegPack}, the stronger error bounds we claim here are
explicitly obtained. The cover conditions asserted are otherwise identical,
despite being written slightly differently.

\begin{lemma}\label{lem:22}
  Given~$D\in \mathbb{N}$ and $\gamma>0$, let
  $\delta,\alpha_0,\alpha_{2n},C,\eps$ be as in
  Setting~\ref{set:const}. Then the following holds for any
  $\alpha_0\le\alpha\le\alpha_{2n}$ and all sufficiently large $n$.
  Let~$G$,~$H$ and $H^*_0$ be as in Setting~\ref{set:RandEmb}.
  Let $\beta_t=\beta_t(\alpha)$ be as in Setting~\ref{set:const}.
  If we run \RandomEmbedding{} to embed~$G[{\scriptstyle [n-\delta n]}]$ into~$H$,
  then with probability at least $1-2n^{-9}$ 
  \begin{enumerate}[label=\abc]
    \item\label{22:a} \RandomEmbedding{} succeeds in constructing
      partial embeddings $(\psi_i)_{i\in[n-\delta n]}$,
    \item\label{22:b} $(H,\im\psi_t)$ satisfies the $(\beta_t,2D+3)$-diet condition (i.e.\ $\DietEvent(\beta_t;t)$ occurs) for
      each $t\in[n-\delta n]$,
    \item\label{22:c} $\psi_t$ has the $(\eps,20D\beta_{t-\eps n+2},t-\eps n+2)$-cover
      condition (i.e.\ $\CoverEvent(20D\beta_{t-\eps n+2},t-\eps n+2)$ occurs) for each $t\in[\eps n-1,n-\delta n]$.
    \item\label{22:d} $(H,H^*_0,\im\psi_t)$ satisfies the $(2\eta,2D+3)$-codiet condition (i.e.\ $\CodietEvent(t)$ occurs) for
      each $t\in[n-\delta n]$. \qed
  \end{enumerate}
\end{lemma}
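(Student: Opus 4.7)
The plan is to read Lemma~\ref{lem:22} as a mild bookkeeping sharpening of \cite[Lemma~24]{DegPack}: parts~\ref{22:a} and~\ref{22:d} are literally the cited results (the deduction of \cite[Lemma~18]{DegPack} and Lemma~24 in \cite{DegPack} respectively), while parts~\ref{22:b} and~\ref{22:c} follow the same proof but retain the finer error term $\beta_t$ in place of the cruder $C\alpha$ bound stated there. The argument in \cite{DegPack} already produces this finer error at each step, so what is needed is to rewrite the inductive bookkeeping.

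For~\ref{22:b} I would induct on $t$ with hypothesis $\DietEvent(\beta_t;t)$. In the inductive step, fix a set $S\subseteq V(H)$ with $|S|\le 2D+3$, let $Z_t = |N_H(S)\setminus\im\psi_t|$ and apply Corollary~\ref{cor:freedm} to the decrements $Y_i=Z_{i-1}-Z_i$. Each $Y_i$ is a $\{0,1\}$-indicator, so $R=1$; its conditional expectation equals the proportion of $N_H(S)\setminus\im\psi_{i-1}$ that lies in $\CANDSET_{G\AlgMap H}^{i-1}(i)\setminus\im\psi_{i-1}$, which by the inductive diet condition at step $i-1$ is determined up to a factor $(1\pm\beta_{i-1})$. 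Summing over $i\le t$ gives a drift term whose ratio to $\beta_{t-1}$ is bounded by $1+O(D\delta^{-2}\gamma^{-2D-10}/n)$; the definition \eqref{eq:defnbeta} of $\beta_t$ and the integral bound \eqref{eq:betabound} are precisely calibrated so that this amplification integrates to at most $\beta_t$. A Freedman deviation of order $\beta_t n^{-1/2}$ then gives failure probability $\exp(-\Omega(\beta_t^2 n))$, and a union bound over the $n^{O(D)}$ sets $S$ and over $t$ yields the claimed probability. Part~\ref{22:d} is the same calculation applied to $|N_H(R)\cap N_{H^*_0}(S\setminus R)\setminus \im\psi_t|$: only the $H$-side codegree evolves with $t$, so the initial coquasirandom error $\eta\gg\alpha_{2n}$ is amplified by at most a factor $2$.

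For~\ref{22:c} I would apply Corollary~\ref{cor:freedm} to the sums $\sum_{x\in X_{t,d}} \ONE[v\in N_H(\psi(\LNBH_G(x)))]$ over the $\eps n$ rounds in the window $[t,t+\eps n)$; the diet condition supplied by~\ref{22:b} controls the conditional expectation of each new indicator up to a multiplicative $(1\pm\beta_i)$, and the factor $20D$ in $20D\beta_{t-\eps n+2}$ absorbs the at-most-$D$ conditioning losses for the left-neighbours of $x$ together with the sum over degrees $d\in\{0,\dots,D\}$; the additive slack $\eps^2 n$ soaks up the Freedman deviation. Finally, for~\ref{22:a}, the diet condition of~\ref{22:b} guarantees $|\CANDSET_{G\AlgMap H}^{t-1}(t)\setminus\im\psi_{t-1}| \ge (1-\beta_{n-\delta n})\gamma^D\, \delta n\ge \Delta(G)+1$ throughout, so \RandomEmbedding{} never halts with failure on the good event. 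The main obstacle is the per-round amplification calculation verifying that the constant $1000D\delta^{-2}\gamma^{-2D-10}$ in the exponent of \eqref{eq:defnbeta} really dominates the ratio by which the diet error can grow in one step; this is a direct computation using the quasirandomness of $H$ and the bound $|S|\le 2D+3$, and is exactly the calculation performed implicitly in the proof of \cite[Lemma~24]{DegPack}.
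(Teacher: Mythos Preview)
Your proposal is correct and matches the paper's approach exactly: the paper does not prove this lemma independently but cites it from~\cite{DegPack}, noting (just as you do) that~\ref{22:a} and~\ref{22:d} are literally the results there while for~\ref{22:b} and~\ref{22:c} the proof in~\cite{DegPack} already obtains the sharper $\beta_t$ error even though the stated bound there is $C\alpha$. Your sketch of the underlying inductive Freedman argument is more detail than the paper gives, but it accurately reconstructs the mechanism in~\cite{DegPack}.
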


The next lemma is proven as part of~\cite[Lemma~26]{DegPack} (it can
be found in~\cite[Claim~26.1]{DegPack}).

\begin{lemma}\label{lem:24}
  Given~$D\in \mathbb{N}$ and $\gamma>0$, let
  $\delta,\alpha_0,\alpha_{2n},C,\eps$ be as in
  Setting~\ref{set:const}. Then the following holds for any
  $\alpha_0\le\alpha\le\alpha_{2n}$ and all sufficiently large $n$.
  Let~$G$ and~$H$ be as in Setting~\ref{set:RandEmb} and let~$1\leq j\leq t+1-\eps n$ for $t\leq (1-\delta)n$.
  Let $\beta_j=\beta_j(\alpha)$ be as in Setting~\ref{set:const}.
  Assume we run \RandomEmbedding{} to embed~$G[{\scriptstyle [n-\delta n]}]$ into~$H$, that it
  produces a partial embedding~$\psi_j$ such that $(H,\im\psi_j)$ has the
  $(\beta_j,2D+3)$-diet condition, and let $T\subset V(H)\setminus\im\psi_j$
  with $|T|\ge\frac12\gamma^{2D+3}\delta n$.
  Then with probability at least $1-2n^{-2D-19}$, one of the following
  occurs.
  \begin{enumerate}[label=\abc]
  \item\label{e:part1} $\psi_t$ does not have the $(\eps,20D\beta_j,j)$-cover condition (i.e.\ $\CoverEvent(20D\beta_j,j)$ does not occur) , or
  \item\label{e:part2} $\big|\{x\colon j\le x<j+\eps n, \psi_{t-1}(x)\in T\}\big|=(1\pm40D\beta_j)\frac{|T|\eps n}{n-j}$.
  \end{enumerate}
\end{lemma}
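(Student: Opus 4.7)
The plan is to apply the concentration result Corollary~\ref{cor:freedm}\ref{cor:freedm:tails} to
\[
Y := \sum_{x=j}^{j+\eps n - 1} Y_x,\qquad Y_x := \ONE_{\psi(x)\in T},
\]
with good event $\cE = \CoverEvent(20D\beta_j; j)$. Since conclusion~\ref{e:part1} already handles the complement of $\cE$, it suffices to derive the claimed concentration conditional on $\cE$; here $\psi$ denotes the final partial embedding produced by \RandomEmbedding{}, and $\psi(x) = \psi_{t-1}(x)$ for each $x \le t-1$ in the window.

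At each step $x$, \RandomEmbedding{} picks $\psi(x)$ uniformly from $\CANDSET^{x-1}(x) \setminus \im\psi_{x-1}$, so
\[
\Exp[Y_x \mid \hist_{x-1}] = \frac{|\CANDSET^{x-1}(x) \cap T \setminus \im\psi_{x-1}|}{|\CANDSET^{x-1}(x) \setminus \im\psi_{x-1}|}.
\]
First I would estimate the denominator by applying the $(\beta_j, 2D+3)$-diet condition to the set $S = \psi_{x-1}(\LNBH_G(x))$ of size $d_x := \LEFTDEG_G(x) \le D$, giving $|\CANDSET^{x-1}(x) \setminus \im\psi_j| = (1 \pm \beta_j)p^{d_x}(n-j)$; since $|\im\psi_{x-1} \setminus \im\psi_j| \le \eps n$, the denominator equals $(1 \pm 2\beta_j)p^{d_x}(n-j)$ up to a small additive correction. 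For the numerator, $T \cap \im\psi_j = \emptyset$ implies $|\CANDSET^{x-1}(x) \cap T \setminus \im\psi_{x-1}| = |\CANDSET^{x-1}(x) \cap T| \pm \eps n$. Grouping the $x$ by their left-degree value $d$ and swapping sums,
\[
\sum_{x \in X_{j,d}} |\CANDSET^{x-1}(x) \cap T| = \sum_{v \in T}\sum_{x \in X_{j,d}} \ONE_{v \in N_H(\psi(\LNBH_G(x)))},
\]
and on $\cE$ the $(\eps, 20D\beta_j, j)$-cover condition bounds each inner sum by $(1 \pm 20D\beta_j)p^d|X_{j,d}| \pm \eps^2 n$.

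Combining these estimates, dividing each term by the denominator $\approx p^{d_x}(n-j)$, summing over $d\in\{0,\dots,D\}$, and using $\sum_d |X_{j,d}| = \eps n$, the total conditional expectation on $\cE$ satisfies
\[
\sum_{x=j}^{j+\eps n - 1} \Exp[Y_x \mid \hist_{x-1}] = (1 \pm 30D\beta_j)\,\frac{|T|\eps n}{n-j}.
\]
Corollary~\ref{cor:freedm}\ref{cor:freedm:tails} applied with $R = 1$, $\tmu = |T|\eps n/(n-j)$, and $\tnu = \trho = 30D\beta_j\tmu$ then yields the required concentration with overall relative error at most $40D\beta_j$. The Freedman exponent is of order $\beta_j^2 |T|\eps n/(n-j)$, which is linear in $n$ thanks to $|T| \ge \tfrac{1}{2}\gamma^{2D+3}\delta n$ and the positive constant lower bound $\beta_j \ge \beta_0(\alpha_0) = \alpha_0$; this comfortably beats the required $2n^{-2D-19}$ for sufficiently large $n$.

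The main obstacle is the careful bookkeeping needed to absorb all three kinds of errors — the relative diet error $\beta_j$ in the denominator, the $\eps n$ absolute errors arising from the $\im\psi_j$-versus-$\im\psi_{x-1}$ discrepancy in both numerator and denominator, and the $|T|\eps^2 n$ absolute error carried in from the cover condition — into a single clean $(1 \pm 40D\beta_j)$ relative error. This relies on the constant hierarchy in Setting~\ref{set:const}, in particular $\eps \ll \alpha_0 \le \beta_j$ and $\eps \ll \gamma^{2D+3}\delta \le |T|/n$, so that each of the absolute errors is dominated by $\beta_j$ times the main term $|T|\eps n/(n-j)$.
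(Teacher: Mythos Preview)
Your proposal is correct and follows precisely the natural approach: bound the sum of one-step conditional hitting probabilities by using the assumed diet condition at time $j$ for the denominators and the cover condition (summed over $v\in T$) for the numerators, then apply Corollary~\ref{cor:freedm}\ref{cor:freedm:tails}. The paper does not reproduce a proof here but cites \cite[Claim~26.1]{DegPack}, whose argument is exactly this one; your error bookkeeping and the linear-in-$n$ Freedman exponent are both handled correctly by the constant hierarchy in Setting~\ref{set:const}.
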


In~\cite[Lemma~28]{DegPack} we estimated the probability that, when running
\RandomEmbedding{}, a given vertex~$x\in V(H)$ is not used in the embedding
of the first~$t_1$ vertices of~$G$.

\begin{lemma}[Lemma~28 in~\cite{DegPack}]\label{lem:probvtx} 
  Given~$D\in \mathbb{N}$ and $\gamma>0$, let
  $\delta,\alpha_0,\alpha_{2n},C,\eps$ be as in
  Setting~\ref{set:const}. Then the following holds for any
  $\alpha_0\le\alpha\le\alpha_{2n}$ and all sufficiently large $n$.
  Let~$G$ and~$H$ be as in Setting~\ref{set:RandEmb}.  Let
  $0\le t_0<t_1\le n-\delta n$. Let $\histens$ be a history ensemble of
  \RandomEmbedding{} up to time $t_0$, and suppose that
  $\Prob[\histens]\ge n^{-4}$. Then the following hold for any distinct
  vertices $u,v\in V(H)$.
  \begin{enumerate}[label=\abc]
   \item\label{lem:probvtx:a} If $v\not\in\im\psi_{t_0}$ then we have
     \begin{flalign*}
       &&\Prob\big[v\not\in\im\psi_{t_1}|\histens\big]=(1\pm
       100C\alpha\delta^{-1})\tfrac{n-1-t_1}{n-t_0}\,.
       &&
     \end{flalign*}
   \item\label{lem:probvtx:b} If $u,v\not\in\im\psi_{t_0}$ then we have
     \begin{flalign*}
       &&\Prob\big[u,v\not\in\im\psi_{t_1}|\histens\big]=(1\pm
       100C\alpha\delta^{-1})(\tfrac{n-1-t_1}{n-t_0})^2\,.
       &&\qed
     \end{flalign*}
  \end{enumerate}
\end{lemma}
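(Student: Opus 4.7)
The approach for part~\ref{lem:probvtx:a} starts from the observation that since $v$ can be chosen at most once during \RandomEmbedding{},
\begin{equation*}
  \Prob\big[v\in\im\psi_{t_1}\,\big|\,\histens\big] = \sum_{t=t_0+1}^{t_1}\Prob\big[v\text{ is embedded at step }t\,\big|\,\histens\big]\,.
\end{equation*}
By the tower property, each summand equals $\Exp\bigl[\ONE_{v\in \CANDSET^{t-1}_{G\AlgMap H}(t)\setminus\im\psi_{t-1}}\cdot|\CANDSET^{t-1}_{G\AlgMap H}(t)\setminus\im\psi_{t-1}|^{-1}\,\big|\,\histens\bigr]$. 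The denominator is controlled on the diet event $\DietEvent(\beta_{t-1};t-1)$ from Lemma~\ref{lem:22}\ref{22:b}, which tells us it equals $(1\pm\beta_{t-1})p^{d_t}(n-t+1)$ where $d_t=\LEFTDEG_G(t)\le D$. The complement of this event has probability at most $2n^{-9}$ and contributes negligibly since $\Prob[\histens]\ge n^{-4}$.

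The main work is then to estimate the numerator, and I would argue inductively on $t\in(t_0,t_1]$, maintaining
\begin{equation*}
  \Prob\big[v\notin\im\psi_t\,\big|\,\histens\big] = (1\pm 100C\alpha\delta^{-1})\tfrac{n-1-t}{n-t_0}\,,
\end{equation*}
which holds trivially at $t=t_0$. To advance from $t-1$ to $t$, the key tools are the $(\alpha,2D+3)$-quasirandomness of $H$ and the cover condition of Lemma~\ref{lem:22}\ref{22:c}: aggregated over histories, the fraction of configurations in which the partial embedding places $\LNBH_G(t)$ inside $N_H(v)$ is $(1\pm O(\alpha))p^{d_t}$, which combined with the diet-controlled denominator contributes approximately $(n-t+1)^{-1}$ to the cumulative probability $\Prob[v\in\im\psi_t]$. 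A telescoping product computation then yields the claimed ratio, with the factor $\delta^{-1}$ appearing because $n-t+1\ge\delta n$ is needed throughout in order to keep per-step errors summable.

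Part~\ref{lem:probvtx:b} follows by inclusion-exclusion,
\begin{equation*}
  \Prob\big[u,v\notin\im\psi_{t_1}\,\big|\,\histens\big] = 1 - \Prob[u\in\im\psi_{t_1}\mid\histens] - \Prob[v\in\im\psi_{t_1}\mid\histens] + \Prob[u,v\in\im\psi_{t_1}\mid\histens]\,,
\end{equation*}
using part~\ref{lem:probvtx:a} for the two single-vertex embedding probabilities and bounding the joint probability by decomposing over ordered pairs of times $(t,t')$ at which $u$ and $v$ are respectively embedded. Each such term factorises up to small multiplicative error by extending the analysis of part~\ref{lem:probvtx:a}, now invoking $(\alpha,2D+3)$-quasirandomness at depth up to $2D+2$ to control the probability that $u$ and $v$ simultaneously lie in the relevant common neighbourhoods. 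The main obstacle throughout is tracking the accumulation of errors: each step injects an error of order $\beta_{t-1}+\alpha$, and the definition of $\beta_t$ in Setting~\ref{set:const} is calibrated precisely so that the accumulated error remains bounded by the constant $C$ uniformly in $t\le n-\delta n$, ultimately producing the final bound $100C\alpha\delta^{-1}$.
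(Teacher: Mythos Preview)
The paper does not prove this lemma: it is imported verbatim from~\cite{DegPack} (note the title ``Lemma~28 in~\cite{DegPack}'' and the \qed terminating the statement), so there is no in-paper argument to compare your proposal against.

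That said, a couple of remarks on your sketch. For part~\ref{lem:probvtx:a}, the ingredients you name (diet condition for candidate-set sizes, cover condition for how often $v$ lands in a candidate set) are the right ones, but the inductive step as you describe it is circular: to estimate $\Prob[v\text{ embedded at step }t\mid\histens]$ you need the joint probability that $v\notin\im\psi_{t-1}$ \emph{and} $v\in\CANDSET^{t-1}_{G\AlgMap H}(t)$, and the cover condition only controls the latter event unconditionally, not conditioned on the former. The actual argument in~\cite{DegPack} works in $\eps n$-sized blocks, using the cover condition to count how many $x$ in each block have $v$ in their candidate set and then multiplying survival probabilities block by block, which avoids this entanglement.

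For part~\ref{lem:probvtx:b}, inclusion--exclusion is a detour rather than a shortcut: the term $\Prob[u,v\in\im\psi_{t_1}\mid\histens]$ that it introduces is no easier to estimate than $\Prob[u,v\notin\im\psi_{t_1}\mid\histens]$ itself, and your proposed decomposition over pairs of embedding times $(t,t')$ is essentially a from-scratch two-vertex analysis anyway. The cleaner route, and the one taken in~\cite{DegPack}, is to rerun the part~\ref{lem:probvtx:a} telescoping argument while tracking the pair $\{u,v\}$ simultaneously; the diet condition at depth $2D+3$ is precisely what lets you control candidate sets with both $u$ and $v$ removed.
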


In addition we estimated the probability that a given edge of~$G$ is
embedded to a given edge of~$H$. The following lemma is~\cite[Lemma~29]{DegPack}, together with equation~(6.10) of that paper which is established in the proof.

\begin{lemma}[Lemma~29 in~\cite{DegPack}]\label{lem:probedge}
 Given~$D\in \mathbb{N}$, and $\gamma>0$, let constants
 $\delta,\eps,C,\alpha_0,\alpha_{2n}$ be as in
 Setting~\ref{set:const}. Then the following holds for any
 $\alpha_0\le\alpha\le\alpha_{2n}$ and all sufficiently large $n$. 
 Let~$G$ and~$H$ be as in Setting~\ref{set:RandEmb}. 
 Let $uv$ be an edge of $H$, and let $xy$ be an edge of $G$. When \RandomEmbedding{} is run to embed $G[{\scriptstyle [n-\delta n]}]$ into $H$, we have
 \[\Prob\big[x\AlgMap u,y\AlgMap v\big]=\big(1\pm500C\alpha\delta^{-1}\big)^{4D+2}\cdot p^{-1}n^{-2}\,,\]
 and furthermore the probability that some edge of $G$ is embedded to $uv$ is
\begin{flalign*}
   &&\big(1\pm500C\alpha\delta^{-1}\big)^{4D+2}p^{-1}n^{-2}\cdot 2e(G)\,.
   &&\qed
  \end{flalign*}   
\end{lemma}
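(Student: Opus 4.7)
The plan is to compute the joint probability by reducing it to single-step probabilities along the run of \RandomEmbedding{}, controlled via the diet and cover conditions from Lemma~\ref{lem:22}. Without loss of generality assume $x<y$ in the natural degeneracy order on $V(G)$; since $xy\in E(G)$ we then have $x\in\LNBH_G(y)$. I would write
\[\Prob[x\AlgMap u,\, y\AlgMap v]=\Prob[x\AlgMap u]\cdot\Prob[y\AlgMap v\mid x\AlgMap u]\,,\]
and estimate each factor in turn using the appropriate good events.

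For the first factor, condition on a history $\hist_{x-1}$ up to step $x-1$ on which the $(\beta_{x-1},2D+3)$-diet condition holds (guaranteed by Lemma~\ref{lem:22}\ref{22:b} outside an event of probability $O(n^{-9})$). Then the valid candidate set $\CANDSET_{G\AlgMap H}^{x-1}(x)\setminus\im\psi_{x-1}$ has size $(1\pm\beta_{x-1})p^{\LEFTDEG_G(x)}(n-x+1)$, so if $u$ lies in it, $u$ is chosen with this reciprocal probability. To compute the probability that $u$ is in this candidate set at all, I would apply the cover condition (Lemma~\ref{lem:22}\ref{22:c}) in an $\eps n$-window ending near $x$: this yields, by averaging against the embedding process, $\Prob[u\in \CANDSET_{G\AlgMap H}^{x-1}(x)]\approx p^{\LEFTDEG_G(x)}$. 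Combining this with Lemma~\ref{lem:probvtx}\ref{lem:probvtx:a}, which handles whether $u$ has already been used, gives $\Prob[x\AlgMap u]=(1\pm O(C\alpha\delta^{-1}))^{2D+1}n^{-1}$.

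For the conditional factor $\Prob[y\AlgMap v\mid x\AlgMap u]$, I would repeat the analysis at step $y$. The diet condition at time $y-1$ again produces a valid candidate set of size $(1\pm\beta_{y-1})p^{\LEFTDEG_G(y)}(n-y+1)$. The cover condition, this time applied in a window around $y$, controls the probability that $v$ lies in this candidate set; but because $x\in\LNBH_G(y)$ has now been pinned to $u$, one of the required common-neighbour constraints collapses to the deterministic condition $v\in N_H(u)$, and only $\LEFTDEG_G(y)-1$ constraints remain effectively random. Hence the cover probability is reduced only by a factor of $p^{\LEFTDEG_G(y)-1}$ rather than $p^{\LEFTDEG_G(y)}$, producing the $p^{-1}$ that appears in the statement and yielding $\Prob[y\AlgMap v\mid x\AlgMap u]=(1\pm O(C\alpha\delta^{-1}))^{2D+1}p^{-1}n^{-1}$. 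Multiplying the two factors gives the first claim, and summing the estimate over the $2e(G)$ ordered incident pairs $(xy)$ of $G$ gives the second claim.

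The main obstacle is the careful bookkeeping of error propagation: each of $x$ and $y$ contributes up to $2D+1$ diet/cover applications, consolidating into the $(1\pm 500C\alpha\delta^{-1})^{4D+2}$ factor. One also has to absorb the contribution of \emph{bad} histories (where the diet or cover conditions fail) into the error; this is harmless because Lemma~\ref{lem:22} gives failure probability $O(n^{-9})$ while the main term is of order $n^{-2}$, so the bad contribution fits comfortably inside the stated multiplicative error, provided the estimates in the previous paragraph are first stated conditionally on the relevant good events and then converted to unconditional bounds by the law of total probability.
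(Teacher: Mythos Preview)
The paper does not prove this lemma here; it is quoted from~\cite{DegPack}. However, the paper does prove the closely related Lemma~\ref{lem:vertexnotimage}, explicitly stating that its proof is ``rather similar'' to that of~\cite[Lemma~29]{DegPack}, so the intended method can be read off from there.

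Your decomposition $\Prob[x\AlgMap u,\,y\AlgMap v]=\Prob[x\AlgMap u]\cdot\Prob[y\AlgMap v\mid x\AlgMap u]$ is formally valid, but the way you propose to estimate each factor has a genuine gap. The cover condition (Lemma~\ref{lem:22}\ref{22:c}) is an \emph{aggregate} statement: on a good run, for a fixed host vertex $u$ and a fixed window $[i,i+\eps n)$, about a $p^d$-fraction of the window's degree-$d$ guest vertices have $u$ in their candidate set. It says nothing about any \emph{particular} guest vertex $x$ in that window, and the ``averaging against the embedding process'' you invoke does not convert this count into the per-vertex probability $\Prob[u\in\CANDSET^{x-1}(x)]$. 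Indeed, in the paper Lemma~\ref{lem:vertex} (which computes $\Prob[x\AlgMap u]$) is \emph{deduced from} Lemma~\ref{lem:probedge}, not the other way round; so your route would be circular if you appealed to Lemma~\ref{lem:vertex}, and unsupported if you appeal to the cover condition.

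The approach in~\cite{DegPack}, as mirrored in the proof of Lemma~\ref{lem:vertexnotimage}, is a step-by-step chain decomposition. List in degeneracy order all the vertices that matter: the left-neighbours of $x$, the left-neighbours of $y$ (one of which is $x$), and $x,y$ themselves---at most $2D+1$ vertices in total. Build a nested sequence of history ensembles, alternating between (i) the event that $u$ and $v$ remain unused up to just before the next such vertex, and (ii) the event that this vertex is embedded into $N_H(u)$, or $N_H(v)$, or $N_H(u,v)$, or exactly to $u$ (for $x$) or to $v$ (for $y$), as appropriate. Each factor of type~(i) is estimated by Lemma~\ref{lem:probvtx}\ref{lem:probvtx:b}; each factor of type~(ii) is a ratio of two diet-condition counts (the candidate set intersected with the relevant neighbourhood, over the full candidate set). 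The product telescopes: all powers of $p$ and all factors of the form $(n-t)$ cancel except for one surplus $p$ in the denominator coming from the fact that $x\in\LNBH_G(y)$ is already pinned to $u\in N_H(v)$, yielding $p^{-1}n^{-2}$. With at most $2D+1$ steps of each type one gets the exponent $4D+2$.

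Your two-stage split also hides a real difficulty: left-neighbours of $y$ that precede $x$ must land in $N_H(v)$, and this has to be controlled \emph{simultaneously} with the left-neighbours of $x$ landing in $N_H(u)$ and with $u,v$ remaining unused. Conditioning on the single event $x\AlgMap u$ does not give you a tractable handle on those earlier steps, whereas the interleaved chain decomposition handles all of this in one pass. Your remarks on absorbing bad histories and on summing over the $2e(G)$ ordered edges to get the second assertion are fine.
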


We can use these two lemmas for estimating the probability that a given vertex of~$G$
is embedded on a given vertex of~$H$.

\begin{lemma}[embedding a vertex on a given vertex]\label{lem:vertex}
  Given $D\in\mathbb N$, $\gamma>0$, let $\delta$, $\eps$, $C$, $\alpha_0$, $\alpha_{2n}$
  be as in Setting~\ref{set:const} and let $p\ge\gamma$. Let $\alpha_0<\alpha\le\alpha_{2n}$
  and let~$n$ be sufficiently large. 
  Let~$G$ and~$H$ be as in Setting~\ref{set:RandEmb}. 
  Let $x\in V(G)$ with $x\leq (1-\delta)n$ and 
  $u\in V(H)$. When we run
  \RandomEmbedding{} to embed~$G[{\scriptstyle [n-\delta n]}]$ into~$H$, then
  \[\Prob\big[x\AlgMap u\big]=(1\pm10^4C\alpha D\delta^{-1})\frac1n\,.\]
\end{lemma}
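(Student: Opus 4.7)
My plan is to split the argument into two cases according to $d:=\LEFTDEG_G(x)$, since $d=0$ needs separate treatment from $d\ge 1$.

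For $d\ge 1$, I would pick any left-neighbour $y$ of $x$, so that $y<x$ and $xy\in E(G)$. If $x\AlgMap u$ then \RandomEmbedding{} has not halted with failure by step~$x$, so $y$ has already been embedded to some $v\in V(H)$, and the candidate-set requirement $u\in\CANDSET^{x-1}_{G\AlgMap H}(x)$ forces $v\in N_H(u)$. Hence
\[\Prob[x\AlgMap u]=\sum_{v\in N_H(u)}\Prob[x\AlgMap u,\ y\AlgMap v]\,.\]
Lemma~\ref{lem:probedge} evaluates every summand as $(1\pm 500C\alpha\delta^{-1})^{4D+2}p^{-1}n^{-2}$, while the $(\alpha,2D+3)$-quasirandomness of~$H$ gives $|N_H(u)|=(1\pm\alpha)pn$. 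Multiplying yields
\[\Prob[x\AlgMap u]=(1\pm\alpha)(1\pm 500C\alpha\delta^{-1})^{4D+2}\cdot\tfrac{1}{n}\,.\]
Since $\alpha\le\alpha_{2n}\ll\delta/(CD)$ by Setting~\ref{set:const}, the exponent satisfies $(4D+2)\cdot 500C\alpha\delta^{-1}\ll 1$, so the linear approximation $(1\pm a)^k\subseteq 1\pm 2ak$ gives $(1\pm 500C\alpha\delta^{-1})^{4D+2}\subseteq 1\pm 5000CD\alpha\delta^{-1}$; absorbing the remaining $(1\pm\alpha)$ factor collapses the total error to $1\pm 10^4 CD\alpha\delta^{-1}$, as claimed.

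For $d=0$, the vertex~$x$ has no left-neighbours, so at step~$x$ the algorithm picks $\psi_x(x)$ uniformly from $V(H)\setminus\im\psi_{x-1}$. Consequently, for every history $\hist_{x-1}$ in which \RandomEmbedding{} has completed the first $x-1$ steps successfully,
\[\Prob[x\AlgMap u\mid\hist_{x-1}]=\frac{\ONE_{u\notin\im\psi_{x-1}}}{n-(x-1)}\,,\]
and the conditional probability is zero on invalid histories. Taking expectation and applying Lemma~\ref{lem:probvtx}\ref{lem:probvtx:a} with $t_0=0$, $t_1=x-1$ and $\histens=\Omega^{G\AlgMap H}$ (so $\Prob[\histens]=1\ge n^{-4}$) yields
\[\Prob[x\AlgMap u]=(1\pm 100C\alpha\delta^{-1})\cdot\frac{n-x}{n(n-x+1)}\,;\]
since $x\le(1-\delta)n$, the factor $(n-x)/(n-x+1)=1-O(1/(\delta n))$ is within $1\pm\alpha$, leaving the estimate comfortably inside the target bound (the case $x=1$ is immediate as $\Prob[1\AlgMap u]=1/n$ exactly).

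The argument is essentially a one-line deduction from Lemmas~\ref{lem:probvtx} and~\ref{lem:probedge}; no new probabilistic input is required. The only genuine point to check is that the compounded $(1\pm 500C\alpha\delta^{-1})^{4D+2}(1\pm\alpha)$ factor in the $d\ge 1$ case fits into $1\pm 10^4CD\alpha\delta^{-1}$, but this is immediate from the constant hierarchy $\alpha\le\alpha_{2n}=\delta/(10^8CD)$ in Setting~\ref{set:const}, leaving plenty of room in the $10^4$ constant.
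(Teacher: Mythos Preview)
Your proof is correct and follows essentially the same approach as the paper's: split into two cases and invoke Lemma~\ref{lem:probvtx}\ref{lem:probvtx:a} in one case and Lemma~\ref{lem:probedge} summed over $v\in N_H(u)$ in the other. The only cosmetic difference is that the paper splits according to whether $x$ is isolated in $G$ (using any neighbour $y$ in the non-isolated case) whereas you split on $\LEFTDEG_G(x)$; your split is arguably slightly cleaner since it guarantees $y<x\le(1-\delta)n$ so that Lemma~\ref{lem:probedge} applies without further comment.
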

\begin{proof}
  While we could prove this lemma directly following the methods of~\cite{DegPack}, it is convenient to deduce it from the results of~\cite{DegPack}. We separate two cases.
  
  If $x$ is an isolated vertex in $G$, then we embed $x$ to $u$ if and only if the first $x-1$ vertices of $G$ are not embedded to $u$, and then among the $n-x+1$ vertices of $H$ to which we could embed $x$, we choose $u$. Using Lemma~\ref{lem:probvtx}\ref{lem:probvtx:a} to estimate the probability of the first event occurring, with $t_0=0$ and $t_1=x-1$ (and so $\histens$ is trivial) we have
  \begin{equation*}\begin{split}
    \Prob\big[x\AlgMap u\big] &=
    \Prob\big[u\not\in\psi_{x-1}\big]
    \Prob\big[x\AlgMap u\big| u\not\in\psi_{x-1}\big]=
    (1\pm 100C\alpha\delta^{-1})\frac{n-1-x+1}{n}\cdot\frac{1}{n-x+1} \\
    & =(1\pm 200C\alpha\delta^{-1})\frac{1}{n}\,.
  \end{split}\end{equation*}

  If, on the other hand, there is~$y$ such that $xy\in E(H)$, then we embed $x$ to $u$ if and only if we embed $x$ to $u$ and $y$ to some neighbour $v$ of $u$ in $H$. Since these events are disjoint as $v$ ranges over the neighbours of $u$, the probability that one of them occurs is exactly the sum of their individual probabilities, and the latter are estimated by Lemma~\ref{lem:probedge}. Since by the $(\alpha,2D+3)$-quasirandomness of $H$, the vertex $u$ has $(1\pm\alpha)pn$ neighbours, we obtain
  \begin{equation*}\begin{split}
    \Prob\big[x\AlgMap u\big] &=
    \sum_{v\in N_H(u)}
    \Prob\big[x\AlgMap u,y\AlgMap v\big]=
    (1\pm \alpha)pn\cdot(1\pm 500C\alpha\delta^{-1})^{4D+2}\frac{1}{pn^2} \\
    & =(1\pm 10^4C\alpha D\delta^{-1})\frac{1}{n}\,.
  \end{split}\end{equation*}
  In either case, we conclude the desired bound.
\end{proof}

We further need the following lemma, estimating the probability that a given
vertex of~$G$ is embedded to a given vertex of~$H$ and another given vertex
of~$H$ is not used in the embedding of the first $n-\lfloor\mu n\rfloor$ vertices of~$G$. We will be interested in this when $G$ is a special graph; so the remaining vertices of $G$ (which \RandomEmbedding{} also embeds) are isolated vertices. The proof of this lemma is rather similar to the proof of~\cite[Lemma~29]{DegPack}.

\begin{lemma}[embedding a vertex on a given vertex and not using another vertex]
\label{lem:vertexnotimage}
  Given $D\in\mathbb N$, $\gamma>0$, let $\delta$, $\eps$, $C$, $\alpha_0$, $\alpha_{2n}$
  be as in Setting~\ref{set:const} and let $p\ge\gamma$. Let $\alpha_0<\alpha\le\alpha_{2n}$
  and let~$n$ be sufficiently large. 
  Let~$G$ and~$H$ be as in Setting~\ref{set:RandEmb}. 
  Let $x\in V(G[{\scriptstyle [n-\mu n]}])$ and $u,v\in V(H)$ with $u\neq v$. When we run
  \RandomEmbedding{} to construct an embedding~$\psi_{n-\lfloor\mu n\rfloor}$ of the first $n-\lfloor\mu n\rfloor$ vertices of~$G$ into~$H$, then
  \[\Prob\big[x\AlgMap v \text{ and } u\not\in\im\psi_{n-\lfloor\mu n\rfloor}\big]=(1\pm10^3C\alpha
    D\delta^{-1})\frac\mu n\,.\]  
\end{lemma}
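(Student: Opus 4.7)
The plan is to follow the two-case structure of the proof of Lemma~\ref{lem:vertex}, carrying the extra condition $u\notin\im\psi_{n-\mu_*}$ (writing $\mu_*:=\lfloor\mu n\rfloor$) through the argument. Since the event is empty when $v=u$ I may assume $v\ne u$. The overarching decomposition, using $x\le n-\mu_*$, is
\[
\Prob\bigl[x\AlgMap v,\ u\notin\im\psi_{n-\mu_*}\bigr]=\Prob\bigl[x\AlgMap v,\ u\notin\im\psi_x\bigr]\cdot\Prob\bigl[u\notin\im\psi_{n-\mu_*}\bigm|x\AlgMap v,\ u\notin\im\psi_x\bigr].
\]
Applying Lemma~\ref{lem:probvtx}\ref{lem:probvtx:a} to the second factor with $t_0=x$, $t_1=n-\mu_*$, and history ensemble $\histens:=\{x\AlgMap v,\ u\notin\im\psi_x\}$ (which is a history ensemble up to time $x$ inside which $u\notin\im\psi_{t_0}$) yields $(1\pm 100C\alpha\delta^{-1})(\mu_*-1)/(n-x)$, provided $\Prob[\histens]\ge n^{-4}$. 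The latter hypothesis is verified a posteriori from the first-factor estimate, which is of order $(n-x)/n^2\ge\mu/(2n)\gg n^{-4}$.

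To estimate the first factor, I split into two cases according to whether $x$ has a left-neighbour in $G$. If $\LEFTDEG_G(x)=0$, the candidate set at step $x$ is $V(H)\setminus\im\psi_{x-1}$ of deterministic size $n-x+1$, and the argument of Case~1 of the proof of Lemma~\ref{lem:vertex} carries over: since $v\ne u$, the event $\{x\AlgMap v,\ u\notin\im\psi_x\}$ equals $\{u,v\notin\im\psi_{x-1},\text{ $v$ chosen at step $x$}\}$, and conditional on any history up to time $x-1$ with $v\notin\im\psi_{x-1}$ the probability of choosing $v$ at step $x$ is exactly $1/(n-x+1)$, independent of the rest of the history. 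Factoring and applying Lemma~\ref{lem:probvtx}\ref{lem:probvtx:b} (with $t_0=0$, $t_1=x-1$) to $\Prob[u,v\notin\im\psi_{x-1}]$ gives
\[
\Prob\bigl[x\AlgMap v,\ u\notin\im\psi_x\bigr]=(1\pm 100C\alpha\delta^{-1})\Big(\frac{n-x}{n}\Big)^2\cdot\frac{1}{n-x+1}\,,
\]
which is of the required order.

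If $x$ has a left-neighbour $y<x$ in $G$, I mimic Case~2 of the proof of Lemma~\ref{lem:vertex}: decompose $\{x\AlgMap v\}=\bigsqcup_{v'\in N_H(v)}\{x\AlgMap v,\ y\AlgMap v'\}$ and intersect with $\{u\notin\im\psi_x\}$. Since $y<x\le n-\mu_*$ the summand $v'=u$ forces $u\in\im\psi_y\subseteq\im\psi_x$, contradicting $u\notin\im\psi_x$ and so contributing zero. The remaining sum runs over $v'\in N_H(v)\setminus\{u\}$, of size $(1\pm\alpha)pn$ by quasirandomness of $H$, and each summand equals $\Prob[x\AlgMap v,y\AlgMap v']\cdot\Prob[u\notin\im\psi_x\mid x\AlgMap v,y\AlgMap v']$. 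The first factor is $(1\pm 500C\alpha\delta^{-1})^{4D+2}(pn^2)^{-1}$ by Lemma~\ref{lem:probedge}; the second is estimated by chaining two applications of Lemma~\ref{lem:probvtx}\ref{lem:probvtx:a}, one up to time $y-1$ and one between times $y$ and $x-1$, together with a routine independence argument for the choice at step $x$. Summing over $v'$, the factor $1/p$ cancels and the same main-order bound $(n-x)/n^2$ as in the first case emerges. Combining with the outer second factor $(\mu_*-1)/(n-x)$ yields $(1\pm 10^3 C\alpha D\delta^{-1})\mu/n$, as desired.

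The main technical obstacle will be the clean treatment of Case~2: cleanly tracking the constraint $u\notin\im\psi_x$ through the conditioning on $\{x\AlgMap v,y\AlgMap v'\}$, which in spirit amounts to extending the proof of Lemma~\ref{lem:probedge} by one extra ``avoid $u$'' bookkeeping step. A secondary subtlety is verifying the $n^{-4}$ probability hypothesis in each intermediate application of Lemma~\ref{lem:probvtx}; in every case this follows from lower bounds of order at least $(pn^2)^{-1}\ge n^{-4}$ guaranteed by $p\ge\gamma$ and $n$ sufficiently large.
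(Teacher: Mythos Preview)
Your outer factorisation and Case~1 are correct and match the paper. The gap is in Case~2: the factor $\Prob[u\notin\im\psi_x\mid x\AlgMap v,\,y\AlgMap v']$ cannot be obtained by ``chaining two applications of Lemma~\ref{lem:probvtx}\ref{lem:probvtx:a}''. That lemma lets you condition on a history ensemble up to a single time $t_0$ and ask about non-coverage at a later time $t_1>t_0$; here the conditioning event pins down choices at two distinct times $y$ and $x$, and the query is about $\im\psi_x$ itself. There is no black-box way to extract this conditional from Lemmas~\ref{lem:probvtx} and~\ref{lem:probedge}. You correctly diagnose that one must ``extend the proof of Lemma~\ref{lem:probedge} by one extra `avoid $u$' bookkeeping step'', but once you open that proof you are forced to track \emph{all} left-neighbours of $x$ (not just one $y$), since the candidate set of $x$ at time $x-1$ depends on where every left-neighbour landed.

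This is precisely what the paper does, and it is cleaner than your two-case split. Let $y_1<\dots<y_d$ be the left-neighbours of $x$. The paper builds a chain of nested history ensembles $\histens'_0\supseteq\histens_1\supseteq\histens'_1\supseteq\dots\supseteq\histens_{d+2}$, where passing from $\histens'_{i-1}$ to $\histens_i$ imposes ``$u,v\notin\im\psi_{y_i-1}$'' (estimated by Lemma~\ref{lem:probvtx}\ref{lem:probvtx:b}), and passing from $\histens_i$ to $\histens'_i$ imposes ``$y_i\AlgMap N_H(v)\setminus\{u\}$'' (estimated via the diet condition applied to the sets $\psi_{y_i-1}(\LNBH_G(y_i))$ and $\psi_{y_i-1}(\LNBH_G(y_i))\cup\{v\}$, giving the ratio $(1\pm4C\alpha)p$). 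After the final step $\histens'_{d+1}=\{x\AlgMap v\}\cap\histens_{d+1}$, one applies Lemma~\ref{lem:probvtx}\ref{lem:probvtx:a} once more for the outer factor. The telescoping product yields the claimed $(1\pm100C\alpha\delta^{-1})^{2D+4}\mu/n$ directly, with the case $d=0$ subsumed. This avoids any appeal to Lemma~\ref{lem:probedge} and makes the $u$-avoidance bookkeeping transparent at every step.
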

\begin{proof}
 Let $y_1,\dots,y_d$ with $d\le D$ be the neighbours of~$x$ in $\LNBH_G(x)$
  in degeneracy order, and (for convenience) define $y_0=0$. We define a collection of events. Let $\histens'_{0}$ be the almost sure event. For each $1\le i\le d$, let $\histens_i$ be the intersection of $\histens'_{i-1}$ and the event that neither $u$ nor $v$ is in the image of $\psi_{y_i-1}$, and let $\histens'_i$ be the intersection of $\histens_i$ and the event that $y_i$ is embedded to a vertex of $N_H(v)\setminus\{u\}$. Let $\histens_{d+1}$ be the intersection of $\histens'_d$ and the event that neither $u$ nor $v$ is in the image of $\psi_{x-1}$. Let $\histens'_{d+1}$ be the intersection of $\histens_{d+1}$ and the event $x\AlgMap v$. And finally let $\histens_{d+2}$ be the intersection of $\histens'_{d+1}$ and the event that $u\not\in\im\psi_{n-\lfloor\mu n\rfloor}$. Note that all of these events are history ensembles up to some given time.
  
  Now what we want to do is estimate $\Prob[\histens_{d+2}]$, and the reason for giving this collection of events is that we can estimate each of the successive conditional probabilities. We can estimate $\Prob[\histens_i|\histens'_{i-1}]$ for each $1\le i\le d+2$ using Lemma~\ref{lem:probvtx} (using part~\ref{lem:probvtx:b} for $1\le i\le d+1$ and part~\ref{lem:probvtx:a} for the final part). And we can estimate $\Prob[\histens'_i|\histens_i]$ using the diet condition for each $1\le i\le d+1$; the probability that the diet condition fails is tiny. To justify both of these steps we need to know $\Prob[\histens_i],\Prob[\histens'_i]>n^{-4}$; this is (by induction) valid since the final $\histens_{d+2}$ is the smallest event and we will argue its probability satisfies this bound. Assuming this bound for a moment, by Lemma~\ref{lem:probvtx}, for each $1\le i\le d$ we have
  \begin{multline*}
   \Prob[\histens_i|\histens'_{i-1}]=(1\pm 100C\alpha\delta^{-1})\big(\tfrac{n-y_i}{n-y_{i-1}}\big)^2\,,\quad\Prob[\histens_{d+1}|\histens'_d]=(1\pm 100C\alpha\delta^{-1})\big(\tfrac{n-x}{n-y_{d}}\big)^2\\
   \text{and}\quad \Prob[\histens_{d+2}|\histens'_{d+1}]=(1\pm 100C\alpha\delta^{-1})\tfrac{\lfloor\mu n\rfloor-1}{n-x}\,.
  \end{multline*}
  
  For each $1\le i\le d$, we have
  \[\Prob[\histens'_i|\histens_i]=\frac{(1\pm C\alpha)p^{\LEFTDEG_G(y_i)+1}(n-y_i+1)\pm 1}{(1\pm C\alpha)p^{\LEFTDEG_G(y_i)}(n-y_i+1)}\pm 4n^{-5}=(1\pm 4C\alpha)p\,.\]
  The fraction in the first term assumes the $(C\alpha,2D+3)$-diet condition,
  for the vertices $\psi_{y_i-1}\big(\LNBH_G(y_i)\big)\cup\{v\}$ in the
  numerator and $\psi_{y_i-1}\big(\LNBH_G(y_i)\big)$ in the denominator, to
  estimate respectively the number of neighbours of $v$ in the candidate set of
  $y_i$ which are not in $\im\psi_{y_i-1}$ and the number of vertices in the
  candidate set of $y_i$ which are not covered by $\im\psi_{y_i-1}$. The $\pm1$
  term in the numerator covers the possibility $u\in N_H(v)$. The $4n^{-5}$
  error term covers  the possibility of failure of the
  diet condition: By Lemma~\ref{lem:22} the probability that the diet condition
  fails is at most $2n^{-9}$, hence since $\Prob[\histens_i]>n^{-4}$ the
  probability that the diet condition fails conditioned on $\histens_i$ is at
  most $2n^{-5}$. By similar logic, we have
  \[\Prob[\histens'_{d+1}|\histens_{d+1}]=\frac{1}{(1\pm C\alpha)p^{d}(n-x+1)}\pm 4n^{-5}=(1\pm 4C\alpha)\tfrac{1}{p^d(n-x+1)}\,.\]
  
  Multiplying together all these conditional probabilities, many terms cancel and we obtain
  \begin{align*}
   \Prob[\histens_{d+2}]&=(1\pm100C\alpha\delta^{-1})^{d+2}(1\pm4C\alpha)^{d+1}\frac{(n-x)(\lfloor\mu n\rfloor-1)}{n^2(n-x+1)}\\
   &=(1\pm 100C\alpha\delta^{-1})^{2D+4}\cdot\frac{\mu}{n}\,,
  \end{align*}
  which since $\alpha\le\alpha_{2n}$ and by choice of $\alpha_{2n}$ implies the desired bound.
\end{proof}

\subsection{Properties of \PackingProcess{}}
\label{subsec:PackingProcess}

The following lemma summarises some facts we obtain in the course of proving~\cite[Theorem~11]{DegPack}.

\begin{lemma}[\PackingProcess{} lemma]\label{lem:PackingProcess}
  Given~$D$, $\hat p$, $\gamma$, let $(\alpha_s)_{s\in[s^*]}$, $\eta$ and the graphs
  $(G''_s)_{s\in[s^*]}$, $\widehat H$ be as in Setting~\ref{set:graphs}.
  When \PackingProcess{} is run with input $(G''_s)_{s\in[s^*]}$ and
  $\widehat H$, with probability at least
  $1-2n^{-5}$, the following holds.
  \begin{enumerate}[label=\abc]
    \item\label{lem:PackingProcess:a} \PackingProcess{} succeeds in packing $(G''_s)_{s\in[s^*]}$
      into~$\widehat H$.
    \item\label{lem:PackingProcess:quasi} For each $s\in[s^*]$ the pair $(H_s,H^*_0)$ is $(\alpha_s,2D+3)$-coquasirandom.
    \item\label{lem:PackingProcess:c} The leftover graph~$H$ is $(\eta,2D+3)$-quasirandom.
    \item\label{lem:PackingProcess:d} $H_0^*$ has maximum degree at most $2\gamma n$.
  \end{enumerate}
\end{lemma}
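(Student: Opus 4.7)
The plan is to proceed by induction on $s\in[s^*]$, maintaining the invariants that \PackingProcess{} has not halted with failure and that $(H_s,H_0^*)$ is $(\alpha_s,2D+3)$-coquasirandom. The backbone is Lemma~\ref{lem:22} applied once per call to \RandomEmbedding{}, together with Chernoff bounds for the initial properties of $H_0^*$. Item~\ref{lem:PackingProcess:d} is handled first as a direct Chernoff argument: since $H_0^*$ is obtained by including each edge of $\widehat H$ independently with probability $\gamma\binom{n}{2}/e(\widehat H)\le 2\gamma/\hat p$, each degree in $H_0^*$ is a sum of independent Bernoullis with expectation at most $1.1\gamma n$, so Theorem~\ref{thm:chernoff} and a union bound over the $n$ vertices give $\Delta(H_0^*)\le 2\gamma n$ with failure probability $e^{-\Omega(\gamma n)}$. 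The base case of the coquasirandomness invariant is handled analogously: given the $(\xi,2D+3)$-quasirandomness of $\widehat H$ with $\xi\ll\alpha_0$, each count $|N_{H_0}(R)\cap N_{H_0^*}(S\setminus R)|$ is a sum of independent Bernoullis concentrated around its mean, so Chernoff plus a union bound over the $O(n^{2D+3})$ relevant sets shows $(H_0,H_0^*)$ is $(\alpha_0,2D+3)$-coquasirandom with probability at least $1-n^{-8}$.

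For the inductive step, assuming $(H_{s-1},H_0^*)$ is $(\alpha_{s-1},2D+3)$-coquasirandom, the graph $H_{s-1}$ is in particular $(\alpha_{s-1},2D+3)$-quasirandom of density $p_{s-1}\ge\gamma$ by~\eqref{eq:ps}, so Setting~\ref{set:RandEmb} applies. Lemma~\ref{lem:22} then yields, with failure probability at most $2n^{-9}$, that \RandomEmbedding{} succeeds and that at time $t=n-\delta n$ the triple $(H_{s-1},H_0^*,\im\psi_{n-\delta n})$ satisfies the $(2\eta,2D+3)$-codiet condition. Since the edges removed from $H_{s-1}$ to form $H_s$ lie entirely within $\im\psi_{n-\delta n}$, the codiet condition translates directly into $(H_s,H_0^*)$ being $(\alpha_s,2D+3)$-coquasirandom, where the exponential growth of $\alpha_x$ in~\eqref{eq:defconsts} is precisely calibrated to absorb this per-step loss. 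The completion of the remaining $\delta n$ vertices of $G_s''$ into $H_{s-1}^*$ succeeds deterministically: these vertices are independent of each other with common degree $d_s\le D$, so a Hall-type argument using~\ref{lem:PackingProcess:d} and the fact that $H_{s-1}^*$ retains density close to $\gamma$ (since only $O(D\delta n)$ edges per vertex are removed in total across all graphs, and $\delta\ll\gamma$) produces the needed extension. This step is implicit in the proof of~\cite[Theorem~11]{DegPack}.

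Finally, item~\ref{lem:PackingProcess:c} follows from~\ref{lem:PackingProcess:quasi} by a short perturbation argument: since $H=H_{s^*}+H_{s^*}^*$ and $H_0^*\setminus H_{s^*}^*$ contains at most $\sum_s d_s\delta n\le D\delta n\cdot s^*$ edges in total, deleting these edges perturbs any common-neighbourhood count by at most $O(D\delta n)$, which together with $(\alpha_{s^*},2D+3)$-coquasirandomness of $(H_{s^*},H_0^*)$ and $\alpha_{s^*}+O(D\delta)\ll\eta$ delivers $(\eta,2D+3)$-quasirandomness of $H$. A union bound over $s\le\tfrac74 n$ of the per-step failure probability $2n^{-9}$, plus the Chernoff failures of probability $e^{-\Omega(\gamma n)}$ from the base case, yields total failure probability at most $2n^{-5}$. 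The main obstacle is not conceptual but the bookkeeping: one must check that the specific exponential calibration of $\alpha_x$ in~\eqref{eq:defconsts} indeed survives up to $s^*\le\tfrac74 n$, which is precisely the calibration chosen in~\cite{DegPack} (with $\alpha_{2n}$ giving room to spare for our slightly shorter run of length $\tfrac74 n$).
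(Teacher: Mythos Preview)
The main gap is in your inductive step for~\ref{lem:PackingProcess:quasi}. The $(2\eta,2D+3)$-codiet condition for $(H_{s-1},H_0^*,\im\psi_{n-\delta n})$ does not ``translate directly'' into $(\alpha_s,2D+3)$-coquasirandomness of $(H_s,H_0^*)$, for two reasons. First, the error parameters are incompatible: by~\eqref{eq:defconsts} we have $\alpha_s\le\alpha_{2n}=\delta/(10^8CD)\ll\delta\ll\eta$, so a $2\eta$-error estimate is far too coarse to yield an $\alpha_s$-error. Second, and more fundamentally, the codiet condition controls $\big|N_{H_{s-1}}(R)\cap N_{H_0^*}(S\setminus R)\setminus X\big|$ for $X=\im\psi_{n-\delta n}$, i.e.\ the uncovered part of the common neighbourhood in the \emph{old} graph $H_{s-1}$; it says nothing about $\big|N_{H_s}(R)\cap N_{H_0^*}(S\setminus R)\big|$ over all of $V$, which is what coquasirandomness of $(H_s,H_0^*)$ requires. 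Knowing that the removed edges lie inside $\im\psi_{n-\delta n}$ does not help when $R\subset\im\psi_{n-\delta n}$, since then the $H_s$-neighbourhoods of vertices in $R$ can differ substantially from their $H_{s-1}$-neighbourhoods. What~\cite{DegPack} actually does (the paper's ``exceptional event~(ii)'') is a separate inductive analysis of how embedding $G''_s$ perturbs each common-neighbourhood count, using the edge-usage probability estimate of Lemma~\ref{lem:probedge} together with concentration; the exponential form of $\alpha_x$ is calibrated to absorb \emph{that} per-step multiplicative degradation, not the one you sketch.

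Your argument for~\ref{lem:PackingProcess:c} has a second gap: from the fact that $H_0^*\setminus H_{s^*}^*$ has at most $D\delta n\cdot s^*$ edges \emph{in total} you cannot conclude that each common-neighbourhood count is perturbed by only $O(D\delta n)$; that would require a per-vertex degree bound in $H_0^*\setminus H_{s^*}^*$, which needs its own argument. The paper instead derives~\ref{lem:PackingProcess:c} from the $(\eta,2D+3)$-coquasirandomness of $(H_{s^*},H_{s^*}^*)$ (note: $H_{s^*}^*$, not $H_0^*$), which is maintained in~\cite{DegPack}, and then observes via a binomial-sum identity that $(\eta,L)$-coquasirandomness of two edge-disjoint graphs implies $(\eta,L)$-quasirandomness of their union.
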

\begin{proof}
 \ref{lem:PackingProcess:a} is obtained by summing the failure probabilities of all exceptional events in~\cite[Proof of Theorem~11]{DegPack}.

 \ref{lem:PackingProcess:quasi} holding is implied by the exceptional event (ii) of that proof not occurring.
 
 \ref{lem:PackingProcess:c} is implied by exceptional event (v) of~\cite[Proof of Theorem~11]{DegPack} not occurring. Again, event (v) not occurring states that $(H_{s^*},H^*_{s^*})$ is $(\eta,2D+3)$-quasirandom. We would like to know that this implies $H=H_{s^*}\cup H^*_{s^*}$ is $(\eta,2D+3)$-quasirandom. Since $H_{s^*}$ and $H^*_{s^*}$ are edge-disjoint, given any vertex set $S$ of size at most $2D+3$, the neighbours $N_H(S)$ are partitioned into parts indexed by the subsets $R$ of $S$, where a vertex $v$ is in the part indexed by $R$ if it is adjacent in $H_{s^*}$ to the vertices $R$ and in $H^*_{s^*}$ to the vertices $S\setminus R$. Now $(\eta,2D+3)$-coquasirandomness gives bounds on these part sizes with a $(1\pm\eta)$ relative error, and summing the bounds we obtain the desired $(\eta,2D+3)$-quasirandomness of $H$.
Indeed, by the argument above we obtain
\begin{align*}
N_H(S)
& =\sum_{R\subseteq S} (1\pm \eta)(p_{s*})^{|R|}
(p_{s*}^*)^{|S\setminus R|} n \\
& = (1\pm \eta) n \sum_{r=0}^{|S|} \binom{s}{r}(p_{s*})^{r}
(p_{s*}^*)^{|S|-r}
= (1\pm \eta)(p_{s*}+p_{s*}^*)^{|S|} n
\end{align*}
for every $S$ of size at most $2D+3$.
 
 \ref{lem:PackingProcess:d} is implied by exceptional event (i) not occurring: this event in particular implies that $H^*_0$ is $(\tfrac14\alpha_0,2D+3)$-quasirandom, which together with the fact $e(H^*_0)=(1\pm\alpha_0)\gamma\binom{n}{2}$ from~\cite[Lemma~16]{DegPack} implies the claimed maximum degree.
\end{proof}

We further need the following two lemmas. The first states that, while
running \PackingProcess{}, chosen subsets~$T$ of neighbourhoods of vertices shrink
roughly as expected. We will use this with $T$ being a vertex neighbourhood with the embedded image of one or two of the $G''_i$ removed. Recall that $p_s$ denotes
the density of $H_s$.

\begin{lemma}\label{lem:T}
  Assume Setting~\ref{set:graphs} and let $s^*-\lfloor\mu n\rfloor<s<s'\le s^*$. Consider the following experiment.
  Run \PackingProcess{} with input
  $(G''_{s''})_{{s''}\in[s^*]}$ and $\widehat H$ up to and including the
  embedding of~$G''_s$. Then fix $T\subset N_{H_s}(v)$ with
  $|T|\ge\frac12 p\mu^2 n$, and continue \PackingProcess{} to perform the embedding
  of $G''_{s+1},\dots,G''_{s'}$.
  
  The probability that \PackingProcess\ fails before embedding $G''_{s'}$, or $H_i$ fails to be $(\alpha_i,2D+3)$-quasirandom for some $1\le i\le s'$, or we have \[\big|T\cap N_{H_{s'}}(v)\big|=(1\pm \gamma^{-1}\alpha_{s'})\frac{p_{s'}}{p_s}|T|\,,\]
  is at least $1-n^{-C}$.
\end{lemma}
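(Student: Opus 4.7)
The plan is to apply Freedman's inequality (Lemma~\ref{lem:freedman}) to the number of edges at $v$ into $T$ consumed during rounds $s+1,\dots,s'$ of \PackingProcess. Let $\cE$ denote the event that \PackingProcess\ does not fail through round $s'$ and each $H_i$ is $(\alpha_i,2D+3)$-quasirandom for $s\le i\le s'$, and let $\mathcal{F}_i$ be the filtration recording the execution of \PackingProcess\ through round $i$. For $s<i\le s'$ define
\[Y_i:=\big|\{t\in T\colon vt\in E(H_{i-1})\setminus E(H_i)\}\big|,\]
so that $|T\cap N_{H_{s'}}(v)|=|T|-\sum_{i=s+1}^{s'} Y_i$; note that only \RandomEmbedding{} steps can change this quantity, since $T\subseteq N_{H_s}(v)$ is edge-disjoint from every $H^*$. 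Because at most one vertex of $G''_i$ is embedded to $v$ and has degree at most $\Delta(G''_i)\le cn/\log n$, we obtain the deterministic bound $Y_i\le R:=cn/\log n$.

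On $\cE$, round $i$ runs \RandomEmbedding{} on the $(\alpha_{i-1},2D+3)$-quasirandom host $H_{i-1}$ of density $p_{i-1}\ge\gamma$. Lemma~\ref{lem:probedge} then gives, for any fixed $t$ with $vt\in E(H_{i-1})$, that $vt$ is used in round $i$ with conditional probability $(1\pm 500C\alpha_{i-1}\delta^{-1})^{4D+2}\cdot 2e(G''_i)/(p_{i-1}n^2)$. Summing over such $t$ and using $2e(G''_i)/(p_{i-1}n(n-1))=1-p_i/p_{i-1}$ yields
\[\Exp[Y_i\,|\,\mathcal{F}_{i-1}]=(1\pm\theta_i)\cdot\big|T\cap N_{H_{i-1}}(v)\big|\cdot(1-p_i/p_{i-1}),\qquad\theta_i=O(CD\alpha_{i-1}\delta^{-1}).\]
A round-by-round induction (sketched below) upgrades this to a deterministic total drift $\sum_i\Exp[Y_i\,|\,\mathcal{F}_{i-1}]=\bigl(1\pm\tfrac12\gamma^{-1}\alpha_{s'}\bigr)(1-p_{s'}/p_s)|T|$ on an enlarged good event. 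Lemma~\ref{lem:freedman} with variance bound $\tsigma^2\le R|T|$ (using $\Var[Y_i\,|\,\mathcal{F}_{i-1}]\le R\,\Exp[Y_i\,|\,\mathcal{F}_{i-1}]$) and $\trho=\tfrac14\gamma^{-1}\alpha_{s'}(p_{s'}/p_s)|T|$ then gives failure probability at most $\exp\bigl(-\Omega(\trho^2/(R|T|))\bigr)=\exp\bigl(-\Omega(\alpha_{s'}^2 p\mu^2\log n/c)\bigr)$, which by the hierarchy $c\ll\alpha_0\le\alpha_{s'}$ is at most $n^{-C}$, as required.

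The main obstacle is the telescoping: because $\Exp[Y_i\,|\,\mathcal{F}_{i-1}]$ depends on the random quantity $|T\cap N_{H_{i-1}}(v)|$, the conditional drifts do not naively sum to $(1-p_{s'}/p_s)|T|$. The fix, following the methodology of~\cite{DegPack}, is to enlarge the good event by an inductive hypothesis of the form $|T\cap N_{H_j}(v)|\in(1\pm\beta_j^\star)(p_j/p_s)|T|$ for a slowly growing sequence $\beta_j^\star$ modelled on the $\beta_t$ of Setting~\ref{set:const}, and then apply Freedman's inequality once per round, union-bounding over the $O(n)$ rounds. The geometric growth of $\alpha_x$ built into~(\ref{eq:defconsts}) guarantees that the accumulated multiplicative errors $\sum_i\theta_i$ remain bounded by $O(\alpha_{s'})$, while each per-round failure probability is still of the form $\exp(-\Omega(\alpha_{s'}^2\log n/c))$ and so is comfortably absorbed into the final $n^{-C}$ tail bound.
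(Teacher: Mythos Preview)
Your proposal is correct and follows essentially the same route as the paper. The paper defines $\cE_i$ to be the event that quasirandomness holds for $H_j$ \emph{and} the running hypothesis $|T\cap N_{H_j}(v)|=(1\pm\gamma^{-1}\alpha_j)\tfrac{p_j}{p_s}|T|$ holds for all $s\le j\le i$; it then applies Corollary~\ref{cor:freedm} once per target index $i$ to the sum $\sum_{j=s}^{i}Y_j$, with the fixed additive slack $\tilde\rho=\eps n$, and union-bounds over the $O(\mu n)$ choices of $i$---precisely the ``enlarge the good event by an inductive hypothesis and apply Freedman once per round'' scheme you describe. Two cosmetic differences: the paper uses the target error $\gamma^{-1}\alpha_j$ itself as the running error (no separate sequence $\beta_j^\star$ is introduced), and it takes $\tilde\rho=\eps n$ rather than your multiplicative choice; the integral bound~\eqref{eq:sum:alpha} on $\sum_j\alpha_j$ is exactly the computation you allude to for controlling $\sum_i\theta_i$.
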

\begin{proof}
 For $s\le i\le s'$, we define the event~$\cE_i$ that \PackingProcess\ does not fail before embedding $G''_i$, and~$H_j$ is $(\alpha_j,2D+3)$-quasirandom for each $1\le j\le i$,
  and $|T\cap N_{H_j}(v)|=(1\pm \gamma^{-1}\alpha_j)\frac{p_j}{p_s}|T|$ for each $s\le j\le i$. If the event in the lemma statement fails to occur, then there must exist some $s\le i< s'$ such that $\cE_i$ occurs and
  \[|T\cap N_{H_{i+1}}(v)|\neq(1\pm \gamma^{-1}\alpha_{i+1})\frac{p_{i+1}}{p_s}|T|\,.\]
 It suffices to show that each of these bad events occurs with probability at most $n^{-C-1}$, since then the union bound over the at most $\mu n$ choices of $i$ gives the lemma statement. This is an estimate we can obtain using Corollary~\ref{cor:freedm}. We now fix $s\le i<s'$ and prove the desired estimate.
 
 Suppose $s\le j\le i$, and let $Y_j:=|N_{H_{j}}(v)\cap T\setminus N_{H_{j+1}}(v)|$ be the number of edges from $v$ to $T$ used for the embedding $G_{j+1}$. Then we have $\big|T\cap N_{H_{i+1}}(v)\big|=|T|-\sum_{j=s}^iY_j$, and what we want to do is argue that the sum of random variables is concentrated. To that end, suppose $\hist$ is a history of \PackingProcess{} up to time $j$ such that $H_j$ is $(\alpha_j,2D+3)$-quasirandom and $|T\cap N_{H_j}(v)|=(1\pm \gamma^{-1}\alpha_j)\frac{p_j}{p_s}|T|$. Then we have
 \[\Exp\big[Y_j\,\big|\,\hist\big]=\frac{2e(G''_{j+1})\cdot(1\pm 500C\alpha_j\delta^{-1})^{4D+2}}{p_jn^2}\cdot(1\pm \gamma^{-1}\alpha_j)\frac{p_j}{p_s}|T|\]
  where we use linearity of expectation: the first factor is by Lemma~\ref{lem:probedge} the probability that a given edge from $v$ to $T$ in $H_j$ is used in the embedding of $G''_j$, and the second factor is the number of such edges. Note that the $p_j$ terms cancel, so we obtain
  \begin{align*}
   \Exp\big[Y_j\,\big|\,\hist\big]&=\frac{2e(G''_{j+1})\cdot(1\pm 500C\alpha_j\delta^{-1})^{4D+2}}{p_sn^2}\cdot(1\pm \gamma^{-1}\alpha_j)|T|\\
   &=\frac{2e(G''_{j+1})|T|}{p_s n(n-1)}\pm\frac{10^5\delta^{-1}CD^2|T|}{p_s
      n}\alpha_{j}\,,
  \end{align*}
  where for the error term we use the upper bound $e(G''_{j+1})\le Dn$ and our choice $\delta^{-1}>\gamma^{-1}$. Let
  \[\tilde{\mu}:=\sum_{j=s}^i\frac{2e(G''_{j+1})|T|}{p_s n(n-1)}\quad\text{and}\quad\tilde\nu:=\sum_{j=s}^i\frac{10^5\delta^{-1}CD^2|T|}{p_s
      n}\alpha_{j}\]
and observe that $\tilde{\mu}\leq |T|\leq n$ and
$\tilde{\nu} \leq \frac{10^5\delta^{-1}CD^2|T|}{p_s}\alpha_{i} < \frac{|T|}{10^3}$ since $p_s\geq \gamma$ and
by the definition of $\alpha_j$. 
  
  We trivially have $0\le Y_j\le \Delta(G''_{j+1})\le  cn/\log n$. So what Corollary~\ref{cor:freedm}\ref{cor:freedm:tails}, with $\tilde{\rho}=\eps n$, gives us is that
  \[\Prob\left[\cE_i\text{ and }\sum_{j=s}^iY_i\neq \tilde{\mu}\pm(\tilde\nu+\eps n)\right]<2\exp\left(-\tfrac{\eps^2n^2}{4cn^2/\log n}\right)<n^{-C-1}\,,\]
  where we use the upper bound $\tilde{\mu}+\tilde{\nu}+\tilde{\rho}\le 2n$ for the first inequality and the choice of $c$ as well as $\eps < \frac{1}{C}$ for the second. This is the probability bound we wanted. We now simply need to show that if
  \[\sum_{j=s}^iY_i=\tilde{\mu}\pm(\tilde\nu+\eps n)\]
  then we have
  \[|T\cap N_{H_{i+1}}(v)|=(1\pm \gamma^{-1}\alpha_{i+1})\frac{p_{i+1}}{p_s}|T|\,.\] 
  Since
  \[|T|-\tilde{\mu}=|T|\Big(1-\tfrac{\sum_{j=s}^ie(G''_{j+1})}{p_s\binom{n}{2}}\Big)=|T|\Big(1-\tfrac{(p_s-p_{i+1})\binom{n}{2}}{p_s\binom{n}{2}}\Big)=\frac{p_{i+1}}{p_s}|T|\,,\]
  what remains is to argue $\tilde\nu+\eps n<\gamma^{-1}\alpha_{i+1}\tfrac{p_{i+1}}{p_s}|T|$. Since $\alpha_j=\frac{\delta}{10^8CD}\exp\left(\frac{10^8CD^3\delta^{-1}(j-2n)}{n}\right)$ is increasing in $j$, we have
  \begin{align}
  \begin{split}
  \label{eq:sum:alpha}
    \sum_{j=s}^i\alpha_j&\le \int_s^{i+1}\alpha_j\,\mathrm{d}j\le\int_{-\infty}^{i+1}\alpha_j\,\mathrm{d}j\\
    &=\Big[\frac{\delta}{10^8CD}\cdot\frac{n}{10^8CD^3\delta^{-1}}\cdot\exp\Big(\frac{10^8CD^3\delta^{-1}(j-2n)}{n}\Big)\Big]_{j=-\infty}^{i+1}
    =\frac{\delta n}{10^8CD^3}\alpha_{i+1}\,.
   \end{split}
  \end{align}
  It follows that
  \[\tilde\nu+\eps n\le \frac{10^5\delta^{-1}CD^2|T|}{p_s n}\cdot \frac{\delta n}{10^8CD^3}\alpha_{i+1}+\eps n\le\tfrac{\alpha_{i+1}}{1000D}\cdot\tfrac{1}{p_s}|T|+\eps n\,.\]
  Finally, since $p_{i+1},p\ge\gamma$, by choice of $\eps$, since $\delta\leq \mu$ and because $|T|\ge\tfrac12 p\mu^2n$, we conclude $\tilde\nu+\eps n\le\gamma^{-1}\alpha_{i+1}\tfrac{p_{i+1}}{p_s}|T|$ as desired.
\end{proof}

The second lemma states that for a set~$S$ of host graph vertices fixed
before the embedding of $G''_s$, it
is likely that the embedding of $G''_s$ (which has $n-\lfloor\mu n\rfloor$ vertices) uses about $(1-\mu)|S|$ vertices of $S$. To prove it, we repeatedly apply Lemma~\ref{lem:24}, which tells us that it is likely that each successive $\eps n$ vertices of $G''_s$ embedded cover about the expected fraction of $S$.

\begin{lemma}\label{lem:S}
  Assume Setting~\ref{set:graphs} and let $s^*-\lfloor\mu n\rfloor<s\le s^*$.  Run \PackingProcess{} with input
  $(G''_{s''})_{{s''}\in[s^*]}$ and $\widehat H$ up to just before the
  embedding of~$G''_s$.
  Then fix any $S\subset V(H_{s-1})$ with $|S|\ge\frac12p\mu^2 n$, and
  let \PackingProcess{} perform the embedding of~$G''_s$. With
  probability at least $1-3n^{-9}$ either $H_{s-1}$ is not $(\alpha_{s-1},2D+3)$-quasirandom or
  \[|S\setminus\im\phi'_{s}|=(1\pm C'\alpha_{s})\mu|S|\,.\]
\end{lemma}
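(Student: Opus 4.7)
The plan is to track the set $T_k:=S\setminus\im\psi_{t_k}$ through the run of \RandomEmbedding{} embedding $G''_s$ into $H_{s-1}$, where $t_k:=\lfloor k\eps n\rfloor$ partitions the embedding into windows of length $\eps n$. Since $s$ is a special index, $V(G'_s)$ occupies exactly the first $n-\lfloor\mu n\rfloor$ vertices of $G''_s$ (the isolated vertices of $I'_s$ being placed at the end of the degeneracy order), so $\im\phi'_s=\im\psi_{n-\lfloor\mu n\rfloor}$. Choose~$K$ such that $t_K=n-\lfloor\mu n\rfloor$ (adjusting the last window length by at most $\eps n$); our goal is to show $|T_K|=(1\pm C'\alpha_s)\mu|S|$.

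By Lemma~\ref{lem:22} applied with $\alpha=\alpha_{s-1}$, with probability at least $1-2n^{-9}$ \RandomEmbedding{} succeeds and the diet and cover conditions hold at every stage. For each $k\in\{0,\dots,K-1\}$ I condition on an arbitrary history $\hist_{t_k}$ compatible with these good events, which determines $T_k$. The hierarchy gives $\mu^3 p\gg\gamma^{2D+3}\delta$, hence $|T_k|\ge|T_K|\approx\mu|S|\ge\tfrac12\gamma^{2D+3}\delta n$, so Lemma~\ref{lem:24} applies with $T=T_k$ and $j=t_k$: with conditional failure probability at most $2n^{-2D-19}$, the number of window vertices $x\in[t_k,t_{k+1})$ with $\psi_{t-1}(x)\in T_k$ equals $(1\pm 40D\beta_{t_k})\tfrac{|T_k|\eps n}{n-t_k}$. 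Since $\psi_{t-1}(x)=\psi_{t_{k+1}}(x)$ for $x<t_{k+1}$ and a vertex of $T_k$ leaves $T$ precisely when its preimage is embedded during the window, this count equals $|T_k|-|T_{k+1}|$. Union-bounding over the $K\le 1/\eps$ windows together with Lemma~\ref{lem:22} bounds the total failure probability by $3n^{-9}$.

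On this good event we obtain the recursion $|T_{k+1}|=|T_k|\cdot\tfrac{n-t_{k+1}}{n-t_k}\pm 40D\beta_{t_k}\tfrac{|T_k|\eps n}{n-t_k}$. Setting $r_k:=|T_k|/(n-t_k)$ rewrites this as $r_{k+1}=r_k\bigl(1\pm 40D\beta_{t_k}\eps n/(n-t_{k+1})\bigr)$, and since $n-t_{k+1}\ge\tfrac12\mu n$ the per-step multiplicative error is at most $80D\beta_{t_k}\eps/\mu$. Telescoping, $r_K=r_0(1\pm E)$ where
\[
E\le 2\sum_{k=0}^{K-1}\tfrac{80D\eps\beta_{t_k}}{\mu}\,,
\]
and using the integral bound~\eqref{eq:betabound} to control the exponentially growing $\beta_{t_k}$ gives $E\le\tfrac12 C'\alpha_s$ by the choices $C=40D\exp(1000D\delta^{-2}\gamma^{-2D-10})$ and $C'=10^4C\delta^{-1}$ from~\eqref{eq:defconsts}. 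Then $|T_K|=r_K(n-t_K)=(1\pm C'\alpha_s)(n-t_K)\tfrac{|S|}{n}=(1\pm C'\alpha_s)\mu|S|$, absorbing the negligible $O(1/n)$ rounding error coming from $\lfloor\mu n\rfloor/n=\mu\pm n^{-1}$.

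The main obstacle will be the exponential growth of $\beta_t$ combined with the factor $1/\mu$ coming from the shrinking denominators $n-t_{k+1}$ in the final windows; this is precisely why the hierarchy in~\eqref{eq:defconsts} puts the exponential factor into $C$ and then takes $C'=10^4 C\delta^{-1}$, with $\delta\ll\mu$ providing room for the $1/\mu$ factor. A secondary subtlety is that Lemma~\ref{lem:24} is a statement for a fixed $T$, whereas our $T_k$ is a random set; this is handled by conditioning on each possible history $\hist_{t_k}$ before invoking the lemma, so that $T_k$ is deterministic at the moment Lemma~\ref{lem:24} is applied.
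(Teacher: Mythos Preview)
Your proposal is correct and follows essentially the same route as the paper: partition the run of \RandomEmbedding{} into $\eps n$-windows, apply Lemma~\ref{lem:24} in each window to get the one-step recursion, control the accumulated multiplicative error via the integral bound~\eqref{eq:betabound}, and absorb the final $\eps n$ boundary discrepancy. Your reformulation via $r_k=|T_k|/(n-t_k)$ is equivalent to the paper's telescoping product. One small point: the justification $|T_k|\ge|T_K|\approx\mu|S|$ is circular as written, since $|T_K|\approx\mu|S|$ is what you are proving; you should instead argue inductively that if the recursion holds up to step~$k$ then $|T_k|\approx\frac{n-t_k}{n}|S|\ge\tfrac12\mu|S|$, which gives the lower bound needed for Lemma~\ref{lem:24}.
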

\begin{proof}
  Fix~$s$ such that $s^*-\lfloor\mu n\rfloor<s\le s^*$, and condition on $H_{s-1}$. If $H_{s-1}$ is not $(\alpha_{s-1},2D+3)$-quasirandom, then the bad event of this lemma cannot occur. So it suffices to show that if $H_{s-1}$ is $(\alpha_{s-1},2D+3)$-quasirandom, then the probability of the event $|S\setminus\im\phi'_{s}|\neq(1\pm C'\alpha_{s})\mu|S|$, conditioned on $H_{s-1}$, is at most $3n^{-9}$. This is what we will now do, so we suppose that $H_{s-1}$ is $(\alpha_{s-1},2D+3)$-quasirandom. Consider the run of \RandomEmbedding{} which embeds $G''_s[{\scriptstyle [n-\delta n]}]$. 
  
  Recall that the embedding $\phi'_s$ of~$G'_s$ is given by letting \RandomEmbedding{} perform the embedding
  of~$G''_s[{\scriptstyle [n-\delta n]}]$, constructing the partial embeddings $\psi_t$ for
  $0\le t\le(1-\delta)n$. More precisely, $\phi'_s$ is given by ignoring the
  embedding of all vertices not in~$G'_s$, that is, by $\psi_{n-\lfloor \mu n
    \rfloor}$.

Define $S_0=S$, and for $i=1,\dots,\tau$ with
  $\tau=\lceil\frac{(1-\mu)}{\eps}\rceil$ set
  $S_i=S_{i-1}\setminus \im \psi_{i\eps n}$. Since $S_\tau\subseteq S\setminus\im\phi'_{s}\subseteq S_{\tau-1}$, it is enough to show both $|S_{\tau-1}|$ and $|S_\tau|$ are likely to be in the claimed range. Since the two quantities differ by at most $\eps n$, we will focus on estimating $|S_\tau|$.
  In this proof we will always use $\alpha=\alpha_s$, and hence will
  often omit the parameter~$\alpha$ in $\beta_t=\beta_t(\alpha)$. 
  By Lemma~\ref{lem:24} (applied with $t=j+\eps n +1$), with probability at
  least $1-n^{-2D-18}$, either for some $j\le n-\mu n-\eps n$
  \begin{enumerate}[label=\abc]
    \item \RandomEmbedding{} failed to
      construct $\psi_j$, or
    \item the partial embedding $\psi_{j+\eps n +1}$ 
    of $G_s''[{\scriptstyle [j+\eps n +1]}]$ into $H_{s-1}$
    does not have the
      $(\eps,20D\beta_j,j)$-cover condition,
  \end{enumerate}
  or we have that for every $j\le n-\mu n-\eps n$
 \begin{enumerate}[label=\abc,start=3]
    \item\label{lem:S:size} 
    $
    \left| \left\{x:~ j\leq x<j+\eps n:~
    		\psi_{j+\eps n}(x) \in S\setminus \im \psi_j 
    		\right\}\right| =
    		(1\pm 40D\beta_{j})\frac{|S\setminus \im \psi_j|\eps
      n}{n-j}~ .
    $
  \end{enumerate} 
  
  By Lemma~\ref{lem:22}, with probability at least $1-2n^{-9}$, the first
  two options do not hold, so with probability at least $1-3n^{-9}$ we have
  that~\ref{lem:S:size} holds for every $j\leq n-\mu n -\eps n$. Applying~\ref{lem:S:size} with $j=(i-1)\eps n$ we conclude
  $$|S_i|=|S_{i-1}|-(1\pm 40D\beta_{(i-1)\eps n})\frac{|S_{i-1}|\eps
      n}{n-(i-1)\eps n}$$ 
for all $i\ge 1$.

  Assuming this is the case, we get
  \[|S_i|=|S_{i-1}|\Big(1-\frac{(1\pm 40D\beta_{(i-1)\eps n})\eps
      }{1-(i-1)\eps}\Big)\,,\]
  and hence
  \[|S_\tau|=|S|\prod_{i=1}^{\tau}\Big(1-\frac{(1\pm 40D\beta_{(i-1)\eps n})\eps
      }{1-(i-1)\eps}\Big)\,.\]
  In order to evaluate this product, observe that
  \[1-\frac{(1\pm 40D\beta_{i\eps n})\eps}{1-i\eps}
    =\frac{1-(i+1)\eps}{1-i\eps}\pm\frac{40 D\beta_{i\eps n}\eps}{1-i\eps}
    = \frac{1-i\eps-\eps}{1-i\eps}\Big(1\pm\frac{40D\beta_{i\eps
        n}\eps }{1-(i+1)\eps}\Big)\,,
  \]
  and therefore
  \[|S_\tau|=|S|\prod_{i=0}^{\tau-1}\frac{1-i\eps-\eps}{1-i\eps}
    \cdot\Big(1\pm\frac{40D\beta_{i\eps n}\eps}{1-(i+1)\eps}\Big)
    =|S|(1-\tau\eps) \prod_{i=0}^{\tau-1}\Big(1\pm\frac{40D\beta_{i\eps n}\eps}{1-(i+1)\eps}\Big)\,.
  \]
  
By the definition of $\tau$ we have
  $\frac{(1-\mu)}{\eps}\le\tau\le\frac{(1-\mu)}{\eps}+1$ and hence $(1-\tau\eps)=\mu(1\pm\frac\eps\mu)$.
Moreover, we obtain that
\begin{align*}
	\sum_{i=0}^{\tau-1}\frac{40D\beta_{i\eps n}
		\eps}{1-(i+1)\eps}
	& \leq \frac{40D\eps }{1-\tau \eps} 
	\sum_{i=0}^{\tau -1} \beta_{i\eps n}
	\leq \frac{80D\eps }{\mu } 
	\sum_{i=0}^{\tau -1} \beta_{i\eps n} \\
	&\le\frac{80D}{\mu n}\int_{0}^{\tau}\eps n\beta_{i\eps n}\,\mathrm{d}i
    \le\frac{80D}{\mu n}\int_{0}^{\tau\eps n}\beta_{x}\,\mathrm{d}x \\
    &\leByRef{eq:betabound}
    \frac{80D}{\mu\cdot 1000
      D\delta^{-2}\gamma^{-2D-10}}\beta_{\tau \eps n}
    \le\beta_{(1-\mu + \eps )n}
    \le\frac12 C'\alpha=\frac12 C'\alpha_s
\end{align*}  
   since
  $\beta_{(1-\mu+\eps)n}=\beta_{(1-\mu+\eps)n}(\alpha)=2\alpha\exp(1000D\delta^{-2}\gamma^{-2D-10}(1-\mu+\eps))<2\alpha $
  and \[C'=10^4\cdot\frac{40D}{\delta}\exp(1000D\delta^{-2}\gamma^{-2D-10})\,.\]

So, since $\prod_i(1\pm x_i)=1\pm2\sum_i x_i$
  as long as $\sum_i x_i < \frac{1}{100}$
  and since $\frac{1}{2}C'\alpha_s < \frac{1}{100}$, 
  we get
  \begin{equation*}\begin{split}
      |S_\tau|&=|S|(1-\tau\eps)\Big(1\pm2\sum_{i=0}^{\tau-1}\frac{40D\beta_{i\eps
          n}\eps}{1-(i+1)\eps}\Big)
      =|S|(1-\tau\eps)\Big(1\pm\frac{80D\eps}{1-\tau\eps}\sum_{i=0}^{\tau-1}\beta_{i\eps
        n}\Big) \\
      &=|S|\Big(1-\tau\eps\pm 80D\eps\sum_{i=0}^{\tau-1}\beta_{i\eps
        n}\Big)
      =|S|\mu\Big(1\pm\frac{\eps}{\mu} \pm\frac{80D\eps}{\mu}\sum_{i=0}^{\tau-1}\beta_{i\eps
        n}\Big) \\
      &=|S|\mu\Big(1\pm\frac12\alpha_s \pm
      \frac{1}{2}C'\alpha_s\Big)\,,
  \end{split}\end{equation*}
  where for the last equation we use that $\eps\le\alpha_0\delta^2\gamma\le\frac12\alpha_s\mu$. It follows that  
  \[|S\setminus\im\phi'_s|=
  |S_{\tau}|\pm \eps n = 
  |S|\mu\big(1\pm\tfrac12\alpha_s\pm\tfrac12C'\alpha_s\big)\pm\eps n=\big(1\pm C'\alpha_s\big)\mu|S|\,,\]
  as desired.
\end{proof}

\subsection{Proof of Lemma~\ref{lem:appl}}
\label{subsec:appl_proof}

We now have all tools at hand to prove the almost perfect packing lemma.

\begin{proof}[Proof of Lemma~\ref{lem:appl}] 
  For $0\le s<s^*$ let~$\cE_{s}$ be the event that~$H_s$ is
  $(\alpha_{s},2D+3)$-quasirandom. By
  Lemma~\ref{lem:PackingProcess}\ref{lem:PackingProcess:quasi} we have
  \begin{equation}\label{eq:appl:bigcapE}
    \Prob\Big[\bigcap_s\cE_s\Big]\ge 1-2n^{-5}\,.
  \end{equation}
  Let $\hist_{s}$ be an embedding of $G''_1,\dots,G''_{s}$ by \PackingProcess{}
  such that $\cE_{s}$ holds.

  Recall that we may assume that $e(H_0^*)\leq 1.1\gamma \binom{n}{2}$ holds,
  which is fine as the probability of this inequality not being satisfied is at
  most $e^{-n}$. So, from now on we always condition on this assumption, and we
  shall show in the following that then each of the properties
  \ref{appl:quasi}--\ref{appl:sumweight} holds with probability at
  least~$1-n^{-4}$, which gives the lemma.

  \medskip
     
  \noindent
  \underline{\ref{appl:quasi}:
  $H$ is $({\gamma'}^3,2D+3)$-quasirandom and has density~$p$}.

  \medskip

  By Lemma~\ref{lem:PackingProcess}\ref{lem:PackingProcess:c}, the leftover
  graph~$H$ is $(\eta,2D+3)$-quasirandom with probability at least $1-2n^{-5}$.
  By~\eqref{eq:defconsts} and since $\gamma\ll \gamma'$ we
  have
  $\eta\le{\gamma'}^3$, which gives~\ref{appl:quasi}.
  
  \medskip

  \noindent
  \underline{\ref{appl:weight}:
  $w(v)=(1\pm{\gamma'}^3)\frac{pn}{2}$}.

  \medskip

  Fix $v\in V(H)$ and let $Y_s=w_s(x)\ONE_{x\AlgMap v}$. We have $Y_s\le\Delta$ and
  \[w(v)=\sum_s w_s(v)=\sum_{s,x\in V(G_s)} Y_s\,.\]
  We want to apply Corollary~\ref{cor:freedm}. By Lemma~\ref{lem:vertex} we have
  \begin{equation*}\begin{split}
      \sum_{s\in[s^*]} \Exp[Y_s | \hist_{s-1}]=
      \sum_{\substack{s\in[s^*] \\x\in V(G_s)}} w_s(x)\Prob[x\AlgMap v| \hist_{s-1}]=
      \sum_{\substack{s\in[s^*] \\x\in V(G_s)}} w_s(x) (1\pm10^4 C\alpha_s D\delta^{-1})\frac1n\,.
  \end{split}\end{equation*}
  It follows that
  \begin{equation*}\begin{split}
      \sum_{s\in[s^*]} \Exp[Y_s | \hist_{s-1}]=
      p\binom{n}{2}(1\pm10^4 C\alpha_{s^*} D\delta^{-1})\frac1n=
      \frac{pn}{2}(1\pm2\cdot 10^4 C\alpha_{s^*} D\delta^{-1})\,.
  \end{split}\end{equation*}
  By the second part of Corollary~\ref{cor:freedm}\ref{cor:freedm:tails}
  applied with $\cE=\bigcap_s\cE_s$, $\tmu=\frac{pn}{2}$, $\teta=2\cdot
  10^4 C\alpha_{s^*} D\delta^{-1}$ we obtain
  \begin{equation}\begin{split}
    \label{eq:appl:P2}
    \Prob\big[\cE\text{ and }\sum_s Y_s\neq \frac{pn}{2}\cdot(1\pm
    4\cdot 10^4C\alpha_{s^*} D\delta^{-1})\big]
    & \le 2\exp\Big(-\frac{\tmu \cdot 4\cdot 10^8C^2\alpha_{s^*}^2
      D^2\delta^{-2}}{4\Delta}\Big) \\
    & \le 2\exp(-10^{10}\log n)\,,
  \end{split}\end{equation}
  where the last inequality uses $\Delta\le cn/\log n$, $c\le
  10^{-10}\gamma^{10D}\alpha_0^4$, $\alpha_0\le\alpha_{s^*}$, $p\ge\mu\nu$ and $\gamma\ll\nu\le\mu$.

  We have $s^*\le\frac74 n$ and hence by the definition of $\alpha_x$ and of~$C$
  in~\eqref{eq:defconsts} we get
  \begin{equation}\label{eq:appl:alphadelta}
    \begin{split}
    4\cdot 10^4C\alpha_{s^*} D\delta^{-1}
  & \le 4\cdot 10^4C\alpha_{\frac74n} D\delta^{-1} \\
  & = 4\cdot 10^4C\cdot
  \frac{\delta}{10^8CD}\exp\big(-10^8CD^3\delta^{-1}\cdot\tfrac14 \big)
  \cdot D\delta^{-1} \\
  & \le \exp\big(-10^7CD^3\delta^{-1}\big) 
  \le \exp\big(-10^7\cdot 40D\exp(1000D\delta^{-2}\gamma^{-2D-10})\big) \\
  & \le \exp\big(-\exp(\gamma^{-2D-10})\big)
  \le\gamma^3\le{\gamma'}^3\,.
  \end{split}\end{equation}
  Combining this with~\eqref{eq:appl:P2} and~\eqref{eq:appl:bigcapE} and a union
  bound over~$v$, we
  conclude that~\ref{appl:weight} fails with probability at most
  $2n^{-5}+n\cdot n^{-10}\le n^{-4}$.

  \medskip

  \noindent
  \underline{\ref{appl:3}:
  $\big|N_H(v)\setminus \im\phi'_s\big|=(1\pm{\gamma'}^3)\mu pn$}
  and

  \noindent
  \underline{
    \ref{appl:4}:
    $\big|N_H(v)\setminus
    (\im\phi'_s\cup\im\phi'_{s'})\big|=(1\pm{\gamma'}^3)\mu^2pn$ if $s\neq s'$}.

  \medskip 

  We prove these together.
  Fix $v\in V(H)$ and $s,s'$ with $s^*-\lfloor\mu n\rfloor<s<s'\le s^*+1$.
  The artificial case $s'=s^*+1$ will be used to prove~\ref{appl:3}.

  We first run \PackingProcess{} up to time $s-1$ and consider the
  embedding of~$G''_s$. We want to apply Lemma~\ref{lem:S} to estimate what
  happens in this first stage.
  We set $S=N_{H_{s-1}}(v)$, so if $\cE_{s-1}$ holds then
  $|S|=(1\pm\alpha_{s-1})p_{s-1}n\ge\frac12pn\ge\frac12p\mu^2 n$.
  Hence we can apply Lemma~\ref{lem:S} with $S$ and
  conclude that with probability at least $1-3n^{-9}$ either $\cE_{s-1}$
  does not hold or
  \begin{equation}\label{eq:appl:Nv1}
    |N_{H_{s-1}}(v)\setminus\im \phi'_s|
    =(1\pm C'\alpha_s)\mu|S|
    =(1\pm 3C'\alpha_s)p_{s-1}\mu n\,.
  \end{equation}
  Further, we have $N_{H_{s}}(v)\setminus\im
  \phi'_s=N_{H_{s-1}}(v)\setminus\im \phi'_s$.

  Now let \PackingProcess{} perform the embeddings of $G''_{s+1},\dots,G''_{s'-1}$. We want to
  apply Lemma~\ref{lem:T} to estimate what happens in this second stage.
  Set $T=N_{H_s}(v)\setminus\im\phi'_s$ and observe that $T\cap
  N_{H_{s'-1}}(v)=N_{H_{s'-1}}(v)\setminus\im\phi'_s$.
  If \eqref{eq:appl:Nv1} holds, then $|T|\ge\frac12p\mu^2 n$ because
  by~\eqref{eq:defconsts} we have $C'\alpha_s\le 10^{-4}$. So
  by Lemma~\ref{lem:T} applied with~$T$ we get that with probability at
  least $1-n^{-C}$ either $\bigcap_{i} \cE_i$ fails or we have
  \begin{equation}\begin{split}\label{eq:appl:Nv2}
    |N_{H_{s'-1}}(v)\setminus\im\phi'_s| & =(1\pm \gamma^{-1}\alpha_{s'-1})\frac{p_{s'-1}}{p_s}|T|
    \eqByRef{eq:appl:Nv1} (1\pm \gamma^{-1}\alpha_{s'-1})\frac{p_{s'-1}}{p_{s}} (1\pm
    3C'\alpha_s)p_{s-1} \mu n \\
    & = (1\pm 5C'\alpha_{s'-1}) \mu p_{s'-1} n 
  \end{split}\end{equation}
  where the last equality follows from $\frac{p_{s-1}}{p_s}=1+o(1)$ and since $\gamma^{-1}<C'$.
  For the case $s'=s^*+1$ this immediately implies~\ref{appl:3}.
  Indeed, in this case~\eqref{eq:appl:Nv2} gets 
  \begin{equation*}\begin{split}
    |N_{H_{s^*}}(v)\setminus\im\phi'_s|= (1\pm 5C'\alpha_{s^*}) \mu p_{s^*} n~ . 
  \end{split}\end{equation*}
As long as $\Delta(H^*_{s^*})\leq \Delta(H^*_0)\leq 2\gamma n$, which holds with probability at least
 $1-2n^{-5}$ according to
 Lemma~\ref{lem:PackingProcess}\ref{lem:PackingProcess:d},
 we have that $|N_{H}(v)|-|N_{H_{s^*}}(v)|\leq 2\gamma n$ and $p_{s^*}=p\pm 2\gamma=\left(1\pm \frac{2\gamma}{p} \right)p$
 from which we conclude that
  \begin{align*}
    |N_{H}(v)\setminus\im\phi'_s|= (1\pm 5C'\alpha_{s^*}) \left(1\pm \frac{2\gamma}{p} \right)\mu p n \pm 2\gamma n 
    = (1\pm{\gamma'}^3)\mu pn
  \end{align*}
  since $C'\alpha_{s^*}<\frac{1}{100}\gamma'^3$, since
  $\gamma \ll \gamma'\ll \nu\ll \mu$ and $p\geq \mu\nu$.
  Hence, in total, taking a union bound over~$v$ and~$s$ and using~\eqref{eq:appl:bigcapE},
  the probability that~\ref{appl:3} fails is at most
  $4n^{-5}+n^2(3n^{-9}+n^{-C})\le n^{-4}$.

  For proving~\ref{appl:4}, assume that $s'\le s^*$ and consider next the
  embedding of~$G''_{s'}$ by \PackingProcess{}. We again want to apply Lemma~\ref{lem:S}, this
  time with $S=N_{H_{s'-1}}(v)\setminus\im\phi'_s$. If~\eqref{eq:appl:Nv2}
  holds, then
  $|S|=(1\pm 5C'\alpha_{s-1})\mu p_{s-1}n\ge\frac12p\mu^2 n$.
  Hence we can apply Lemma~\ref{lem:S} with $S$ and with~$s'$ in place
  of~$s$ to
  conclude that with probability at least $1-3n^{-9}$
  either~\eqref{eq:appl:Nv2} fails, or $\cE_{s'-1}$ fails or
  \begin{equation}\begin{split}\label{eq:appl:Nv3}
    |N_{H_{s'}}(v)\setminus(\im\phi'_s\cup\im\phi'_{s'})|& 
    =|N_{H_{s'-1}}(v)\setminus(\im\phi'_s\cup\im\phi'_{s'})| \\
    & =(1\pm C'\alpha_{s'})\mu|S|
    =(1\pm 7C'\alpha_{s'})p_{s'-1}\mu^2 n\,.
  \end{split}\end{equation}

  In a last stage, consider the embedding of $G''_{s'+1},\dots,G''_{s^*}$
  by \PackingProcess{}. We apply Lemma~\ref{lem:T} with 
  $T=N_{H_{s'}}(v)\setminus(\im\phi'_s\cup\im\phi'_{s'})\subset N_{H_{s'}}(v)$ and with~$s'$
  replaced by~$s^*$, which is possible if~\eqref{eq:appl:Nv3} holds since
  then $|T|\ge\frac12 p\mu^2n$. In this case, because $T\cap
  N_{H_{s^*}}(v)=N_{H_{s^*}}(v)\setminus(\im\phi'_s\cup\im \phi'_{s'})$,  we conclude that with
  probability at least $1-n^{-C}$ we have 
  \begin{equation*}\begin{split}
    |N_{H_{s*}}(v)\setminus(\im\phi'_s\cup \im \phi'_{s'})| & =(1\pm \gamma^{-1}\alpha_{s^*})\frac{p_{s^*}}{p_{s'}}|T| \\
 &   \eqByRef{eq:appl:Nv3} (1\pm \gamma^{-1}\alpha_{s^*})\frac{p_{s^*}}{p_{s'}} (1\pm
    7C'\alpha_{s'})p_{s'-1} \mu^2 n 
     = (1\pm 9C'\alpha_{s^*}) \mu^2 p_{s^*} n
  \end{split}\end{equation*}
  from which we obtain
  \begin{equation*}\begin{split}
    |N_{H}(v)\setminus(\im\phi'_s\cup \im \phi'_{s'})| &      = (1\pm \gamma'^3) \mu^2 p n
  \end{split}\end{equation*}
  analogously to the discussion of~\ref{appl:3} and
  as long as $\Delta(H_0)\leq 2\gamma n$.
  We conclude, using a union bound over~$v$,
  $s$ and~$s'$ and again~\eqref{eq:appl:bigcapE} and
  Lemma~\ref{lem:PackingProcess}\ref{lem:PackingProcess:d},
  that~\ref{appl:4} fails with probability at most
  $4n^{-5}+n^3(2\cdot 3n^{-9}+2\cdot n^{-C})\le n^{-4}$.

  \medskip

  \noindent 
  \underline{
  \ref{appl:degN}:
  $\sum_s w_s(v)\ONE_{u\not\in\im\phi'_s}=(1\pm{\gamma'}^3)\mu\frac{pn}{2}$}.

  \medskip

  Fix~$u$ and $v\neq u$ and define
  \[ Y_s=w_s(v)\ONE_{u\not\in\im\phi'_s}\,,\]
  and observe that $Y_s\le w_s(v)\le\Delta$. 
  Again, we want to apply Corollary~\ref{cor:freedm}.
  We have
  \[\Exp[Y_s|\hist_{s-1}]=\sum_{x\in V(G_s)}w_s(x)\cdot\Prob[x\AlgMap v,u\not\in\im\phi_s'|\hist_{s-1}]\,.\]
  By Lemma~\ref{lem:vertexnotimage} we obtain
  \[\Exp[Y_s|\hist_{s-1}]=
  \sum_{x\in V(G_s)} w_s(x)\cdot(1\pm 10^3C\alpha_{s-1}
    D\delta^{-1})\frac{\mu}{n}
    =\lfloor\nu n\rfloor \cdot(1\pm 10^3C\alpha_{s-1} D\delta^{-1})\frac{\mu}{n} \,. \]
  This implies
  \[\sum_s \Exp[Y_s|\hist_{s-1}]=\lfloor\mu n\rfloor\lfloor\nu n\rfloor
    \cdot(1\pm 10^3C\alpha_{s^*} D\delta^{-1})\frac{\mu}{n} 
  =\frac{\mu pn}{2}\cdot(1\pm 2\cdot 10^3C\alpha_{s^*} D\delta^{-1})\,. \]
  We apply the second part of Corollary~\ref{cor:freedm}\ref{cor:freedm:tails} with
  \[\cE=\bigcap_s\cE_s\,, \quad R=\Delta\,, \quad \tmu=\frac{\mu pn}{2}\,, \quad
  \teta=2\cdot 10^3C\alpha_{s^*} D\delta^{-1}\] and use
  $\teta\le\frac12$, which holds by definition of $\alpha_{s^*}$, to
  conclude that
  \begin{equation*}\begin{split}
    \Prob\big[\cE\text{ and }\sum_s Y_s\neq \frac{\mu pn}{2}\cdot(1\pm
    4\cdot 10^3C\alpha_{s^*} D\delta^{-1})\big]
    & \le 2\exp\Big(-\frac{\tmu \cdot4\cdot 10^6C^2\alpha_{s^*}^2
      D^2\delta^{-2}}{4\Delta}\Big) \\
    & \le 2\exp(-10^{10}\log n)\,,
  \end{split}\end{equation*}
  where the last inequality uses $\Delta\le cn/\log n$, $c\le
  10^{-10}\gamma^{10D}\alpha_0^4$, $\alpha_0\le\alpha_{s^*}$, $p\ge\mu\nu$ and $\gamma\ll\nu\ll\mu$.
  Combining this with~\eqref{eq:appl:alphadelta}
  and~\eqref{eq:appl:bigcapE} and using a union bound over all~$u$, $v$, we conclude that
  \ref{appl:degN} fails with probability at most $2n^{-5}+n^2\cdot
  n^{-10}\le n^{-4}$.

  \medskip 

  \noindent 
  \underline{
    \ref{appl:sumweight}: If $u\not\in\im\phi'_s$ then we have $\sum_{v\colon
      vu\in E(H)} w_s(v)<\frac{10p^2 n}{\mu}$}.

  \medskip

    The verification of this statement is the most complicated part of this
    proof. We fix $u\in V(H)$ and~$s$ with $s^*-\lfloor\mu n\rfloor<s\le s^*$.
    We shall show that either an unlikely event occurs, or the desired property
    holds when~$G'_s$ is embedded, and then continues to hold while the
    remaining guest graphs are embedded. The embeddings of these guest
    graphs~$G'_{s'}$
    is performed in the graphs $H_{s'}$ and we shall show that
    $\sum_{v\colon vu\in E(H_{s'})} w_s(v)$ stays concentrated.
    But since~\ref{appl:sumweight}
    concerns the whole graph~$H$, we additionally need to control the contribution of
    edges $vu$ in~$H_{s'}^*$, for which we can only provide an upper bound.
    More precisely, we shall establish the following claim. We will then, at the
    end of this proof, argue that this implies~\ref{appl:sumweight},

    \begin{claim}\label{cl:sumweight:main}
      Suppose $u\not\in\im\phi'_s$. Then with probability at least $1-4n^{-19}$
      either $(H_i,H^*_0)$ is not $(\alpha_i,2D+3)$-coquasirandom for some
      $i\in[s^*]$, or $\big(H_{i-1},\phi'_i\big([t]\big)\big)$ does not satisfy
      the $(C\alpha_{i-1},2D+3)$-diet condition for some $i\in[s^*]$ and
      $t\in[n-\delta n]$, or $\big(H_{i-1},H^*_0,\phi'_i\big([t]\big)\big)$ does
      not satisfy the $(2\eta,2D+3)$-codiet condition for some $i\in[s^*]$ and
      $t\in[n-\delta n]$, or for each $s\le s'\le s^*$ we have
     \begin{equation}\label{eq:sumweight:main}
      \sum_{v:vu\in E(H_{s'})}w_s(v)=\big(1\pm 10Cp^{-1}\alpha_{s'}\big)\tfrac{p_{s'}pn}{2\mu}\,,
     \end{equation}
     and
     \begin{equation}\label{eq:sumweight:Hssum}
      \sum_{v:vu\in E(H^*_{0})}w_s(v)\le \tfrac{\gamma pn}{\mu}\,.
     \end{equation}
    \end{claim}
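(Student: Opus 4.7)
The proof of Claim~\ref{cl:sumweight:main} proceeds in three stages: establish~\eqref{eq:sumweight:main} at $s'=s$, propagate it forward for $s<s'\le s^*$, and separately verify~\eqref{eq:sumweight:Hssum}. Throughout I would condition on the claim's listed good events (coquasirandomness, diet, codiet), whose failure probabilities are absorbed into the final $O(n^{-5})$ budget via a union bound.

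\emph{Stage 1 (initial value at $s'=s$).} Since $u\notin\im\phi'_s$ by hypothesis, no edge at $u$ is used in the embedding of $G''_s$, so $N_{H_s}(u)=N_{H_{s-1}}(u)$ and
\[X_s=\sum_{x\in V(G_s)}w_s(x)\,\ONE_{\phi'_s(x)\in N_{H_{s-1}}(u)}.\]
Conditioning on $\hist_{s-1}$ (which determines $H_{s-1}$), Lemma~\ref{lem:vertex} gives $\Prob[\phi'_s(x)=v\mid\hist_{s-1}]=(1\pm O(C\alpha_{s-1}D\delta^{-1}))/n$. Summing over $v\in N_{H_{s-1}}(u)$ (of size $(1\pm\alpha_{s-1})p_{s-1}n$) and over $x\in V(G_s)$ (using $\sum_x w_s(x)=\lfloor\nu n\rfloor$) yields $\Exp[X_s\mid\hist_{s-1}]=(1\pm O(C\alpha_{s-1}D\delta^{-1}))\tfrac{p_spn}{2\mu}$ by the definition $p=\lfloor\mu n\rfloor\lfloor\nu n\rfloor\binom{n}{2}^{-1}$. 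Each summand is bounded by $\Delta=cn/\log n$ and $\tmu=\Theta(pn/\mu)$, so Corollary~\ref{cor:freedm}\ref{cor:freedm:tails} gives the required $(1\pm O(Cp^{-1}\alpha_s))$ concentration of $X_s$ with failure probability at most $n^{-20}$.

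\emph{Stage 2 (evolution for $s<s'\le s^*$).} For each $s<i\le s^*$ define the weighted decrement $Y_i:=X_{i-1}-X_i=\sum_v w_s(v)\ONE_{vu\in E(H_{i-1})\setminus E(H_i)}$. Lemma~\ref{lem:probedge} gives $\Prob[vu\text{ used in embedding of }G''_i\mid\hist_{i-1}]\approx\tfrac{2e(G''_i)}{p_{i-1}n^2}$, so $\Exp[Y_i\mid\hist_{i-1}]\approx\tfrac{2e(G''_i)}{p_{i-1}n^2}X_{i-1}$; a straightforward induction then gives, in expectation, $X_{s'}\approx (p_{s'}/p_s)X_s\approx\tfrac{p_{s'}pn}{2\mu}$. \emph{The main obstacle here is tight concentration of $\sum_iY_i$}: the worst-case range $Y_i\le\Delta^2=c^2n^2/\log^2n$ (up to $\Delta$ edges at $u$ used per round, each of weight $\le\Delta$) is far too large for a direct Freedman application against $\tmu\approx pn/\mu$, and a weighted analogue of Lemma~\ref{lem:T} is required. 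To circumvent this, I would decompose $N_{H_s}(u)\cap\im\phi'_s$ by exact weight into classes $V_d:=\{v:w_s(v)=d\}$ and apply Lemma~\ref{lem:T} to each class with $|V_d|\ge\tfrac12p\mu^2n$, which produces the tight contribution $d\,|V_d\cap N_{H_{s'}}(u)|=(1\pm\gamma^{-1}\alpha_{s'})\,d|V_d|\,(p_{s'}/p_s)$ for large classes. The residual contribution from small classes is the most delicate part: the natural route is a dyadic grouping $W_k:=\bigsqcup_{\substack{2^k\le d<2^{k+1}\\ |V_d|<p\mu^2n/2}}V_d$ combined with per-group Freedman bounds, where the per-group range drops to $O(2^k\Delta)$ and the Markov-type estimate $|W_k|\le X_s/2^k$ tightens the variance sufficiently; summing class-wise and taking a union bound over the $O(\log\Delta)$ groups and $O(n)$ values of $s'$ then yields~\eqref{eq:sumweight:main}.

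\emph{Stage 3 ($H_0^*$ bound).} Here $H_0^*$ is fixed before \PackingProcess{} runs and, by Lemma~\ref{lem:PackingProcess}\ref{lem:PackingProcess:d}, $|N_{H_0^*}(u)|\le 2\gamma n$. Setting $Z_x:=w_s(x)\ONE_{\phi'_s(x)\in N_{H_0^*}(u)}$ and applying Lemma~\ref{lem:vertex} gives
\[\sum_x\Exp[Z_x\mid\hist_{s-1}]\le(1+O(C\alpha_{s-1}D\delta^{-1}))\cdot 2\gamma\lfloor\nu n\rfloor\le\tfrac{3\gamma pn}{4\mu}.\]
Since $Z_x\le\Delta$, Corollary~\ref{cor:freedm}\ref{cor:freedm:uppertail} gives $\sum_x Z_x\le\tfrac{\gamma pn}{\mu}$ with failure probability at most $n^{-20}$, establishing~\eqref{eq:sumweight:Hssum}. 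A union bound over the three stages together with the $O(n^{-5})$ failure of the conditioned-on events yields the claimed total failure probability $4n^{-19}$.
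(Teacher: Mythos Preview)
Your Stages~1 and~3 parallel the paper's Claim~\ref{cl:sumweight:start}, with one technical slip: Lemma~\ref{lem:vertex} gives the \emph{unconditional} probability $\Prob[x\AlgMap v]$, whereas Corollary~\ref{cor:freedm} needs the conditional expectation given the partial embedding $\psi_{t-1}$. The paper uses the diet (respectively codiet) condition directly to get $\Prob[x_t\AlgMap N_{H_{s-1}}(u)\mid\psi_{t-1}]=(1\pm 3C\alpha_{s-1})p_{s-1}$, which is the correct input to Freedman. This is easy to fix.

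The real gap is in Stage~2. You correctly identify the range obstruction, but the proposed cure does not close. Consider the perfectly possible configuration in which all of $X_s$ is carried by $O(\log n)$ vertices of $N_{H_s}(u)$, each of weight $\Theta(\Delta)$. Then every weight class $V_d$ has $|V_d|<\tfrac12 p\mu^2 n$, so Lemma~\ref{lem:T} never applies, and the entire weight sits in a single dyadic level $W_k$ with $2^k\approx\Delta$. Your range there is $R_k=O(2^k\Delta)=O(\Delta^2)$ and $\tilde\mu_k\le 2X_s=O(pn/\mu)$, so Corollary~\ref{cor:freedm} forces $\tilde\rho_k\gtrsim\sqrt{R_k\tilde\mu_k\log n}=\Delta\sqrt{X_s\log n}$. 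With $\Delta=cn/\log n$ this is of order $n^{3/2}/\sqrt{\log n}$, which diverges with $n$ and completely swamps the target error $O(\alpha_{s'}n)$. The Markov bound $|W_k|\le X_s/2^k$ does not help: it caps the \emph{size} of $W_k$, but not the weighted variance proxy $R_k\tilde\mu_k$.

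The paper's device is quite different. It \emph{caps} each $Y_i$ at $K'\Delta$ for a large constant $K'=10^{10}CD^3\delta^{-1}$, writing $Y_i=Y'_i+Z_i$ with $Y'_i\le K'\Delta$ and $Z_i=\max\{Y_i-K'\Delta,0\}$. The capped variables now have range $K'\Delta$, so Freedman applies directly (Claims~\ref{cl:sumweight:Yexp} and~\ref{cl:sumweight:Zexp} give the expectation). The overflow $\sum_i Z_i$ is controlled separately (Claim~\ref{cl:sumweight:Zsum}): one shows that $Z_i>0$ requires the embedding of $G''_i$ to ``go near the cap'', meaning the vertex embedded to $u$ has forward neighbours accumulating weight $\ge(K'-D-1)\Delta$; conditioned on the diet condition this has probability $O(e^{-K'/8})$ (Claim~\ref{cl:probcap}), and a second Freedman argument over the ``after-cap'' vertices across all rounds then bounds $\sum_i Z_i$ by $\tfrac{\alpha_s pn}{1000\mu}$. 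This capping-plus-overflow idea is the missing ingredient in your Stage~2.
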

    
    We will prove this claim in two steps. First (in
    Claim~\ref{cl:sumweight:start}), we establish that it is very likely
    that~\eqref{eq:sumweight:main} holds for $s'=s$ and
    that~\eqref{eq:sumweight:Hssum} holds. Then, based on
    Claims~\ref{cl:sumweight:Yexp} to~\ref{cl:sumweight:Zsum}, we show that it
    is unlikely that any given $s'>s$ is the smallest $s'$ for
    which~\eqref{eq:sumweight:main} fails. Taking the union bound over $s'$
    will complete the proof of the claim.

  Recall again that the embedding $\phi'_s$ of~$G'_s$ is given by letting \RandomEmbedding{} perform the embedding
  of~$G''_s[{\scriptstyle [n-\delta n]}]$,
  thus constructing partial embeddings $\psi_t$ for
  $0\le t\le(1-\delta)n$,
  and then ignoring the
  vertices that do not belong to~$G'_s$, i.e. the last 
  $\mu n - \delta n$ ones.
  
    \begin{claim}\label{cl:sumweight:start}
      Suppose $u\not\in \im \phi'_s$. Then with probability at least $1-n^{-20}$
      the pair $(H_{s-1},H^*_0)$ is not $(\alpha_{s-1},2D+3)$-coquasirandom, or
      $\big(H_{s-1},\phi'_s\big([t]\big)\big)$ does not satisfy the
      $(C\alpha_{s-1},2D+3)$-diet condition for some $t\in[n-\lfloor \mu n \rfloor]$,
      or
      $\big(H_{s-1},H^*_0,\phi'_s\big([t]\big)\big)$ does not satisfy the
      $(2\eta,2D+3)$-codiet condition for some $t\in[n-\lfloor \mu n \rfloor]$, or we have
     \[
      \sum_{v:vu\in E(H_{s})}w_s(v)=\big(1\pm 10Cp^{-1}\alpha_{s}\big)\tfrac{p_{s}pn}{2\mu}\quad\text{and}\quad\sum_{v:vu\in E(H^*_{0})}w_s(v)\le\tfrac{\gamma pn}{\mu}\,.
     \]
     
    \end{claim}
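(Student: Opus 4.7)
The key preliminary observation is that since $u\notin\im\phi'_s$ and the isolated vertices of $I'_s$ (placed at positions $n-\lfloor\mu n\rfloor+1,\dots,n$ in the degeneracy order of $G''_s$) contribute no edges when \RandomEmbedding{} runs, none of the edges of $H_{s-1}$ incident to $u$ get consumed in passing to $H_s$; hence $N_{H_s}(u)=N_{H_{s-1}}(u)$, so one may replace $H_s$ by $H_{s-1}$ in the first sum. This reduces the problem to a single run of \RandomEmbedding{} embedding $G''_s$ into $H_{s-1}$.

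The plan is to express each sum as a sum of per-step contributions during \RandomEmbedding{} and apply Corollary~\ref{cor:freedm}. Letting $x_t$ denote the vertex of $G''_s$ at position~$t$ of the degeneracy order, I would set
\[Y_t:=w_s(x_t)\ONE_{\psi_t(x_t)\in N_{H_{s-1}}(u)}\qquad\text{and}\qquad Z_t:=w_s(x_t)\ONE_{\psi_t(x_t)\in N_{H^*_0}(u)}\]
for $t\in[n-\lfloor\mu n\rfloor]$, so that $\sum_tY_t=\sum_{v:vu\in E(H_{s-1})}w_s(v)$ and $\sum_tZ_t=\sum_{v:vu\in E(H^*_0)}w_s(v)$, since $w_s$ vanishes on the added vertices of $I_s$ and $I'_s$ and on the removed leaves themselves. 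Each variable is bounded by $\Delta\le cn/\log n$, and $\sum_tw_s(x_t)=\lfloor\nu n\rfloor=(1\pm o(1))pn/(2\mu)$ by the definition $p=\lfloor\mu n\rfloor\lfloor\nu n\rfloor\binom{n}{2}^{-1}$.

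Conditioning on any history $\hist_{t-1}$ for which the $(C\alpha_{s-1},2D+3)$-diet condition of $(H_{s-1},\im\psi_{t-1})$ holds, I would compute
\[\Prob\big[\psi_t(x_t)\in N_{H_{s-1}}(u)\,\big|\,\hist_{t-1}\big]=\frac{\big|N_{H_{s-1}}\big(\psi_{t-1}(\LNBH_{G''_s}(x_t))\cup\{u\}\big)\setminus\im\psi_{t-1}\big|}{\big|N_{H_{s-1}}\big(\psi_{t-1}(\LNBH_{G''_s}(x_t))\big)\setminus\im\psi_{t-1}\big|}=(1\pm O(C\alpha_{s-1}))p_{s-1}\,,\]
by applying the diet condition to numerator (legitimate since $u\notin\im\psi_{t-1}$) and denominator in turn. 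The analogous calculation using the $(2\eta,2D+3)$-codiet condition for $(H_{s-1},H^*_0,\im\psi_{t-1})$ gives $\Exp[Z_t\,|\,\hist_{t-1}]=(1\pm O(\eta))w_s(x_t)p^*_0$, where $p^*_0\le 1.1\gamma$ by the assumption $e(H^*_0)\le 1.1\gamma\binom{n}{2}$ of Setting~\ref{set:graphs}. Summing over~$t$ and using $p_{s-1}=(1+o(1))p_s$ yields
\[\sum_t\Exp[Y_t\,|\,\hist_{t-1}]=(1\pm O(C\alpha_s))\tfrac{p_spn}{2\mu}\qquad\text{and}\qquad\sum_t\Exp[Z_t\,|\,\hist_{t-1}]\le\tfrac{3\gamma pn}{5\mu}\,.\]

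To finish, I would apply Corollary~\ref{cor:freedm} with good event $\cE$ equal to the conjunction of the coquasirandomness, diet, and codiet assumptions listed in the claim. Part~\ref{cor:freedm:tails} applied to $\sum_tY_t$ with $R=\Delta$ and permitted deviation $\trho$ of order $Cp^{-1}\alpha_s\cdot p_spn/(2\mu)$ produces a tail bound of shape $\exp(-\Omega(\alpha_0^2\log n/(c\mu)))$, which sits comfortably below $\tfrac12n^{-20}$ thanks to $c\ll\alpha_0^4$; part~\ref{cor:freedm:uppertail} applied to $\sum_tZ_t$ with mean at most $\tfrac{3\gamma pn}{5\mu}$ delivers $\sum_tZ_t\le\tfrac{\gamma pn}{\mu}$ with an even better tail, and a union bound closes the argument. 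The one real subtlety is that the coquasirandomness, diet, and codiet hypotheses cannot be imposed almost surely but must enter as the good event $\cE$ in Freedman's inequality---precisely the scenario that Corollary~\ref{cor:freedm} was formulated to handle.
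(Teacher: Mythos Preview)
Your proposal is correct and follows essentially the same approach as the paper: define per-step random variables weighted by $w_s(x_t)$ for landing in $N_{H_{s-1}}(u)$ (respectively $N_{H^*_0}(u)$), compute the conditional probabilities via the diet (respectively codiet) condition, sum using $\sum_t w_s(x_t)=\lfloor\nu n\rfloor$, and apply the two parts of Corollary~\ref{cor:freedm} with the stated hypotheses as the good event. The paper's proof is organised identically, with the same preliminary observation that $u\notin\im\phi'_s$ forces $N_{H_s}(u)=N_{H_{s-1}}(u)$.
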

    \begin{claimproof} We begin by proving the concentration of $\sum_{v:vu\in E(H_{s})}w_s(v)$. For every $t\in
      [n]$, let $x_t$ be the $t$-th vertex of $G_s''$, let $\cE_t'$ be the event
      that $H_{s-1}$ is $(\alpha_{s-1},2D+3)$-quasirandom and
      $\big(H_{s-1},\phi'_s\big([t]\big)\big)$ satisfies the
      $(C\alpha_{s-1},2D+3)$-diet condition, and let $\hist'_t$ be a history up
      to and including the embedding of $x_t$ which satisfies $\cE'_{t}$. When
      \RandomEmbedding{} is run, for $t\in[n-\lfloor\mu n\rfloor]$ we obtain
      \[
        \Prob[x_t \AlgMap N_{H_{s-1}}(u) | \hist_{t-1} ]
        =
        \frac{(1\pm C\alpha_{s-1})p_{s-1}^{1+\deg_G^-(x_t)}(n-t+1)}{
          (1\pm C\alpha_{s-1})p_{s-1}^{\deg_G^-(x_t)}(n-t+1)}
        =
        (1\pm 3C\alpha_{s-1})p_{s-1}	
      \]
      where the first equality holds, since under assumption of the diet
      condition for $(H_{s-1},\phi_s'([t-1]))$ the candidate set for $x_t$ is of
      size $(1\pm C\alpha_{s-1})p_{s-1}^{\deg_G^-(x_t)}(n-t+1)$, and since
      $u\notin \im \phi_s'$ and thus there exist $(1\pm
      C\alpha_{s-1})p_{s-1}^{1+\deg_G^-(x_t)}(n-t+1)$ candidates among
      $N_{H_{s-1}}(u)$. Now, set
      \[
        X_t:=w_s(x_t) \cdot \ONE_{x_t \AlgMap N_{H_{s-1}}(u)}
      \]
      so that
      \[
        \sum_{v:vu\in E(H_{s})}w_s(v)=
        \sum_{v:vu\in E(H_{s-1})}w_s(v)=
        \sum_{t\in [n]} X_t~ ,
      \]
      where the first equation holds because of $u\notin \im \phi'_s$. In order
      to apply Corollary~\ref{cor:freedm}\ref{cor:freedm:tails} observe that
      $0\leq X_t\leq \Delta$. Moreover,
      \begin{equation*}\begin{split}
        \sum_{t\in [n]} \Exp[X_t|\hist'_{t-1}]
        &= \sum_{t\in [n-\lfloor\mu n\rfloor]} \Exp[X_t|\hist'_{t-1}]
         = \sum_{t\in [n-\lfloor\mu n\rfloor]} w_s(x_t) \Prob[x_t \AlgMap N_{H_{s-1}}(u) | \hist'_{t-1} ] \\
        & = (1\pm 3C\alpha_{s-1})p_{s-1} 
          \sum_{t\in [n-\lfloor\mu n\rfloor]} w_s(x_t) 
          = (1\pm 4C\alpha_{s-1}) \frac{p_spn}{2\mu}
      \end{split}\end{equation*}
      since $p_s=(1-o(1))p_{s-1}$, and $\sum_{t\in [n-\lfloor\mu n\rfloor]} w_s(x_t) = \lfloor
      \nu n \rfloor$, and by definition of $p$. So,
      Corollary~\ref{cor:freedm}\ref{cor:freedm:tails} with
      $\tilde{\mu}=\frac{p_spn}{2\mu}$, $\tilde{\nu} =
      4C\alpha_{s-1}\frac{p_spn}{2\mu}$ and $\tilde{\rho} =
      C\alpha_{s-1}\frac{p_spn}{2\mu}$ yields
      \begin{align*}
        \Prob\left[\cE'_t ~ \text{and} ~
        \sum_{t\in [n]}X_t 
        \neq (1\pm 5C\alpha_{s-1}) 		
        \frac{p_spn}{2\mu}
        \right]
        \leq
        2 \exp\left(
        - \frac{C^2\alpha_{s-1}^2p_spn}{4\Delta \mu (1+5C\alpha_{s-1})}
        \right) \leq n^{-21}
      \end{align*}
      where the last inequality holds, since $\Delta\leq \frac{cn}{\log n}$ and
      by choice of $c$. This gives the first part of the claim, as
      $5C\alpha_{s-1}\leq 10Cp^{-1}\alpha_s$.

      The second part of the claim, concerning $H^*_0$, is very similar, and we
      only sketch the proof. We define $\cE''_t$ to be the event that
      $(H_{s-1},H^*_0)$ is $(\alpha_{s-1},2D+3)$-coquasirandom and
      $\big(H_{s-1},H^*_0,\phi'_s\big([t]\big)\big)$ satisfies the
      $(2\eta,2D+3)$-codiet condition, and let $\hist''_t$ be a history up to
      and including the embedding of $x_t$ which satisfies $\cE''_{t''}$. By a
      similar calculation as before, with $p^*\leq 1.1\gamma$ being the density
      of $H_0^*$, we see that when \RandomEmbedding{} is run, we have
      \[\Prob[x_t\AlgMap N_{H^*_0}(u)|\hist_{t-1}]=\big(1\pm6\eta\big)p^*\le\tfrac 32 \gamma,\]
      as the codiet condition for $(H_{s-1},H_0^*)$ makes sure that the
      candidate set for $x_t$ is of size $(1\pm 2\eta
      )p_{s-1}^{\deg_G^-(x_t)}(n-t+1)$, while among these candidates $(1\pm
      2\eta)p_{s-1}^{\deg_G^-(x_t)}p^*(n-t+1)$ vertices belong to
      $N_{H^*_0}(u)$.
 
      Having that, we can again define $X'_t:=w_s(x_t)\cdot\ONE_{x_t\AlgMap
        N_{H^*_0}(u)}$, and as before we obtain
      \[\sum_{t\in[n]}\Exp[X'_t|\hist''_{t-1}]=\sum_{t\in[n-\mu
          n]}\Exp[X'_t|\hist''_{t-1}]\le\tfrac53\cdot\frac{\gamma p n}{2\mu}\,.\]

      Applying Corollary~\ref{cor:freedm}\ref{cor:freedm:uppertail}, we get
      \[\Prob\left[\cE''_t ~ \text{and} ~
          \sum_{t\in [n]}X'_t > \frac{\gamma pn}{\mu} \right] \leq n^{-21}\,.\]
      This is the second part of the claim; the total failure probability is at
      most $2n^{-21}<n^{-20}$.
    \end{claimproof}
    
    We now need to show that it is unlikely that a given $s'>s$ is the first
    $s'$ for which~\eqref{eq:sumweight:main} fails. 
    To that end, fix~$s'$ with
    $s^*-\lfloor\mu n\rfloor\le s<s'\le s^*$. For $s<i\le s^*$ we
    define \[Y_{i}:=\sum_{v\in N_{H_{i-1}}(u)\setminus N_{H_{i}}(u)} w_s(v)\,.\]
    We have
    \[\sum_{v\colon vu\in E(H_{s'})} w_s(v)=\sum_{v\colon vu\in E(H_{s})}
      w_s(v)-\sum_{i=s+1}^{s'} Y_{i}\,,\] and so the missing piece to establishing
    Claim~\ref{cl:sumweight:main} is to show that the sum of the~$Y_i$ is
    likely to stay close to its expectation. We start by determining this expectation.

    \begin{claim}\label{cl:sumweight:Yexp}
      Suppose that $H_{i-1}$ is $(\alpha_{i-1},2D+3)$-quasirandom, and suppose
      that~\eqref{eq:sumweight:main} holds for $s'=i-1$. Then when
      \RandomEmbedding{} is run to embed $G''_i[{\scriptstyle [n-\delta n]}]$
      into $H_{i-1}$, we have
      \[\Exp[Y_i|H_{i-1}]=\big(1\pm 10^4CD\alpha_{i-1}\delta^{-1}\big)\cdot\tfrac{pe(G''_i)}{\mu n}\,.\]
    \end{claim}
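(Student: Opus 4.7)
The plan is to compute $\Exp[Y_i|H_{i-1}]$ by expanding it as a weighted sum of indicators over host-graph edges at $u$ and combining two ingredients: Lemma~\ref{lem:probedge} for the per-edge probability that an edge gets used, and the standing hypothesis~\eqref{eq:sumweight:main} at $s'=i-1$ for the $w_s$-weighted sum of neighbours of $u$.

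First I would observe that the set $N_{H_{i-1}}(u) \setminus N_{H_i}(u)$ consists exactly of those neighbours $v$ of $u$ in $H_{i-1}$ for which the edge $vu$ is used by the run of \RandomEmbedding{} embedding $G''_i[{\scriptstyle[n-\delta n]}]$ into $H_{i-1}$. Rewriting $Y_i$ as a sum of indicators and taking conditional expectation on $H_{i-1}$ via linearity gives
\[\Exp[Y_i|H_{i-1}] = \sum_{v: vu \in E(H_{i-1})} w_s(v)\cdot \Prob\bigl[vu \text{ is used}\bigm|H_{i-1}\bigr].\]

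The graph $H_{i-1}$ is $(\alpha_{i-1},2D+3)$-quasirandom with density $p_{i-1}\ge \gamma$ by~\eqref{eq:ps}, so the hypotheses of Lemma~\ref{lem:probedge} hold for the pair $(G''_i, H_{i-1})$, and that lemma provides the per-edge probability $\bigl(1\pm 500C\alpha_{i-1}\delta^{-1}\bigr)^{4D+2}\cdot 2e(G''_i)/(p_{i-1}n^2)$, which is the same for every $v$. Pulling this common factor out of the sum leaves $\sum_{v:vu\in E(H_{i-1})} w_s(v)$, which is precisely the quantity controlled by the assumption~\eqref{eq:sumweight:main} at $s'=i-1$; substituting its value $(1\pm 10Cp^{-1}\alpha_{i-1})p_{i-1}pn/(2\mu)$ causes the factors of $p_{i-1}$ to cancel, leaving $pe(G''_i)/(\mu n)$ multiplied by the product of the two relative error terms.

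The remaining task is routine error bookkeeping: the factor $(1\pm 500C\alpha_{i-1}\delta^{-1})^{4D+2}$ is $1\pm O(DC\alpha_{i-1}\delta^{-1})$, and the second factor $(1\pm 10Cp^{-1}\alpha_{i-1})$ is absorbed into the first using $p\ge \gamma$ together with $\delta\ll\gamma$ from Setting~\ref{set:const}, so that $10Cp^{-1}\alpha_{i-1}\le 10CD\alpha_{i-1}\delta^{-1}$; combining then yields an overall multiplicative error within $10^4CD\alpha_{i-1}\delta^{-1}$, as claimed. There is no genuine obstacle in this argument: the only probabilistic input is the single application of Lemma~\ref{lem:probedge}, after which the computation reduces to substituting the deterministic arithmetic hypothesis on $H_{i-1}$ and tracking constants carefully enough to stay within the stated factor $10^4$.
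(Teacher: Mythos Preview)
Your proposal is correct and follows essentially the same route as the paper's proof: expand $Y_i$ as a $w_s$-weighted sum of edge-usage indicators, apply Lemma~\ref{lem:probedge} for the uniform per-edge probability, substitute~\eqref{eq:sumweight:main} at $s'=i-1$ for the weighted sum, and observe that the $p_{i-1}$ factors cancel. The only cosmetic difference is in the final error absorption, where the paper invokes $p\ge\nu\mu$ with $\delta\ll\nu\ll\mu$ rather than your $p\ge\gamma$ with $\delta\ll\gamma$, but both estimates are valid and yield the stated bound.
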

    \begin{claimproof}
      By definition of $Y_i$ we obtain
      \[
        Y_i=\sum_{v\in N_{H_{i-1}}(u)}
        w_s(v)\cdot \ONE_{\text{uv is used when embedding }G_i''}
      \]
      and therefore
      \[
        \Exp[Y_i|H_{i-1}]
        = 
        \sum_{v\in N_{H_{i-1}}(u)}
        w_s(v)\cdot \Prob[\text{uv is used when embedding }G_i''|H_{i-1}]~ .
      \]
      Under assumption that $H_{i-1}$ is $(\alpha_{i-1},2D+3)$-quasirandom,
      Lemma~\ref{lem:probedge} yields
      \[
        \Exp[Y_i|H_{i-1}]
        = 
        \sum_{v\in N_{H_{i-1}}(u)}
        w_s(v)\cdot 
        (1\pm 500C\alpha_{i-1}\delta^{-1})^{4D+2} \frac{2e(G_i'')}{p_{i-1}n^2}~ .
      \]
      Applying~\eqref{eq:sumweight:main} for $s'=i-1$ finally leads to
      \begin{align*}
        \Exp[Y_i|H_{i-1}]
        & = (1\pm 10Cp^{-1}\alpha_{i-1})
          \frac{p_{i-1}pn}{2\mu}\cdot 
          (1\pm 500C\alpha_{i-1}\delta^{-1})^{4D+2} \frac{2e(G_i'')}{p_{i-1}n^2}\\
        &  = (1 \pm 10^4C\alpha_{i-1}D\delta^{-1} )\frac{pe(G_i'')}{\mu n}
      \end{align*}
      where last equality holds since $p\geq \nu \mu$ and $\delta \ll \nu\ll
      \mu$.
    \end{claimproof}
  
    What we would like to do now is apply Corollary~\ref{cor:freedm} (or
    Lemma~\ref{lem:freedman}) to show that the sum of the $Y_i$ is likely to be
    close to the sum of the observed expectations we just calculated. But
    unfortunately this approach fails, because the range of the $Y_i$ is too
    large; it is possible that there are as few as $O(\log n)$ vertices which
    contain all the weight of $w_s$ in $N_{H_s}(v)$, and we might use all the
    edges to these vertices in embedding a single $G''_i$. This is the reason
    for defining the random variables 
    \[
      Z_{i}:=\max\{Y_{i}-K'\Delta,0\} \quad \text{with} \quad K'=10^{10}CD^3\delta^{-1}\,.
    \]
    Trivially the `capped' random variable \[Y'_i:=Y_i-Z_i\] does not have an
    excessively large range (it cannot exceed $K'\Delta$), and we shall see (in
    the proof of Claim~\ref{cl:sumweight:main}) that we can apply
    Corollary~\ref{cor:freedm} to argue that the sum of the~$Y'_i$ is concentrated. In order to
    show that this implies that also the sum of the~$Y_i$ is concentrated, we need
    to argue that the `error' caused by the $Z_i$ is not too large, which we
    establish in Claim~\ref{cl:sumweight:Zsum}. As preparation for this, we will
    analyze the behaviour of the variables~$Z_i$ more in detail (in
    Claim~\ref{cl:probcap}) and bound their expectation (in
    Claim~\ref{cl:sumweight:Zexp}; we will need this bound when we show that the
    sum of the~$Y'_i$ is concentrated).

    Let us now try to understand the behaviour of~$Z_i$.
    Consider the embedding of $G''_i[{\scriptstyle [n-\delta n]}]$ into
    $H_{i-1}$ by \RandomEmbedding{}. Observe that~$Z_i$ is determined by the
    vertex~$x_t$ that is embedded to~$u$ and by the embedding of neighbours
    of~$x_t$. Until we embed~$x_t$ to $u$ at
    time $t$, we have used no edges of~$H_{i-1}$ leaving $u$. On embedding a vertex to $u$,
    we have
    \[\sum_{y\in\LNBH_{G''_i}(x_t)}w_s\big(\phi'_i(y))\le D\Delta\,,\]
    because $x_t$ has at most $D$ neighbours preceding it in the degeneracy
    order. Consider now the successive embedding of the forward neighbours
    $y_1,\dots,y_\ell$ of $x_t$ by \RandomEmbedding{}. In order for $Z_i>0$ to
    occur, we have to embed the next $j$ forwards neighbours of $x_t$ (for some
    $j$) to vertices such that $\sum_{k=1}^j w_s\big(\phi'_i(y_k)\big)\ge
    (K'-D-1)\Delta$. We say that the embedding of $G''_i$ \emph{goes near the
      cap} at the first time when we embed a $y_j$ such that this inequality
    holds. We write $\CapEvent(i,y)$ for the event that the embedding of $G''_i$ goes near the cap at the time when we embed $y$ (note that these events are pairwise disjoint as $y$ ranges over $V(G''_i)$), and we write $\CapEvent(i)$ for their union, i.e.\ the event that the embedding of $G''_i$ goes near the cap at some time. If $\CapEvent(i,y_j)$ occurs, we have the inequality $Z_i\le\sum_{k=j+1}^\ell
    w_s\big(\phi'_i(y_k)\big)$; it is important to note that the right hand side
    depends only on embeddings after the event of going near the cap is decided.
    Our next aim is to show that, conditioned on the embedding up to the time
    when $x_t$ is embedded to $u$, it is unlikely that the embedding goes near
    the cap.
  
    \begin{claim}\label{cl:probcap}
      Suppose that $H_{i-1}$ is $(\alpha_{i-1},2D+3)$-quasirandom. Suppose
      furthermore that $\psi_t$ is a partial embedding of $G''_i$ to $H_{i-1}$
      generated by \RandomEmbedding{} which embeds $x_t$ to $u$ (and embeds no
      vertices after $x_t$). Suppose that~$\psi_t$ is such that the probability, conditioned on
      $H_{i-1}$ and $\psi_t$, of $(H_{i-1},\im\phi'_i)$ failing to have the
      $(C\alpha_{i-1},2D+3)$-diet condition is at most $n^{-5}$. Then we have
     \[\Prob\big[\CapEvent(i)\,\big|\,H_{i-1},\psi_t\big]\le 3e^{-K'/8}\,.\]
    \end{claim}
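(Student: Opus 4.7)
The plan is to observe that $\CapEvent(i)$ is, by monotonicity, exactly the event that the total sum $W_\ell := \sum_{k=1}^\ell Y_k$ reaches the threshold $(K'-D-1)\Delta$, where $y_1 < \cdots < y_\ell$ are the forward neighbours of $x_t$ in $G''_i$ (so $\ell \le \Delta$) and $Y_k := w_s(\phi'_i(y_k))$. One then applies Freedman's inequality (Lemma~\ref{lem:freedman}) with the good event $\cE$ that the $(C\alpha_{i-1},2D+3)$-diet condition for $(H_{i-1},\im\phi'_i)$ holds; by hypothesis $\Prob[\cE^c\,|\,H_{i-1},\psi_t]\le n^{-5}$.

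Let $\cF_{k-1}$ denote the sigma-algebra generated by the randomness of \RandomEmbedding{} up to just before the embedding of $y_k$. Trivially $0 \le Y_k \le \Delta$. On $\cE$ the candidate set available when $y_k$ is embedded contains $C_k \setminus \im\phi'_i$, and the final diet condition gives $|C_k \setminus \im\phi'_i| \ge (1 - C\alpha_{i-1}) p_{i-1}^{\LEFTDEG(y_k)} \mu n \ge \tfrac12 \gamma^D \mu n$ using $p_{i-1}\ge\gamma$ and $\LEFTDEG(y_k)\le D$. Since $y_k$ is embedded uniformly in that set and $\sum_v w_s(v) = \lfloor\nu n\rfloor$, on $\cE$ one obtains
\[
\Exp[Y_k \mid \cF_{k-1}] \;\le\; \frac{\lfloor\nu n\rfloor}{\tfrac12 \gamma^D \mu n} \;\le\; \frac{2\nu}{\gamma^D \mu} \;=:\; E,
\]
so $\sum_{k=1}^\ell \Exp[Y_k \mid \cF_{k-1}] \le \Delta E$ on $\cE$. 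Crucially, $K' \gg E$: indeed $K' = 10^{10}CD^3\delta^{-1}$ and $C = 40D\exp(1000D\delta^{-2}\gamma^{-2D-10})$ is astronomical in the relevant parameters, whereas $E$ is merely a constant in $\gamma$ and $D$ (using $\nu \le \mu$).

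Applying the one-sided part of Lemma~\ref{lem:freedman} with $R = \Delta$, $\tmu = \Delta E$, $\tnu = 0$, $\tsigma^2 \le R\tmu = \Delta^2 E$ (using $\Var[Y_k\mid\cF_{k-1}] \le R \cdot \Exp[Y_k\mid\cF_{k-1}]$), and $\trho = (K'-D-1-E)\Delta \ge \tfrac34 K'\Delta$ yields
\[
\Prob\bigl[\cE \text{ and } W_\ell > (K'-D-1)\Delta\bigr] \;\le\; \exp\Bigl(-\tfrac{\trho^2}{2\tsigma^2 + 2R\trho}\Bigr) \;\le\; \exp(-K'/8),
\]
where the last inequality uses $2\tsigma^2 + 2R\trho \le 4R\trho$ (valid since $\Delta E \ll \trho$), so the exponent is at least $\trho/(4\Delta) \ge 3K'/16 \ge K'/8$. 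Adding the $n^{-5}$ probability of $\cE^c$, which is bounded by $e^{-K'/8}$ for $n$ large enough (since $K'$ is a constant in the relevant parameters), gives the desired bound $3e^{-K'/8}$. The main obstacle is purely numerical bookkeeping: confirming $K' \gg E$ so that the Freedman exponent is of order $K'$, and justifying that the candidate-set lower bound obtained from the \emph{final} diet condition transfers to every intermediate embedding step — which follows because earlier partial images $\im\psi_{t'}$ are contained in $\im\phi'_i$, so the intermediate candidate sets are supersets of $C_k \setminus \im\phi'_i$.
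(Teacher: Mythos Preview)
Your proof is correct and follows essentially the same line as the paper: bound the conditional expectation of each $w_s(\phi'_i(y_k))$ via the diet condition and apply Freedman's inequality (the paper uses Corollary~\ref{cor:freedm}\ref{cor:freedm:tails} with $\tilde\mu=\tilde\nu=p^{-D}\mu^{-1}\nu\Delta$, giving the same exponent). Your monotonicity observation---that $\im\psi_{r-1}\subseteq\im\phi'_i$ lets the \emph{final} diet condition from the hypothesis control every intermediate candidate set---is actually cleaner than the paper's version, which appeals to the intermediate diet conditions $\tilde\cE_r$ directly.
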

  \begin{claimproof}
    With the notation from above, set
    \[
      X_k=w_s(\phi_i'(y_k))
    \]
    for every forward neighbour~$y_k$ of $x_t$, and observe that $0\leq X_k\leq
    \Delta$. Let $\hist'_{k-1}$ be a history up to and including the embedding
    $\psi_r$ of the vertex $x_r$ which comes immediately before $y_k$ in the
    ordering of $G_i''$. Let $\tilde\cE_r$ be the event that $(H_{i-1},\im\psi_r)$
    satisfies the $(C\alpha_{i-1},2D+3)$-diet condition.

    Then, if $\tilde\cE_r$ holds, we have
    \[
      \Exp\left[
        X_k | \hist'_{k-1} 
      \right]
      \leq 
      \frac{\lfloor \nu n \rfloor}{
        \frac{1}{2}p^D\mu n}
      \leq 2p^{-D}\mu^{-1} \nu
    \]
    since the sum over all weights from $G_i''$ is $\lfloor \nu n\rfloor$, while
    the diet-condition ensures that the candidate set for $y_k$ is of size at
    least
    \[
      (1-C\alpha_{i-1})p^{D}
      \lfloor \mu n\rfloor 
      \geq \frac{1}{2}p^D\mu n~ .
    \]

    In particular,
    \[
      \sum_{k=1}^\ell
      \Exp[X_k | \hist'_{k-1}]
      \leq 2p^{-D}\mu^{-1} \nu \Delta~ .
    \]
    Applying the first part of Corollary~\ref{cor:freedm}\ref{cor:freedm:tails}
    with
    $\cE = \bigcup_r \tilde\cE_r$,
    $\tilde{\mu}=\tilde{\nu}
    = p^{-D}\mu^{-1}\nu \Delta$,
    $\tilde{\rho}=(K'-D-1-2p^{-D}\mu^{-1}\nu)\Delta$
    and $R=\Delta$
    we then obtain that 
    \begin{align*}
      \Prob\left[
      \cE \text{ and } ~
      \sum_{k=1}^\ell X_k \geq (K'-D-1)
      \Delta
      \right]
      & \leq 2\exp
        \left(
        - \frac{(K'-D-1-2p^{-D}\mu^{-1}\nu)^2}{2(K'-D-1)}
        \right)\\
      & \leq 2\exp\left(
        -\frac{\left(\frac{1}{2}K'\right)^2}{2K'} 
        \right)
        = 2\exp\left(-\frac{K'}{8}\right)~ .
    \end{align*}
    Since by assumption the probability of $\cE$ not occurring is at most $n^{-5}$, the claim follows.
  \end{claimproof}
  
  Now we can use this, and Lemma~\ref{lem:vertex}, to estimate the expectation of $Z_i$ conditioned on $H_{i-1}$ which is quasirandom.
  
  \begin{claim}\label{cl:sumweight:Zexp}
   Suppose that $H_{i-1}$ is $(\alpha_{i-1},2D+3)$-quasirandom.
   Then we have
   \[\Exp\big[Z_i\big|H_{i-1}\big]\le 13\tfrac{e(G''_i)}{n}e^{-K'/8}\cdot 2\nu\mu^{-1}p_{s^*}^{-D}\,.\]
  \end{claim}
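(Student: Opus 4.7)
The plan is to decompose $Z_i$ according to which vertex $x_t$ of $G''_i$ is embedded to $u$ and at which forward neighbour $y_j^{(t)}$ of $x_t$ the run of \RandomEmbedding{} first goes near the cap, then pay an $e^{-K'/8}$ factor via Claim~\ref{cl:probcap} and bound the remaining sum of forward weights via the diet condition. Writing $y_1^{(t)},\dots,y_{\ell_t}^{(t)}$ for the forward neighbours of $x_t$ in degeneracy order, the discussion preceding Claim~\ref{cl:probcap} already gives $Z_i\le \sum_{k>j}w_s(\phi'_i(y_k^{(t)}))$ on $\CapEvent(i,y_j^{(t)})$, and $Z_i>0$ forces some $x_t\AlgMap u$ and $\CapEvent(i)$ to occur. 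Hence
\[
Z_i \;\le\; \sum_{t}\sum_{j=1}^{\ell_t}\Big(\sum_{k>j} w_s(\phi'_i(y_k^{(t)}))\Big)\cdot \ONE_{\CapEvent(i,y_j^{(t)})\cap\{x_t\AlgMap u\}}\,.
\]

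Next, I would condition on the history $\mathcal{F}_{r_j}$ up to and including the embedding of $y_j^{(t)}$. The event $\CapEvent(i,y_j^{(t)})\cap\{x_t\AlgMap u\}$ is $\mathcal{F}_{r_j}$-measurable, while for any $k>j$ the $(C\alpha_{i-1},2D+3)$-diet condition guarantees that the candidate set of $y_k^{(t)}$ at the time of its embedding has size at least $\tfrac{1}{2}p_{i-1}^D\mu n\ge \tfrac12 p_{s^*}^D\mu n$: here the factor $\mu n$ is available because $y_k^{(t)}$ is a non-isolated vertex of the special graph $G''_i$, and hence lies in its first $n-\mu n$ positions. Since $\sum_v w_s(v)=\lfloor\nu n\rfloor$, this yields $\Exp[w_s(\phi'_i(y_k^{(t)}))\mid \mathcal{F}_{r_k-1}]\le 2\nu\mu^{-1}p_{s^*}^{-D}$, and iterating via the tower property gives $\Exp\big[\sum_{k>j}w_s(\phi'_i(y_k^{(t)}))\,\big|\,\mathcal{F}_{r_j}\big]\le (\ell_t-j)\cdot 2\nu\mu^{-1}p_{s^*}^{-D}$.

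Combining these with $\ell_t-j\le\ell_t$ and summing over $j$ produces $\ell_t\cdot 2\nu\mu^{-1}p_{s^*}^{-D}\cdot\Prob[\CapEvent(i)\cap\{x_t\AlgMap u\}\mid H_{i-1}]$. Averaging Claim~\ref{cl:probcap} over the realisations of $\psi_t$ with $x_t\AlgMap u$ bounds the conditional probability of $\CapEvent(i)$ by $3e^{-K'/8}$, and Lemma~\ref{lem:vertex} gives $\Prob[x_t\AlgMap u\mid H_{i-1}]\le (1+o(1))/n$. Summing over $t$ and using the identity $\sum_t \ell_t = e(G''_i)$ (each edge of $G''_i$ contributes once to the forward degree of its earlier endpoint) yields a bound of at most $6(1+o(1))\cdot \tfrac{e(G''_i)}{n}\,e^{-K'/8}\cdot 2\nu\mu^{-1}p_{s^*}^{-D}$, comfortably within the constant $13$ asserted in the claim.

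The main obstacle is handling the conditioning cleanly: both Claim~\ref{cl:probcap} and the expectation bound on $w_s(\phi'_i(y_k^{(t)}))$ require the diet condition to hold along the relevant part of the run, which by Lemma~\ref{lem:22} is only true off an exceptional event of probability at most $2n^{-9}$. On that bad event one still has $Z_i\le Y_i\le\deg_{H_{i-1}}(u)\cdot \Delta\le 2pn\Delta$, so its contribution to $\Exp[Z_i\mid H_{i-1}]$ is at most $O(pn^{-7}\Delta)$, which is negligible compared with the target. Once this small-probability exception is absorbed into the constant, the rest of the argument is the direct calculation sketched above.
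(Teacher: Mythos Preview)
Your proposal is correct and follows essentially the same route as the paper: decompose by which $x_t$ hits $u$ and where the run goes near the cap, pay the $3e^{-K'/8}$ factor from Claim~\ref{cl:probcap}, bound each remaining forward weight by $2\nu\mu^{-1}p_{s^*}^{-D}$ via the diet condition, and sum degrees. Your use of the forward degree $\ell_t$ (with $\sum_t\ell_t=e(G''_i)$) rather than the full degree $d_{G''_i}(x_t)$ is a small sharpening; the paper's $\ONE_{\cE}$ device is just a cleaner way to package the diet-condition conditioning you describe in your last paragraph. One slip: your crude bound $\deg_{H_{i-1}}(u)\le 2pn$ confuses $p$ with $p_{i-1}$ (which can be much larger), but any polynomial bound on $Z_i$ (e.g.\ $Z_i\le Y_i\le \nu n$, since the total weight of $w_s$ is $\lfloor\nu n\rfloor$) makes the $2n^{-9}$-probability exception negligible, so the argument is unaffected.
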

  \begin{claimproof}
    We have
    \begin{equation}\label{eq:Zexp:a}
      \Exp[Z_i|H_{i-1}]
      = \sum_{x\in V(G_i'')}
      \Prob[x \AlgMap u | H_{i-1}] \cdot
      \Exp[Z_i | x \AlgMap u, H_{i-1}]\,.
    \end{equation}
    Assuming that $H_{i-1}$ is $(\alpha_{i-1},2D+3)$-quasirandom, we know by Lemma~\ref{lem:vertex} that
    \begin{equation}\label{eq:Zexp:b}
      \Prob[x \AlgMap u | H_{i-1} ]
      = (1+10^4C\alpha_{s-1}D\delta^{-1})\frac{1}{n} =\big(1\pm\tfrac12\big) \frac{1}{n}\,.
    \end{equation}
    For estimating $\Exp[Z_i | x \AlgMap u, H_{i-1}]$, we let $\cE$ be the event that $(H_{i-1},\im\phi'_i)$ satisfies the
    $(C\alpha_{i-1},2D+3)$-diet condition. Then by linearity of expectation we have
    \begin{equation}\begin{split}\label{eq:Zexp:c}
     \Exp[Z_i | x \AlgMap u, H_{i-1}]&=\Exp[Z_i\ONE_{\cE} | x \AlgMap u, H_{i-1}]+\Exp[Z_i\ONE_{\bar{\cE}} | x \AlgMap u, H_{i-1}]\\
     &\le \Exp[Z_i\ONE_{\cE} | x \AlgMap u, H_{i-1}]+n\cdot \tfrac{2n^{-9}}{2/n}=\Exp[Z_i\ONE_{\cE} | x \AlgMap u, H_{i-1}]+4n^{-7}\,,
    \end{split}\end{equation}
     where the estimate for the second term is from Lemma~\ref{lem:22} bounding
     the probability of $\bar{\cE}$ and~\eqref{eq:Zexp:b} lower bounding the
     probability of $x\AlgMap u$, and since trivially $Z_i\le Y_i\le n$. To
     estimate the first term, we observe that since outside $\CapEvent(i)$ we
     have $Z_i=0$, it follows that
     \begin{multline}\label{eq:Zexp:twirl}
       \Exp[Z_i\ONE_{\cE}|x\AlgMap u,H_{i-1}] \\
       =\sum_{z\in V(G''_i)}\Prob[\CapEvent(i,z)|x\AlgMap
      u,H_{i-1}]\cdot\Exp[Z_i\ONE_{\cE}|x\AlgMap u,H_{i-1},\CapEvent(i,z)]\,.
    \end{multline}
    Note that the only terms of the sum in which the probability is positive are those with $z$ a forwards neighbour of $x$, so fix such a $z$. Recall that if $\CapEvent(i,z)$ occurs then we have
    \[Z_i\le\sum_{\substack{y\in N_{G''_i}(x)\\y\text{ comes after }z}}w_s(\phi'_i(y))\,,\quad\text{and so}\quad
    Z_i\ONE_{\cE}\le\sum_{\substack{y\in N_{G''_i}(x)\\y\text{ comes after }z}}w_s(\phi'_i(y))\ONE_{\cE}\,.\]
    For bounding $\Exp[Z_i\ONE_{\cE}|x\AlgMap u,H_{i-1},\CapEvent(i,z)]$,
    for any forwards neighbour~$y$ of~$x$ which comes after $z$ in the degeneracy order, let $\cH'_{<y}$ denote
    any history up to and including the embedding of the vertex which comes
    immediately before~$y$ that is consistent with $x\AlgMap u$ and is contained in $\CapEvent(i,z)$. Then we have
    \begin{multline}\label{eq:Zexp:twirl2}
      \Exp[Z_i\ONE_{\cE}|x\AlgMap u,H_{i-1},\CapEvent(i,z)] \\
      \le\sum_{\substack{y\in N_{G''_i}(x)\\y\text{ comes after }z}}~\sum_{\cH'_{<y}}
      \Exp\Big[w_s\big(\phi'_i(y)\big)\ONE_{\cE}\Big|\cH'_{<y},H_{i-1}\Big]
      \cdot \Prob[\cH'_{<y}|x\AlgMap u,H_{i-1},\CapEvent(i,z)]\,.
    \end{multline}
    Let $y$ be a forwards neighbour of $x$ which comes after $z$. Then $y$ is
    not isolated, so it is in the first $n-\mu n$ vertices of $G''_i$.
    We want to calculate
    $\Exp\Big[w_s\big(\phi'_i(y)\big)\ONE_{\cE}\Big|\cH'_{<y},H_{i-1}\Big]$.
    There are two cases to consider.
    First, if $\cE$ occurs, then since~$y$ is in the first $n-\mu n$ vertices
    of~$G''_i$, it has a candidate set of size at
    least
    \[
      (1-C\alpha_{i-1})p_{i-1}^{D} \lfloor \mu n\rfloor \geq
      \frac{1}{2}p_{s^*}^D\mu n\,.\]
    Hence we embed $y$ uniformly to a set of
    size at least $\tfrac12p_{s^*}^D\mu n$, so (because the total weight of all
    vertices in $G''_s$ is $\lfloor\nu n\rfloor$) the expectation of
    $w_s\big(\phi'_i(y)\big)$ conditioned on $\cH'_{<y}$ and $H_{i-1}$ is at
    most $\tfrac{\nu n}{p_{s^*}^D\mu n/2}$. Second, if $y$ is chosen from a
    candidate set of size less than $\tfrac12p_{s^*}^D\mu n$, then the event
    $\cE$ does not occur, and so the conditional expectation we
    want to calculate is zero. In either case, we obtain
    \[\Exp\Big[w_s\big(\phi'_i(y)\big)\ONE_{\cE}\Big|\cH'_{<y},H_{i-1}\Big]\le 2\nu
    \mu^{-1}p_{s^*}^{-D}\,.\]
    Plugging this into~\eqref{eq:Zexp:twirl2} gives 
    \[\Exp[Z_i\ONE_{\cE}|x\AlgMap u,H_{i-1},\CapEvent(i,z)]\le d_{G''_i}(x)\cdot 2\nu
    \mu^{-1}p_{s^*}^{-D}\,,\]
    since the sum over $\cH'_{<y}$ of $\Prob[\cH'_{<y}|x\AlgMap u,H_{i-1},\CapEvent(i,z)]$ is trivially $1$, and $d_{G''_i}(x)$ is at least as big as the number of forward neighbours of $x$ which come after $y$. Now putting this into~\eqref{eq:Zexp:twirl} we obtain
    \begin{align*}
     \Exp[Z_i\ONE_{\cE}|x\AlgMap u,H_{i-1}]&\le\sum_{z\in V(G''_i)}\Prob[\CapEvent(i,z)|x\AlgMap
      u,H_{i-1}]\cdot d_{G''_i}(x)\cdot 2\nu \mu^{-1}p_{s^*}^{-D}\\
    &=\Prob[\CapEvent(i)|x\AlgMap
      u,H_{i-1}]\cdot d_{G''_i}(x)\cdot 2\nu \mu^{-1}p_{s^*}^{-D}\,.
    \end{align*}
      
    We finally use Claim~\ref{cl:probcap} to estimate $\Prob[\CapEvent(i)|x\AlgMap
    u,H_{i-1}]$. By~\eqref{eq:Zexp:b}, we have $\Prob[x\AlgMap
    u|H_{i-1}]\ge\tfrac{1}{2n}$. By Lemma~\ref{lem:22}, the probability that
    $(H_{i-1},\phi'_i)$ fails to have the $(C\alpha_{i-1},2D+3)$-diet condition,
    conditioned on $H_{i-1}$, is at most $2n^{-9}$. Consequently, summing up
    $\Prob[\psi_x|x\AlgMap u,H_{i-1}]$ over partial embeddings $\psi_x$ which
    embed the vertices up to and including $x$ of $G''_i$, and embed $x$ to $u$,
    but which fail the condition of Claim~\ref{cl:probcap} (i.e.\ the
    probability that $(H_{i-1},\phi'_i)$ fails to have the
    $(C\alpha_{i-1},2D+3)$-diet condition, conditioned on $H_{i-1}$ and
    $\psi_x$, exceeds $n^{-5}$), we obtain at most $4n^{-3}$. For any $\psi_x$
    which does satisfy the condition of Claim~\ref{cl:probcap}, we have
    $\Prob[\CapEvent(i)|H_{i-1},\psi_x]\le 3e^{-K'/8}$. Putting these together, we have
    \begin{align*}
     \Prob[\CapEvent(i)|x\AlgMap u,H_{i-1}]&=\sum_{\psi_x}\Prob[\psi_x|x\AlgMap u,H_{i-1}]\cdot\Prob[\CapEvent(i)|\psi_x,x\AlgMap u,H_{i-1}]\\
     &\le 4n^{-3}\cdot 1+1\cdot 3e^{-K'/8}= 3e^{-K'/8}+4n^{-3}\,.
    \end{align*}
    At last, we obtain
    \[
      \Exp[Z_i\ONE_{\cE}|x \AlgMap u,H_{i-1}]\leq (3e^{-\frac{K'}{8}}+4n^{-3})\cdot d_{G_i''}(x)\cdot 2\nu\mu^{-1}p_{s^*}^{-D}\,.\]
    Thus, using~\eqref{eq:Zexp:c}, we have
    \begin{align*}
     \Exp[Z_i|x \AlgMap u,H_{i-1}]&\le (3e^{-\frac{K'}{8}}+4n^{-3})\cdot d_{G_i''}(x)\cdot 2\nu\mu^{-1}p_{s^*}^{-D}+4n^{-7}\\
     &\le 3e^{-\frac{K'}{8}}\cdot d_{G_i''}(x)\cdot 2\nu\mu^{-1}p_{s^*}^{-D}+n^{-2}\,,
    \end{align*}
    and so by~\eqref{eq:Zexp:a} and~\eqref{eq:Zexp:b} we get
    \begin{align*}
      \Exp[Z_i|H_{i-1}]
      & \leq 
        \sum_{x\in V(G_i'')} \frac{2}{n}\cdot\Big( 3e^{-\frac{K'}{8}}\cdot d_{G_i''}(x)\cdot 2\nu\mu^{-1}p_{s^*}^{-D}+n^{-2} \Big)\\
      & = \frac{2 \sum_{x\in V(G_i'')} d_{G_i''}(x)}{n}\cdot 3e^{-\frac{K'}{8}}\cdot 2\nu\mu^{-1}p_{s^*}^{-D} + 2n^{-2}
        < 13 \tfrac{e(G''_i)}{n} \cdot e^{-\frac{K'}{8}} \cdot 2\nu\mu^{-1}p_{s^*}^{-D}\,.
    \end{align*}
\end{claimproof}
  
This expectation is tiny, because the term $e^{-K'/8}$ is very small. Thus we see that the expectations of $Y'_i$ and
$Y_i$ (conditioning on any $H_{i-1}$ which is quasirandom) are very close. The final
thing we have to do before we complete the proof of
Claim~\ref{cl:sumweight:main} is to show that the sum of the $Z_i$ is likely to
be very small.

  \begin{claim}\label{cl:sumweight:Zsum}
    With probability at least $1-2n^{-20}$, the following event occurs when \PackingProcess{} is run. Either $H_i$ is not $(\alpha_i,2D+3)$-quasirandom for some $i\in[s^*]$, or $(H_{i-1},\phi'_i([n-\mu n]))$ does not satisfy the $(C\alpha_{i-1},2D+3)$-diet condition for some $i\in[s^*]$, or we have
    \[\sum_{i=s+1}^{s'}Z_i\le \tfrac{\alpha_s p n}{1000\mu}\,.\]
  \end{claim}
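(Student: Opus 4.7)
The plan is to apply Freedman's inequality (Corollary~\ref{cor:freedm}\ref{cor:freedm:tails}) to the martingale $\sum_{i=s+1}^{s'} Z_i$ on the good event $\cE$ that for every $i \in [s^*]$ the graph $H_i$ is $(\alpha_i,2D+3)$-quasirandom and every relevant pair $\big(H_{i-1},\phi'_i([t])\big)$ satisfies the $(C\alpha_{i-1},2D+3)$-diet condition. First I would collect the mean bound: by Claim~\ref{cl:sumweight:Zexp} together with $e(G''_i) \le Dn$, the total conditional expectation on $\cE$ is at most $26D\nu\mu^{-1}p_{s^*}^{-D}\,n\,e^{-K'/8}$. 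By our choice $K' = 10^{10}CD^{3}\delta^{-1}$ together with the explicit formulas for $C$ and $\alpha_0$ in Setting~\ref{set:const}, $e^{-K'/8}$ is so small that this bound is at most $T/4$, where $T := \alpha_s p n/(1000\mu)$.

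The principal obstacle is the range of $Z_i$: since $Y_i$ is a sum of at most $\Delta$ forward-neighbour weights each of size at most $\Delta$, naively $Z_i \le \Delta^2$, and taking $R = \Delta^2$ in Freedman is not nearly enough to reach an $n^{-20}$ bound. To get around this I would establish the auxiliary event that $Z_i \le K'\Delta$ for every $i \in [s+1,s']$, and argue that its complement has probability at most $n^{-21}$ on $\cE$. The key observation for this is that $Z_i > K'\Delta$ requires both $\CapEvent(i,y_j)$ for some $j$ (which by Claim~\ref{cl:probcap} already has probability at most $3e^{-K'/8}$) and, conditional on $\CapEvent(i,y_j)$, the post-cap sum $\sum_{k>j} w_s(\phi'_i(y_k))$ to exceed $K'\Delta$. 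Because each post-cap vertex $y_k$ is embedded uniformly into a candidate set of size at least $\tfrac12 p_{s^*}^D\mu n$ while $G''_i$ carries total weight $\lfloor\nu n\rfloor$, the diet condition gives conditional mean at most $2\Delta\nu\mu^{-1}p_{s^*}^{-D}$ for this post-cap sum; since each term is bounded by $\Delta$, Corollary~\ref{cor:freedm}\ref{cor:freedm:tails} applied to the post-cap weights controls this tail. Combining with a union bound over $i$ and absorbing the (polynomially small) probability of diet/quasirandomness failures delivers the overflow bound.

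Granted this effective cap, I would then apply Corollary~\ref{cor:freedm}\ref{cor:freedm:tails} to $\sum_{i=s+1}^{s'} Z_i$ with $R = K'\Delta$, $\tilde\mu + \tilde\nu \le T/4$ from the mean bound, and $\tilde\rho = T/4$. The resulting bound is
\[
\exp\!\Big(-\tfrac{\tilde\rho}{4R}\Big) \;=\; \exp\!\Big(-\tfrac{\alpha_s p\,\log n}{16000\,\mu K' c}\Big),
\]
where I used $\Delta \le cn/\log n$. The constants are chosen in Section~\ref{sec:const} precisely so that $c$ is much smaller than $\alpha_s p/(\mu K')$ (indeed $c$ is controlled by $\alpha_0^{4}$, while $\alpha_s p/(\mu K')$ scales only with $\alpha_0$), so the exponent exceeds $21\log n$ and the bound is at most $n^{-21}$. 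Adding the overflow and Freedman probabilities gives the asserted $2n^{-20}$.

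The hardest part will be the tail estimate on individual $Z_i$ in the overflow step: making sure the Chernoff-style bound on the post-cap forward-neighbour weights, combined with the rarity of the cap event itself, yields an \emph{inverse-polynomial} in $n$ bound rather than merely a constant. The argument has to interleave the diet condition (to bound per-step conditional means of the weights) with $K'$ chosen large enough that $e^{-K'/8}$ beats every relevant polynomial prefactor in the constants, while keeping $R = K'\Delta$ small enough that the Freedman denominator gives at least $21\log n$ in the exponent.
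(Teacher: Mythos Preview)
Your overflow step does not give an inverse-polynomial bound in $n$, and this is fatal. The quantity $K'=10^{10}CD^3\delta^{-1}$ is a \emph{constant} in the hierarchy (it does not depend on $n$), so $e^{-K'/8}$ is a very small constant but not $n^{-\text{anything}}$. Your sketch for a single $i$ gives at best $\Prob[Z_i>K'\Delta\mid H_{i-1}]\lesssim e^{-K'/8}\cdot e^{-K'/8}$, and this cannot be improved: the post-cap sum has at most $\Delta$ terms each bounded by $\Delta$ with conditional mean $O(1)$, so Freedman's exponent is $\Theta(K')$, independent of $n$. Taking the union bound over the $\Theta(n)$ indices $i\in(s,s']$ then yields a bound of order $n e^{-K'/4}$, which diverges rather than being $\le n^{-21}$. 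So the event ``$Z_i\le K'\Delta$ for all $i$'' simply does not hold with the probability you need, and your Freedman application to $\sum_i Z_i$ with $R=K'\Delta$ collapses.

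The paper avoids this by \emph{not} controlling the individual $Z_i$ at all. Instead it flattens the double sum: writing $\sum_i Z_i\le\sum_i\sum_{y\text{ after-cap in }G''_i}w_s(\phi'_i(y))$, it treats the after-cap vertices across \emph{all} graphs $G''_{s+1},\dots,G''_{s'}$ as a single sequence $X_1,X_2,\dots$ of weights, each bounded by $\Delta$ (not $K'\Delta$). Two separate Freedman applications, both with range $R=\Delta$, finish the job: one shows the total number $T$ of after-cap vertices is unlikely to exceed $L:=40Dn e^{-K'/8}$ (here the per-$i$ expectation $\lesssim e^{-K'/8}$ is summed over $i$, giving $\tilde\mu\sim ne^{-K'/8}$, and $\tilde\mu/(4\Delta)\sim e^{-K'/8}\log n/c$ is large because $c$ is chosen after $K'$); the other shows $\sum_{j\le L}X_j$ is unlikely to exceed $4pp_{s^*}^{-D}\mu^{-2}L<\tfrac{\alpha_s pn}{1000\mu}$. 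The key point is that by passing to the after-cap \emph{vertices} rather than the per-graph totals $Z_i$, the range drops from something uncontrolled to $\Delta$, and both exponents become $\Theta(\log n/c)$.
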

  \begin{claimproof} Let $\cE$ denote the event that $H_i$ is $(\alpha_i,2D+3)$-quasirandom for each $i\in[s^*]$, and $(H_{i-1},\phi'_i([n-\mu n]))$ satisfies the $(C\alpha_{i-1},2D+3)$-diet condition for each $i\in[s^*]$. So we want to show that it is likely that either $\cE$ fails or $\sum_{i=s+1}^{s'}Z_i<\tfrac{\alpha_spn}{1000\mu}$.
  
   In order to prove this claim, we need to reinterpret $\sum_{i=s+1}^{s'}Z_i$. The random variables $Z_i$ can be very large, so that Corollary~\ref{cor:freedm} does not help us.
   
   What we do is to use our earlier observation that we can understand $Z_i$ as
   follows. We watch \RandomEmbedding{} as it embeds $G''_i[{\scriptstyle
     [n-\delta n]}]$, until it embeds some $x_t$ to $u$, and then embeds
   the forwards neighbours of $x_t$ until it goes near the cap (if one or the
   other event does not occur, then $Z_i=0$). Then $Z_i$ is at most the sum of
   $w_s\big(\phi'_i(y)\big)$ taken over forwards neighbours $y$ of $x_t$
   which are embedded after reaching the cap. We refer to these vertices~$y$
   as \emph{after-cap} vertices. We then use the inequality
   \[\sum_{i=s+1}^{s'}Z_i\le\sum_{i=s+1}^{s'}~\sum_{\substack{v=\phi'_i(y)\\ \text{for $y$ after-cap in }G''_i}}w_s(v)\,,\]
   where the right hand side sum runs over all after-cap vertices in all graphs
   $G''_{s+1},\dots,G''_{s'}$. For a given after-cap vertex $y\in V(G''_i)$ we
   know $y$ is an after-cap vertex before we embed it. Now when we embed $y$,
   provided the $(C\alpha_{i-1},2D+3)$-diet condition holds for
   $(H_{i-1},\phi([n-\mu n]))$, we embed it uniformly into a set~$S$ of size at
   least $\tfrac12 p_{i-1}^D\mu n$ (because $y$, since it is not isolated, must
   be one of the first $n-\mu n$ vertices of $G''_i$). The sum of $w_s(z)$ over
   the vertices $z$ of~$S$ is at most $\lfloor\nu n\rfloor$. So the expected
   value of $w_s\big(\phi'_i(y)\big)$, conditioned on the history up to the time
   $y-1$ immediately before embedding $y$ and on the $(C\alpha_{i-1},2D+3)$-diet
   condition holding for $(H_{i-1},\phi'_i([y-1]))$, is at most
   $2\lfloor\nu n\rfloor p_{i-1}^{-D}\mu^{-1} n^{-1}\le 2p
   p_{s^*}^{-D}\mu^{-2}$, where the inequality uses the equation $\lfloor\nu
   n\rfloor\lfloor\mu n\rfloor=p\binom{n}{2}$.
   
   Let \[L:=40Dne^{-K'/8}\,,\] and define a random variable $X_j$ for $1\le j\le
   L$ by $X_j=w_s\big(\phi'_i(y)\big)$, where the $j$th after-cap vertex in a
   run of \PackingProcess{} is $y\in V(G''_i)$ ($y$ and hence~$i$ depend on the
   run of \PackingProcess{}). If there is no such after-cap vertex, we let
   $X_j:=0$. Observe that we have $0\le X_j\le\Delta$ for each $j$, and what we
   just calculated is that, letting $\hist_j$ denote the history of
   \PackingProcess{} up to immediately before embedding the $j$th after-cap
   vertex $y\in V(G''_i)$, if the $(C\alpha_{i-1},2D+3)$-diet condition holds
   for $(H_{i-1},\phi'_i([y-1]))$, then $\Exp[X_j|\hist_j]\le2p
   p_{s^*}^{-D}\mu^{-2}$. So we can apply
   Corollary~\ref{cor:freedm}\ref{cor:freedm:uppertail}, with $\tilde{\mu}=2p
   p_{s^*}^{-D}\mu^{-2}L$, to obtain
   \[\Prob\Big[\cE\text{ and }\sum_{j=1}^LX_j>4p p_{s^*}^{-D}\mu^{-2}L\Big]<\exp\Big(-\frac{2p p_{s^*}^{-D}\mu^{-2}L}{4\Delta}\Big)\le n^{-20}\,,\]
   where the final inequality follows from $\Delta=\tfrac{cn}{\log n}$ and by choice of $c$ and $K'$.
   
   Let $T$ denote the total number of after-cap vertices encountered during the
   entire run of \PackingProcess{}. Since by choice of $K'$ and~\eqref{eq:defconsts} we have $4p
   p_{s^*}^{-D}\mu^{-2}L<\tfrac{\alpha_s p n}{1000\mu}$, what we have just
   argued is that
   \[\Prob\Big[\cE \text{ and } T\le L \text{ and }
     \sum_{i=s+1}^{s'}Z_i>\tfrac{\alpha_s p n}{1000\mu}\Big]
     \le n^{-20}\,.
   \]
   What we now want to do is estimate the probability of the event
   that $\cE$ occurs and that $T>L$.
  
   To that end, for each $s+1\le i\le s'$, we define $X'_i$ to be the number of after-cap vertices embedded 
   from $G''_i$ in a given run of \PackingProcess{}. By definition we have $T=\sum_{i=s+1}^{s'}X'_i$. Now, if $H_{i-1}$ is $(\alpha_{i-1},2D+3)$-quasirandom, we can estimate $\Exp[X'_i|H_{i-1}]$ as follows. First, observe $X'_i$ can only be positive if some $x_t\in V(G''_i)$ is embedded to $u$, and then $G''_i$ goes near the cap, and then the remaining neighbours of $x_t$ will be the after-cap vertices counted by $X'_i$. So we have
   \[X'_i\le\sum_{x_t\in V(G''_i)}\ONE_{x_t\AlgMap u}\ONE_{\CapEvent(i)}\cdot d_{G''_i}(x_t)\,.\]
   It follows that
   \[\Exp\big[X'_i\big|H_{i-1}\big]\le\sum_{x_t\in
       V(G''_i)}d_{G''_i}(x_t)\Prob\big[x_t\AlgMap u\mid
     H_{i-1}\big]\cdot\Prob\big[\CapEvent(i)\big|H_{i-1},\psi_t\big]\,,\]
   where~$\psi_t$ is a partial embedding of the first $t$ vertices of~$G''_i$ into~$H_{i-1}$ generated by
   \RandomEmbedding{} which embeds~$x_t$ to~$u$.
   By respectively Lemma~\ref{lem:vertex} and Claim~\ref{cl:probcap}, we have
   \[\Exp\big[X'_i\big|H_{i-1}\big]\le\sum_{x_t\in V(G''_i)}d_{G''_i}(x_t)\cdot \big(\tfrac{2}{n}\cdot 3e^{-K'/8}+2n^{-4}\big)\le 20De^{-K'/8}\,, \]
   where the first inequality uses the observation that, by Lemma~\ref{lem:22},
   there is at most $2n^{-4}$ probability of generating $\psi_t$ such that the
   $(C\alpha_{i-1},2D+3)$-diet condition holding for $(H_{i-1},\phi([n-\mu n]))$
   has more than $n^{-5}$ chance of failing (when embedding the remaining
   vertices). The second inequality uses the fact that $G''_i$ has at most $Dn$
   edges and so the sum of its degrees is at most $2Dn$.
   
   Since $0\le X'_i\le\Delta$ for each $i$, we can apply Corollary~\ref{cor:freedm}\ref{cor:freedm:uppertail}, with $\tilde\mu=20Dne^{-K'/8}$, to obtain
   \[\Prob\big[\cE\text{ and }\sum_{i=s+1}^{s'}X'_i>40Dne^{-K'/8}\big]<\exp\big(-\tfrac{20Dne^{-K'/8}}{4\Delta}\big)\le n^{-20}\,,\]
   where the second inequality comes from $\Delta=\tfrac{cn}{\log n}$ and choice of $c$ and $K'$. Since $\sum_{i=s+1}^{s'} X_i'= T$, this proves as desired that it is unlikely that $\cE$ occurs and $T>L$.
   
   Putting these two pieces together, we conclude that with probability at most $2n^{-20}$, the event $\cE$ occurs and we have $\sum_{i=s+1}^{s'}Z_i>\tfrac{\alpha_spn}{1000\mu}$. This completes the proof of the claim.
  \end{claimproof}
  
  The reader might at this point wonder why we cannot simply estimate the sum of the $Y_i$ by modifying the above method. The point is that it is not easy to obtain an accurate estimate of the quantity $\Exp[X_j|\hist_j]$ in the above proof (the upper bound we obtain above is off from the truth by a rather large factor, compensated for by the unlikeliness of going near the cap), and we would need such an accurate estimate for Claim~\ref{cl:sumweight:main}. 
  
  Finally, we are in a position to prove Claim~\ref{cl:sumweight:main}.
  
  \begin{claimproof}[Proof of Claim~\ref{cl:sumweight:main}]
    Firstly, by Claim~\ref{cl:sumweight:start} we have that either $(H_i,H^*_0)$ is
    not $(\alpha_i,2D+3)$-co\-qua\-si\-ran\-dom for some $i\in[s^*]$, or
    $\big(H_{i-1},\phi'_i\big([t]\big)\big)$ does not satisfy the
    $(C\alpha_{i-1},2D+3)$-diet condition for some $i\in[s^*]$ and
    $t\in[n-\delta n]$, or $\big(H_{i-1},H^*_0,\phi'_i\big([t]\big)\big)$ does
    not satisfy the $(2\eta,2D+3)$-codiet condition for some $i\in[s^*]$ and
    $t\in[n-\delta n]$, or that~\eqref{eq:sumweight:Hssum} holds
    and~\eqref{eq:sumweight:main} holds for the case $s'=s$ with probability at
    least $1-n^{-20}$. Now, let $s<s'\le s^*$. We aim to show that with
    probability at most $3n^{-20}$ we have that~\eqref{eq:sumweight:main}
    continues to hold for~$s'$. Taking a union bound over the choices of $s'$
    then completes the proof of Claim~\ref{cl:sumweight:main}.
    
    More precisely, let $\cE$ denote the event that $u\not\in\im\phi'_s$, and
    $(H_i,H^*_0)$ is $(\alpha_i,2D+3)$-coquasirandom for each $i\in[s^*]$, and
    $\big(H_{i-1},\phi'_i\big([t]\big)\big)$ satisfies the
    $(C\alpha_{i-1},2D+3)$-diet condition for each $i\in[s^*]$ and
    $t\in[n-\delta n]$, and $\big(H_{i-1},H^*_0,\phi'_i\big([t]\big)\big)$
    satisfies the $(2\eta,2D+3)$-codiet condition for each $i\in[s^*]$ and
    $t\in[n-\delta n]$, and~\eqref{eq:sumweight:main} holds for each $s\le
    i<s'$.  Our goal is to show that $\cE$ occurs and~\eqref{eq:sumweight:main}
    fails for $s'$ with probability at most $3n^{-20}$. 

    By Claim~\ref{cl:sumweight:start}, with probability at least $1-n^{-20}$,
    either we witness a failure of $\cE$ before beginning to embed $G''_{s+1}$,
    or we have $\sum_{v\colon vu\in E(H_s)}w_s(v)=\big(1\pm
    10Cp^{-1}\alpha_s\big)\tfrac{p_spn}{2\mu}$. Suppose that this likely event
    occurs, and that we do not witness a failure of $\cE$ before beginning to
    embed $G''_{s+1}$.
   
   Since we have
   \[\sum_{v\colon vu\in E(H_{s'})} w_s(v)=\sum_{v\colon vu\in E(H_{s})}
    w_s(v)-\sum_{i=s+1}^{s'} Y_{i}\,,\]
    and we want to conclude that it is unlikely that $\cE$ occurs and $\sum_{v\colon vu\in E(H_{s'})}w_s(v)\neq\big(1\pm 10Cp^{-1}\alpha_{s'}\big)\tfrac{p_{s'}pn}{2\mu}$, it is enough to estimate the probability, conditioned on $H_s$, that $\cE$ occurs and
    \begin{equation}\label{eq:sumweight:enough}
     \sum_{i=s+1}^{s'} Y_{i}\neq \big(1\pm 10Cp^{-1}\alpha_s\big)\tfrac{p_spn}{2\mu}-\big(1\pm 10Cp^{-1}\alpha_{s'}\big)\tfrac{p_{s'}pn}{2\mu}=\tfrac{(p_s-p_{s'})pn}{2\mu}\pm 20C\alpha_{s'}\tfrac{p_sn}{2\mu}\,.
    \end{equation}
    We have $Y_i=Y'_i+Z_i$ for each $i$, and so
    $\sum_{i=s+1}^{s'}Y_i=\sum_{i=s+1}^{s'}Y'_i+\sum_{i=s+1}^{s'}Z_i$.
    For showing that~\eqref{eq:sumweight:enough} is unlikely to occur, we will use
    Corollary~\ref{cor:freedm} to argue that $\sum Y'_i$ is concentrated and
    Claim~\ref{cl:sumweight:Zsum} to bound the contribution of $\sum Z_i$. Accordingly,
    we shall first calculate the expectation of $\sum Y'_i$.

    By Claim~\ref{cl:sumweight:Yexp}, provided $H_{i-1}$ does not
    witness that $\cE$ fails, we have $\Exp[Y_i|H_{i-1}]=\big(1\pm
    10^4CD\alpha_{i-1}\delta^{-1}\big)\cdot\tfrac{pe(G''_i)}{\mu n}$. By
    Claim~\ref{cl:sumweight:Zexp}, again provided $H_{i-1}$ does
    not witness that $\cE$ fails, we have $\Exp[Z_i|H_{i-1}]\le
    13\tfrac{e(G''_i)}{n}e^{-K'/8}\cdot 2\nu\mu^{-1}p_{s^*}^{-D}$. By linearity,
    we conclude
    \begin{align*}
     \Exp[Y'_i|H_{i-1}]&=\big(1\pm 10^4CD\alpha_{i-1}\delta^{-1}\big)\cdot\tfrac{pe(G''_i)}{\mu n}\pm 13\tfrac{e(G''_i)}{n}e^{-K'/8}\cdot 2\nu\mu^{-1}p_{s^*}^{-D}\\
     &=\big(1\pm 10^5 CD\alpha_{i-1}\delta^{-1}\big)\cdot\tfrac{pe(G''_i)}{\mu n} \,,
    \end{align*}
    where for the second inequality we use our choice of $K'$. Summing this up, we see that either $\cE$ fails or we have
    \begin{align*}
     \sum_{i=s+1}^{s'}\Exp[Y'_i|H_{i-1}]&=\sum_{i=s+1}^{s'}\big(1\pm 10^5 CD\alpha_{i-1}\delta^{-1}\big)\cdot\tfrac{pe(G''_i)}{\mu n}\\
     &=\sum_{i=s+1}^{s'}\tfrac{pe(G''_i)}{\mu n}\pm\sum_{i=s+1}^{s'}10^5 CD\alpha_{i-1}\delta^{-1}\cdot\tfrac{pDn}{\mu n}\\
     &=\tfrac{p}{\mu n}\big(p_s-p_{s'}\big)\binom{n}{2}\pm 10^5CD^2\delta^{-1}\mu^{-1}p\int_{i=-\infty}^{s'}\alpha_i\textrm{d}i\\
     &\eqByRef{eq:sum:alpha}
     \tfrac{pn}{2\mu}\big(p_s-p_{s'}\big)\pm \tfrac{1}{\mu}\pm 10^5CD^2\delta^{-1}\mu^{-1}p\cdot \frac{\delta n}{10^8CD^3}\alpha_{s'}\\
     &=\tfrac{pn}{2\mu}\big(p_s-p_{s'}\big)\pm \frac{p\alpha_{s'}n}{100\mu}=\tfrac{(p_s-p_{s'})pn}{2\mu}\pm C\alpha_{s'}\tfrac{p_sn}{2\mu}\,,
    \end{align*}
    where the final inequality is by choice of $C$
    and since $p\leq p_s+2\gamma$
    according to (\ref{eq:ps}). Now applying the first part of Corollary~\ref{cor:freedm}\ref{cor:freedm:tails}, with $\tilde\mu=\tfrac{(p_s-p_{s'})pn}{2\mu}$ and $\tilde\nu=\tilde\rho=C\alpha_{s'}\tfrac{p_sn}{2\mu}$, and using the fact $0\le Y'_i\le K'\Delta$, we obtain
    \[\Prob\Big[\cE\text{ and }\sum_{i=s+1}^{s'}Y'_i\neq \tfrac{(p_s-p_{s'})pn}{2\mu}\pm 2C\alpha_{s'}\tfrac{p_sn}{2\mu}\Big]<2\exp\big(-\tfrac{\tilde\rho^2}{2K'\Delta(\tilde\mu+\tilde\nu+\tilde\rho)}\big)\le n^{-20}\,,\]
    where the final inequality uses $\Delta=\tfrac{cn}{\log n}$ and the choice of $c$ and $K'$.
    
    Putting this estimate together with Claim~\ref{cl:sumweight:Zsum}, where we show that with probability at least $1-n^{-20}$ either $\cE$ does not occur, or we have $\sum_{i=s+1}^{s'}Z_i\le\tfrac{\alpha_spn}{1000\mu}$, we conclude the following. With probability at least $1-3n^{-20}$, either $\cE$ does not occur, or we have
    \[\sum_{i=s+1}^{s'}Y_i=\tfrac{(p_s-p_{s'})pn}{2\mu}\pm 2C\alpha_{s'}\tfrac{p_sn}{2\mu}\pm \tfrac{\alpha_spn}{1000\mu}=\tfrac{(p_s-p_{s'})pn}{2\mu}\pm 3C\alpha_{s'}\tfrac{p_sn}{2\mu}\,.\]
    If this holds~\eqref{eq:sumweight:enough} does not occur. With this we finally proved that with probability at most $3n^{-20}$ the event $\cE$ occurs and~\eqref{eq:sumweight:main} holds for each $s\le i<s'$ but fails for $s'$.
  \end{claimproof}
  
Finally, we argue that Claim~\ref{cl:sumweight:main} implies~\ref{appl:sumweight} holds with high probability. It is straightforward to check that $10Cp^{-1}\alpha_{s^*}<1$, and $p_{s^*}\le p$. Since $E(H)\subset E(H_{s^*})\cup E(H^*_0)$, provided~\eqref{eq:sumweight:main} with $s'=s^*$ and~\eqref{eq:sumweight:Hssum} hold, we have
 \begin{align*}
  \sum_{v:vu\in E(H)}w_s(v)&\le\sum_{v:vu\in E(H_{s^*})}w_s(v)+\sum_{v:vu\in E(H^*_0)}w_s(v)\\
  &\le 2\tfrac{p_{s^*}pn}{2\mu}+\tfrac{\gamma p n}{\mu}<\tfrac{10p^2n}{\mu}\,,
 \end{align*}
 where the last inequality follows since $p_{s^*},\gamma<p$. Thus by
 Claim~\ref{cl:sumweight:main}, with probability at least $1-4n^{-19}$, either
 the stated coquasirandomness, diet or codiet conditions fail,
 or~\ref{appl:sumweight} holds for fixed~$u$ and~$s$. So it is enough to check
 that it is unlikely that either the stated coquasirandomness, diet or codiet
 conditions fail. By respectively
 Lemma~\ref{lem:PackingProcess}\ref{lem:PackingProcess:quasi}, and
 Lemma~\ref{lem:22}\ref{22:b} and~\ref{22:d} (and the union bound over the at
 most $2n$ runs of \RandomEmbedding{}), the probability that either of these
 occur is at most $2n^{-5}+4n^{-8}$. For the latter, note that
 $\beta_t(\alpha_{i-1})\le C\alpha_{i-1}$ for each $i,t$. We finally conclude,
 using a union bound over~$u$ and~$s$, that~\ref{appl:sumweight} holds with
 probability at least $1-n^2\cdot 4n^{-19}-2n^{-5}-4n^{-8}>1-3n^{-5}$.
\end{proof}

\section{Concluding remarks}
\label{sec:concl}
 Once one knows that a given collection of graphs $\cG$ can be packed into a host graph $\widehat{H}$, it is natural to ask whether there is an efficient algorithm, randomised or not, which will exhibit such a packing. For $\cG$ as in Theorem~\ref{thm:main} (with the various parameters taken as fixed while $n$ is large) the obvious answer is simply to run our packing algorithm. Most of the steps in this algorithm simply consist of uniform random samples from sets which are of linear size and trivial to compute. In addition the completion step of \PackingProcess{} requires finding a perfect matching in a linear-sized and easily computed auxiliary bipartite graph; this is well known to be solvable in polynomial time using the augmenting paths algorithm. Finally, the completion step of \MatchLeaves{} requires sampling uniformly from the set of perfect matchings of a dense bipartite graph (which is linear-sized and easy to compute).
 
 If one assumes that it is possible to sample in polynomial time from these
 various distributions, then our algorithm clearly is polynomial time. However,
 if the source of randomness is an unbiased bit string (which is the natural and
 usual assumption) then one cannot sample exactly uniformly from arbitrary distributions. It is standard in the literature to ignore this problem (because sample approximately uniformly is possible and this suffices), but for completeness we give the details.
 
 For the random sampling in \PackingProcess{}, it is easy to sample approximately uniformly: using $k$ bits of randomness one can approximately sample any
 probability $p$ Bernoulli random variable up to an error $2^{-k}$ by viewing the bits as an integer in $[2^k]$ and returning $1$ if this integer is at most $2^kp$. One can similarly select uniformly from a set, by partitioning $[2^k]$ into intervals of approximately equal size corresponding to the set elements. For all the analysis here and in~\cite{DegPack}, it is easy to check that using $n$ random bits per sample, the sampling error is tiny compared to the probabilities we want to estimate and is absorbed by our error terms (in fact, $O(\log n)$ bits would suffice).
 
 However sampling a perfect matching approximately uniformly, even from a dense bipartite graph, is not so obviously possible. We actually do not need a uniform random perfect matching: what we need is any distribution on perfect matchings which satisfies the conclusion of Lemma~\ref{lem:match}, i.e.\ that any given edge is in the matching with probability not too much greater (by at most a factor $\tfrac32$ would suffice) than the average. So the question becomes whether one can sample in polynomial time from such a distribution. There is a Markov chain on perfect matchings due to Broder~\cite{Broder}, which Jerrum and Sinclair~\cite{JerSim} showed can be simulated and has polynomial mixing time. This means we can sample in polynomial time from a distribution on perfect matchings which is exponentially close to the uniform distribution, and in particular has the desired property.
 
 In conclusion, one can actually simulate the randomised algorithm of~\cite{DegPack} and this paper in polynomial time. Following the (somewhat) general belief that $\mathrm{RP}\neq\mathrm{NP}$, this suggests that the packing problem for the graphs we pack in this paper should not be NP-complete (in contrast to the general packing problem, which is known to be NP-complete~\cite{DorTar}). We suspect the problem is in P, but we do not know how to derandomise our algorithm, or otherwise provide a deterministic polynomial time algorithm for the packing.

\section{Acknowledgements}

Part of the work leading to this paper was done while PA and JB visited Hamburg.
PA and JB would like to thank TU Hamburg and the University of Hamburg for their
hospitality, the Suntory and Toyota International Centres for Economics and
Related Disciplines and TU Hamburg for financial support, and Heike B\"ottcher
for help with childcare arrangements.

\bibliographystyle{amsplain_yk}
\bibliography{packing}	 

\providecommand{\bysame}{\leavevmode\hbox to3em{\hrulefill}\thinspace}
\providecommand{\MR}{\relax\ifhmode\unskip\space\fi MR }
\providecommand{\MRhref}[2]{%
  \href{http://www.ams.org/mathscinet-getitem?mr=#1}{#2}
}
\providecommand{\href}[2]{#2}
\def\MR#1{\relax}
\begin{thebibliography}{10}

\bibitem{DegPack}
P.~Allen, J.~B{\"o}ttcher, J.~Hladk{\'y}, and D.~Piguet, \emph{Packing
  degenerate graphs}, arXiv:1711.04869.

\bibitem{BHPT}
J.~B{\"o}ttcher, J.~Hladk{\'y}, D.~Piguet, and A.~Taraz, \emph{An approximate
  version of the tree packing conjecture}, Israel J. Math. \textbf{211} (2016),
  no.~1, 391--446.

\bibitem{Broder}
A.~Z. Broder, \emph{How hard is it to marry at random? (on the approximation of
  the permanent)}, Proceedings of the Eighteenth Annual ACM Symposium on Theory
  of Computing (New York, NY, USA), STOC '86, ACM, 1986, pp.~50--58.

\bibitem{DorTar}
D.~Dor and M.~Tarsi, \emph{Graph decomposition is {NP}-complete: a complete
  proof of {H}olyer's conjecture}, SIAM J. Comput. \textbf{26} (1997), no.~4,
  1166--1187.

\bibitem{DukLefRod}
R.~A. Duke, H.~Lefmann, and V.~R\"odl, \emph{A fast approximation algorithm for
  computing the frequencies of subgraphs in a given graph}, SIAM J. Comput.
  \textbf{24} (1995), no.~3, 598--620.

\bibitem{FLM}
A.~Ferber, C.~Lee, and F.~Mousset, \emph{Packing spanning graphs from separable
  families}, Israel J. Math. \textbf{219} (2017), no.~2, 959--982.

\bibitem{FerSam}
A.~Ferber and W.~Samotij, \emph{Packing trees of unbounded degrees in random
  graphs}, arXiv:1607.07342.

\bibitem{Freedman}
D.~A. Freedman, \emph{On tail probabilities for martingales}, Ann. Probability
  \textbf{3} (1975), 100--118.

\bibitem{Gallian}
J.~A. Gallian, \emph{A dynamic survey of graph labeling}, Electron. J. Combin.
  \textbf{5} (1998), Dynamic Survey 6, 43.

\bibitem{GKLO:Designs}
S.~Glock, D.~K{\"u}hn, A.~Lo, and D.~Osthus, \emph{The existence of designs via
  iterative absorption}, arXiv:1611.06827.

\bibitem{GKLO:Fdesigns}
\bysame, \emph{{Hypergraph $F$-designs for arbitrary $F$}}, arXiv:1706.01800.

\bibitem{GyaLeh}
A.~Gy{\'a}rf{\'a}s and J.~Lehel, \emph{{Packing trees of different order into
  {$K_n$}}}, {Combinatorics (Proc. Fifth Hungarian Colloq., Keszthely, 1976)},
  {Colloq. Math. Soc. J{\'a}nos Bolyai}, vol.~18, North-Holland, Amsterdam,
  1978, pp.~463--469.

\bibitem{JLR}
S.~Janson, T.~{\L}uczak, and A.~Ruci{\'n}ski, \emph{Random graphs},
  Wiley-Interscience, 2000.

\bibitem{JerSim}
M.~Jerrum and A.~Sinclair, \emph{Approximating the permanent}, SIAM Journal on
  Computing \textbf{18} (1989), no.~6, 1149--1178.

\bibitem{JKKO}
F.~Joos, J.~Kim, D.~K\"uhn, and D.~Osthus, \emph{Optimal packings of bounded
  degree trees}, J. European Math. Soc., to appear.

\bibitem{Kee1}
P.~Keevash, \emph{The existence of designs}, arXiv:1401.3665.

\bibitem{Kee2}
\bysame, \emph{The existence of designs ii}, arXiv:1802.05900.

\bibitem{KKOT}
J.~Kim, D.~K{\"u}hn, D.~Osthus, and M.~Tyomkyn, \emph{A blow-up lemma for
  approximate decompositions}, arXiv:1604.07282.

\bibitem{Kirkman}
T.~P. Kirkman, \emph{On a problem in combinations}, Cambridge and Dublin Math.
  J. \textbf{2} (1847), 191--204.

\bibitem{KnoxKO}
F.~Knox, D.~K\"{u}hn, and D.~Osthus, \emph{Edge-disjoint {H}amilton cycles in
  random graphs}, Random Structures Algorithms \textbf{46} (2015), no.~3,
  397--445.

\bibitem{Lucas}
E.~Lucas, \emph{R\'{e}cr\'{e}ations math\'{e}matiques}, 2i\`eme \'{e}d.,
  nouveau tirage, Librairie Scientifique et Technique Albert Blanchard, Paris,
  1960.

\bibitem{MRS}
S.~Messuti, V.~R\"odl, and M.~Schacht, \emph{Packing minor-closed families of
  graphs into complete graphs}, J. Combin. Theory Ser. B \textbf{119} (2016),
  245--265.

\bibitem{MPS}
R.~Montgomery, A.~Pokrovskiy, and B.~Sudakov, \emph{Embedding rainbow trees
  with applications to graph labelling and decomposition}, arXiv:1803.03316.

\bibitem{Pluecker}
J.~Pl\"ucker, \emph{System der analytischen {G}eometrie, auf neue
  {B}etractungsweisen gegr\"undet, und insbesondere eine ausf\"uhrliche
  {T}heorie der {C}urven dritter {O}rdnung enthalend}, Duncker und Humboldt,
  Berlin, 1835.

\bibitem{RCW}
D.~K. Ray-Chaudhuri and R.~M. Wilson, \emph{Solution of {K}irkman's schoolgirl
  problem}, Combinatorics ({P}roc. {S}ympos. {P}ure {M}ath., {V}ol. {XIX},
  {U}niv. {C}alifornia, {L}os {A}ngeles, {C}alif., 1968), Amer. Math. Soc.,
  Providence, R.I., 1971, pp.~187--203.

\bibitem{Ringel}
G.~Ringel, \emph{{Problem 25}}, {Theory of Graphs and its Applications (Proc.
  Int. Symp. Smolenice 1963)}, Czech. Acad. Sci., Prague, 1963.

\bibitem{RodlNibble}
V.~R{\"o}dl, \emph{{On a packing and covering problem}}, European J. Combin.
  \textbf{6} (1985), no.~1, 69--78.

\bibitem{Steiner}
J.~Steiner, \emph{Combinatorische aufgabe}, Journal f{\"u}r die reine und
  angewandte Mathematik \textbf{45} (1853), 181--182.

\bibitem{Wilson75}
R.~M. Wilson, \emph{An existence theory for pairwise balanced designs. {III}.
  {P}roof of the existence conjectures}, J. Combinatorial Theory Ser. A
  \textbf{18} (1975), 71--79.

\end{thebibliography}

\end{document}